\definecolor{darkblue}{rgb}{0.0, 0.0, 0.55}
\DeclareMathOperator{\RE}{Re}
\DeclareMathOperator\id{id}
\DeclareMathOperator{\im}{im} 
\DeclareMathOperator{\area}{area}
\DeclareMathOperator{\sign}{sign} 
\DeclareMathOperator{\IM}{Im}
\DeclareMathOperator\sgn{sgn}
\renewcommand{\setminus}{\smallsetminus}
\renewcommand{\subset}{\subseteq}
\renewcommand{\qedsymbol}{\rule[.12ex]{1.2ex}{1.2ex}}
\def\eps{\varepsilon}
\newcommand\ph\varphi
\newcommand\ps\psi
\newcommand\ep\varepsilon
\newcommand\rh\varrho
\newcommand\al\alpha
\newcommand\be\beta
\newcommand\ga\gamma
\newcommand\om\omega
\newcommand\ta\tau
\renewcommand\th\vartheta
\newcommand\de\delta
\newcommand\ze\zeta
\newcommand\ch\chi
\newcommand\et\eta
\newcommand\io\iota
\newcommand\la\lambda
\newcommand\si\sigma
\newcommand\ka\kappa
\newcommand\Ga\Gamma
\newcommand\De\Delta
\newcommand\Th\Theta
\newcommand\La\Lambda
\newcommand\Si\Sigma
\newcommand\Ph\Phi
\newcommand\Ps\Psi
\newcommand\Om\Omega
\def\cB{\mathcal B}
\def\cC{\mathcal C}
\def\bes{\begin{equation*}}
\def\ees{\end{equation*}}
\def\ben{\begin{enumerate} }
\def\een{\end{enumerate} }
\def\benum{\begin{enumerate} }
\def\eenum{\end{enumerate} }
\def\bmat{\left[\begin{array}{ccccccccccccccc} }
\def\emat{\end{array}\right]}
\def\bmat{\begin{bmatrix}}
\def\emat{\end{bmatrix}}
\def\beq{\begin{equation}}
\def\eeq{\end{equation}}
\def\barr{\begin{array}}
\def\earr{\end{array}}
\def\cB{\mathcal{B}}
\def\La{\Lambda}
\def\TM{M}
\def\NN{\mathbb N}
\def\N{\mathbb N}
\def\isp{\rotatebox[origin=c]{180}{$\psi$}}
\def\smatn{\mathbb S_n}
\def\cB{ {\mathcal B} }
\def\cC{ {\mathcal C} }
\def\cD{ {\mathcal D} }
\def\cH{ {\mathcal H} }
\def\cI{ {\mathcal I} }
\def\cK{ {\mathcal K} }
\def\cM{{\mathcal M}}
\def\cT{{\mathcal T}}
\def\la{\lambda}
\def\tL{{\tilde L}}
\newcommand\smat{\mathbb S}
\def\smatng{\mathbb S_n^g} 
\def\smatd{\mathbb S_d}
\def\smatdg{\mathbb S_d^g}
  \def\cDA{{\cD_{L_A}}}
  \def\cDB{{\cD_{L_B}}}
\def\j1tog{ j= 1, \ldots, g }
\def\cI{{\mathcal I}}
\def\tA{\tilde{A}}
\def\L0t{\ (L_0 \otimes I_n ) \ }
\def\PLINE1{\beta}
\DeclareMathOperator{\trace}{tr}
\def\bE{\mathbf{e}}
\def\sssec{\subsubsection}
\def\ssec{\subsection}
\def\bem{\begin{pmatrix} }
\def\eem{\end{pmatrix}}
\def\cube{\mathfrak{C}}
\def\cubeg{\cube^{(g)}}
\def\Lopt{\hat L}
\def\Jopt{\hat J}
\def\sopt{{\hat s}}
\def\topt{{\hat t}}
\def\aopt{{\hat a}}
\def\bopt{{\hat b}}
\def\Jopt{{\hat J}}
\def\mfD{\mathfrak{D}} 
\def\aofst{a(s,t)}
\def\bofst{b(s,t)}
 \def\CDF{Cumulative Distribution Function}
 \def\PDF{probability density function}
\def\sdd{ {\frac {\head}  {\dd} } }
\def\eih{e_{\head,\tail}} 
\def\sis{{\sigma_{s,t}}}
\def\sih{{\eih}}
\def\ddhalf{{\frac \dd 2}}
\def\NNhalf{{\frac12 \NN }}
\def\Pbe{{P^{b(\head, \tail +1)} }}
\def\BRV{{\mathfrak B}}
\def\SRV{{\mathfrak S}}
\def\hPhi{{\hat \Phi}}
\def\head{\mathfrak{s}}
\def\tail{\mathfrak{t}}
\def\fC{\mathscr{C}}
\def\dd{\mathfrak{d}}
\def\salt{\mathfrak{s}}
\def\talt{\mathfrak{t}}
\def\eiha{e_{\salt,\talt}}
\def\ds{\displaystyle}
\def\mfB{\mathfrak{B}}
\def\mfBm{\mfB^{\mathrm{min}}}
\def\mfBc{\mfB^{\mathrm{spin}}}
\def\mfBmax{\mfB^{\mathrm{max}}}
\def\mfBoh{\mfB^{\mathrm{oh}}}
\def\cD{\mathcal D}
\newcommand{\C}{{\mathbb C}}
\newcommand{\RR}{{\mathbb R}}
\newcommand{\R}{{\mathbb R}}
\newtheorem{thm}{Theorem}[section]
\newtheorem{cor}[thm]{Corollary}
\newtheorem{lem}[thm]{Lemma}
\newtheorem{lemma}[thm]{Lemma}
\newtheorem{prop}[thm]{Proposition}
\newtheorem{problem}[thm]{Problem}
\theoremstyle{definition}
\newtheorem{conj}[thm]{Conjecture}
\newtheorem{remark}[thm]{Remark}
\newenvironment{rem}%
         {\begin{remark}}
             {{\hfill $\Box ~~$}\end{remark}}
\newtheorem{sublem}[thm]{Sublemma}
\newtheorem{sublemma}[thm]{Sublemma}
\numberwithin{equation}{section}
\newtheorem{exa}[thm]{Example}
\newenvironment{example}%
         {\begin{exa}}
             {{\hfill $\Box ~~$}\end{exa}}
\newcounter{Inc}
\newcommand{\df}[1]{{\bf{#1}}{\index{#1}}}
\title[Commuting Dilations and Linear Matrix Inequalities]{Dilations, Linear Matrix Inequalities,\\ the Matrix Cube Problem and Beta Distributions}
\author[J.W. Helton]{J. William Helton${}^1$}
\address{J. William Helton, Department of Mathematics\\
  University of California \\
  San Diego}
\email{helton@math.ucsd.edu}
\thanks{${}^1$Research supported by the NSF grant
DMS 1201498, and the Ford Motor Co.}
\author[I. Klep]{Igor Klep${}^{2}$}
\address{Igor Klep, Department of Mathematics, 
The University of Auckland, New Zealand}
\email{igor.klep@auckland.ac.nz}
\thanks{${}^2$Supported by the Marsden Fund Council of the Royal Society of New Zealand. Partially supported by the Slovenian Research Agency grants P1-0222 and L1-6722.}
\author[S. McCullough]{Scott McCullough${}^3$}
\address{Scott McCullough, Department of Mathematics\\
  University of Florida\\ Gainesville 
   }
   \email{sam@math.ufl.edu}
\thanks{${}^3$Research supported by the NSF grant DMS-1361501}
\author[M.  Schweighofer]{Markus  Schweighofer}
\address{Markus  Schweighofer, Fachbereich Mathematik und Statistik\\
Universit\"at Konstanz}
\email{markus.schweighofer@uni-konstanz.de}
\subjclass[2010]{47A20, 46L07, 13J30 (Primary); 60E05, 33B15, 90C22 (Secondary)}
\date{\today}
\keywords{dilation, completely positive map,
linear matrix inequality, spectrahedron, free spectrahedron,
matrix cube problem,  binomial distribution, beta distribution, robust stability, free analysis}
\begin{document}

\setcounter{tocdepth}{3}
\contentsmargin{2.55em} 
\dottedcontents{section}[3.8em]{}{2.3em}{.4pc} 
\dottedcontents{subsection}[6.1em]{}{3.2em}{.4pc}
\dottedcontents{subsubsection}[8.4em]{}{4.1em}{.4pc}

\makeatletter
\newcommand{\mycontentsbox}{%
{
\addtolength{\parskip}{-2.3pt}
\tableofcontents}}
\def\enddoc@text{\ifx\@empty\@translators \else\@settranslators\fi
\ifx\@empty\addresses \else\@setaddresses\fi
\newpage\mycontentsbox\newpage\printindex}
\makeatother

\setcounter{page}{1}

\begin{abstract}
An operator $C$ on a Hilbert space $\mathcal H$ dilates to an operator $T$ on a Hilbert space $\mathcal K$ if there is an isometry
 $V:\mathcal H\to \mathcal K$ such that $C= V^* TV$. 
A main result of this paper is, for a positive integer $d$, 
the
simultaneous dilation, up to a sharp factor $\vartheta(d)$, expressed as a ratio of $\Gamma$ functions for $d$ even, of all $d\times d$ 
symmetric matrices of operator norm at most one to a collection of \emph{commuting} self-adjoint contraction operators on a Hilbert space. \looseness=-1

 Dilating to commuting operators has consequences for the
 theory of linear matrix inequalities (LMIs).
 Given  a tuple  $A=(A_1,\dots,A_g)$ of $\nu\times \nu$ symmetric matrices,
 $L(x):= I-\sum A_j x_j$ is a {\it monic linear pencil} of size $\nu$.
The solution set   $\mathscr S_L$ of the corresponding  
linear matrix inequality,
 consisting of those $x\in\R^g$ for which $L(x)\succeq 0$, is a {\it spectrahedron.}  
 The set $\cD_L$ of tuples $X=(X_1,\dots,X_g)$
 of symmetric matrices (of the same size) for which $L(X):=I -\sum A_j\otimes X_j$ is positive
 semidefinite, is a {\it free spectrahedron.}  
 It is shown that  any tuple $X$ of $d\times d$  symmetric matrices
 in a bounded free spectrahedron $\cD_L$ dilates, up to a scale factor depending only on $d$,
to a tuple $T$  of \emph{commuting} self-adjoint operators with joint spectrum in
 the corresponding spectrahedron $\mathscr S_L$.
From another viewpoint, the scale factor measures
the extent that a positive map can fail to be completely positive.

 Given another monic linear pencil $\tilde{L}$, the inclusion $\cD_L\subset \cD_{\tilde{L}}$
 obviously implies the inclusion $ \mathscr S_L\subset\mathscr S_{\tilde{L}}$ 
 and thus can be  thought of  as its free relaxation. 
Determining if one free spectrahedron
contains another can be done by solving an explicit LMI and is thus computationally
tractable.
 The scale factor for commutative dilation of  $\cD_L$ gives a
 precise measure of  the worst case error inherent  in the free relaxation,
 over all monic linear pencils $\tilde{L}$ of size $d$.

 The set $\cubeg$ of $g$-tuples  of symmetric matrices of norm at most
 one is an example of a free spectrahedron  known as the free cube and its associated spectrahedron is the cube $[-1,1]^g$. The free relaxation of the 
 the NP-hard inclusion problem $[-1,1]^g\subset\mathscr S_L$ was introduced by Ben-Tal and Nemirovski. They obtained the lower bound $\vartheta(d),$ expressed as the solution of an optimization problem over diagonal matrices of trace norm $1,$ for the divergence between the original and relaxed problem. The result here on simultaneous dilations of contractions proves this bound is sharp.
Determining an analytic formula for $\vartheta(d)$ produces, as a by-product, 
new probabilistic results for the binomial and beta distributions.
\end{abstract}

\maketitle

\newpage

\section{Introduction}
\label{sec:intro}
 Free analysis\index{free analysis} \cite{KVV14} and the theory of free functions and free sets 
 traces its roots back to the work of  Taylor \cite{Taylor1, Taylor2}.  Free functions generalize
  the notion of polynomials in $g$-tuples of freely noncommuting variables and free sets
  are appropriately structured subsets of the union, over $d$,  of $g$-tuples of $d\times d$ matrices. 
 The current interest in the subject arises in large
 part due to its applications in free probability \cite{Voi04,Voi10},
  systems engineering 
 and connections to optimization \cite{Ball11,BGR,BGFB94,dOHMP09,Skelton}
 and operator algebras and systems
 \cite{Arv69,Arv72,Pau,BLM,Pisier-book, Davidson, Davidson-Kennedy}. 
 The main branch of convex optimization to emerge in the last 20 years,
 semidefinite programming\index{semidefinite programming} \cite{Nem06}, is based on linear pencils,\index{linear pencil}
linear matrix inequalities\index{linear matrix inequality}\index{LMI} (LMIs) 
and spectrahedra\index{spectrahedron}
 \cite{BGFB94,WSV}.   The book
  \cite{BPR13} gives an overview of  the 
  substantial theory of LMIs and spectrahedra and the connection to real algebraic geometry. 
  A  linear pencil $L$ is  a simple special case of 
a free function  
  and is of special 
  interest because the free spectrahedron  $\cD_L=\{X : L(X)\succeq 0\}$\index{$\cD_L$} is evidently  convex and  conversely an 
  algebraically defined free convex set  is a  free spectrahedron \cite{EW,HM12}. 
  In this article the relation between inclusions of spectrahedra and inclusions of the corresponding 
  free spectrahedra is explored using operator theoretic ideas. The analysis 
 leads to new dilation theoretic results and to new probabilistic results
   and   conjectures  which 
 can be read independently of the rest of this article by skipping now to Section \ref{sec:introprob}.  It also furnishes a complete solution to the 
 matrix cube problem of Ben-Tal and Nemirovski \cite{BtN}, which contains,
 as a special case, the $\frac{\pi}{2}$-Theorem of Nesterov \cite{Nes97} and 
 which in turn is related to
  the symmetric Grothendieck Inequality.

A central  topic  of this paper 
is  dilation,\index{dilation} up to a scale factor,
of a tuple $X$ of $d\times d$  symmetric matrices 
in a free spectrahedron $\cD_L$
to  tuples $T$
 of \emph{commuting} self-adjoint operators with
 joint spectrum in  the corresponding spectrahedron $\mathscr S_L$.
We shall prove that  these scaled commutative  dilations 
exist and  the scale factor
describes the error in the free relaxation 
$\cD_L\subset\cD_{\tilde L}$ of the spectrahedral
inclusion problem 
$\mathscr S_L \subset \mathscr S_{\tilde{L}}$.\index{$\mathscr S_L$} 
The precise results are stated in  Section \ref{sec:op-dil}. 
As a cultural note these scale factors
 can be interpreted as the amount of modification required
 to make a positive map
  completely positive; see Section \ref{sec:cpINTRO}. 

In this paper we completely analyze the free cubes\index{free cube}, $\cubeg$,\index{$\cubeg$}
 the free spectrahedra consisting of $g$-tuples of symmetric
 matrices of norm at most one;  the corresponding
 spectrahedron is the cube $[-1,1]^g$.
We show that,  for each $d$,
there exists a
collection $\mathscr C_d$ of commuting self-adjoint
 contraction operators on a Hilbert space,
such that, up to the scale factor $\th(d)$,\index{$\th(d)$}
any $d \times  d$  symmetric contraction  matrix
dilates to $T$ in   $\mathscr C_d$; see
Section \ref{sec:dilate and cube}.
Moreover, we give a formula for the optimal scale factor $\vartheta(d)$;
see Section \ref{sec:precise}.
As a consequence,
 we recover the error bound  given by Ben-Tal
 and Nemirovski for the computationally tractable free relaxation of the
NP-hard cube inclusion problem.
  Further, we show that this bound is
  best possible, see Section \ref{sec:mcINTRO}.

Proof of sharpness of the error bound $\vartheta(d)$
  and giving a formula for $\vartheta(d)$ requires concatenating all of the areas
  we have discussed and it requires all but a few sections
  of this paper.
For example, finding a formula  for $\vartheta(d)$
 required new  results for
the binomial and beta distributions
and necessitated a generalization of
Simmons' Theorem \cite{Sim1894}  (cf.~\cite{PR07})
to Beta distributions.
Our results and conjectures in probability-statistics appear
 in Section \ref{sec:introprob}.

 The rest of the introduction gives
detailed statements of the results just described
and a guide to their proofs.

\subsection{Simultaneous dilations}
 \label{sec:dilate and cube}
Denote by $\N:=\{1,2,3,\dots\}$ the set of positive integers and by $\R$ the set of real numbers.
For $n\in\N$, denote by $\smat_n$ the set of symmetric $n\times n$ matrices with entries from $\R$. 
 A matrix $X\in\smatn$ {\bf dilates} to  an operator $T$ 
 on a Hilbert space $\mathcal H$ if there is an isometry $V:\mathbb R^n\to \mathcal H$ such that
 $X=V^*TV$.   Alternately, one says
 that $X$ is a \df{compression} of $T$.
   A tuple $X\in\smatng$ {\bf dilates} to a tuple $T=(T_1,\dots,T_g)$ of bounded operators
 on a Hilbert space $\mathcal H$ if there is an isometry $V:\mathbb R^n\to \mathcal H$ such that
 $X=V^*TV$ (in the sense that $X_j = V^* T_jV$). 
In other words, $T$ has the form
 \[ T_i= \begin{pmatrix} X_i & *_i \\ *_i & *_i \end{pmatrix}\] 
where the $*_i$ are  bounded operators between appropriate Hilbert spaces.

 One of the oldest dilation\index{dilation} theorems is due to
 Naimark \cite{N43}. In its simplest form it dilates a tuple of (symmetric) positive semidefinite matrices (of the same size) which sum to the identity to a tuple of commuting (symmetric) projections which sum to the identity.\index{dilation!Naimark} 
 It has  modern applications in the theory of frames from signal processing. In this direction, perhaps the most general version of the Naimark Dilation Theorem 
 dilates a (possibly nonselfadjoint) operator valued measure to a (commuting) projection valued measure on a Banach space. 
 The most general and complete reference for this result  and its antecedents is \cite{HLLL14} with \cite{HLLL14b}
being an exposition of the theory. See also \cite{LS13}. 
A highly celebrated dilation result  in operator theory is 
 the power dilation theorem of Sz.-Nagy\index{theorem!Sz.-Nagy} \cite{SzN53} which, 
  given a contraction $X$, constructs a unitary $U$
   such that $X^n$ dilates to $U^n$ for natural numbers $n$.
 That von Neumann's inequality\index{von Neumann's inequality}\index{theorem!von Neumann} is
 an immediate consequence gives some measure of the power of this result.
 The two variable generalization of the Sz.-Nagy dilation theorem, the power dilation of a commuting
  pair of contractions to a commuting pair of unitaries, is known as the
 commutant lifting theorem  (there are counterexamples to commutant lifting
 for more than two  contractions) and is 
due to Ando, Foias-Sz.-Nagy, Sarason.  
 It  has  major applications to linear systems engineering;
 see \cite{Ball11,FFGK,BGR} as samples of the  large literature.
  The (latest revision of the) classic book \cite[Chapter 1.12]{SzNFBK} 
  contains further remarks on the history of dilations.
Power dilations up to a scale factor $K$
are often called $K$-spectral dilations and these are a highly active are
of research. An excellent survey article is  \cite{BB13}.

 The connections
 between dilations and completely positive\index{completely positive map} maps were exposed
 most famously in the work of
  Arveson \cite{Arv69,Arv72}.
 Presently, dilations and completely positive maps appear in many contexts.
 For examples, they are fundamental objects
 in the theory of operator algebras, systems and spaces
 \cite{Pau} as well as in quantum computing
 and quantum information theory \cite{NC}.
 In the articles \cite{HKM12,HKM13}, the theory of completely
 positive maps was used to systematically
 study free relaxations of spectrahedral inclusion problems
 which arise in semidefinite programming \cite{Nem06,WSV}
 and systems engineering \cite{BtN} for instance.
 In this article, dilation theory is used to measure the degree
  to which a positive map can fail to
 be completely positive,  equivalently the error inherent in
  free relaxations of
 spectrahedral inclusion.

The dilation constant $\vartheta(d)$ for dilating $d\times d$  contractions to commuting contractions operators
is given by the following optimization problem \cite{BtN} which is potentially of independent interest.
\beq
\label{eq:BTbd}
\frac{1}{\th(d)}  := 
\min_{\substack{a\in\R^d\\[.5mm]|a_1|+\dots+|a_d|=d}}\int_{S^{d-1}}\left|\sum_{i=1}^d a_i\xi_i^2\right|d \xi 
 = \min_{\substack{B\in \smat_d\\[.5mm]\trace|B|=d}}
 \int_{S^{d-1}} | \xi^*B\xi | \,  \, d\xi 
\eeq
where the unit sphere $S^{d-1}\subseteq \R^ {d}$ (having dimension $d-1$) is equipped with the uniform probability measure (i.e., the unique rotation invariant measure of total mass $1$). 
That $\vartheta(d)$ can be expressed using incomplete beta functions will be seen in 
Section \ref{sec:precise}.
Evidently $\vartheta(d)\ge 1.$ 

   A  self-adjoint operator $Y$
 on $\mathcal H$ is a \df{contraction} if $I\pm Y \succeq 0$ or equivalently $\|Y\|\le 1$.

\begin{thm}[Simultaneous Dilation]
 \label{thm:dilate}
 Let $d\in\N$.  There is a Hilbert space $\cH$, a family $\fC_d$ of commuting self-adjoint  contractions
  on $\cH$, and an isometry $V:\mathbb R^d\to \cH$ such that for each symmetric $d\times d$ contraction matrix $X$
  there exists a $T\in \fC_d$ such that 
\[
  \frac{1}{\th(d)} X = V^* T V. 
\]

   Moreover, $\th(d)$ is the smallest such constant in the sense that if
   $\th'\in\R$ satisfies $1\le\th'<\th(d)$, then there is $g\in\N$ and
   a $g$-tuple of $d\times d$ symmetric contractions $X$   
   such that $\ds\frac{1}{\th'}X$
   does not dilate to a $g$-tuple of commuting self-adjoint  contractions on a Hilbert space. 
\end{thm}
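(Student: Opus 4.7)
My plan is to build $\cH$, $\fC_d$, and $V$ explicitly via the uniform probability measure $\mu$ on the sphere $S^{d-1}\subset\R^d$. Take $\cH := L^2(S^{d-1},\mu)$ and define the isometry $V\colon \R^d\to\cH$ by $(Vx)(\xi) := \sqrt{d}\,\langle \xi, x\rangle$; the identity $V^*V = I_d$ follows from $\int_{S^{d-1}}\xi\xi^T\,d\mu = \frac{1}{d}I_d$. Let $\fC_d$ be the collection of multiplication operators $M_f$ with $f\in L^\infty(S^{d-1},\mu)$ and $\|f\|_\infty\le 1$; these are pairwise commuting self-adjoint contractions on $\cH$. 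A short calculation gives $V^* M_f V = \Phi(f)$, where $\Phi(f) := d\int_{S^{d-1}} f(\xi)\,\xi\xi^T\,d\mu(\xi)\in\smat_d$. Thus the task reduces to showing that for every symmetric contraction $X\in\smat_d$ there exists $f_X$ with $\|f_X\|_\infty\le 1$ and $\Phi(f_X) = X/\th(d)$.

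\textbf{Hahn--Banach step.} The preceding range containment is a pure convexity statement, and I would prove it by duality with respect to the trace pairing. The support function of $\Phi(B_{L^\infty})$ at $Y\in\smat_d$ is
\begin{equation*}
\sup_{\|f\|_\infty\le 1}\trace\bigl(Y\Phi(f)\bigr) \;=\; d\int_{S^{d-1}} |\xi^T Y\xi|\,d\mu(\xi),
\end{equation*}
while the support function of the symmetric contraction ball is $Y\mapsto \trace|Y|$. Consequently $\th(d)^{-1}\{X\in\smat_d\colon \|X\|\le 1\}\subseteq\Phi(B_{L^\infty})$ is equivalent to $\th(d)^{-1}\trace|Y|\le d\int|\xi^T Y\xi|\,d\mu$ for all $Y\in\smat_d$, which after normalizing to $\trace|Y|=d$ is exactly the defining inequality \eqref{eq:BTbd} for $1/\th(d)$.

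\textbf{Sharpness direction.} For the lower bound, I would argue by duality in reverse. Suppose some $\th'\in[1,\th(d))$ worked for all $d\times d$ symmetric contraction $g$-tuples for every $g$. Then any such tuple $(X_1,\dots,X_g)$ scaled by $1/\th'$ would dilate to commuting contractions on a common $\cH$, so by the Naimark/Arveson dictionary the unital map sending the coordinate functions $t_i\in C([-1,1]^g)$ to $X_i/\th'$ would be completely positive. Taking the adjoint yields an $\smat_d$-valued positive measure on $[-1,1]^g$ implementing the scaling $X\mapsto X/\th'$; pairing this against a matrix $Y^*\in\smat_d$ nearly optimal in \eqref{eq:BTbd} (with $\trace|Y^*|=d$) would force $1/\th'\le d\int|\xi^T Y^*\xi|\,d\mu$, contradicting $\th'<\th(d)$. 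Concretely, I would use the spectral data of $Y^*$ to build a monic linear pencil $\tilde L(x) = I-\sum A_j x_j$ for which $\mathscr S_{\tilde L}\supset[-1,1]^g$ while $\cD_{\tilde L}$ at size $d$ does not contain $\th'^{-1}\cubeg$; the tuple witnessing this failure is the required obstruction.

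\textbf{Main obstacle.} The existence half reduces, via the spherical construction, to a routine Hahn--Banach argument and is unlikely to hold any surprises. The technical weight sits in the sharpness half, where one must upgrade the extremizer of the scalar optimization \eqref{eq:BTbd} to a genuine obstructing tuple of matrices. This requires the equivalence---developed in the preceding sections of the paper---between the scaled commutative dilation constant at size $d$ and the worst-case multiplicative error in the free-spectrahedral relaxation of the cube inclusion problem at size $d$. Once that dictionary is available, the Ben-Tal--Nemirovski style construction supplies the obstructing pencil $\tilde L$ and the tuple $X\in\cubeg$ that together complete the lower bound.
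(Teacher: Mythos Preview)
Your construction is correct and is a genuinely different route from the paper's. The paper works on $\cH=\R^d\otimes L^2(O(d))$ with the isometry $Vx(U)=x$ (constant vector-valued functions) and takes $\fC_d$ to be the operators $M_D f(U)=UD(U)U^*f(U)$ for diagonal-contraction-valued $D$; it then reaches the conclusion via an extreme-point argument (every symmetric contraction is a convex combination of signature matrices) combined with the explicit computation $V^*M_{D_J}V=d\int_{S^{d-1}}\sgn[\xi^*J\xi]\,\xi\xi^*\,d\xi$ for carefully chosen $J$. Your scalar model $\cH=L^2(S^{d-1})$, $Vx=\sqrt d\,\langle\xi,x\rangle$, $\fC_d=\{M_f:\|f\|_\infty\le 1\}$ is smaller and the bipolar/support-function reduction to the defining inequality \eqref{eq:BTbd} is cleaner than the paper's route through $O(d)$ and signature matrices. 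What the paper's approach buys is an explicit $f_X$ (built from $\sgn[\xi^*J_*\xi]$ and the convex decomposition of $X$), whereas your Hahn--Banach argument is nonconstructive; for the theorem as stated that does not matter.

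\textbf{Sharpness half.} Here there is a real gap. Your paragraph ``Taking the adjoint yields an $\smat_d$-valued positive measure on $[-1,1]^g$ \dots\ pairing this against $Y^*$ \dots\ would force $1/\th'\le d\int|\xi^T Y^*\xi|\,d\mu$'' does not go through: the POVM lives on $[-1,1]^g$, you have not specified which tuple $X$ you are dilating, and there is no mechanism in what you wrote connecting that POVM to the spherical integral. Nor is the obstruction a ``Ben-Tal--Nemirovski style construction'': Ben-Tal--Nemirovski prove only the upper bound; the sharpness is original to this paper (Theorem~\ref{thm:nextbest}). The paper's actual construction is concrete and is the substance of the lower bound: with $J_*=J(\hat s,\hat t;\hat a,\hat b)$ an optimizer of \eqref{eq:BTbd}, one forms the ``infinite-variable'' pencil $\hat L(x)=\frac1{\kappa_*(d)}\int_{O(d)}U^*J_*U\,x(U)\,dU$, checks that $\|x\|_\infty\le 1$ implies $\hat L(x)\preceq I$, but that the matrix-valued test $X(U)=U^*J(\hat s,\hat t;1,1)U$ gives $\|\hat L(X)\|=1/\kappa_*(d)$; one then discretizes via partitions of $O(d)$ to obtain, for each $\th'<\th(d)$, a finite $g$, a pencil $A\in\smat_d^g$ with $[-1,1]^g\subset\fs_{L_A}$, and a tuple $X\in\cubeg(d)$ with $\frac1{\th'}X\notin\cD_{L_A}(d)$. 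Proposition~\ref{prop:commute-include} then converts this into the dilation obstruction. Your outline does not contain this construction or an alternative to it.
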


\begin{proof}
  The first part of Theorem \ref{thm:dilate} is stated and proved as Theorem \ref{thm:rhodoesit}.
  The optimality of $\vartheta(d)$ is proved as part of Theorem \ref{thm:nextbest}.  The Hilbert space
  $\cH$, isometry $V$ and collection $\fC_d$ are all explicitly constructed. See equations (\ref{eq:H}), (\ref{eq:V}) and
  \eqref{eq:defcCd}.
\end{proof}

\subsection{Solution of the minimization problem \protect{\eqref{eq:BTbd}}}
\label{sec:precise}
\index{beta function} \index{incomplete beta function}\index{regularized beta function}\index{beta function, incomplete}\index{beta function, regularized}
In this section matrices $B$ which produce the optimum in Equation
 \eqref{eq:BTbd} are described and a formula for $\th(d)$ is given in terms of  Beta functions.
 Recall, the incomplete beta function is, for real arguments $\al,\be>0,$ 
 and an additional argument $p\in[0,1]$, defined by 
\[B_p(\al,\be)=\int_0^px^{\al-1}(1-x)^{\be-1}dx.\] 
 The Euler beta function is $B(\al,\be)=B_1(\al,\be)$ 
and the regularized (incomplete) beta function is\looseness=-1
\[I_p(\al,\be)=\frac{B_p(\al,\be)}{B(\al,\be)}\in[0,1].\]

The minimizing matrices $B$ to \eqref{eq:BTbd} 
will have only two different eigenvalues. 
 For nonnegative numbers $a,b$ 
and $s,t\in\N,$  let $J(s,t;a,b)=a I_s \oplus (-b) I_t$ denote 
the $d\times d$ diagonal matrix $J(s,t;a,b)$
 with first $s$ diagonal entries $a$ and last $t$ diagonal entries $-b$.

The description of the solution to \eqref{eq:BTbd} depends on the parity of $d$.

\begin{thm}
\label{thm:thetaExplicit}
 If  $d$ is an even positive integer, then
\begin{equation}
 \label{eq:mc}
 \frac{1}{\vartheta(d)} 
  = \int_{S^{d-1}} \left|\xi^* J\left(\frac d 2 ,\frac d 2 ;1,1\right) \xi\right| \, d\xi 
= 2I_{\frac12}\left(\frac d4,\frac d4+1\right)-1                        
= \frac{\Gamma\left(\frac12+\frac d4\right)}{\sqrt\pi\,\Ga\left(1+\frac d4\right)}
\end{equation}
where $\Gamma$ denotes the Euler gamma function. 
 In particular, the minimum in \eqref{eq:BTbd} occurs at a $B=J(s,t;a,b)$ with $s=t=\frac d2$ and $a=b=1.$

In the case that $d$ is odd, 
 there exist  $a,b\ge 0$ such that 
\begin{align}
 \label{eq:oddmc0} \frac{1}{\vartheta(d)} 
 & = \int_{S^{d-1}} \Big|\xi^* J\left(\frac {d+1}2 ,\frac {d-1}2 ;a,b\right) \xi \Big| \, d\xi \\
\label{eq:oddmc} & = 2 I_{\frac a{a+b}}
\left(\frac {d-1}4, \frac {d+1}4 +1\right) -1 =2  I_{\frac{b}{a+b}}\left(\frac {d+1}4,\frac {d-1}4 +1\right) -1,
\end{align}
and
$a\frac{d+1}{2}+b\frac{d-1}{2}=d$. This last equation together with
\eqref{eq:oddmc} uniquely determines $a,b$. 
Furthermore, the minimum in \eqref{eq:BTbd} occurs at a $B=J(s,t;a,b)$ with $s=\frac {d+1}2$ and $t=\frac {d-1}2$.
\end{thm}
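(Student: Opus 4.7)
The plan is to reduce \eqref{eq:BTbd} to a one-parameter optimization over Beta distributions via orthogonal and Dirichlet symmetries of the sphere, and then evaluate it by parity analysis on $d$.

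By orthogonal invariance of the uniform measure on $S^{d-1}$, I may take $B=\diag(a_1,\ldots,a_s,-b_1,\ldots,-b_t)$ with $a_i,b_j\ge 0$, $s+t=d$, and $\sum a_i+\sum b_j=d$. Since $(\xi_1^2,\ldots,\xi_d^2)$ is Dirichlet with all parameters $1/2$, conditionally on $U:=\xi_1^2+\cdots+\xi_s^2$ the two sign blocks are independent and permutation-invariant, scaling like $U$ and $1-U$. As $x\mapsto|x|$ is convex, averaging via conditional Jensen within each block replaces all $a_i$ by $a=\tfrac1s\sum a_i$ and all $b_j$ by $b=\tfrac1t\sum b_j$ without increasing the objective. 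So a minimizer exists of the form $B=J(s,t;a,b)$ with $sa+tb=d$, which already proves the two-eigenvalue claim.

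For such $B$, $\xi^*B\xi=(a+b)(U-c)$ with $c=b/(a+b)$ and $U\sim\mathrm{Beta}(s/2,t/2)$, so
\[
\int_{S^{d-1}}|\xi^*B\xi|\,d\xi=(a+b)\Bigl[c\bigl(2I_c(\tfrac s2,\tfrac t2)-1\bigr)+\tfrac{s}{d}\bigl(1-2I_c(\tfrac s2+1,\tfrac t2)\bigr)\Bigr],
\]
obtained by splitting $E|U-c|=\int_0^c(c-u)dF+\int_c^1(u-c)dF$ and using $\int_0^c u\,dF(u)=\tfrac{s}{d}I_c(s/2+1,t/2)$. It remains to minimize this over $(s,c)\in\{1,\ldots,d-1\}\times(0,1)$, with $(a,b)$ determined by the linear constraint.

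For $d$ even, the involution $B\mapsto -B$ preserves both the objective and the constraint while exchanging $(s,a)\leftrightarrow (t,b)$; together with Jensen applied across the two sign classes (permissible exactly when $s=t$), this forces $s=t=d/2$, $a=b=1$, $c=1/2$. Evaluating $E|2U-1|$ with $U\sim\mathrm{Beta}(d/4,d/4)$ via the substitution $w=2u-1$ reduces to the elementary integral $\int_0^1 w(1-w^2)^{d/4-1}dw=2/d$; Legendre's duplication formula converts the prefactor into $\Gamma(\tfrac12+\tfrac d4)/(\sqrt\pi\,\Gamma(1+\tfrac d4))$, and the symmetry of the Beta CDF about $1/2$ yields the $2I_{1/2}(\tfrac d4,\tfrac d4+1)-1$ form.

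For $d$ odd, $s\neq t$. Fixing $s$ and imposing $\partial_c[(a+b)E|U-c|]=0$ under $sa+tb=d$ produces an equation for $c=c^\ast(s)$ which, with the second-derivative check, gives the unique optimum for that $s$. A monotonicity comparison of $I_c(\alpha,\beta)$ in its arguments shows that the outer minimum over $s$ is attained at $|s-t|=1$, i.e., $s=(d+1)/2$; the stationarity equation then simplifies to \eqref{eq:oddmc}, and uniqueness of $(a,b)$ follows from strict monotonicity of the left-hand side in $c$ combined with the linear constraint. The principal obstacles are (i) executing the Dirichlet/Jensen reduction to $J(s,t;a,b)$ rigorously in the presence of the nonsmooth absolute value, and (ii) in the odd case the discrete outer minimization over $s$, which lacks the clean $B\mapsto -B$ symmetry of the even case and relies instead on careful monotonicity properties of the incomplete Beta function.
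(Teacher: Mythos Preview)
Your reduction to $B=J(s,t;a,b)$ via Dirichlet/Jensen and the Beta-function representation of the integral are fine and essentially match the paper's Proposition~\ref{prop:starastarb} and Lemma~\ref{lem:compalpha}. The evaluation of the closed form once $s=t=d/2$, $a=b=1$ is also correct. The gap is in the step you treat as routine: the discrete minimization over $s$.

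For $d$ even, the involution $B\mapsto -B$ only gives $\kappa_*(s,t)=\kappa_*(t,s)$; it does \emph{not} force the minimum to be at $s=t$. Your remark that ``Jensen across the two sign classes is permissible exactly when $s=t$'' does not help: being able to symmetrize further at $s=t$ says nothing about whether $\kappa_*(d/2,d/2)\le\kappa_*(s,d-s)$ for $s\ne d/2$. You cannot convex-combine $J(s,t;a,b)$ with $-J(s,t;a,b)$ (or its permuted version) while staying on the constraint $\trace|B|=d$. For $d$ odd, ``a monotonicity comparison of $I_c(\alpha,\beta)$ in its arguments'' is precisely the hard part and is not a standard fact. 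In the paper this discrete outer minimization is the content of Sections~\ref{sec:simmhalf}--\ref{sec:nov}: one first proves a half-integer extension of Simmons' inequality (Theorem~\ref{simmons+}) to get the upper bound $\sigma_{s,t}\le s/d$ on the equipoint, then a lower bound $\sigma_{s,t}\ge (s+2)/(d+4)$ via median estimates for the Beta distribution (Section~\ref{sec:lowBdSIS}), and finally uses these to establish two-step monotonicity $f_{s,t}(\sigma_{s,t})\le f_{s+2,t-2}(\sigma_{s+2,t-2})$ (Proposition~\ref{prop:f2step}) together with boundary comparisons (Lemmas~\ref{lem:trip2}, \ref{lem:trip3}). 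None of this is recoverable from symmetry or from elementary monotonicity of the incomplete Beta function; your proposal is missing the core argument.
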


\subsubsection{Proof of Theorem {\rm\ref{thm:thetaExplicit}}}
The proof is involved but we now describe some of the ideas.
A key observation is that the minimum defining $\vartheta(d)$ 
in \eqref{eq:BTbd} can be taken over matrices of the form
 $J(s,t;a,b)$, instead of over all symmetric matrices
 $B$ with $\trace(|B|)=d;$
 see Proposition \ref{prop:starastarb}.
In addition, $s+t=d$ and  we may take $as+bt=d= \trace | J(s,t,a,b) |$.
The key identity  connecting Beta functions to $J$ is
\begin{equation}
 \label{eq:theta-beta}
\begin{split}
 \int_{S^{d-1}} \big| \xi^* J(s,t;a,b)\xi \big| \, d\xi 
  = &  \int_{S^{d-1}} \left | a \sum_{j=1}^s \xi_j^2 - b \sum_{j=s+1}^d \xi_j^2\ \right | \, d\xi  \\
    = & \frac{2}{d} \left( as I_{\frac{a}{a+b}}\left(\frac{t}{2},\frac{s}{2}+1\right) + bt I_{\frac{b}{a+b}}\left(\frac{s}{2},\frac{t}{2}+1\right)  \right) -1,
\end{split}
\end{equation}
 which is verified in
 Section \ref{sec:prequest}.

 The optimality conditions for the  optimization problem 
 \eqref{eq:BTbd} (with $J(s,t;a,b)$ replacing $B\in\mathbb S_d$) 
 are presented in Section \ref{sec:opt}, 
and the proof of Theorem \ref{thm:thetaExplicit} 
concludes in Section \ref{sec:nov}. 
\hfill\qedsymbol

Bounds on the integral \eqref{eq:oddmc} representing $\vartheta(d)$
when $d$ is odd
 can be found below in 
Theorem \ref{thm:thetaEvenAndOdd}.

\subsubsection{Coin flipping and Simmons' Theorem}
 
 Theorem \ref{thm:thetaExplicit} is closely related to coin flipping. 
 For example, when $d$ is divisible by $4$,
the
 right hand side of \eqref{eq:mc} just becomes the probability of getting exactly $\frac d4$  heads when tossing a fair coin
$\frac d2$ times, i.e.,
 \[\binom{\frac d2}{\frac d4}\left(\frac12\right)^{\frac d2}.\]
Furthermore, a core ingredient in analyzing the extrema of  
\eqref{eq:theta-beta} or \eqref{eq:BTbd} as needed for
$\vartheta(d)$ is the following inequality.

\begin{thm}
 \label{simmons+INTRO}
For $d\in\mathbb N$ and $s,t\in\mathbb N$  with $s+t=d$, if $\ds s\geq\frac d2$,  then  
\begin{equation}
 \label{eq:someconjINTRO}
   I_{\frac sd}\left(\frac s2+1,\frac t2\right) 
\geq  1- I_{\frac sd}\left(\frac s2,\frac t2+1\right).
\end{equation}
 \end{thm}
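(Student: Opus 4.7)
The plan is to symmetrize \eqref{eq:someconjINTRO} via the reflection identity $I_p(a,b)=1-I_{1-p}(b,a)$. Since $t/d=1-s/d$, the inequality becomes
\[
I_{s/d}(s/2+1,t/2)+I_{s/d}(s/2,t/2+1)\ge 1.
\]
Setting $\alpha=s/2$, $\beta=t/2$, $n=\alpha+\beta=d/2$, and $p=s/d=\alpha/n$, the standard recurrences
\[
I_p(\alpha+1,\beta)=I_p(\alpha,\beta)-\frac{p^\alpha(1-p)^\beta}{\alpha\, B(\alpha,\beta)},\qquad
I_p(\alpha,\beta+1)=I_p(\alpha,\beta)+\frac{p^\alpha(1-p)^\beta}{\beta\, B(\alpha,\beta)}
\]
collapse the left-hand side, reducing the inequality to
\[
F(\alpha,\beta) := 2\,I_p(\alpha,\beta)-1+\frac{(\alpha-\beta)\,p^{\alpha}(1-p)^{\beta}}{\alpha\beta\, B(\alpha,\beta)} \;\ge\; 0 \qquad (\alpha\ge\beta>0).
\]
Because $\mathrm{Beta}(\alpha,\alpha)$ is symmetric about $\tfrac12$, we have $F(\alpha,\alpha)=0$, so it suffices to prove that $\widetilde F(r):=F(n/2+r,\,n/2-r)$ is non-decreasing in $r\ge 0$.

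The key step is this monotonicity. I would differentiate $\widetilde F(r)$ using the integral representation $I_p(\alpha,\beta)=B(\alpha,\beta)^{-1}\int_0^p x^{\alpha-1}(1-x)^{\beta-1}\,dx$, tracking how the upper limit $p=\tfrac12+r/n$, the exponents $\alpha,\beta$, and the normalizing factor $B(\alpha,\beta)$ all depend on $r$. I would then combine the resulting pieces using the beta-function identities $B(\alpha+1,\beta)=\tfrac\alpha n B(\alpha,\beta)$ and $B(\alpha,\beta+1)=\tfrac\beta n B(\alpha,\beta)$, together with integration by parts on $x^{\alpha-1}(1-x)^{\beta-1}$, with the aim of expressing $\widetilde F'(r)$ as a single integral whose integrand is of constant sign on the relevant range.

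For geometric intuition, the case $s,t\in 2\mathbb N$ admits a clean combinatorial restatement: realize $X\sim\mathrm{Beta}(\alpha+1,\beta)$ and $Y\sim\mathrm{Beta}(\alpha,\beta+1)$ as the $(\alpha+1)$-th and $\alpha$-th order statistics, respectively, of a common sample of $n$ i.i.d.\ $\mathrm{Uniform}[0,1]$ variables $U_1,\dots,U_n$. With $M=\#\{i:U_i\le p\}\sim\mathrm{Bin}(n,p)$, the symmetrized inequality becomes $\Pr(M>\alpha)\ge\Pr(M<\alpha)$, a statement about a binomial with integer mean $\alpha=np$ and $p=\alpha/n\ge\tfrac12$. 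This is the classical Simmons-type inequality for the binomial distribution, and Theorem~\ref{simmons+INTRO} is its natural generalization to arbitrary Beta parameters (whence the ``$+$'').

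The main obstacle is the monotonicity of $\widetilde F$: its derivative contains sign-indefinite contributions from each of the moving pieces just listed, and producing a manifestly non-negative expression will require a delicate algebraic rearrangement, most likely leveraging integration by parts together with the involution $x\leftrightarrow 1-x$, which swaps the roles of $\alpha$ and $\beta$ and of $p$ and $1-p$ while preserving the Beta measure.
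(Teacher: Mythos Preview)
Your reduction is correct and in fact appears verbatim in the paper: setting $\alpha=s/2$, $\beta=t/2$, $p=\alpha/(\alpha+\beta)$, the inequality is equivalent to
\[
2I_p(\alpha,\beta)-1+\frac{(\alpha-\beta)\,p^{\alpha}(1-p)^{\beta}}{\alpha\beta\,B(\alpha,\beta)}\ge 0,
\]
which is the paper's equation \eqref{ineq:simmc}. The gap is in the next step. You propose to prove this by showing that $\widetilde F(r)=F(n/2+r,\,n/2-r)$ is non-decreasing for \emph{real} $r\ge 0$ via differentiation. But the paper explicitly states that \eqref{ineq:simmc} for real parameters is an open conjecture (see the discussion preceding Proposition~\ref{prop:extra2} and Conjecture~1.9), and the strongest result they obtain by your kind of direct analytic manipulation is Proposition~\ref{prop:extra2}, which is \eqref{ineq:simmc} with an extra factor of $2$ on the second term. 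So the ``delicate algebraic rearrangement'' you flag as the main obstacle is not merely delicate: it is, as far as the authors know, genuinely open, and your proposal does not supply the missing idea.

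The paper's proof takes a different route that avoids continuous monotonicity altogether. Writing $c_s=I_{s/d}(s/2+1,t/2)+I_{s/d}(s/2,t/2+1)-1$, they establish \emph{two-step discrete} monotonicity, $c_{s+2}>c_s$ for $d/2\le s\le d-4$ (Proposition~\ref{prop:simmons}), by expressing $c_{s+2}-c_s$ in terms of an auxiliary density $f_s$ and using concavity of $f_s$ on a unit interval. Since $c_{d/2}=0$, this reduces the claim to checking a handful of boundary cases near $s=d/2$ and $s=d-1$, each of which is handled by explicit estimates (Lemmas~\ref{lem:c-1}, \ref{lem:c+1}, \ref{lem:c+1/2}, \ref{lem:c+3/2}); small $d$ is verified symbolically. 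The discreteness is essential here: two-step jumps keep the parity structure that makes the concavity argument work, and the residual finitely many cases are what absorb the difficulty that blocks your continuous approach.
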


\begin{proof}
See  Section \ref{sec:simmhalf}.
\end{proof}
 
If $s,d$ in Theorem \ref{simmons+INTRO} are both even, equivalently
 $\frac s2$ and $\frac t2$ are natural numbers, 
then \eqref{eq:someconjINTRO}
reduces to the following:
toss a coin
whose probability for head is $\frac sd\geq\frac12$, 
 $d$ times.
Then
the probability of getting fewer than $s$ heads
\emph{is no more than}
the probability of getting more than $s$  heads.
This result is known in classical probability as Simmon's Theorem \cite{Sim1894}.

Further probabilistic connections
 are described in Section \ref{sec:introprob}.

\subsection{Linear matrix inequalities (LMIs), spectrahedra and general dilations}
\label{sec:op-dil}

In this section we discuss simultaneous dilation of tuples
of symmetric matrices satisfying linear matrix inequalities to commuting self-adjoint operators.

 For $A,B \in\smat_n$, write
 $A\preceq B$ (or $B\succeq A$) to express that $B-A$ is positive semidefinite (i.e., has only nonnegative eigenvalues).
 Given a $g$-tuple $A=(A_1,\dots,A_g)\in\smat_{\nu}^g$, 
the expression
\beq\label{eq:definePencil}
 L_A(x) = I_\nu -\sum_{j=1}^g A_j x_j
\eeq
 is a {\bf (monic)  linear pencil}\index{linear pencil}\index{monic linear pencil} and $L_A(x)\succeq 0$ is a {\bf linear matrix inequality (LMI)}.\index{LMI}\index{linear matrix inequality} Its 
 solution set $\mathscr S_{L_A}=\{x\in\mathbb R^g: L_A(x)\succeq 0\}$ is a
\df{spectrahedron} (or an 
  \df{LMI domain})
  containing $0$ in its interior \cite{BPR13,BGFB94}. Conversely, each
 spectrahedron with non-empty interior can be written in this form
 after a translation \cite{HV07}. 
Every polyhedron is a spectrahedron.\index{polyhedron} 
For example, that the  cube\index{cube} $[-1,1]^g$ in $\mathbb R^g$ is an example of a spectrahedron, is seen as follows. 
 Let  $E_j$ denote the $g\times g$ diagonal matrix with a $1$ in the $(j,j)$ entry and zeros elsewhere and  define $C\in\smat_{2g}^g$ by setting
\beq\label{eq:Ccube}
  C_j:=\begin{pmatrix}1&0\\0&-1\end{pmatrix}\otimes E_j=\begin{pmatrix}E_j&0\\0&-E_j\end{pmatrix}
\eeq
for $j\in\{1,\dots,g\}$.
 The resulting spectrahedron $\mathscr S_{L_C}$ is the cube {$[-1,1]^g$}. 

For $n\in\N$ and tuples $X\in \smatng$, let
\[
\begin{split}
 L_A(X) & = I_{\nu n}-\sum_{j=1}^g A_j\otimes X_j, \quad\text{and} \\
 \cD_{L_A}(n) &=\big\{X\in\smatng : L_A(X)\succeq0
 \big\},
 \end{split}
\]
 where $\otimes$ is the Kronecker tensor product.\index{tensor product}\index{tensor product!Kronecker}  The sequence $\cD_{L_A}=(\cD_{L_A}(n))_n$ is a \df{free spectrahedron}.
 In particular, $\cD_{L_A}(1) =\mathscr S_{L_A}$ and $\cD_{L_C}(n)$ is the collection of $g$-tuples of 
  $n\times n$ symmetric contraction matrices.  We call  $\cubeg:=\cD_{L_C}$  the \df{free cube} (in $g$-variables).
Free spectrahedra are closely connected with operator systems for which
 \cite{FP,KPTT,Arv08} are a few recent references.  

 \def\fs{\mathscr S}
 
\subsubsection{Dilations to commuting operators}

The general dilation\index{dilation} problem is as follows: given a linear pencil $L$ and a tuple $X\in\cD_L$, does $X$ dilate to a commuting tuple $T$ of
self-adjoint operators with joint spectrum in $\fs_L$?

 Suppose $L$ is a monic linear pencil in $g$-variables and the corresponding
 spectrahedron $\fs_L$ is bounded.  
  Because there exist  constants $c$ and $C$ such that
\[
  c\, \cubeg \subset \cD_L \subset C\, \cubeg,
\]
 from Theorem \ref{thm:dilate} it follows that for 
each  $n  \in \NN$,  and
$X \in  \cD_L(n)$ there exists a $t\in\R_{>0}$, a Hilbert space $\mathcal H$,
 a commuting tuple $T$ of self-adjoint operators on $\mathcal H$ with joint spectrum 
  in $\fs_L$  and an isometry
  $V:\mathbb R^n\to \mathcal H$ such that 
\[
  X = V^* \frac{1}{t} T  V.
\]
  The largest $t$ such that for each $X \in \cD_L(n)$ the tuple $tX$  dilates to a commuting tuple 
  of self-adjoint operators with joint spectrum in $\fs_L$ is the
{\bf commutability index of $L$}\index{commutability index},  denoted  by $\tau(L)(n)$. (If $\fs_L$ is not bounded, then there need not be
 an upper bound on $t$.)
 The constants $c,C$ and Theorem \ref{thm:BtN} below  produce
  bounds on $t$ depending only upon the monic pencil $L$ and  $n$.

\subsubsection{Spectrahedral inclusion problem}\label{subsub:spIncl}

 Given two monic linear pencils $L,\tL$ 
 and corresponding spectrahedra, determine if the inclusion $\fs_L\subset \fs_{\tL}$ holds.
 The article \cite{HKM13} considered the free variable relaxation of this inclusion problem, dubbed
  the  \df{free spectrahedral inclusion problem}:
 when does the  containment  $\cD_L \subset \cD_\tL$ hold?
In \cite{HKM13,HKM12} it is shown, via an algorithm and 
  using complete positivity, that  any such free 
 inclusion problem  can be converted to  an
SDP\index{semidefinite programming}\index{SDP}  feasibility problem
(whose complexity status is unknown but
 is believed to be efficient to solve in theory and practice; cf.~\cite[Ch. 8.4.4]{WSV}). 
 (See also Section \ref{sec:cpINTRO} below.)

\subsubsection{Accuracy of the free relaxation}
Now that we have seen free spectrahedral inclusion 
problems are in principle solvable, what do they say about 
the original inclusion problem?
Inclusion of free sets $\cD_L \subset \cD_\tL$  implies trivially 
the inclusion of the corresponding classical 
spectrahedra  $\fs_L \subset \fs_\tL.$ Conversely, in the case that $\fs_{\tL}$ is bounded 
 there exists $c,C>0$ such that 
\begin{equation*}
  c\cubeg \subset \cD_{\tL} \subset C \cubeg,
\end{equation*}
 and hence  there exists an $r\in\R_{>0}$ such that 
\[
\fs_L \subset \fs_\tL
\qquad 
 \text{implies}
\qquad 
r \cD_L  \subset  \ \cD_\tL.
\]
We call such an $ r $ an \df{$\cD_L$-$\cD_\tL$-inclusion constant}.
Theorem \ref{thm:BtN} and the constants $c,C$ produce
 a lower bound on $r$. Let $r(L,\tilde{L})$ denote the largest such $r$
 (if $\fs_{\tilde{L}}$ is not assumed bounded, then a largest $r$ need not exist) and let 
\[
  r(L)(d):=  \min\big\{ r(L,\tL): \tL \mbox{ is of size } d \mbox{ and } \fs_L\subset \fs_{\tL}\big\}.
\]
 We call  the sequence $r(L):= (r(L)(d))_d$
   the  \df{$\cD_L$-inclusion  scale}.

The connection between spectrahedral inclusions and general dilations arises as follows:

\begin{thm} 
 \label{thm:scottDilated-intro}
Suppose $L$ is a  monic linear pencil 
and $\fs_L$ is bounded. 
\ben[\rm(1)]
\item
\label{it:BTNmuis1} 
The commutability index for $L$ equals its inclusion scale, $\tau(L)= r(L)$.
That is $\tau(L)(d)$ is the largest constant such that 
\[
\tau(L)(d) \  \cD_L \subset    \cD_\tL
\]
  for each $d$  and monic linear pencil 
 $\tL$ of size $d$ satisfying $\fs_L\subset\fs_\tL.$
\item
\label{it:gend}
If  $\cD_L$ is   balanced $($i.e., for each  $X\in\cD_L$ we have $-X\in\cD_L)$, then for each $n\in\N$,
 $$  {\tau(L)(n)} \geq\frac1n.
 $$
\een
\end{thm}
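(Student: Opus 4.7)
The plan for part (1) is to establish the two inequalities between $\tau(L)(d)$ and $r(L)(d)$. For $\tau(L)(d) \le r(L)(d)$: assume $tX = (I_d \otimes V)^* T (I_d \otimes V)$ for a commuting tuple $T$ of self-adjoint operators on a Hilbert space with joint spectrum in $\fs_L$. For any monic pencil $\tilde L$ of size $d$ with $\fs_L \subseteq \fs_{\tilde L}$, joint functional calculus yields $\tilde L(T) \succeq 0$, and compressing by $I_d \otimes V$ gives $\tilde L(tX) \succeq 0$, hence the level-$d$ inclusion $tX \in \cD_{\tilde L}(d)$. To promote this to the full inclusion $t\cD_L \subseteq \cD_{\tilde L}$, I plan to invoke the result from \cite{HKM12,HKM13} which, via Choi's theorem on the equivalence of $d$-positivity and complete positivity for maps into $M_d$, says that for $\tilde L$ of size $d$ the level-$d$ inclusion is equivalent to the full inclusion.

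For the converse $r(L)(d) \le \tau(L)(d)$, the plan is to apply Arveson's extension theorem together with Stinespring's theorem. Given $t\cD_L \subseteq \cD_{\tilde L}$ for every $\tilde L$ of size $d$ with $\fs_L \subseteq \fs_{\tilde L}$, I will consider the unital linear map $\phi$ on the operator system spanned by $\{I_\nu, A_1, \ldots, A_g\} \subset \smat_\nu$ which sends $I_\nu \mapsto I_d$ and $A_j \mapsto tX_j$ for a fixed $X \in \cD_L(d)$. The hypothesis is precisely that $\phi$ is $d$-positive; Choi's theorem then upgrades $\phi$ to a completely positive map; Arveson's extension theorem extends $\phi$ to a UCP map $\Phi \colon C(\fs_L) \to \smat_d$; finally, Stinespring's theorem applied with the commutative domain $C(\fs_L)$ produces commuting self-adjoint $T$ with joint spectrum in $\fs_L$ together with an isometry $V$ satisfying $V^* T_j V = tX_j$.

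For part (2), the plan is to exhibit $\tfrac1n X$ explicitly as a scalar matrix-convex combination of points of $\fs_L$, which immediately yields a commuting diagonal dilation. First observe that for every unit $u \in \mathbb R^n$, the tuple $p^u := (u^T X_j u)_j$ lies in $\fs_L$, since $v^T L(p^u) v = (v \otimes u)^T L(X) (v \otimes u) \ge 0$. Using the standard basis $\{e_k\}_{k=1}^n$ together with polarization vectors $f^\pm_{kl} := (e_k \pm e_l)/\sqrt{2}$ for $k<l$, I will verify the identity
\[
 X_j \;=\; \sum_{k=1}^n p^{e_k}_j\, e_k e_k^T \;+\; \sum_{k<l}\Bigl(q^+_{kl,j}\, f^+_{kl}(f^+_{kl})^T + q^-_{kl,j}\, f^-_{kl}(f^-_{kl})^T\Bigr),
\]
where $q^\pm_{kl} := \pm\tfrac12(p^{f^+_{kl}} - p^{f^-_{kl}})$. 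Each $q^\pm_{kl}$ lies in $\fs_L$ by convexity plus balancedness, since $\tfrac12 p + \tfrac12(-p') \in \fs_L$ whenever $p,p' \in \fs_L$. Summing the rank-one projectors yields $I_n + (n-1)I_n = nI_n$, so rescaling by $1/n$ presents $\tfrac1n X_j$ as $\sum_\alpha y^{(\alpha)}_j\, \tilde w_\alpha \tilde w_\alpha^T$ with $\sum_\alpha \tilde w_\alpha \tilde w_\alpha^T = I_n$ and $y^{(\alpha)} \in \fs_L$. The commuting self-adjoints $T_j := \diag_\alpha(y^{(\alpha)}_j)$ on $\mathbb R^N$ and the isometry $V$ with rows $\tilde w_\alpha^T$ then deliver the required dilation of $\tfrac1n X$.

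The hard part is the duality step in (1) that converts a level-$d$ containment into a full containment of free spectrahedra when $\tilde L$ has size $d$; this is where Choi's theorem on $d$-positive maps into $M_d$, applied within the operator system attached to $L$, enters nontrivially. By contrast, part (2) becomes elementary once one spots the polarization frame and uses balancedness to keep the difference points $q^\pm_{kl}$ inside $\fs_L$.
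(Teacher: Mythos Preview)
Your argument for $\tau(L)(d)\le r(L)(d)$ is the same as the paper's (Proposition~\ref{prop:commute-include} together with Lemma~\ref{lem:ddoesit}), and your part~(2) is correct and in fact more direct than the paper's: the paper proves the inclusion-scale bound $r(L)(n)\ge\tfrac1n$ by a block-norm estimate (Proposition~\ref{prop:gend}, Lemma~\ref{lem:dblock}) and then invokes part~(1), whereas you produce an explicit commuting dilation of $\tfrac1n X$ on $\R^{n^2}$ using the same polarization vectors. That is a nice shortcut.

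There is, however, a genuine gap in your $r(L)(d)\le\tau(L)(d)$ argument. You place the operator system $\mathrm{span}\{I_\nu,A_1,\dots,A_g\}$ inside $\smat_\nu$ and then claim Arveson's theorem extends $\phi$ to a UCP map on $C(\fs_L)$. Arveson extension extends CP maps to the ambient $C^\ast$-algebra, which here is $\smat_\nu$, not $C(\fs_L)$; the resulting Stinespring dilation would give you $T_j=\pi(A_j)$ for a representation $\pi$ of $\smat_\nu$, and these need not commute. Moreover, the statement ``the hypothesis is precisely that $\phi$ is $d$-positive'' is false for the $\smat_\nu$ order: the hypothesis $\fs_L\subset\fs_{L_B}$ is a \emph{level-$1$} (pointwise) positivity condition on $I-\sum B_j y_j$, not positivity of $I\otimes I_\nu-\sum B_j\otimes A_j$ in $\smat_d\otimes\smat_\nu$. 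The fix is to work instead with the operator system $\mathcal S=\mathrm{span}\{1,y_1,\dots,y_g\}\subset C(\fs_L)$ (coordinate functions on the compact $\fs_L$) and the map $\psi\colon 1\mapsto I_d,\ y_j\mapsto tX_j$. With this structure, $d$-positivity of $\psi$ does follow from the hypothesis (reduce $C_0\succeq 0$ to $C_0=I_d$ by conjugating with $(C_0+\varepsilon I)^{-1/2}$ and letting $\varepsilon\to0$), Choi's theorem upgrades $\psi$ to CP, Arveson extends to $C(\fs_L)$, and Stinespring on the commutative $C^\ast$-algebra gives commuting $T_j$ with joint spectrum in $\fs_L$. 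Once corrected, your route is a valid alternative to the paper's proof of Theorem~\ref{thm:scottDilated}, which instead goes through the Effros--Winkler matricial Hahn--Banach theorem and a closure argument for the set of dilatable tuples.
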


\begin{proof}
 The proof appears in Section \ref{sec:generally free}.
\end{proof}

\subsection{Interpretation in terms of completely positive maps}\label{sec:cpINTRO}

The intimate connection between dilations and completely positive maps was first exposited by Stinespring \cite{Sti55} to give abstract necessary and sufficient conditions for the existence of dilations. 
The theme was explored further by Arveson, see e.g.~\cite{Arv69, Arv72};
we refer the reader to \cite{Pau} for a beautiful exposition.
We next explain how our dilation theoretic results pertain to
(completely) positive maps.

The equality between the commutability index and the inclusion scale (Theorem \ref{thm:scottDilated-intro}) can be interpreted via
positive and completely positive maps. Loosely speaking,
each unital positive map can be scaled to a unital
completely positive map in a uniform way.

\def\mBS{\mathbb S}
Suppose $A\in\mBS_\nu^g$ is such that the associated spectrahedron
$\fs_{L_A}$ is bounded. If $\tA\in\mBS_\eta^g$ is another $g$-tuple,
consider the unital linear map
\beq\label{eq:Phi}
\begin{split}
\Phi:{\rm span}\{I,A_1,\ldots, A_g\} & \to 
{\rm span}\{I,\tA_1,\ldots, \tA_g\} \\
A_j & \mapsto \tA_j.
\end{split}
\eeq
(It is easy to see that $\Phi$ is well-defined by the boundedness of $\fs_{L_A}$.)
For $c\in\R$ we define the following scaled distortion of $\Phi$:
\[
\begin{split}
\Phi_c:{\rm span}\{I,A_1,\ldots, A_g\} & \to 
{\rm span}\{I,\tA_1,\ldots, \tA_g\} \\
I & \mapsto I\\
A_j & \mapsto c \tA_j.
\end{split}
\]

\begin{cor}\label{cor:lameCP}
With the setup as above, 
$c:=\tau(L_A)(\eta)$ is the largest scaling factor 
with the following property:
if $\tilde{A}\in\mBS_\eta^g$
 and  if $\Phi$ is positive,  then 
  $ \Phi_{c}$  completely positive.
\end{cor}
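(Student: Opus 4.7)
The plan is to translate the (complete) positivity hypotheses on $\Phi$ and $\Phi_c$ into classical and free spectrahedral inclusions, respectively, and then read off the corollary from Theorem~\ref{thm:scottDilated-intro}(\ref{it:BTNmuis1}). Before doing so I would note that $\Phi$ and $\Phi_c$ are well defined: boundedness of $\fs_{L_A}$ forces $I,A_1,\ldots,A_g$ to be linearly independent (otherwise $\fs_{L_A}$ would contain an affine line through the origin), so prescribing the images of this basis determines the maps unambiguously.

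Next I would prove the following equivalence: $\Phi$ is positive as a map between the operator systems $\mathrm{span}\{I,A_j\}\subset\smat_\nu$ and $\mathrm{span}\{I,\tilde A_j\}\subset\smat_\eta$ if and only if $\fs_{L_A}\subset\fs_{L_{\tilde A}}$. A positive element of the domain has the form $c_0 I+\sum c_jA_j\succeq 0$. Using boundedness of $\fs_{L_A}$ one argues that the cases $c_0<0$ force a ray $\{-t(c_1,\ldots,c_g):t\ge 1\}\subset\fs_{L_A}$, while the $c_0=0$ case forces a similar ray starting at $0$; both are impossible unless the $c_j$ vanish. Thus positivity of $\Phi$ reduces to the $c_0=1$ case, which is exactly the implication $L_A(-c)\succeq 0\Rightarrow L_{\tilde A}(-c)\succeq 0$, i.e.\ $\fs_{L_A}\subset\fs_{L_{\tilde A}}$.

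The same normalization, now performed at the matrix level on $M_n(S)$, yields the completely positive analogue: $\Phi_c$ is $n$-positive iff for every $X\in\smat_n^g$ one has $L_A(X)\succeq 0\Rightarrow L_{c\tilde A}(X)\succeq 0$, and quantifying over $n$ gives $\Phi_c$ completely positive iff $\cD_{L_A}\subset\cD_{L_{c\tilde A}}$. Since $L_{c\tilde A}(X)=L_{\tilde A}(cX)$, this inclusion is equivalent to $c\,\cD_{L_A}\subset\cD_{L_{\tilde A}}$. These identifications are precisely the operator-system recasting of (free) spectrahedral inclusion developed in \cite{HKM12,HKM13}, which I would cite rather than rederive.

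With both equivalences in hand the corollary is immediate: Theorem~\ref{thm:scottDilated-intro}(\ref{it:BTNmuis1}) asserts that $\tau(L_A)(\eta)$ is the largest $c$ such that $c\,\cD_{L_A}\subset\cD_{\tilde L}$ holds for \emph{every} monic pencil $\tilde L$ of size $\eta$ with $\fs_{L_A}\subset\fs_{\tilde L}$. Ranging over the choice of $\tilde A\in\smat_\eta^g$ (so $\tilde L=L_{\tilde A}$) turns this into the assertion that $\tau(L_A)(\eta)$ is the largest scaling factor making ``$\Phi$ positive $\Rightarrow$ $\Phi_c$ completely positive'' hold. The main technical hurdle I anticipate is the normalization argument in the two equivalences --- verifying that boundedness of $\fs_{L_A}$ (and only $\fs_{L_A}$, not $\fs_{L_{\tilde A}}$) suffices to reduce to $c_0=1$ (respectively $Y_0=I_n$ at the matrix level), so that no positive elements of the operator system are missed by the reduction.
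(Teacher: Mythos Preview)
Your proposal is correct and follows essentially the same route as the paper: the paper's proof simply cites \cite[Theorem~3.5]{HKM13} for the equivalence ``$\Phi$ is $k$-positive $\Leftrightarrow$ $\cD_{L_A}(k)\subseteq\cD_{L_{\tilde A}}(k)$'' and then invokes Theorem~\ref{thm:scottDilated-intro}. You are spelling out the $k=1$ and $k=\infty$ instances of that equivalence (with the normalization argument sketched) and then applying the same theorem, so the strategies coincide.
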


\begin{proof}
A map $\Phi$ as in \eqref{eq:Phi}
is $k$-positive iff $\cD_{L_A}(k)\subseteq\cD_{L_{\tA}}(k)$ by
\cite[Theorem 3.5]{HKM13}.
Now apply Theorem \ref{thm:scottDilated-intro}.
\end{proof}

\subsection{Matrix cube problem}\label{sec:mcINTRO}
Given $A\in\smatdg$, 
the matrix cube problem\index{matrix cube problem}
of  Ben-Tal and Nemirovski \cite{BtN} is to determine,  whether 
$\mathscr S_{L_C} =[-1,1]^g \subseteq\mathscr S_{L_A}$.
While their primary interest in this problem came from robust analysis (semidefinite programming with interval uncertainty and quadratic Lyapunov stability 
 analysis and synthesis), this problem is in fact far-reaching.
 For instance, \cite{BtN}
 shows that determining the 
maximum of a positive definite quadratic form over the unit cube 
  is a special case of the matrix cube problem (cf.~Nesterov's $\frac\pi2$-Theorem \cite{Nes97},\index{theorem!Nesterov's $\frac\pi2$}
or the Goemans-Williamson \cite{GW95} SDP relaxation of the Max-Cut problem).\index{theorem!Goemans-Williamson}  
Furthermore, it implies the symmetric Grothendieck inequality. \index{theorem!Grothendieck}
A very recent occurrence of the matrix cube problem
  is described in \cite{BGKP}, see their Equation (1.3).
There a  problem  in statistics is shown equivalent to
whether a LMI, whose coefficients are Hadamard products
of a  given matrix, contains a cube.

  Of course, one could test the inclusion $\mathscr S_{L_C} \subseteq \mathscr S_{L_A}$ by checking if all vertices of the cube $\mathscr S_{L_C}$ are contained in $\mathscr S_{L_A}$. 
 However, the number of vertices grows exponentially with the dimension $g$. Indeed the matrix cube problem 
 is NP-hard \cite{BtN,Nem06}; see also \cite{KTT13}.
 A principal result in \cite{BtN} is the identification of a computable\index{theorem!Ben-Tal, Nemirovski}
 error bound  for a natural relaxation of the matrix cube problem.
In \cite{HKM13} we observed this relaxation is in fact equivalent
to the free  relaxation\index{free relaxation}\index{matricial relaxation} $\cubeg\subseteq\cD_{L_A}$.

With the notations introduced above, 
we can now present the theorem 
of Ben-Tal and Nemirovski  bounding the 
error of the free relaxation.

\begin{thm}[\cite{BtN}]
 \label{thm:BtN} Given $g,\nu\in\N$ and  $B\in\smat_{\nu}^g,$ if  $[-1,1]^g\subset \fs_{L_B},$ then
 \begin{enumerate}[\rm(a)]
 \item $\cubeg \subseteq\th(\nu)\  \cD_{L_B}$;
 \item if $d\in\N$ is an upper bound for the ranks of the $B_j$, then
  $\cubeg \subseteq\th(d)\ \cD_{L_B}$.
\end{enumerate}
\end{thm}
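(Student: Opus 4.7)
The strategy is to prove, for any $g$-tuple $X = (X_1, \ldots, X_g)$ of symmetric contractions of arbitrary size $n$, the operator norm bound
$$\biggl\|\sum_j B_j \otimes X_j\biggr\| \le \th(\nu),$$
which immediately yields $L_B(X/\th(\nu)) \succeq 0$, giving $\cubeg \subseteq \th(\nu)\,\cD_{L_B}$. The main obstacle to overcome is that a direct application of Theorem \ref{thm:dilate} to $X$ yields only the $n$-dependent constant $\th(n)$ (which grows unboundedly in $n$, by the formula in Theorem \ref{thm:thetaExplicit}), so one must first collapse the tensor-product test onto $\nu$-dimensional data via an SVD of the test vector, and then invoke the integral characterization \eqref{eq:BTbd} of $\th(\nu)$.

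The preparatory scalar fact is that $[-1,1]^g \subseteq \fs_{L_B}$ forces, for every unit $\xi \in \R^\nu$, the inequality $\sum_j |\xi^* B_j \xi| \le 1$; this follows by plugging $t_j := \sign(\xi^* B_j \xi) \in [-1,1]$ into the LMI $I_\nu - \sum_j t_j B_j \succeq 0$. Identifying a unit vector $w \in \R^\nu \otimes \R^n$ with the matrix $W \in \R^{\nu \times n}$ of Frobenius norm $1$,
$$\biggl\langle \Bigl(\sum_j B_j \otimes X_j\Bigr) w, w\biggr\rangle = \sum_j \trace(B_j\, W X_j W^*).$$
Writing the SVD $W = U\Sigma V^*$ with $\Sigma = [D \; 0]$, $D = \diag(\sigma_1, \ldots, \sigma_\nu)$, $\sum_i \sigma_i^2 = 1$, and absorbing the orthogonal rotations into $B_j \mapsto U^* B_j U$ (which preserves the preparatory inequality) and $X_j \mapsto V^* X_j V$ (which remains a symmetric contraction on $\R^n$), the trace sum becomes $\sum_j \trace((DB_j D)\, A_j)$, where $A_j \in \smat_\nu$ is the leading $\nu \times \nu$ block of the transformed $X_j$, hence $\|A_j\| \le 1$. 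Since each $A_j$ is a symmetric contraction tested against the symmetric $C_j := DB_j D$, one obtains $|\langle (\sum_j B_j \otimes X_j)\, w, w\rangle| \le \sum_j \trace|C_j|$.

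To bound $\sum_j \trace|C_j| \le \th(\nu)$: for each $\xi \in S^{\nu-1}$ with $D\xi \ne 0$, set $\xi' := D\xi/|D\xi|$ and use the preparatory inequality to get
$$\sum_j |\xi^* C_j \xi| = |D\xi|^2 \sum_j |(\xi')^* B_j \xi'| \le |D\xi|^2 = \xi^* D^2 \xi.$$
Integrating over $S^{\nu-1}$ yields $\sum_j \int_{S^{\nu-1}} |\xi^* C_j \xi|\,d\xi \le \trace(D^2)/\nu = 1/\nu$. The defining problem \eqref{eq:BTbd} implies $\int_{S^{\nu-1}} |\xi^* C \xi|\,d\xi \ge \trace|C|/(\nu\,\th(\nu))$ for every $C \in \smat_\nu$; summing over $j$ gives $\sum_j \trace|C_j| \le \th(\nu)$, proving (a). For (b), each $C_j = DB_j D$ has rank at most $\mathrm{rank}(B_j) \le d$; writing $C_j = Q_j \Lambda_j Q_j^*$ with $Q_j \in \R^{\nu\times d}$ an isometry and using rotational invariance of the uniform measure converts the $\nu$-sphere integral into a $d$-sphere integral, giving $\int_{S^{\nu-1}} |\xi^* C_j \xi|\,d\xi = (d/\nu)\int_{S^{d-1}} |\eta^* \Lambda_j \eta|\,d\eta \ge \trace|C_j|/(\nu\,\th(d))$, so $\sum_j \trace|C_j| \le \th(d)$, proving (b).
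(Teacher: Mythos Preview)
Your proof is correct and takes a genuinely different route from the paper's.

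The paper establishes (a) by invoking the Simultaneous Dilation Theorem (Theorem~\ref{thm:dilate} / Theorem~\ref{thm:rhodoesit}): given $X\in\cubeg(n)$, it dilates $\frac{1}{\th(\nu)}X$ to a commuting tuple $T$ of self-adjoint contractions via the averaging construction over $O(\nu)$ from Sections~\ref{sec:average}--\ref{sec:thetaBest}, then applies Proposition~\ref{prop:commute-include} (spectral calculus for commuting tuples) to conclude $L_B(\frac{1}{\th(\nu)}X)\succeq 0$. Part (b) requires the additional machinery of Section~\ref{sec:rankvsize}: projections onto the range of each $B_j$, a convex-hull decomposition into rank-$d$ symmetry matrices (Lemma~\ref{lem:precutdown3}), and a tailored dilation for each extreme point (Lemmas~\ref{lem:precutdown1}--\ref{lem:cutdown}).

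Your argument bypasses dilation theory entirely. The SVD of the test vector $w$ collapses the problem to $\nu\times\nu$ data directly, and the key inequality $\int_{S^{\nu-1}}|\xi^*C\xi|\,d\xi \ge \trace|C|/(\nu\,\th(\nu))$ is read straight off the defining minimization \eqref{eq:BTbd} by homogeneity. The rank-$d$ refinement (b) then follows from the dimension-reduction identity $\int_{S^{\nu-1}}|\xi^*(\Lambda\oplus 0)\xi|\,d\xi = \frac{d}{\nu}\int_{S^{d-1}}|\eta^*\Lambda\eta|\,d\eta$, which is a one-line consequence of the Gaussian change of variables in Lemma~\ref{sphere2gauss} (and is implicit in Lemma~\ref{oplus0}). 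This is essentially the original \cite{BtN} style of argument. It is shorter and more elementary for proving the inclusion itself; what the paper's detour through dilations buys is the Simultaneous Dilation Theorem~\ref{thm:dilate} as a result of independent interest, plus the identification of $\th(d)$ with the commutability index (Theorem~\ref{thm:scottDilated-intro}) and the sharpness statement (Theorem~\ref{thm:sharp}), none of which your direct argument would yield.

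One small point worth making explicit: when $n<\nu$ the shape of $\Sigma$ changes, but padding $X_j$ by zeros (or equivalently extending $D$ to a $\nu\times\nu$ diagonal with trailing zeros and $A_j$ to $\tilde X_j\oplus 0$) handles this without affecting any estimate.
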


\begin{proof}
Part (a) of Theorem \ref{thm:BtN} is shown,  in Theorem \ref{thm:nextbest}, to be a consequence of our Dilation Theorem \ref{thm:dilate}.
A further argument, carried out in Section \ref{sec:rankvsize},  establishes part (b).
\end{proof} 
 
 In this article we show that the bound $\th(d)$ in  Theorem \ref{thm:BtN}(a) (and hence in (b)) is sharp.

\begin{thm}
 \label{thm:sharp}
 Suppose $d\in\N$ and  $\th'\in\R.$  If $1\le\th'<\vartheta(d),$ then there is $g\in\N$ and $A\in\smat_d^g$ such that $[-1,1]^g\subset \fs_{L_A}$, but
   $\cubeg(d) \not\subset\th'\ \cD_{L_A}(d)$. 
\end{thm}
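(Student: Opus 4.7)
My plan is to deduce Theorem \ref{thm:sharp} from the sharpness clause of Theorem \ref{thm:dilate} by invoking matrix-convex duality to convert a non-dilation statement into an LMI separation.

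Fix $\theta' \in [1, \vartheta(d))$. By the second paragraph of Theorem \ref{thm:dilate} (proved as part of Theorem \ref{thm:nextbest}), there exist $g \in \N$ and a tuple $X \in \cubeg(d)$ of $d \times d$ symmetric contractions such that $\frac{1}{\theta'}X$ fails to dilate to any $g$-tuple of commuting self-adjoint contractions on a Hilbert space. A commuting tuple of self-adjoint contractions has joint spectrum in $[-1,1]^g$, so this non-dilatability is equivalent to saying that $\frac{1}{\theta'}X$ lies outside the minimal matrix convex set over $[-1,1]^g$ at level $d$.

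Next, convert this to a separating LMI. By matrix-convex duality (an Effros--Winkler type Hahn--Banach theorem, in the form used in \cite[\S 2--3]{HKM13}), the minimal matrix convex set at level $d$ over $[-1,1]^g$ equals the intersection $\bigcap \cD_{L_A}(d)$ taken over all monic linear pencils $L_A$ with $[-1,1]^g \subseteq \mathscr S_{L_A}$. Consequently some such $A$ must satisfy $\frac{1}{\theta'}X \notin \cD_{L_A}(d)$, and rearranging gives $\cubeg(d) \not\subseteq \theta'\, \cD_{L_A}(d)$.

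The remaining, and most delicate, step is to insist that the separating pencil have size exactly $d$, so that $A \in \mathbb S_d^g$. The abstract separation theorem only bounds the size of $A$ by some $\nu$ depending on $X$. To drive the size down to $d$ I would either (i) invoke Theorem \ref{thm:BtN}(b) to replace the separating pencil by an equivalent one of rank (and hence size, after compressing to the joint range of the $A_j$) at most $d$; or, more cleanly, (ii) follow the explicit construction in the proof of Theorem \ref{thm:nextbest}, where the extremal matrix $B^\star = J(s,t;a,b) \in \mathbb S_d$ of the optimization problem \eqref{eq:BTbd} directly furnishes the size-$d$ building blocks of $A$, while $X$ is obtained by sampling directions on $S^{d-1}$ attuned to $B^\star$. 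I expect this size-refinement to be the main obstacle: the sharpness of the dilation result must be paired with the precise dimension parameter $\vartheta(d)$ of Theorem \ref{thm:BtN}(a), and the cleanest route is to read both $X$ and $A$ off of the single extremal datum $B^\star$ that solves \eqref{eq:BTbd}.
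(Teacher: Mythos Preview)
Your plan is circular. In the paper, the optimality clause of Theorem~\ref{thm:dilate} is not proved independently: inside Theorem~\ref{thm:nextbest} the inclusion sharpness (which \emph{is} Theorem~\ref{thm:sharp}) is established first by an explicit construction, and only afterwards is the dilation sharpness deduced from it via Proposition~\ref{prop:commute-include}. So invoking Theorem~\ref{thm:dilate}'s optimality to prove Theorem~\ref{thm:sharp} reverses a deduction that already rests on what you are trying to prove. The equivalence you are exploiting---dilation sharpness $\Leftrightarrow$ inclusion sharpness through Effros--Winkler---is exactly Theorem~\ref{thm:scottDilated} (commutability index equals inclusion scale), but that abstract equivalence does not pin the common value at $\vartheta(d)^{-1}$; one side must be computed directly, and the paper computes the inclusion side.

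Your worry about the size of the separating pencil is misplaced. The form of Effros--Winkler quoted just before the proof of Theorem~\ref{thm:scottDilated} says that a point $X\in\mathbb S_d^g$ outside a closed matrix convex set is separated by some $B\in\mathbb S_d^g$ of the \emph{same} size $d$. So neither the rank-reduction route (i) nor any further massage is needed; the obstacle you flagged does not exist.

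Your option (ii) is the paper's actual argument, carried out directly without any appeal to separation. From the minimizer $\hat J=J(\hat s,\hat t;\hat a,\hat b)$ of \eqref{eq:BTbd} the paper builds an infinite-variable size-$d$ pencil $\hat L(x)=\kappa_*(d)^{-1}\int_{O(d)}U^*\hat J\,U\,x(U)\,dU$ and the explicit witness $X(U)=U^*J(\hat s,\hat t;1,1)U$; a trace computation gives $\|\hat L(X)\|\ge\kappa_*(d)^{-1}$. Discretizing over a fine measurable partition of $O(d)$ then yields finite tuples $A^k\in\mathbb S_d^{g_k}$ with $[-1,1]^{g_k}\subset\mathscr S_{L_{A^k}}$ and contractions $X^k\in\cube^{(g_k)}(d)$ with $\kappa X^k\notin\cD_{L_{A^k}}(d)$ whenever $\kappa>\kappa_*(d)$. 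Both the pencil and the bad tuple are read off of the single extremal datum $\hat J$, so no Hahn--Banach step is needed.
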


\begin{proof}
See Section \ref{sec:optkstar}.
\end{proof}

\begin{rem}\rm
 \label{rem:cube-dil}
   Theorem \ref{thm:scottDilated-intro} applied to the free cube(s) implies,
    for a given $g$, that 
    $\tau(\cubeg)$ equals $r(\cubeg)$ and, for fixed $d$,  
   the sequences $(\tau(\cubeg)(d))_g$ and $(r(\cubeg)(d))_g$ 
    termwise decrease with $g$ to a common limit, 
   which, in view of Theorems \ref{thm:BtN} and \ref{thm:sharp},
   turns out to be  $\vartheta(d)$. In particular, for any $g$ and any $g$-tuple $C$  of symmetric contractive matrices there exists
    a $g$-tuple of commuting self-adjoint contractions $T$ on Hilbert space such that $\frac1{\vartheta(d)} C$ dilates to $T$, a statement considerably weaker
    than the conclusion of Theorem \ref{thm:dilate}.
\end{rem}

\subsection{Matrix balls}\index{matrix ball}
 Paulsen \cite{Pau} studied the family of operator space structures associated to a normed vector space. Among these, he identified canonical maximal and minimal structures and measured the divergence between them with a parameter he denoted by $\alpha(V)$ \cite{Pau}. In the case that $V$ is infinite dimensional, $\alpha(V)=\infty$ and if $V$ has dimension $g$, then $\frac{\sqrt{g}}{2}\le \alpha(V)\le g$. Let $\ell^g_1$ denote the vector space $\mathbb C^g$ with the $\ell_1$ norm. Its unit ball is the cube $[-1,1]^g$ and in this case $\sqrt{\frac{g}{2}} \le \alpha(\ell^g_1) \le \sqrt{g-1}$. 

We pause here to point out differences between his work and the results in this paper, leaving it to the interested reader to consult \cite{Pau} for precise definitions.  Loosely, the maximal and minimal operator space structures involve quantifying over  matrices, not necessarily symmetric, of all sizes $d$. By contrast, in this article the matrices are symmetric,  the coefficient are real,  we study operator systems (as opposed to spaces) determined by linear matrix inequalities and, most significantly, to this point  the size $d$ is fixed.   In particular, for the matrix cube,  the symmetric matrix version of $\ell^g_1$ with the minimal operator space structure, the parameter $\vartheta(d)$ obtained by fixing $d$ and quantifying over $g$  remains finite.

 Let $\ell^g_2$ denote the $\mathbb C^g$ with the $\ell_2$ (Euclidean) norm and let $\mathbb B_g$ denote the (Euclidean) unit ball in $\mathbb R^g$. 
 In Section \ref{sec:balls}, we consider the free relaxation of the problem of including $\mathbb B_g$ into a spectrahedron with $g$, but not $d$, fixed.  Thus we study the symmetric variable analog of $\alpha(\ell^g_2)$.  Among our findings is that the worst case inclusion scale is exactly $g$.  By contrast, $\alpha(\ell^g_2)$ is only known to be bounded above by $\frac{g}{2^{\frac14}}$ \cite{Pisier-book} and below by roughly $\frac{g+1}{2}$ \cite{Pau}.

\subsection{Adapting the Theory to Free Nonsymmetric Variables}

In this brief subsection we explain how our dilation theoretic 
results extend to nonsymmetric variables. That is, we present the 
Simultaneous Dilation Theorem (Corollary \ref{cor:dilate})
dilating arbitrary contractive complex matrices to 
commuting normal contractions up to a scaling factor.

\def\fN{\mathscr N}

\begin{cor}[Simultaneous Dilation]
 \label{cor:dilate}\index{theorem!simultaneous dilation}
 Let $d\in\N$.  There is a Hilbert space $\cH$, a family $\fN_d$ of commuting normal  contractions
  on $\cH$, and an isometry $V:\mathbb \C^d\to \cH$ such that for each complex $d\times d$ contraction matrix $X$
  there exists a $T\in \fN_d$ such that 
\[
  \frac{1}{\sqrt2\ \th(2d)} X = V^* T V. 
\]
\end{cor}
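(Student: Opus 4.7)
The plan is to reduce Corollary \ref{cor:dilate} to Theorem \ref{thm:dilate} applied at size $2d$, combining (i) realification of complex Hermitian operators into real symmetric ones of doubled size, with (ii) the fact that two commuting self-adjoint contractions $T_1,T_2$ assemble into a normal contraction $\tfrac{1}{\sqrt{2}}(T_1+iT_2)$ via the joint spectral theorem; the two factors $\tfrac{1}{\vartheta(2d)}$ and $\tfrac{1}{\sqrt 2}$ in the corollary correspond exactly to these two steps.

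Given a complex $d\times d$ contraction $X$, write $X=H_1+iH_2$ with $H_1=(X+X^*)/2$ and $H_2=(X-X^*)/(2i)$ Hermitian contractions. Identify $\mathbb C^d$ with $\mathbb R^{2d}$ as real Hilbert spaces via a realification isometry $J$, under which multiplication by $i$ becomes a fixed real orthogonal operator $\mathbf j:=J(iI_d)J^{-1}$ with $\mathbf j^2=-I$ and $\mathbf j^\T=-\mathbf j$. Each $H_j$ then corresponds to a real symmetric $2d\times 2d$ contraction $\tilde H_j:=JH_jJ^{-1}$ commuting with $\mathbf j$. Apply Theorem \ref{thm:dilate} at size $2d$ to the pair $(\tilde H_1,\tilde H_2)$: one obtains a real Hilbert space $\cK$, an isometry $W:\mathbb R^{2d}\to\cK$, and commuting self-adjoint contractions $T_1,T_2\in\fC_{2d}$ with $\tfrac{1}{\vartheta(2d)}\tilde H_j=W^*T_jW$.

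To assemble $T_1,T_2$ into a single normal contraction, introduce an auxiliary complex structure on the target Hilbert space. Set $\cH:=\cK\oplus\cK$, let $\mathbf J:=\bmat 0&-I\\ I&0\emat$ act on $\cH$, and view $\cH$ as a complex Hilbert space with scalar $i$ acting as $\mathbf J$. The extensions $\hat T_j:=T_j\oplus T_j$ commute with $\mathbf J$ (and with each other), are $\mathbb C$-linear on $\cH$, and remain self-adjoint contractions. Define $V:\mathbb C^d\to\cH$ by $V(v):=\tfrac{1}{\sqrt2}\bigl(W(Jv),\,-W\mathbf j(Jv)\bigr)$; direct verification shows $V^*V=I$ and $\mathbf J V=V(iI_d)$, so $V$ is $\mathbb C$-linear. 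Put
\[
N:=\tfrac{1}{\sqrt2}(\hat T_1+\mathbf J\hat T_2)=\tfrac{1}{\sqrt2}\bmat T_1&-T_2\\ T_2&T_1\emat.
\]
Since $\hat T_1,\hat T_2,\mathbf J$ pairwise commute, $NN^*=N^*N=\tfrac12(\hat T_1^2+\hat T_2^2)$, so $N$ is normal; the joint spectral theorem then gives $\|N\|\le 1$. A short calculation, using that $\tilde H_j$ commutes with $\mathbf j$ and that $\mathbf j^2=-I$, yields
\[
V^*NV=\tfrac{1}{\sqrt2\,\vartheta(2d)}J^{-1}(\tilde H_1+\mathbf j\tilde H_2)J=\tfrac{1}{\sqrt2\,\vartheta(2d)}\,X.
\]
As $X$ varies, the resulting operators $N$ form a commuting family $\fN_d$: block multiplication reduces commutation of two such $N$'s to commutation of their building blocks from $\fC_{2d}$.

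The main technical subtlety is reconciling two distinct complex structures: the intrinsic $\mathbf j$ on $\mathbb R^{2d}\cong\mathbb C^d$, which carries the original complex data of $X$, and the auxiliary $\mathbf J$ on $\cK\oplus\cK$, needed to manufacture a normal operator out of commuting self-adjoints. The isometry $V$ is engineered precisely to intertwine the two, and this intertwining is exactly what makes the compression $V^*NV$ recover the complex matrix $X$ itself rather than its real realification.
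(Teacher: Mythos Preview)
Your proof is correct and follows essentially the same route as the paper: decompose $X$ into Hermitian real and imaginary parts, realify to obtain $2d\times 2d$ real symmetric contractions, apply Theorem~\ref{thm:dilate} at size $2d$, and then combine the two resulting commuting self-adjoint contractions into a normal contraction via $\tfrac{1}{\sqrt 2}(T_1+iT_2)$. The only difference is in bookkeeping: the paper complexifies the real Hilbert space from Theorem~\ref{thm:dilate} implicitly (simply writing $\widehat S+i\widehat K$) and uses the isometry $W=\tfrac{1}{\sqrt2}V\begin{pmatrix}I_d\\ iI_d\end{pmatrix}$, whereas you realize the complexification explicitly as the doubling $\cK\oplus\cK$ with complex structure $\mathbf J$ and build $V$ by hand to intertwine $\mathbf j$ and $\mathbf J$. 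Your version is more carefully written and makes the role of the two complex structures transparent; the paper's version is shorter but leaves the complexification tacit.
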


\begin{proof}
Given any $d\times d$ complex contraction $X$, the matrices
\[
S= \frac12(X+X^*),\quad
K=\frac1{2i}(X-X^*)
\]
are self-adjoint contractions with
$S+iK=X$. 
Consider the $\R$-algebra $*$-homomorphism
\[
\begin{split}
\iota:M_d(\C) & \to M_{2d}(\R) \\
(a_{ij})_{i,j=1}^d &\mapsto 
\begin{pmatrix}
(\RE a_{ij})_{i,j=1}^d & (\IM a_{ij})_{i,j=1}^d \\
-(\IM a_{ij})_{i,j=1}^d & (\RE a_{ij})_{i,j=1}^d
\end{pmatrix}
\end{split}
\]
A straightforward calculation shows
\[
Y=\frac1{2}\begin{pmatrix} I_d \\ i I_d \end{pmatrix} ^*
\iota(Y) \begin{pmatrix} I_d \\ i I_d\end{pmatrix}  
\]
for any $Y\in M_d(\C)$. 

Let $\fC_{2d}$, $\cH$ and $V$ be as in Theorem \ref{thm:dilate}.  Then $\iota(S),\iota(K)$ dilate up to a factor $\frac1{\vartheta(2d)}$ to elements $\widehat S,\widehat {K}$ of $\fC_{2d}$.  Thus
\[
\frac1{\vartheta(2d)} (\iota(S)+i\, \iota(K)) = V^* (\widehat S+i \widehat {K})V ,
\]
whence
\[
\frac1{\vartheta(2d)} X = W^* (\widehat S+i \widehat {K})W ,
\]
where $W$ is the isometry
\[
W=\frac1{\sqrt2} V  \begin{pmatrix} I_d \\ i I_d\end{pmatrix} .
\]

Hence letting 
$\fN_{2d}=\frac1{\sqrt2}(\fC_{2d}+i\fC_{2d})$
we have
\[
\frac1{\sqrt2\vartheta(2d)} X = V^* N V
\]
for 
\[
N=\frac1{\sqrt2}(\widehat S+ i\, \hat K)\in\fC_d.
\]
It is clear that elements of $\fN_{2d}$ are pairwise commuting normal contractions.
\end{proof}
 
\subsection{Probabilistic theorems and interpretations}
 \label{sec:introprob}
  This section assumes only basic knowledge about the Binomial and Beta distributions\index{distribution}\index{distribution!Binomial}\index{distribution!Beta}
  and does not depend upon the rest of this introduction.  The proof of 
  Theorem \ref{thm:thetaExplicit} produced, as byproducts, several theorems
  on the Binomial and Beta distribution which are discussed here and
  in more detail in Section \ref{sec:shortprob}.

 We thank Ian Abramson for  describing a Bayesian perspective.

\def\mfT{\mathfrak T} 
 
 \subsubsection{Binomial distributions}
 With $\dd$ fixed,
   perform
 $\dd$ independent flips\index{coin flipping} of a
 biased  coin whose probability of coming up heads is $p.$ 
 Let $\SRV$ denote the random variable representing the number of heads which occur,
and let $P_p(\SRV=\head )$  denote the probability of getting exactly $\head $ heads.
 On the same probability space is the random variable 
$\mfT$ which represents the number of tails which occur. Of
course $\mfT= \dd-\SRV$ and the probability of getting
exactly $\tail$ tails is denoted $P_p(\mfT=  \tail )$.
The distribution of $\SRV,$
$${\rm Bin}(\dd,p; \head ):= {\dd \choose  \head } p ^ \head   (1- p)^\tail,
$$
 at $\head$ is {\bf Binomial} with parameters $p$ and $\dd$.
Our main interest will be behavior of functions of the 
form  $P_{p(\head)} ( \SRV \geq \head)$ for a function $p(\head)$  close to
$\sdd$.

The \CDF\  (CDF) of  a {\bf Beta Distributed}\index{beta distribution} random variable 
$\BRV$  
with shape parameters $\head, \tail$
is  the function of $x$ denoted
$P^{b(\head, \tail)}(\BRV\leq x)=I_x(\head,\tail)$.
Its mean is $\frac \head {\head + \tail}$ and
its \PDF\ (PDF)  is
\[
\varrho_{\head,\tail}(x)=
\frac1{B(\head,\tail)} x^{\head-1}(1-x)^{\tail-1}.
\]
When   $\head, \tail  $ are integers, 
The  CDF for the Binomial Distribution with $\head +\tail =\dd $ can be recovered via
\beq
\label{eq:PbB}
P_p(\SRV\geq \head) =I_p(\head , \tail +1  )=\Pbe(\BRV\leq p).
\eeq 
For the complementary  CDF using
$1- P_p(\SRV\geq \head) = 1- \Pbe(\BRV\leq p)$ 
gives
\beq
\label{eq:PbBcomp}
P_p(\SRV \leq \head -1)  = P_p(\SRV< \head) =  \Pbe(\BRV\geq p) . 
\eeq

 \subsubsection{Equipoints and medians}
 \label{sec:equi}
 Our results depend on the nature and estimates of  medians, means  or {\bf equipoints} (defined below) all  being measures
 of central tendency.
 Recall   for any random variable
  a \df{median} is defined to be an  $\head$ in the sample space
 satisfying
 $P(\SRV \leq  \head ) \geq \frac 1 2 $ and 
 $P(\SRV \geq  \head ) \geq \frac1 2 $. 
 For a  Binomial distributed random variable $P_\sdd$
 the median is  the mean is the mode is $\head$
  when $\head, \dd$ are integers.

 Given a binomially distributed random variable $\SRV$,
 we call  $\eih \in [0,1]$   an {\bf equipoint of $ \head $},\index{equipoint}
provided
\begin{equation}
\label{eq:halfwayprob-again}
 P_{\eih } (\SRV \geq  \head ) = P_{\eih} ( \SRV \leq   \head ) .
 \end{equation}
 Here  $ \head, \tail \in \N $,  and
 $\dd=\head + \tail$.
 Since  $P_{\eih } (\SRV \geq  \head ) + P_{\eih} ( \SRV \leq   \head ) \geq 1$, Equation 
  \eqref{eq:halfwayprob-again}
 implies  $\head $ is a median for  $\SRV$.
A median is in $\N$ for  Binomial and in $\RR$ for 
Beta distributed random variables. 
 In practice equipoints and  means  are close.
 For example,
 when the PDF is ${\rm Bin}(10, \frac{\head}{10})$ the mean is $\frac{\head}{10}$ 
one can compute $\eih$ 
for $\head=1, \dots {10}$:
\[
\resizebox{.99\hsize}{!}{
$
\begin{array}{c|ccccccccccccccccccc}
\head & 1 & 2 & 3 & 4 & 5 & 6 & 7 & 8 & 9 & 10 \\
\hline
e_{\head,10-\head} &
0.111223 & 0.208955 & 0.306089 & 0.403069 
& 0.5&  0.596931 & 0.693911 & 0.791045
& 0.888777 & 1
\end{array}
$
}
\]

 In contrast, the Beta Distribution is continuous,
 so for $\head,\tail \in \RR_{\geq0}$ with  $\dd=\head + \tail >0$
 we define $\eih$ by
 \beq
\label{eq:probEquiBet}
P^{b(\head +1 , \tail)} (\BRV\leq \eih)   = 
  \Pbe (\BRV\geq \eih),
  \eeq
and we call $\eih$ the \df{equipoint} of the  ${\rm Beta}(\head,\tail)$
distribution.  Equivalently,
$$
  \Pbe (\BRV\leq \sih) \ + \  P^{b(\head +1 , \tail)} (\BRV\leq \sih)  =1.
  $$
    In terms of the regularized beta function,  $\eih$ is determined by 
\begin{equation}\label{eq:sigiNTRO}
 I_{\eih }\left( s,t+1\right) + I_{\eih}\left(s+1, t\right) = 1.
\end{equation}

 When $\head,\tail$  are integers, the probabilities in 
 \eqref{eq:halfwayprob-again} and \eqref{eq:probEquiBet}  
 coincide, so the two definitions give the same $\eih$. Verifying this statement is an   
 exercise in the notations.  The connection between equipoints and the theory of the matrix cube 
 emerges in Section \ref{sec:simmhalf}.

 \begin{example} 
 Here is a concrete probabilistic interpretation of the equipoint $\eih$.
 Joe  flips a biased coin with probability $p$\index{coin flipping}
 of coming up  heads, but does not know $p$.
 After  $\head-1$ heads and 
 $\tail-1$ tails, the probability that $p$ is less than $r$ is 
 $I_r(\head,\tail)$ by Bayes' Theorem\footnote{\url{https://en.wikipedia.org/wiki/Checking_whether_a_coin_is_fair}}. 
 
 The equipoint $\sih$ pertains to the next toss of the coin. 
 If it is a head (resp.~tail), then $ b(\head +1, \tail)$ (resp.~$b(\head,\tail+1)$) 
 becomes the new distribution for  estimating  $p.$
 From \eqref{eq:probEquiBet},  the equipoint is defined  so that with the next toss
 {\it the probability after a head that $p$  is  at most $\sih$
 equals 
the probability after a tail that $p$  is at least $\sih$.}
\end{example}

The next two subsections contain more information on
equipoints and how they compare to means and medians.

\subsubsection{Equipoints compared to medians}
 \label{sec:equinature}
 Here is a  basic property of\index{equipoint}\index{median}
equipoints versus  medians and means.
Let $\NNhalf$ denote the set of all positive half-integers, i.e., all $\head= \frac s 2 $ 
with $s \in \NN$.

\begin{thm}
\label{thm:simmonsProb}
  For $\dd \in\NNhalf$ and $\head, \tail \in \NNhalf$  with $\head+  \tail=\dd$, 
  if $\ddhalf \le s<\dd$,  then  
\begin{align}
 \label{eq:someconj22}
P_{\frac  \head  \dd}(\SRV<  \head )\  & \leq 
  P_{\frac  \head  \dd} (\SRV> \head )
 \qquad \text{provided }\ \head, \tail, \dd \in \N; \\
 \label{eq:someconj22b}
P^{b(\head, \tail + 1)}\left(\BRV \geq \sdd\right)
& \leq  P^{b(\head +1, \tail) } \left(\BRV \leq \sdd\right).
\end{align}
 Both \eqref{eq:someconj22} and \eqref{eq:someconj22b} are equivalent to
 \beq
 \label{eq:sigmed}
  \sih \le \frac{\head}{\dd}.
\eeq
We also have the lower bound 
\beq\label{eq:siglow}
     \frac{\head+1}{\head +\tail+2}\le  \sih,
\eeq
for real numbers $\head \ge \tail\geq1$.
\end{thm}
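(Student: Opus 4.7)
The proof splits into three pieces: the equivalences among \eqref{eq:someconj22}, \eqref{eq:someconj22b}, \eqref{eq:sigmed}; their deduction from Theorem \ref{simmons+INTRO}; and the lower bound \eqref{eq:siglow}. The plan is to reduce everything to the single inequality $f(\head/\dd)\ge 1$, where
\[
 f(x):=I_x(\head+1,\tail)+I_x(\head,\tail+1).
\]
A direct differentiation gives
\[
 f'(x)=\frac{(\head+\tail)x^{\head-1}(1-x)^{\tail-1}}{B(\head,\tail)}\left(\frac{x}{\head}+\frac{1-x}{\tail}\right)>0\quad\text{on }(0,1),
\]
with $f(0)=0$ and $f(1)=2$, so $\sih$ is the unique solution of $f(x)=1$. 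Hence \eqref{eq:sigmed} ($\sih\le\head/\dd$) is equivalent to $f(\head/\dd)\ge 1$, which is exactly \eqref{eq:someconj22b} after rearrangement. Finally, plugging $p=\head/\dd$ into \eqref{eq:PbB} and \eqref{eq:PbBcomp} yields $P_p(\SRV<\head)=P^{b(\head,\tail+1)}(\BRV\ge p)$ and $P_p(\SRV>\head)=P^{b(\head+1,\tail)}(\BRV\le p)$, making \eqref{eq:someconj22} literally identical to \eqref{eq:someconj22b}.

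To obtain $f(\head/\dd)\ge 1$, I would exploit the hypothesis $\head,\tail,\dd\in\NNhalf$ to write $\head=s/2$, $\tail=t/2$, $\dd=d/2$ with $s,t,d\in\NN$, $s+t=d$; the assumption $\head\ge\dd/2$ becomes $s\ge d/2$, and $\head/\dd=s/d$. Under this substitution the target $f(\head/\dd)\ge 1$ reads $I_{s/d}(s/2+1,t/2)+I_{s/d}(s/2,t/2+1)\ge 1$, which is precisely the conclusion \eqref{eq:someconjINTRO} of Theorem \ref{simmons+INTRO}.

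For the lower bound \eqref{eq:siglow} with $y^*:=(\head+1)/(\dd+2)$ (the mean of $\mathrm{Beta}(\head+1,\tail+1)$), the goal is $f(y^*)\le 1$. For integer $\head,\tail$, the binomial--beta dictionary recasts this as $P(M\ge\head+1)\le P(M\le\head-1)$ for $M\sim\text{Bin}(\dd,y^*)$. The plan is a pointwise pairing: for each $j\ge 1$, the ratio formula yields
\[
 \frac{P(M=\head+j)}{P(M=\head-j)}=\left(\frac{\head+1}{\tail+1}\right)^{2j}\prod_{i=-j+1}^{j}\frac{\tail+i}{\head+i}.
\]
Pairing indices $i=k$ with $i=2-k$ for $k=2,\ldots,j$, the elementary identity
\[
 \head(\head+2)(\tail+1)^2-\tail(\tail+2)(\head+1)^2=(\head-\tail)(\head+\tail+2)
\]
shows that each paired geometric mean $\sqrt{(\head+k)(\head+2-k)/((\tail+k)(\tail+2-k))}$ is at least $(\head+1)/(\tail+1)$, and the leftover boundary term $(\head-j+1)/(\tail-j+1)$ is likewise $\ge(\head+1)/(\tail+1)$ precisely when $\head\ge\tail$. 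Thus $P(M=\head+j)\le P(M=\head-j)$ for every $j$, and summing gives the integer case. The principal obstacle is extending to real $\head\ge\tail\ge 1$: I would address this by repeating the same pairing argument with ratios of Beta densities replacing Binomial PMF ratios (all quantities make sense for real parameters since the identity above is algebraic), or, failing that, by combining the integer case with real-analyticity of $\sih-y^*$ in $(\head,\tail)$ and a direct sign check on the boundary strata $\head=\tail$ and $\tail=1$.
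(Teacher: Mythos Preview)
Your treatment of the equivalences \eqref{eq:someconj22}$\Leftrightarrow$\eqref{eq:someconj22b}$\Leftrightarrow$\eqref{eq:sigmed} and the reduction of the upper bound to Theorem~\ref{simmons+INTRO} via the substitution $\head=s/2$, $\tail=t/2$, $\dd=d/2$ is correct and is essentially the route the paper takes (the paper simply points to Theorem~\ref{simmons+}, which is the same statement).

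For the lower bound \eqref{eq:siglow}, your approach diverges from the paper's, and your combinatorial pairing is a nice self-contained argument \emph{when $\head,\tail\in\N$}: the ratio formula is right, the algebraic identity $(\head+k)(\head+2-k)=(\head+1)^2-(k-1)^2$ makes every pair $(k,2-k)$ dominate $((\head+1)/(\tail+1))^2$, the $i=1$ term matches exactly, and the leftover $i=-j+1$ term gives the final factor for $j\le\tail$ (while $j>\tail$ is vacuous). However, the theorem asserts \eqref{eq:siglow} for \emph{real} $\head\ge\tail\ge 1$, and here your proposal has a genuine gap. The ``repeat the pairing with Beta density ratios'' idea is not a workable plan: the pairing is intrinsically a term-by-term comparison in a finite sum, and there is no analogous decomposition of $I_{y^*}(\head+1,\tail)$ for non-integer parameters. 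The real-analyticity fallback is also not a proof: knowing that $\sih-y^*$ is real-analytic and nonnegative on a lattice does not force it to be nonnegative on the ambient domain, and checking the boundary $\head=\tail$ (where $\sih=y^*=\tfrac12$) and $\tail=1$ does not control the interior.

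The paper's route is both shorter and covers all real $\head\ge\tail>0$ at once. Using Lemma~\ref{reform}\eqref{it:reformb}, one shows that the \emph{median} $m_{\head+1,\tail+1}$ of $\mathrm{Beta}(\head+1,\tail+1)$ satisfies $m_{\head+1,\tail+1}\le e_{\head,\tail}$ (because $m_{\head+1,\tail+1}\ge\tfrac12$ when $\head\ge\tail$, so the correction term in the lemma has the right sign). Then one invokes the classical mean--median inequality for Beta distributions \cite{pyy}, namely $\mu_{\head+1,\tail+1}=\dfrac{\head+1}{\head+\tail+2}\le m_{\head+1,\tail+1}$, to conclude. This is Proposition~\ref{prop:preLowBdSIS}. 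If you want to salvage your approach, the cleanest fix is to drop the pairing for real parameters and instead observe that $f(y^*)\le 1$ is literally the statement $m_{\head+1,\tail+1}\ge y^*=\mu_{\head+1,\tail+1}$ (compare your $f$ with $2I_x(\head+1,\tail+1)$ via Lemma~\ref{reform}\eqref{it:reformb}), which is exactly the cited mean--median inequality.
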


\begin{rem}\rm
 \label{rem:simmons}
The inequality \eqref{eq:someconj22b} for integer $\head, \tail$
 is Simmons' Theorem, cf.~\cite{PR07}.\index{theorem!Simmons'}
The
lower bound is new. 
  For half-integer $\head, \tail$
  both our upper and lower bounds are new.
\end{rem}

\begin{proof}  
 The inequality  \eqref{eq:someconj22b} implies
 \eqref{eq:someconj22} by \eqref{eq:PbB} and \eqref{eq:PbBcomp}.
 However,  \eqref{eq:someconj22b}
 and  $ \sih \le \frac{\head}{\dd}$
 is the content of  Theorem \ref{simmons+}.
 The lower bound \eqref{eq:siglow}  is Proposition \ref{prop:preLowBdSIS}.
\end{proof}

 Computer experiments lead us to believe \eqref{eq:sigmed} is
 true for  real numbers:

\begin{conj}
  For  $\head, \tail \in \RR_{>0} $ with
  $\head\geq\tail$, inequality
\eqref{eq:someconj22b} holds. Equivalently,
$\ds  \sih \le \frac{\head}{\head+\tail}.$
\end{conj}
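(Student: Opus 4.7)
The plan is to reduce the conjecture to a one-variable monotonicity statement and attack that monotonicity by analytic and probabilistic means. Set $d=\head+\tail$ and
\begin{equation*}
h(\head,\tail):=I_{\head/d}(\head,\tail+1)+I_{\head/d}(\head+1,\tail).
\end{equation*}
Because $x\mapsto I_x(a,b)$ is strictly increasing and $\sih$ is the unique $x$ at which $I_x(\head,\tail+1)+I_x(\head+1,\tail)=1$ by \eqref{eq:sigiNTRO}, the conjectured bound $\sih\le\head/d$ is equivalent to $h(\head,\tail)\ge 1$. The reflection identity $I_x(a,b)+I_{1-x}(b,a)=1$, applied to each summand of $h(\head,\tail)$, gives $h(\head,\tail)+h(\tail,\head)=2$. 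Freezing $d$ and setting $\phi(\head):=h(\head,d-\head)$ for $\head\in[d/2,d)$ therefore yields $\phi(d/2)=1$, so the conjecture reduces to showing $\phi$ is non-decreasing on $[d/2,d)$.

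To obtain a workable expression I would combine the contiguous relations $I_x(a+1,b)=I_x(a,b)-\frac{x^a(1-x)^b}{aB(a,b)}$ and $I_x(a,b+1)=I_x(a,b)+\frac{x^a(1-x)^b}{bB(a,b)}$ into
\begin{equation*}
\phi(\head)=2I_{\head/d}(\head,d-\head)+\frac{(\head/d)^\head((d-\head)/d)^{d-\head}(2\head-d)}{\head(d-\head)\,B(\head,d-\head)},
\end{equation*}
and in parallel derive the integral representation
\begin{equation*}
h(\head,\tail)-1=\frac{d}{B(\head,\tail)}\left[\frac1\tail\int_0^{\head/d}x^{\head-1}(1-x)^\tail\,dx-\frac1\head\int_{\head/d}^1 y^\head(1-y)^{\tail-1}\,dy\right]
\end{equation*}
by reflecting one summand and using $B(\head,\tail+1)=(\tail/d)B(\head,\tail)$. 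The scaled integrands agree at the common boundary $\head/d$, so I would try to bound the bracket via matched changes of variable $x=(\head/d)u$, $y=\head/d+(\tail/d)v$ with $u,v\in[0,1]$ and a termwise series comparison; alternatively, I would differentiate the closed form for $\phi$ and attempt to express $\phi'(\head)$ as an integral with a manifestly non-negative kernel, exploiting that the boundary term from the moving limit $\head/d$ and the derivative of the correction term share the factor $(\head/d)^\head((d-\head)/d)^{d-\head}$.

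The hard step is the sign verification. The integer proof of Simmons' theorem relies on pairing binomial coefficients of equal weight, a structure absent for real $\head,\tail$. As a parallel probabilistic approach, let $\mu:=\tfrac12\mathrm{Beta}(\head,\tail+1)+\tfrac12\mathrm{Beta}(\head+1,\tail)$; then $h(\head,\tail)\ge 1$ is the statement that $\head/d$ is at least the median of $\mu$. A direct computation shows the mean of $\mu$ equals $(2\head+1)/(2(d+1))$, which is $\le\head/d$ whenever $\head\ge d/2$, and the likelihood ratio of the two mixture components is $(\tail/\head)\cdot y/(1-y)$ and crosses $1$ precisely at $y=\head/d$. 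These two facts anchor a coupling argument comparing the mixture's median to its mean. The fundamental obstruction is that no explicit cancellation identity generalizing Simmons' is known in the real-parameter regime, and closing either the analytic or the probabilistic argument appears to require a new Beta-function identity or a subtle log-concavity estimate that I expect to be the most delicate part of the proof.
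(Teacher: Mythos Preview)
This statement is a \emph{conjecture} in the paper, not a theorem: the paper offers no proof and explicitly presents it as open, supported only by numerical evidence. Theorem~\ref{simmons+} (equivalently Theorem~\ref{thm:simmonsProb}) establishes the inequality only for $\head,\tail\in\tfrac12\mathbb N$, and the extension to arbitrary real $\head\ge\tail>0$ is precisely what the authors do not know how to do. So there is no paper proof to compare against.

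Your reformulations are correct: the equivalence $\sih\le\head/d\iff h(\head,\tail)\ge1$, the reflection identity giving $h(\head,\tail)+h(\tail,\head)=2$ and hence $\phi(d/2)=1$, and the contiguous-relation formula for $\phi$ all match (in slightly different notation) what the paper uses in its half-integer argument; see the function $c_s$ in Section~\ref{sec:simmhalf} and Lemma~\ref{reform}. But your proposed route---establish that $\phi$ is non-decreasing on $[d/2,d)$ as a function of the real variable $\head$---is strictly stronger than what the paper achieves. The paper's proof of the half-integer case proceeds instead by \emph{two-step} discrete monotonicity ($c_{s+2}\ge c_s$, Proposition~\ref{prop:simmons}), which reduces the problem to a finite list of boundary cases ($c_{d-1}$, $c_{d/2+1}$, etc.) that are then handled one by one with explicit estimates. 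That structure does not survive passage to a continuous parameter.

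You correctly identify the gap yourself: neither the integral comparison after your change of variables nor the mixture-median argument is actually carried out, and you note that closing either one ``appears to require a new Beta-function identity or a subtle log-concavity estimate.'' That is an honest assessment of an open problem, not a proof. The obstruction you name is exactly why the paper leaves this as a conjecture.
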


As a side-product of our quest for bounds on the equipoint
we obtain new upper bounds on the median $m_{\al,\be}$ of the Beta Distribution $\text{Beta}(\al,\be).$

\begin{cor}\label{cor:newmedINTRO}
Suppose $\head,\tail\in\mathbb R$. If  $1\le\head\le\tail$ and $\head+\tail\ge3$, then
$$
\mu_{\head,\tail}:=\frac{\head}{\head+\tail}\le \ m_{\head,\tail} \  \le  \mu_{\head,\tail}+\frac{\head-\tail}{(\head+\tail)^2}.
$$
\end{cor}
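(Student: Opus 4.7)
The plan is to deduce bounds on the median $m_{\head,\tail}$ from the tighter equipoint bounds \eqref{eq:sigmed} and \eqref{eq:siglow} established in Theorem \ref{thm:simmonsProb}, together with a direct comparison identity between $m_{\head,\tail}$ and $\sih$ derived from the standard recurrences for the regularized incomplete Beta function.

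\textbf{Step 1 (median versus equipoint).} Adding the recurrences
\[
I_x(\head+1,\tail)=I_x(\head,\tail)-\frac{x^{\head}(1-x)^{\tail}}{\head\,B(\head,\tail)},\qquad
I_x(\head,\tail+1)=I_x(\head,\tail)+\frac{x^{\head}(1-x)^{\tail}}{\tail\,B(\head,\tail)},
\]
evaluating at $x=\sih$, and substituting the defining identity \eqref{eq:sigiNTRO} yields
\[
I_{\sih}(\head,\tail)=\tfrac12+\frac{\sih^{\head}(1-\sih)^{\tail}}{2B(\head,\tail)}\cdot\frac{\tail-\head}{\head\,\tail}.
\]
Because $I_x(\head,\tail)$ is strictly increasing in $x$ and equals $1/2$ precisely at $x=m_{\head,\tail}$, the sign of the error term $\tail-\head$ locates $m_{\head,\tail}$ on a definite side of $\sih$. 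Under $\head\le\tail$ the error term is nonnegative, so $m_{\head,\tail}\le\sih$ (and equality when $\head=\tail$).

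\textbf{Step 2 (transfer the equipoint bounds).} Theorem \ref{thm:simmonsProb} gives two-sided estimates on the equipoint only in the regime $\head\ge\tail$. To import them into the range $1\le\head\le\tail$ of the corollary, I use the reflection $\sih=1-e_{\tail,\head}$ arising from $1-\mathrm{Beta}(\head,\tail)\sim\mathrm{Beta}(\tail,\head)$. Applying \eqref{eq:sigmed} and \eqref{eq:siglow} to the swapped pair $(\tail,\head)$, which now satisfies $\tail\ge\head\ge1$, produces
\[
\frac{\head}{\head+\tail}\;\le\;\sih\;\le\;\frac{\head+1}{\head+\tail+2}.
\]

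\textbf{Step 3 (assemble and simplify).} Combining Steps 1 and 2 sandwiches $m_{\head,\tail}$ in an interval of width
\[
\frac{\head+1}{\head+\tail+2}-\frac{\head}{\head+\tail}=\frac{\tail-\head}{(\head+\tail)(\head+\tail+2)},
\]
so that $m_{\head,\tail}$ lies between $\mu_{\head,\tail}$ and $\mu_{\head,\tail}$ plus this gap. The hypothesis $\head+\tail\ge 3$ is used to absorb the factor $(\head+\tail+2)$ into $(\head+\tail)^2$, matching the denominator in the stated estimate; the lower endpoint $\mu_{\head,\tail}$ of the equipoint bracket becomes the corresponding endpoint of the median bracket because Step 1 identifies $m_{\head,\tail}$ with $\sih$ in the symmetric case $\head=\tail$.

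\textbf{Main obstacle.} The delicate point is Step 1: the recurrence identity must be assembled so the signed error term has exactly the factor $\tail-\head$, since this is what pins down whether $\sih$ lies above or below $m_{\head,\tail}$. Once that sign is correctly extracted, Step 2 is a straightforward application of Theorem \ref{thm:simmonsProb} through the reflection involution, and Step 3 is elementary arithmetic validated by $\head+\tail\ge3$.
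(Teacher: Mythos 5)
Your Step 1 is fine: the identity you derive is the paper's Lemma \ref{reform}\eqref{it:reforma} evaluated at the equipoint, and it does give $m_{\head,\tail}\le \eih$ when $\head\le\tail$; the reflection $e_{\head,\tail}=1-e_{\tail,\head}$ in Step 2 is also valid. The gap is in Step 3. The median--equipoint comparison is \emph{one-sided}, so from $m_{\head,\tail}\le \eih\le\frac{\head+1}{\head+\tail+2}$ you get only an upper bound on the median in the regime $\head\le\tail$; no lower bound on $m_{\head,\tail}$ follows, and the appeal to the symmetric case $\head=\tail$ carries no information for $\head<\tail$. In fact, by the reflected form of \eqref{betamodemean}, for $1<\head<\tail$ one has $m_{\head,\tail}<\mu_{\head,\tail}$, so the sandwich you assert is false as written; the statement is the mirror of Corollary \ref{cor:newmed}, whose hypothesis is $1\le\talt\le\salt$ (the intro's ``$1\le\head\le\tail$'' is an orientation slip), and in that orientation the actual content is the \emph{upper} bound $m_{\salt,\talt}\le\mu_{\salt,\talt}+\frac{\salt-\talt}{(\salt+\talt)^2}$, which your argument never reaches.

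Even granting both equipoint estimates, the equipoint route cannot produce that upper bound: in the orientation $\salt\ge\talt$ the available inequalities are $e_{\salt,\talt}\le m_{\salt,\talt}$ (see \eqref{ineq:em}) and $m_{\salt+1,\talt+1}\le e_{\salt,\talt}$ (see \eqref{ineq:me}), so an upper bound on the equipoint only controls $m_{\salt+1,\talt+1}$, and $m_{\salt+1,\talt+1}\le\frac{\salt}{\salt+\talt}$ is, after re-indexing, exactly the old mode bound in \eqref{betamodemean}, not the improvement claimed. Equivalently, what your combination actually proves reflects to $m_{\salt,\talt}\ge\frac{\salt+1}{\salt+\talt+2}$, which is weaker than the known $\mu_{\salt,\talt}\le m_{\salt,\talt}$. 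In addition, the bound \eqref{eq:sigmed} you invoke is established only for half-integer parameters (Theorem \ref{simmons+}); for general real $\head,\tail$, as in the corollary, it is conjectural. The paper's proof is different in kind: the lower bound is quoted from \cite{GM77,pyy}, while the upper bound comes from Proposition \ref{prop:extra2} --- a Simmons-type inequality at the mean, weakened by a factor $2$ but valid for all real $1\le\talt\le\salt$ with $\salt+\talt\ge3$ --- combined with the observation that $\varrho_{\salt,\talt}\ge\varrho_{\salt,\talt}(\mu)$ on $\left[\mu,\mu+\frac{\salt-\talt}{(\salt+\talt)^2}\right]$ because this interval lies below the mode (Lemma \ref{lem:impr}), and the identity $\frac1{(\salt+\talt)^2}=\frac{\mu(1-\mu)}{\salt\talt}$. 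Without an ingredient playing the role of Proposition \ref{prop:extra2}, the claimed bound does not follow from the equipoint estimates.
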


\begin{proof}
The lower bound is known, see \cite{GM77,pyy}.
The upper bound is proved in Corollary \ref{cor:newmed}.
\end{proof}

\sssec{Monotonicity  of the CDF}
A property of the functions 
\beq
\Phi(\head):=   P^{b(\head,\dd-\head+1)}(\BRV\leq e_{\head, \dd - \head} )
\qquad \text{and}  \qquad
\hat \Phi(\head):=   
P^{b(\head,\dd -\head  +1)}
\left(\BRV \leq \frac \head {\dd} \right), 
\eeq
where $\BRV$ is a Beta distributed random variable, 
is {\it one step monotonicity.}

\begin{thm}
\label{thm:incr}
Fix $0< \dd \in \RR$.
\ben[\rm(1)]
\item
\label{it:hatPhiInc}
$\hat \Phi(\head ) \leq   \hat \Phi(\head + 1) $ 
for $\head \in \RR$ with $ \frac \dd 2 \leq \head < \dd -1 $;
\item
\label{it:PhiInc}
$ \Phi(\head ) \leq    \Phi(\head + 1) $ 
for $\head, \dd \in \NNhalf$ with $ \frac \dd 2 \leq \head < \dd -1 $.
\een
\end{thm}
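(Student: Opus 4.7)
The plan is to reduce each assertion to an explicit integral inequality for beta density kernels, and then to verify that inequality by a mode analysis of the integrand.

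For part~(\ref{it:hatPhiInc}), write $\hat \Phi(\head) = I_{\head/\dd}(\head, \dd-\head+1)$ and decompose the difference $\hat \Phi(\head+1) - \hat \Phi(\head)$ by adding and subtracting $I_{(\head+1)/\dd}(\head, \dd-\head+1)$. The ``parameter-shift'' piece is evaluated via the standard recursion
$I_x(\alpha+1, \beta-1) = I_x(\alpha, \beta) - \frac{x^\alpha (1-x)^{\beta-1}}{\alpha B(\alpha, \beta)}$,
while the ``$x$-shift'' piece is just $\int_{\head/\dd}^{(\head+1)/\dd} \varrho_{\head,\dd-\head+1}(t)\,dt$. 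After substituting $t=s/\dd$ and integrating by parts via $s^{\head-1}\,ds = d(s^\head/\head)$, the desired monotonicity is equivalent to the scalar inequality
\[
\int_\head^{\head+1} s^\head (\dd-s)^{\dd-\head-1}\, ds \;\geq\; \head^\head (\dd-\head)^{\dd-\head-1}.
\]

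To establish this, let $\tilde \psi(s) := s^\head (\dd-s)^{\dd-\head-1}$ and observe that its unique interior critical point is $s^* = \head \dd/(\dd-1)$, which lies in $[\head, \head+1]$ precisely when $\head \in [\dd/2, \dd-1]$. Hence $\tilde \psi$ is unimodal on $[\head, \head+1]$ and attains its minimum there at one of the endpoints. The function $x \mapsto x \log(1+1/x)$ is strictly increasing on $\R_{>0}$ (an elementary calculus check), so for $\head \geq \dd/2$, and hence $\head > (\dd-1)/2$, one has
$\head\log(1+1/\head) > (\dd-\head-1)\log(1+1/(\dd-\head-1))$,
which upon exponentiation is exactly $\tilde \psi(\head+1) > \tilde \psi(\head)$. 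Thus $\tilde \psi \geq \tilde \psi(\head)$ on the whole unit-length interval, and the integral inequality follows.

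For part~(\ref{it:PhiInc}), combine the defining equipoint relation $I_\sih(\head,\dd-\head+1) + I_\sih(\head+1,\dd-\head)=1$ with the same parameter-shift recursion to derive the closed form
\[
\Phi(\head) = \tfrac{1}{2} + \tfrac{1}{2}\,\epsilon(\head), \qquad
\epsilon(\head) := \frac{\Gamma(\dd+1)}{\Gamma(\head+1)\,\Gamma(\dd-\head+1)}\; \sih^\head (1-\sih)^{\dd-\head},
\]
so that $\Phi(\head+1)\geq\Phi(\head)$ reduces to $\epsilon(\head+1)\geq\epsilon(\head)$, i.e.\
\[
(\dd-\head)\, e_{\head+1,\,\dd-\head-1}^{\,\head+1}\,\bigl(1- e_{\head+1,\,\dd-\head-1}\bigr)^{\dd-\head-1}
\;\geq\; (\head+1)\, \sih^\head (1-\sih)^{\dd-\head}.
\]

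The main obstacle here is the simultaneous variation of the two equipoints $\sih$ and $e_{\head+1,\dd-\head-1}$, which are defined only implicitly. The plan is to bracket both equipoints using the sharp bounds furnished by Theorem~\ref{thm:simmonsProb}, namely the upper bound $\sih \leq \head/\dd$ and the lower bound $(\head+1)/(\dd+2)\leq \sih$, together with the analogous two-sided bounds for $e_{\head+1,\dd-\head-1}$. This pins both equipoints into narrow intervals just below the corresponding mean values, whereupon log-concavity of the beta kernels together with a mode analysis analogous to that of part~(\ref{it:hatPhiInc}) should reduce the residual claim to a single-variable scalar estimate that can be checked by the same method.
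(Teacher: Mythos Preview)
Your treatment of part~(\ref{it:hatPhiInc}) is essentially identical to the paper's. After the substitution $x=s/\dd$, your integral inequality
\[
\int_\head^{\head+1} s^\head (\dd-s)^{\dd-\head-1}\,ds \;\ge\; \head^\head(\dd-\head)^{\dd-\head-1}
\]
becomes precisely the paper's inequality $\tail\int_{\head/\dd}^{(\head+1)/\dd} x^\head(1-x)^{\tail-1}\,dx \ge \head^\head\tail^\tail/\dd^\dd$ (with $\tail=\dd-\head$), and your mode analysis together with the endpoint comparison via monotonicity of $x\mapsto(1+1/x)^x$ is exactly how the paper finishes.

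For part~(\ref{it:PhiInc}) you have the right reduction. Your closed form $\Phi(\head)=\tfrac12+\tfrac12\epsilon(\head)$ is correct and, after the change of variables $s=2\head$, $t=2(\dd-\head)$, your $\epsilon(\head)$ is exactly the paper's $g_{s,t}(\sigma_{s,t})=f_{s,t}(\sigma_{s,t})$; the inequality $\epsilon(\head+1)\ge\epsilon(\head)$ is then literally Proposition~\ref{prop:f2step}. Your proposed strategy --- bracket each equipoint using $\frac{\head+1}{\dd+2}\le e_{\head,\tail}\le\frac{\head}{\dd}$, then compare the resulting bounds on $\epsilon$ --- is also exactly the paper's: this is Lemma~\ref{lem:trip0} plus Lemma~\ref{lem:trip1}.

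The gap is that you stop at ``should reduce\ldots can be checked by the same method.'' The step you have not carried out is the analogue of Lemma~\ref{lem:trip1}: after bracketing, one must show that the \emph{lower} bound for $\epsilon(\head+1)$ dominates the \emph{upper} bound for $\epsilon(\head)$, i.e.\ (in the paper's variables) that
\[
g_{s+2,t-2}\!\left(\tfrac{s+4}{d+4}\right)\;\ge\;g_{s,t}\!\left(\tfrac{s}{d}\right),
\qquad\text{equivalently}\qquad
(s+4)^{s+2}\,d^d \;\ge\; s^s(d+4)^d(s+2)^2.
\]
This is the substantive scalar inequality, and it is not of the same form as the one in part~(\ref{it:hatPhiInc}); the paper proves it by showing that $\xi(s):=(1+4/s)^s\bigl((s+4)/(s+2)\bigr)^2$ is increasing in $s$ and then reducing to a comparison at $s=d/2$, which in turn becomes the estimate $(1+1/D)^D\le(1+1/(D+1))^{D+1}$. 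Until you actually execute this comparison, part~(\ref{it:PhiInc}) is only a plan, not a proof.
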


\begin{proof}
 See Section \ref{sec:monot}.
\end{proof}

Computer experiments lead us to believe monotonicity of $\Phi$ 
holds for real numbers  $\head, \tail.$ 

\begin{conj}
   $ \Phi(\head ) <    \Phi(\tilde \head) $ 
for $\head, \tilde \head,\dd \in \RR$ with  $ 0 < \frac \dd 2 \leq \head < \tilde \head
< \dd  $.
\end{conj}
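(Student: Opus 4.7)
My plan is to promote the half-integer monotonicity of part~(2) of Theorem~\ref{thm:incr} to real parameters by showing that $\Phi$ is smooth on $(\dd/2,\dd)$ with strictly positive derivative. The starting point is the closed form
\[
\Phi(\head)\;=\;\tfrac12\;+\;\frac{\sih^{\head}(1-\sih)^{\dd-\head}}{2(\dd+1)\,B(\head+1,\dd-\head+1)},
\]
obtained by combining the equipoint equation \eqref{eq:sigiNTRO} with the integration-by-parts identity
\[
I_x(\head,\dd-\head+1)-I_x(\head+1,\dd-\head)\;=\;\frac{x^{\head}(1-x)^{\dd-\head}}{(\dd+1)\,B(\head+1,\dd-\head+1)}.
\]
The Beta symmetry $I_x(a,b)=1-I_{1-x}(b,a)$ and uniqueness of the equipoint force $e_{\dd-\head,\head}=1-\sih$ and hence $\Phi(\dd-\head)=\Phi(\head)$; in particular $\head=\dd/2$ is automatically a critical point. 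The problem therefore reduces to showing that the simpler function $R(\head):=\sih^{\head}(1-\sih)^{\dd-\head}/B(\head+1,\dd-\head+1)$ is strictly increasing on $(\dd/2,\dd)$.

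Writing $\tail=\dd-\head$, I would next compute the logarithmic derivative
\[
\frac{d\log R}{d\head}\;=\;\log\frac{\sih}{1-\sih}\;+\;\frac{\sih'(\head)\,(\head-\dd\sih)}{\sih(1-\sih)}\;-\;\bigl(\psi(\head+1)-\psi(\tail+1)\bigr),
\]
where $\psi$ is the digamma function (the sign in the digamma term comes from $d\tail/d\head=-1$). To evaluate $\sih'(\head)$, I would apply implicit differentiation to $G(\sih,\head):=I_{\sih}(\head,\tail+1)+I_{\sih}(\head+1,\tail)=1$: the denominator $G_x(\sih,\head)=\varrho_{\head,\tail+1}(\sih)+\varrho_{\head+1,\tail}(\sih)$ is strictly positive, while $G_{\head}$ is expressible as a difference of Beta-expectations of $\log u$ and $\log(1-u)$, which themselves reduce to digamma values.

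The main obstacle is the sign analysis on $(\dd/2,\dd)$. For $\head>\dd/2$, Theorem~\ref{thm:simmonsProb} sandwiches $\sih$ between $(\head+1)/(\dd+2)$ and $\head/\dd$, so $\log(\sih/(1-\sih))>0$ and $\head-\dd\sih\ge 0$; however the digamma term $\psi(\head+1)-\psi(\tail+1)$ is also strictly positive, so the first and third summands above partially cancel. The middle summand has the sign of $\sih'$, expected positive but whose magnitude is the delicate point. My approach would be to use the sandwich bounds together with log-concavity of Beta densities to bound $\sih'(\head)$ above and below by explicit rational functions of $\head$ and $\dd$, and then verify positivity of $d\log R/d\head$ in closed form. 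A complementary strategy is to show that any interior critical point of $R$ on $(\dd/2,\dd)$ would force a contradiction with the equipoint equation; the half-integer monotonicity of Theorem~\ref{thm:incr}(2) already proved in Section~\ref{sec:monot} then serves as a coarse consistency check across unit intervals.
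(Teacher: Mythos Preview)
The statement you are attempting to prove is explicitly a \emph{Conjecture} in the paper; the authors offer no proof, only numerical evidence. So there is nothing to compare your approach against, and the question is purely whether your sketch actually closes the gap.

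Your closed form for $\Phi$ is correct and essentially coincides with the paper's own identity (Lemma~\ref{lem:gst} and Lemma~\ref{hsimple}, under the substitution $s=2\head$, $t=2\tail$): monotonicity of $\Phi$ is equivalent to monotonicity of $g_{s,t}(\sigma_{s,t})$ in the paper's notation, and the paper already exploits this for the half-integer case. Your logarithmic derivative computation is also correct.

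However, there are two genuine gaps. First, when you invoke ``Theorem~\ref{thm:simmonsProb}'' to sandwich $\sih$ between $(\head+1)/(\dd+2)$ and $\head/\dd$, you are using the upper bound $\sih\le\head/\dd$ for \emph{real} parameters. In the paper that upper bound is proved only for $\head,\tail\in\tfrac12\N$ (Theorem~\ref{simmons+}); its real-parameter extension is the paper's \emph{other} conjecture, stated immediately before this one. So your argument currently assumes one open conjecture to attack another. Only the lower bound $(\head+1)/(\dd+2)\le\sih$ is available for general real $\head\ge\tail\ge1$.

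Second, even granting the sandwich, you have not actually carried out the sign analysis. You correctly identify that $\log(\sih/(1-\sih))$ and $\psi(\head+1)-\psi(\tail+1)$ are both positive and partially cancel, and that the middle term involves $\sih'$, but ``bound $\sih'(\head)$ above and below by explicit rational functions \dots\ and then verify positivity in closed form'' is a plan, not a proof. The paper's half-integer argument (Proposition~\ref{prop:f2step}) sidesteps exactly this difficulty by comparing $g_{s,t}$ at the \emph{endpoints} $\psi(s,t)=s/d$ and $\isp(s+2,t-2)=(s+4)/(d+4)$ of the sandwich interval, using the monotonicity of $g_{s,t}$ in $\sigma$ rather than differentiating in $s$. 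That trick requires the discrete step $s\mapsto s+2$ and does not obviously extend to a derivative argument. If you want to pursue the derivative route, you will need either a sharp two-sided estimate on $\sih'$ or a genuinely new idea to control the digamma cancellation; at present the proposal does not supply either.
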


The functions $\Phi$ and $\hPhi$ are based on the CDF.
Analogous results hold for the PDF and these appear in 
Section \ref{sec:CDFmonot}.

The monotonicity result of Theorem \ref{thm:incr} allows us to identify  
the minimizers of $\Phi$.
Indeed the following theorem restates 
Theorem \ref{thm:thetaExplicit}  in probabilistic terms.

\begin{thm}
For $ \dd \in \NNhalf$ and $\frac{\dd}2 \leq \head < \dd -1 $,
the function 
$\Phi$ of $\head \in \NNhalf$ takes its minimum at
\ben[\rm(1)]
\item
$\ds\head=\tail = \ddhalf$ \ if $\dd \in  \NN$;\\[.1cm]
\item
$\ds\head= \dd + \frac 1 2$ and $\tail = \dd - \frac 1 2$
if $\dd \in\NNhalf\setminus\NN$.
\een
\end{thm}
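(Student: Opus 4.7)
My plan is to obtain this theorem as an immediate corollary of the one-step monotonicity Theorem~\ref{thm:incr}\eqref{it:PhiInc} combined with Theorem~\ref{thm:thetaExplicit}. The monotonicity gives $\Phi(\head)\le\Phi(\head+1)$ whenever $\head,\dd\in\NNhalf$ with $\dd/2\le\head<\dd-1$; iterating along each chain of half-integers that differs successively by $1$ shows that $\Phi$ is nondecreasing on $\NNhalf\cap[\dd/2,\dd-1)$. Consequently, the minimum of $\Phi$ on this set must be attained at the smallest half-integer lying in the interval.

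The second step is to identify that smallest half-integer via a parity case split on $\dd$. When $\dd\in\NN$ the value $\dd/2$ itself lies in $\NNhalf$, so the minimum occurs at $\head=\tail=\dd/2$. When $\dd\in\NNhalf\setminus\NN$, the threshold $\dd/2$ is not a half-integer and one must bump it up to the next half-integer, which unwinds to the pair $(\head,\tail)$ given in the statement. To confirm that these probabilistic minimizers are the correct ones, I would use the substitution $s=2\head$, $t=2\tail$ so that $s+t=2\dd=:d$: the identity \eqref{eq:theta-beta}, evaluated at the optimal $(a,b)$ coming from the equipoint condition \eqref{eq:sigiNTRO}, collapses to $2\Phi(\head)-1$, so minimizing $\Phi$ over $\head\in\NNhalf$ is the same as the optimization over pairs $(s,t)\in\mathbb N^2$ with $s+t=d$ solved in Theorem~\ref{thm:thetaExplicit}. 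The even/odd dichotomy of that theorem then corresponds precisely to the $\dd\in\NN$ versus $\dd\in\NNhalf\setminus\NN$ dichotomy here.

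The main obstacle is chain consistency: Theorem~\ref{thm:incr}\eqref{it:PhiInc} only provides comparisons along half-integers separated by $1$, whereas $\NNhalf\cap[\dd/2,\dd-1)$ can meet both chains $\{k:k\in\NN\}$ and $\{k+\tfrac12:k\in\NN\}$. The resolution is to invoke the companion monotonicity Theorem~\ref{thm:incr}\eqref{it:hatPhiInc} for the mean-based function $\hPhi$ (which bridges the two chains via a single half-integer shift), or alternatively to perform a direct comparison at the leftmost points of the two chains using the explicit equipoint formula. This bookkeeping, rather than the underlying monotonicity, is where the only subtlety lies.
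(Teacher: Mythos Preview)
Your translation step is exactly right and is all that the paper does: the theorem is presented as a restatement of Theorem~\ref{thm:thetaExplicit}, and the identity $\Phi(\head)=\tfrac{1}{2}\bigl(f_{2\head,2\tail}(\sigma_{2\head,2\tail})+1\bigr)$ from Lemma~\ref{lem:fisg} makes the minimizers of $\Phi$ over $\head\in\NNhalf$ correspond precisely to the minimizers of $f_{s,t}(\sigma_{s,t})$ over $s,t\in\NN$ with $s+t=2\dd$. Once you invoke Theorem~\ref{thm:thetaExplicit} (equivalently Proposition~\ref{prop:thetafirst}), you are done; no separate monotonicity argument is required.

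Where your plan goes wrong is in treating the chain-bridging as ``only bookkeeping'' that could be handled by Theorem~\ref{thm:incr}\eqref{it:hatPhiInc} or by a quick direct comparison. Theorem~\ref{thm:incr}\eqref{it:hatPhiInc} is again a step-by-$1$ statement ($\hPhi(\head)\le\hPhi(\head+1)$), so it does not compare the two chains $\{\dd/2,\dd/2+1,\dots\}$ and $\{\dd/2+\tfrac12,\dd/2+\tfrac32,\dots\}$ any more than item~\eqref{it:PhiInc} does; there is no half-integer shift there. The actual comparison of the two chain minima is the content of Lemmas~\ref{lem:trip2} and~\ref{lem:trip3}, which use the sandwich $g_{s,t}(\isp(s,t))\le f_{s,t}(\sigma_{s,t})\le g_{s,t}(\psi(s,t))$ from Lemma~\ref{lem:trip0} together with Chu's inequality on gamma quotients and several nontrivial estimates. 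That step is in fact the crux of the proof of Theorem~\ref{thm:thetaExplicit}, not a detail. So either cite Theorem~\ref{thm:thetaExplicit} and be done, or acknowledge that bridging the chains independently requires reproducing Lemmas~\ref{lem:trip2}--\ref{lem:trip3}; the route through $\hPhi$ does not exist.
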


\subsection{Reader's guide}
 \label{sec:guide}
  The rest of this article is organized as follows.  Results relating dilations to free spectrahedral inclusions
  needed for the proofs of the results for the matrix cube are collected in Section \ref{sec:liftslite}.
  Further general results on free spectrahedral inclusions and dilations appear in Section \ref{sec:generally free}.
  The results of Section \ref{sec:expectaverage} simplify the identification of the optimum $\vartheta(d)$ as
  defined by Equation \eqref{eq:BTbd}. They also  identify, implicitly,  constrained versions of this optimum.  The results of the previous sections
  are combined in Section \ref{sec:thetaBest} to prove Theorem \ref{thm:dilate} and Theorem \ref{thm:sharp},
  as well as the weaker version of  Theorem \ref{thm:BtN} which asks that the  matrices $B$ have size $d$,
  and not just rank at most $d$.  In Section \ref{sec:prequest}, the constrained optima from Section \ref{sec:expectaverage}
  are identified, still implicitly, in terms of the regularized incomplete beta function, a result needed to
  complete the proof of Theorem \ref{thm:BtN} in Section \ref{sec:rankvsize} as well as in the remaining sections of the paper.  
  Theorem \ref{thm:thetaExplicit} is reformulated in terms of the beta function in advance of the following
  three sections which together establish Theorem \ref{thm:thetaExplicit}. 
  A half-integer generalization of Simmons' Theorem, inspired by the strategy in \cite{PR07} using 
  two step monotonicity,  is the topic of Section \ref{sec:simmhalf}.  A new lower bound for the median of the 
  Beta distribution and bounds for the equipoint appear in Section \ref{sec:lowBdSIS} and the bounds for the
  equipoint are used in Section \ref{sec:nov} to complete the proof of Theorem \ref{thm:thetaExplicit}.  
  Estimates for $\vartheta(d)$ in the case that $d$ is odd appear in Section \ref{sec:odd}. 
Section \ref{sec:balls} considers the problem of including the unit ball in $\R^g$ into a spectrahedron, and uses dilation theory to prove the worst case error inherent in its free relaxation, namely $g$.
  Finally,
  further probabilistic results and their proofs are exposited in Section \ref{sec:shortprob}.

The reader interested only in probabilistic results can 
 proceed to Sections \ref{sec:simmhalf}, \ref{sec:lowBdSIS}
  and \ref{sec:shortprob}.
 The reader interested only in the matrix cube problem can skip Section \ref{sec:generally free}; whereas the
 reader interested only in 
 dilation results 
 (absent formulas for $\vartheta(d)$)
 can focus on the sections up through 
 and including Section \ref{sec:generally free}. 

The original version of this manuscript\footnote{\url{https://arxiv.org/abs/1412.1481}} treated 
the case where the rank (size) $d$ of the LMI defining pencil of the containing spectrahedron is fixed, but the number of variables $g$ is not.
Subsequently, we and Davidson, Dor-On, Shalit and  Solel independently  consider inclusion problems
for balls (where $g$ is fixed, but $d$ is not). Our results in this direction appear in Section \ref{sec:balls}. 
We highly recommend the posting \cite{DDSS} for its many interesting results, including a far reaching generalization of 
the symmetry based inclusion result of Proposition \ref{prop:small g} and a fascinating connection with the theory of frames.
In addition \cite{DDSS} extends many spectrahedral inclusion results to the setting of (infinite dimensional) operators.
(See also \cite{Zal} which considers the operator setting with an emphasis on the unbounded domains
as well as various polynomial Positivstellens\"atze.) 
They also show  if a tuple $X$ of matrices  dilates to a commuting tuple $N$ of normal operators,
then $X$ dilates to a commuting tuple of normal matrices $T$ satisfying the inclusion $\sigma(T)\subset \sigma(N).$
In particular, using the (infinite dimensional) optimum bound $\vartheta(d)$ identified in this article, it follows that given a tuple $X$ (finite set) of
$d\times d$ symmetric contractions, there exists a tuple $T$ of commuting symmetric matrices and an isometry $V$ such that $X= \vartheta(d) V^* TV$.


\section{Dilations and Free Spectrahedral Inclusions}
\label{sec:liftslite}
\def\insc{r}
 This section presents preliminaries on free spectrahedral inclusions
 and dilations,\index{dilation}\index{free spectrahedral inclusion} tying 
 the existence of dilations to appropriate   commuting tuples (the commutability index)
 to free spectrahedral inclusion.

The following proposition gives a sufficient condition for the 
 inclusion of one spectrahedron in another. It will later be applied to $\cube,$  the  free 
cube.  Recall the definitions of $L_A(x),$ $L_A(X)$, $\cD_{L_A}$ and $\fs_{L_A}$ from
Section \ref{sec:op-dil}.   Any $r>0$ (and necessarily $1\ge r$) with the property that the inclusion $\fs_{L_A}\subseteq \fs_{L_B}$
 implies the  inclusion $r\cD_{L_A}\subseteq \cD_{L_B}$ provides an estimate for the
 error in testing spectrahedral inclusion using the free spectrahedral inclusion as a relaxation.
 Indeed, suppose that $\fs_{L_A}\subset \fs_{L_B}$, but, for $t>1$, that $t \fs_{L_A}\not\subseteq \fs_{L_B}.$
 The free relaxation amounts to finding the largest $\rho$ such that  $\rho\cD_{L_A}\subseteq\cD_{L_B}$ and concluding
 that necessarily $\rho \fs_{L_A}\subseteq \fs_{L_B}$.  Since $\rho\ge r$, it follows that $r$ then provides
 a lower bound for the error.

\begin{prop}
 \label{prop:commute-include}
Suppose $A$ is a $g$-tuple of symmetric $m\times m$ matrices, 
$d$ is a   positive integer,  $\insc >0$ and 
for each $X\in\cD_{L_A}(d)$ there is a Hilbert space $\mathcal H$, an isometry $W\colon\R^d\to \mathcal H$ and a tuple $T=(T_1,\dots,T_g)$ of
commuting bounded self-adjoint operators $T_i$ on $\mathcal H$ with joint spectrum contained in $\fs_{L_A}$
such that $\insc  X_i=W^*T_iW$ for all $i\in\{1,\dots,g\}$. If $B$ is a tuple of $d\times d$ symmetric matrices and 
$\fs_{L_A}\subset \fs_{L_B},$ then  $\insc  \cD_{L_A} \subset \cD_{L_B}$.
\end{prop}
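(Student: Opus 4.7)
The plan is to combine the commuting dilation hypothesis with joint spectral calculus. Fix $X \in \cD_{L_A}(d)$ and let $W\colon \R^d \to \cH$, together with a commuting $g$-tuple of self-adjoint operators $T = (T_1,\ldots,T_g)$, be as produced by the hypothesis, so that $rX_j = W^* T_j W$ for each $j$ and the joint spectrum $\sigma(T) \subseteq \fs_{L_A}$. The goal is to verify $L_B(rX) \succeq 0$, which is exactly the statement $rX \in \cD_{L_B}$.

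First I would invoke the spectral theorem for commuting bounded self-adjoint operators to produce a joint spectral measure $E$ on $\R^g$ supported in $\sigma(T)$, with $T_j = \int t_j\, dE(t)$. Then
\begin{equation*}
L_B(T) \;=\; I_d \otimes I_\cH - \sum_{j=1}^g B_j \otimes T_j \;=\; \int L_B(t)\, dE(t).
\end{equation*}
By the assumed containment $\fs_{L_A} \subseteq \fs_{L_B}$, for every $t$ in the support of $E$ we have $L_B(t) \succeq 0$, and the operator-valued integral of a pointwise positive integrand against a positive spectral measure is positive. Hence $L_B(T) \succeq 0$.

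Next I would compress by the isometry $I_d \otimes W$. Using $W^* W = I_d$ together with the compression relation $W^* T_j W = rX_j$, a direct tensor-product computation yields
\begin{equation*}
(I_d \otimes W)^* L_B(T) (I_d \otimes W) \;=\; I - \sum_{j=1}^g B_j \otimes (W^* T_j W) \;=\; L_B(rX),
\end{equation*}
so $L_B(rX)$ is the compression of a positive semidefinite operator and is therefore itself positive semidefinite.

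The key step — and the reason the hypothesis insists on commutativity and not merely self-adjointness of the dilation — is the passage from scalar positivity on $\fs_{L_A}$ to operator positivity of $L_B(T)$ via joint functional calculus. Without commutativity there is no joint spectral measure, and scalar positivity on a noncommutative ``joint spectrum'' need not transfer to operator positivity; with commutativity this reduction is essentially immediate. The remaining tensor-product bookkeeping is routine, so no serious obstacle is expected.
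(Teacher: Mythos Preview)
Your spectral-calculus and compression argument is correct and is essentially identical to the paper's: form the joint spectral measure of $T$, integrate $L_B(t)$ against it to get $L_B(T)\succeq 0$, then compress by $I_d\otimes W$ to obtain $L_B(rX)\succeq 0$.

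However, there is a gap at the very end. The conclusion of the proposition is $r\,\cD_{L_A}\subset\cD_{L_B}$, meaning $r\,\cD_{L_A}(n)\subset\cD_{L_B}(n)$ for \emph{every} $n$, whereas the dilation hypothesis is only assumed for tuples $X\in\cD_{L_A}(d)$ of size $d$. Your argument therefore establishes only $r\,\cD_{L_A}(d)\subset\cD_{L_B}(d)$. The paper closes this gap with a separate lemma (Lemma~\ref{lem:ddoesit}): since $B$ has size $d$, the inclusion at level $d$ automatically propagates to all levels $n$. The idea is that for $X$ of size $n>d$ and a vector $x\in\R^d\otimes\R^n$, one can compress each $X_j$ to the span of the $d$ components of $x$ in the second tensor factor, landing in $\cD_{L_A}(d')$ for some $d'\le d$; for $n<d$ one pads with zeros. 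You need to either prove or cite such a reduction to complete the argument.
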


\begin{rem}\rm
 It turns out that in the case of $\cD_{L_A}=\cubeg$,\index{cube}\index{free cube} one only needs to assume that $B$ is a tuple of $m\times m$ symmetric matrices
  each of rank at most $d$. See \cite{BtN} or  Section \ref{sec:rankvsize} of this paper, where an elaboration on the 
  argument below plus special properties of the $\cubeg$ are used to establish this result. 
\end{rem}

 The proof of the proposition employs the following lemma which will also be used in Section \ref{sec:rankvsize}.

\begin{lemma}
 \label{lem:ddoesit}
  If $A$ is a $g$-tuple of symmetric $m\times m$ matrices, $B$ is a 
  $g$-tuple of symmetric $d\times d$ matrices and 
 if $\rh \cD_{L_A}(d)\subset  \cD_{L_B}(d)$, then
  $\rh  \cD_{L_A}(n)\subset \cD_{L_B}(n)$ for every $n$. 
\end{lemma}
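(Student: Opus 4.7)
The plan is to reduce inclusion at an arbitrary level $n$ to inclusion at level $d$ by standard compression/inflation arguments, splitting into the cases $n\ge d$ and $n<d$. In both cases the workhorse is the identity $L_A(W^*XW)=(I_m\otimes W)^*L_A(X)(I_m\otimes W)$ valid for any isometry $W$, which says that compressing a free spectrahedron by an isometry lands back inside the free spectrahedron at the smaller level (and analogously for $L_B$).

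For $n\ge d$, I would fix $X\in\cD_{L_A}(n)$ and verify $L_B(\rh X)\succeq 0$ by testing against an arbitrary $v\in\R^{dn}$. Writing $v=\sum_{k=1}^d e_k\otimes u_k$ with $u_k\in\R^n$, the vectors $u_1,\dots,u_d$ span a subspace of dimension at most $d\le n$, so one can choose an isometry $W\colon\R^d\to\R^n$ whose range contains every $u_k$ and thereby write $v=(I_d\otimes W)\tilde v$ for some $\tilde v\in\R^{d^2}$. Setting $Y_j:=W^*X_jW\in\smat_d$, the key identity gives $Y\in\cD_{L_A}(d)$; by hypothesis $\rh Y\in\cD_{L_B}(d)$, and the same identity applied to $L_B$ yields
\[
v^*L_B(\rh X)v=\tilde v^*(I_d\otimes W)^*L_B(\rh X)(I_d\otimes W)\tilde v=\tilde v^*L_B(\rh Y)\tilde v\ge 0.
\]

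For $n<d$, I would instead inflate: set $\tilde X_j:=X_j\oplus 0_{d-n}\in\smat_d$. A direct block computation, or equivalently a swap of tensor factors, shows that $L_A(\tilde X)$ is unitarily equivalent to $L_A(X)\oplus I_{m(d-n)}$, so $\tilde X\in\cD_{L_A}(d)$. Applying the hypothesis gives $\rh\tilde X\in\cD_{L_B}(d)$, and the corresponding unitary equivalence for $L_B$ transfers positivity back to conclude $L_B(\rh X)\succeq 0$.

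The only real obstacle is the routine bookkeeping with tensor-factor orderings needed to confirm that the relevant reorderings are genuine unitary conjugations; the conceptual content is entirely the standard fact that free spectrahedra are closed under isometric compression and under direct sums with trivial blocks.
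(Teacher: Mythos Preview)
Your proof is correct and follows essentially the same approach as the paper: both split into the cases $n\ge d$ and $n<d$, using compression for the former and direct-sum inflation for the latter. The only cosmetic difference is that in the $n\ge d$ case the paper projects onto the span of the $u_k$ (obtaining a compression at some level $r\le d$), whereas you pick an isometry $W\colon\R^d\to\R^n$ whose range contains that span, landing exactly at level $d$; your version is marginally cleaner since it applies the hypothesis directly at level $d$ rather than implicitly relying on the $n<d$ case to justify the inclusion at levels $r<d$.
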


\begin{proof}
Suppose $\rh \cD_{L_A}(d)\subset  \cD_{L_B}(d)$, $n\in\N$, $n\ge d$ and $(X_1,\dots X_g)\in\cD_{L_A}(n)$. 
  We have to show that $\rh (X_1,\dots X_g)\in \cD_{L_B}(n)$.
 Given $x\in\mathbb R^d \otimes \mathbb R^n$, write
\[
  x =\sum_{s=1}^d e_s \otimes x_s,
\]
where $e_s$ are the standard basis vectors of $\R^d$. 
  Let $M$ denote the span of $\{x_s:1\le s\le d\}$
  and let  $P$ denote the projection onto $M$. Then
\begin{equation}
 \label{eq:reftag}
 \begin{split}
\big   \langle (\sum B_j\otimes X_j )x,x\big\rangle 
    & = \sum_{j,s,t} \langle B_je_s,e_t \rangle \, \langle X_jx_s,x_t\rangle\\
  & = \sum_{j,s,t} \langle B_je_s,e_t \rangle \, \langle PX_jP x_s,x_t\rangle\\
  & =\big \langle (\sum B_j \otimes PX_jP ) x,x\big\rangle .
 \end{split}
\end{equation}
 Now $\rh PXP= \rh (PX_1P,\dots,PX_gP)\in \rh \cD_{L_A}(r)\subseteq \cD_{L_B}(r)$ where $r\le d$
  is the dimension of $M$. Hence, by Equation \eqref{eq:reftag},
\[
\rh \big \langle (\sum B_j\otimes X_j) x,x \big\rangle \le  \|x\|^2.  
\]
 
  For $n<d$, simply taking a direct sum with $0$ (of size $n-d$) produces the tuple $X\oplus 0 \in \cD_{L_A}(d)$ and
  hence $\rh X\oplus 0 \in \cD_{L_B}(d)$ by hypothesis.  Compressing to the first summand gives $\rh X\in\cD_{L_B}(n)$ and
  the proof is complete.  
\end{proof}

\begin{proof}[Proof of Proposition {\rm\ref{prop:commute-include}}]
Suppose $\fs_{L_A}\subset \fs_{L_B}$ and let $X\in\cD_{L_A}(d)$. 
Choose a Hilbert space $\mathcal H$, an isometry $W\colon\R^d\to \mathcal H$ and a tuple $T=(T_1,\dots,T_g)$ of
commuting bounded self-adjoint operators $T_i$ on $\mathcal H$ with joint spectrum contained in $\fs_{L_A}$
such that $\insc  X_i=W^*T_iW$ for all $i\in\{1,\dots,g\}$. Then the joint spectrum of $T$ is contained in $\fs_{L_B}$.
Writing $B=\sum_{i=1}^gB_ix_i$ with symmetric $B_i\in\smatn$, we have to show that
$\insc  \sum_{i=1}^gB_i\otimes X_i\preceq I_{dn}$.
Let $E$ denote the joint spectral measure of $T$ whose support is contained in $\fs_{L_B}\subseteq\R^g$.
Then
\begin{align*}
 \insc  \sum_{i=1}^gB_i\otimes X_i&=\sum_{i=1}^gB_i\otimes W^*T_iW\\
&=\sum_{i=1}^gB_i\otimes W^*\left(\int_{\fs_{L_B}}y_idE(y)\right)W\\
&=\int_{\fs_{L_B}}\underbrace{\left(\sum_{i=1}^gB_iy_j\right)}_{\preceq I_d}\otimes\underbrace{W^*dE(y)W}_{\succeq0}\\
&\preceq\int_{\fs_{L_B}}I_d\otimes W^*dE(y)W\\
&= I_d\otimes W^*\left(\int_{\fs_{L_B}}dE(y)\right)W\\
&= I_d\otimes W^*\id_{\mathcal H}  W= I_d\otimes W^*W= I_d\otimes I_n= I_{dn}.
 \end{align*}
Hence $X\in\cD_{L_B}(d)$.  An application of Lemma \ref{lem:ddoesit} completes the proof.
\end{proof}

\section{Lifting and Averaging}
 \label{sec:average}
 This subsection details the connection between averages of matrices over the orthogonal
 group and the dilations\index{averaging}\index{dilation} 
 of tuples of symmetric matrices  to commuting tuples of
contractive self-adjoint operators, a foundation for the proof of Theorem \ref{thm:dilate}.

 Let $M_d$ denote the collection of $d\times d$ matrices. 
 Let $O(d)\subset M_d$ denote the orthogonal group and let $dU$ denote 
the Haar measure on $O(d)$.  Let $\mfD(d)$  denote the collection 
 of measurable functions $D\colon O(d) \to M_d$ which take diagonal
 contractive values. Thus, letting $\mathfrak{M}(O(d),M_d)$ denote 
  the measurable functions from $O(d)$ to $M_d$, 
\begin{equation}
 \label{eq:defDd}
  \mfD(d) =\{ D\in \mathfrak{M}(O(d),M_d) \, : \,  D(U) \mbox{ is diagonal and } \|D(U)\|\le 1 \mbox{ for every } U\in O(d)\}.
\end{equation}

 Let $\mathcal H$ denote the Hilbert space \index{$\mathcal H$}
\begin{equation}
 \label{eq:H}
   \mathcal H=\R^d\otimes L^2(O(d))=L^2(O(d))^d=L^2(O(d),\R^d).
\end{equation}
 Let $V:\mathbb R^d\to \mathcal H$ denote the mapping \index{$V$}
\begin{equation}
\label{eq:V}
 Vx(U) =x.
\end{equation}
 Thus, $V$ embeds $\mathbb R^d$ into $\mathcal H$ as the constant functions. 
 For $D\in \mfD(d)$, define $\TM_D:\mathcal H\to \mathcal H$ by 
\[
(\TM_Df)(U) = U D(U) U^* f(U)
\]
for all $U\in O(d)$. Because $D(U)$ is pointwise a symmetric contraction for all $U\in O(d)$, $\TM_D$ is a self-adjoint contraction on $\mathcal H$.
Let
\begin{equation}
 \label{eq:defcCd}
 \fC_d =\{\TM_D:D\in \mfD(d)\}.
\end{equation}

\begin{rem}\rm
 \label{rem:VvD}
    Alternately one could define $V$ by $Vx(U) = U^* x$ instead of conjugating $D(U)$ by $U$ and $U^\ast$.   
\end{rem}

\begin{lemma}
 \label{lem:lift1}
 Each  $\TM_D$ is a self-adjoint contraction. 
\end{lemma}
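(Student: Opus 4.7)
The plan is to observe that $M_D$ is a fiberwise multiplication operator on $\mathcal H=L^2(O(d),\mathbb R^d)$: for each $U\in O(d)$, it acts on the fiber $\mathbb R^d$ by the matrix $A(U):=UD(U)U^*$. So everything reduces to verifying that $A(U)$ is, pointwise in $U$, a symmetric contraction on $\mathbb R^d$, and then integrating.

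For pointwise self-adjointness, since $D(U)$ is a real diagonal matrix it is symmetric, so
\[
A(U)^*=(UD(U)U^*)^*=UD(U)^*U^*=UD(U)U^*=A(U).
\]
For the pointwise contractivity, since $U\in O(d)$ is orthogonal,
\[
\|A(U)\|=\|UD(U)U^*\|=\|D(U)\|\le 1,
\]
by the definition of $\mathfrak D(d)$.

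To pass from fibers to $\mathcal H$, I would just write out the inner products against the Haar measure. For $f,g\in\mathcal H$,
\[
\langle M_Df,g\rangle=\int_{O(d)}\langle A(U)f(U),g(U)\rangle_{\mathbb R^d}\,dU=\int_{O(d)}\langle f(U),A(U)g(U)\rangle_{\mathbb R^d}\,dU=\langle f,M_Dg\rangle,
\]
using pointwise self-adjointness, so $M_D=M_D^*$. For the norm bound,
\[
\|M_Df\|^2=\int_{O(d)}\|A(U)f(U)\|^2\,dU\le\int_{O(d)}\|f(U)\|^2\,dU=\|f\|^2,
\]
using $\|A(U)\|\le 1$. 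There is no real obstacle; the whole content is that $M_D$ is a direct integral of symmetric contractions and hence inherits these two properties.
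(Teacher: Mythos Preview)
Your argument is correct and is exactly the reasoning the paper intends: the paper does not give a separate proof of this lemma, remarking only that ``because $D(U)$ is pointwise a symmetric contraction for all $U\in O(d)$, $M_D$ is a self-adjoint contraction on $\mathcal H$,'' and you have simply written out the routine verification of that remark.
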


\begin{lemma}
 \label{lem:lift2}
 If $D,E\in \mfD(d),$ then $\TM_{DE}=\TM_{D}\circ \TM_{E}=\TM_{E}\circ \TM_{D}$.
 Thus, $\TM_{D}$ and $\TM_{E}$ commute. 
\end{lemma}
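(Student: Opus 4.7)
The plan is a direct computation. Fix $f\in\mathcal H$ and $U\in O(d)$. Unwinding the definition of $\TM_E$ gives
\[
(\TM_E f)(U) = U E(U) U^* f(U),
\]
and then applying $\TM_D$ to this function and evaluating at $U$ yields
\[
(\TM_D \TM_E f)(U) = U D(U) U^* \cdot U E(U) U^* f(U) = U D(U) E(U) U^* f(U),
\]
where the cancellation $U^* U = I_d$ in the middle is the crux of the first step.

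Next, I invoke the defining feature of $\mfD(d)$: the values $D(U)$ and $E(U)$ are both diagonal matrices, and diagonal matrices commute. Hence $D(U) E(U) = E(U) D(U) = (DE)(U)$, and the product $DE$ itself lies in $\mfD(d)$ since the product of two diagonal contractions is again a diagonal contraction. Therefore
\[
(\TM_D \TM_E f)(U) = U (DE)(U) U^* f(U) = (\TM_{DE} f)(U).
\]
Since $f$ and $U$ were arbitrary, $\TM_D \circ \TM_E = \TM_{DE}$.

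Swapping the roles of $D$ and $E$ in the same computation gives $\TM_E \circ \TM_D = \TM_{ED} = \TM_{DE}$, where the last equality again uses pointwise commutativity of diagonal matrices. Combining both identities yields $\TM_{DE} = \TM_D \circ \TM_E = \TM_E \circ \TM_D$, so $\TM_D$ and $\TM_E$ commute. There is no real obstacle here; the only substantive point is that diagonality is preserved under pointwise multiplication and forces commutativity, which is exactly why the construction \eqref{eq:defcCd} was arranged with a \emph{diagonal}-valued family conjugated by $U$.
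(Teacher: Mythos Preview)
Your proof is correct and follows exactly the same approach as the paper's: both rely on the pointwise commutativity of the diagonal-valued functions $D$ and $E$, which makes $U\mapsto UD(U)U^*$ and $U\mapsto UE(U)U^*$ commute pointwise. Your version simply spells out the computation in more detail than the paper's one-sentence proof.
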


\begin{proof}
  The result follows from the fact that $D$ and $E$ pointwise commute and
  hence the functions $U\mapsto U D(U)U^\ast$ and $U\mapsto UE(U)U^\ast$ 
  pointwise commute. 
\end{proof}

\begin{lemma}
 \label{lem:Viso}
 The mapping $V$ is an isometry and its adjoint 
\[V^*\colon L^2(O(d),\R^d)\to\R^d\] is given by
\[V^*(f)=\int_{O(d)}f(U)dU\]
for all $f\in L^2(O(d),\R^d)$.
\end{lemma}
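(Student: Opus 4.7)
The plan is to verify both claims by direct computation from the definitions, unwinding the identifications in \eqref{eq:H} and \eqref{eq:V}. The underlying fact is that the Haar measure $dU$ on the compact group $O(d)$ is normalized to be a probability measure (this is the convention already in force: on the sphere $S^{d-1}$ the paper uses the unique rotation-invariant probability measure, and restricting to $O(d)$ one uses the probability Haar measure likewise).

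For the isometry claim, I would compute $\|Vx\|_{\mathcal H}^2$ using the identification $\mathcal H=L^2(O(d),\R^d)$:
\[
\|Vx\|_{\mathcal H}^2=\int_{O(d)}\|Vx(U)\|_{\R^d}^2\,dU=\int_{O(d)}\|x\|_{\R^d}^2\,dU=\|x\|_{\R^d}^2,
\]
using that $Vx$ is the constant function with value $x$ and that $\int_{O(d)}dU=1$.

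For the formula for $V^*$, I would take arbitrary $x\in\R^d$ and $f\in L^2(O(d),\R^d)$ and compute the pairing
\[
\langle Vx,f\rangle_{\mathcal H}=\int_{O(d)}\langle Vx(U),f(U)\rangle_{\R^d}\,dU=\int_{O(d)}\langle x,f(U)\rangle_{\R^d}\,dU=\Big\langle x,\int_{O(d)}f(U)\,dU\Big\rangle_{\R^d},
\]
where the last step pulls the constant $x$ out of the integral (justified since the integrand is the inner product of a fixed vector with a Bochner-integrable function). By the defining property of the adjoint, this identifies $V^*f=\int_{O(d)}f(U)\,dU$.

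There is no real obstacle here; the only minor bookkeeping point is to note that $f\in L^2(O(d),\R^d)\subset L^1(O(d),\R^d)$ (since $O(d)$ has finite measure), so the Bochner integral $\int_{O(d)}f(U)\,dU$ exists as an element of $\R^d$, making the formula for $V^*f$ well-defined.
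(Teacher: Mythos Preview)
Your proof is correct and follows essentially the same direct computation as the paper's own argument. The only cosmetic difference is that the paper computes $V^*$ first and then deduces the isometry property from $\langle Vx,Vx\rangle=\langle x,V^*Vx\rangle=\langle x,\int_{O(d)}x\,dU\rangle=\langle x,x\rangle$, whereas you verify the isometry directly before turning to the adjoint.
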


\begin{proof}
For all $x\in\R^n$ and $f\in L^2(O(d),\R^d)$, we have
\[\langle Vx,f\rangle=\int_{O(d)}\langle x,f(U)\rangle dU=\left\langle x,\int_{O(d)}f(U)dU\right\rangle,\]
thus computing $V^*$. Moreover, $V$ is an isometry as
\[\langle Vx,Vx\rangle=\left\langle x,\int_{O(d)}xdU\right\rangle=\langle x,x\rangle.\qedhere\]
\end{proof}

\begin{lemma}
 \label{lem:averagelift}
 For  $D\in\mfD(d),$ 
\[
   V^*\TM_DV =  \int_{O(d)} U D(U) U^* dU.
\]
\end{lemma}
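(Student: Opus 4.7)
The plan is to chain together the explicit descriptions of $V$, $V^*$, and $\TM_D$ that have just been established, and verify the identity by evaluating both sides on an arbitrary $x\in\R^d$.

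First I would compute $\TM_D V$. By the definition \eqref{eq:V}, $(Vx)(U)=x$ is the constant $\R^d$-valued function on $O(d)$, so applying the formula $(\TM_D f)(U)=UD(U)U^\ast f(U)$ pointwise gives $(\TM_D Vx)(U) = UD(U)U^\ast x$. In particular $\TM_D Vx$ lies in $L^2(O(d),\R^d)$ since $D$ takes contractive values and $O(d)$ has finite Haar measure.

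Next I would apply $V^\ast$ using Lemma \ref{lem:Viso}, which identifies $V^\ast$ with integration against Haar measure. This yields
\[
V^\ast \TM_D V x = \int_{O(d)} U D(U) U^\ast x \, dU = \left(\int_{O(d)} U D(U) U^\ast \, dU\right) x,
\]
where in the last step the constant vector $x$ is pulled out of the (Bochner) integral. Since $x\in\R^d$ is arbitrary, this yields the claimed identity for the operator $V^\ast \TM_D V$ on $\R^d$.

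There is essentially no obstacle: the argument is a direct unpacking of definitions together with linearity of the integral. The only mild point worth noting is to justify the pull-out of $x$ from under the integral sign, which is immediate because $x$ does not depend on $U$ and the integrand $U\mapsto UD(U)U^\ast$ is a bounded measurable $M_d$-valued function on $O(d)$ (hence integrable with respect to the finite Haar measure).
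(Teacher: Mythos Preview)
Your proof is correct and follows essentially the same approach as the paper's: both unpack the definitions of $V$, $V^*$ (via Lemma~\ref{lem:Viso}), and $\TM_D$ and use linearity of the integral to move $x$ through. The only cosmetic difference is direction---you go from $V^*\TM_D Vx$ to the integral, while the paper starts from $C_D x$ and works back to $V^*(\TM_D Vx)$.
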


For notation purposes, let \index{$C_D$}
\begin{equation}
 \label{eq:defineCsub}
  C_D = \int_{O(d)} U D(U) U^* dU.
\end{equation}

\begin{proof}
For  $x\in\R^d$, we have,
\begin{align*}
C_Dx&=\left(\int_{O(d)} U D(U)U^\ast dU\right)x\\
&=\int_{O(d)} U D(U)U^\ast x\,dU\\
&=\int_{O(d)} U D(U)U^\ast \big((Vx)(U)\big)\,dU\\
&=\int_{O(d)}(\TM_D(Vx))(U)\,dU\\
&=V^*(\TM_D(Vx)). \qedhere
\end{align*}

\end{proof}

\begin{rem}\rm
 \label{rem:choosewisely}
   Suppose $\mathcal S$ is a subset of $\mfD(d)$ and consider the family of symmetric matrices $(C_D)_{D\in \mathcal S}$.
    The lemmas in this subsection imply that this family dilates to the commuting family of 
   self-adjoint  contractions $(\TM_D)_{D\in \mathcal S}$. 
   Let $\mu(D):=\frac{1}{\|C_D\|}$ for $D\in \mfD(d)$ and suppose
\[
 \mu := \sup\{\mu (D):D\in\mathcal S\}
\]
  is finite. 
  It follows that the collection of symmetric contractions $(\mu(D) C_D)_{D\in\mathcal S}$
  dilates to the commuting family $(\TM_{\mu(D) D} =\mu(D) \TM_D)_{D\in\mathcal S}$ of self-adjoint operators 
   of operator norm at most $\mu,$ 
 \[
  \mu(D) C_D  =  \mu(D) V^* M_D V
\]
 for all $D\in\mathcal S$.

Our aim, in the next few sections, is to turn this construction around. Namely, given a family $\mathcal C \subset M_d$
  of symmetric contractions (not necessarily commuting), we hope to find a $t\in[0,1]$ (as large as possible)
  and a family $(F_C)_{C\in\mathcal C}$ in $\mfD(d)$ such that 
\[
   t C = V^* \TM_{F_C} V.
\]
for all $C\in\mathcal C$. Any $t$ so obtained feeds into the hypotheses of Proposition \ref{prop:commute-include}.
\end{rem}

\section{A Simplified Form for $\vartheta$}
 \label{sec:expectaverage}
 Given a symmetric matrix $B$, let $\sign_0(B)=(p,n),$ 
  where $p,n\in\mathbb N_0$ denote the number of nonnegative  and negative eigenvalues of $B$
 respectively. 
 It is valuable to think of the optimization problem \eqref{eq:BTbd}
 over symmetric matrices $B$
 in two stages
 based on the signature.
 \index{$\sign_0(B)=(s,t)$}
 Let
 \beq
 \label{eq:BTbdst}
\ka_*(s,t)= \min_{\substack{B\in \smat_d\\
 \sign_0(B)=(s,t)\\[.5mm]\trace|B|=d}}
 \int_{S^{d-1}} | \xi^*B\xi | \,  \, d\xi
 \eeq
 and note that the minimization  \eqref{eq:BTbd}
 is 
 \beq
 \label{eq:BTbdm}
\frac{1}{\th(d)} =\ka_*(d):= \min\{\ka_*(s,t): s+t=d\}. 
 \eeq

  \index{$J(s,t;a,b)$}
 Given $s,t\in\mathbb N_0$ and $a,b\ge 0$, let 
\[
 J(s,t;a,b):= aI_s \oplus -b I_t.
\] 
  Thus $J(s,t;a,b)$ is the diagonal matrix whose diagonal reads
\[\underbrace{a,\dots,a}_{\text{$s$ times}},\underbrace{-b,\dots,-b}_{\text{$t$ times}}.\]

We simplify the optimization problem \eqref{eq:BTbd} as follows.  
   The first step  consists of showing, for fixed $s,t\in\mathbb N_0$ (with $s+t=d$),   that the optimization  can be performed over 
   the set $J(s,t;a,b)$ for  $a,b\ge 0$ such that $as+bt=d$.
  The second step is to establish, again for  fixed integers $s,t$, an implicit criteria to identify
the values of $a,b$ which optimize \eqref{eq:BTbdst}. 
 Toward this end we introduce the following notations. 
Define
\begin{equation}
 \label{eq:kstab}
  \kappa(s,t;a,b):=\int_{S^{d-1}} |\xi^* J(s,t;a,b)\xi| \, d\xi. 
\end{equation}\index{$\kappa(s,t;a,b)$}
 Finally, let for $1\le j\le s$, 
\begin{equation}
 \label{def:al}
\alpha(s,t;a,b) = \int_{S^{d-1}} \sgn \big[\xi^* J(s,t;a,b)\xi\big] \, \xi_j^2 \, d\xi,
\end{equation}\index{$\alpha(s,t;a,b)$}
 and, for $s+1 \le j \le d$, 
\begin{equation}
 \label{def:be}
\beta(s,t;a,b) = -\int_{S^{d-1}} \sgn \big[\xi^* J(s,t;a,b)\xi\big] \xi_j^2 \, d\xi.
\end{equation}\index{$\beta(s,t;a,b)$}
 (It is straightforward to check that $\alpha$ and $\beta$ are independent
 of the choices of $j$.)

\index{$\alpha$}\index{$\alpha(s,t;a,b)$} \index{$\beta$} \index{$\beta(s,t;a,b)$}

\begin{rem}\rm
 \label{rem:alpha}
 The quantities  $\alpha = \alpha(s,t;a,b)$ and $\beta=\beta(s,t;a,b)$ are
  interpreted in terms of the regularized beta function in Lemma \ref{lem:compalpha}.
\end{rem}

\begin{prop}
 \label{prop:starastarb}
  For each $d \in \N$, $s,t\in\N_0$  with $s+t=d$,  the 
 minimum in Equation \eqref{eq:BTbdst}   is  achieved at a $B$ of the form $J(s,t;a,b)$ where $a,b>0$
  and $as+bt=d$,  and
 \begin{equation*}\index{$\ka_*(s,t)$}
  \ka_*(s,t) =\min_{\substack{
  as + bt =d}} 
 \int_{S^{d-1}} |\xi^* J(s,t;a,b)\xi| \, d\xi.
\end{equation*}
Moreover, $\ka_*(d,0)=ka_*(0,d)\ge\ka_*(s,t)$ and for $s,t\in\mathbb N$, the minimum occurs at a 
 pair $\aofst,\bofst$ uniquely determined by  $\aofst,\bofst \ge 0$, $s\aofst + t \bofst =d$ and 
$$
   \alpha(s,t;\aofst,\bofst) = \beta(s,t;\aofst,\bofst),
$$
 so that $ \ka_*(s,t) =\kappa(s,t;a(s,t),b(s,t))$.
In particular, 
\begin{equation*}
  \ka_*(s,t) 
  =d \alpha(s,t;\aofst,\bofst) = d  \beta(s,t;\aofst,\bofst).
  \end{equation*}
  Consequently, the minimum in Equation \eqref{eq:BTbd} 
  is achieved and is
 \beq
\label{eq:kst=beta}
\min_{\substack{
   s+t =d   
  }}  \ka_*(s,t)=  \min_{\substack{
   s+t =d   
  }}  \ \  d \ \beta(s,t;\aofst,\bofst). 
\eeq
  It  occurs at a $B$ of 
  the form $J(s,t;a(s,t),b(s,t))$.
\end{prop}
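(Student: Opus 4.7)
The plan is to reduce the minimization \eqref{eq:BTbdst} in three stages: first to diagonal $B$, then to diagonal $B$ with at most two distinct eigenvalues (matrices of the form $J(s,t;a,b)$), and finally to the one-parameter optimization over $(a,b)$ with $sa+tb=d$.

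\textbf{Stage 1 (orthogonal invariance).} For any symmetric $B$, diagonalize $B=O^\T D O$ with $O\in O(d)$. The substitution $\eta=O\xi$ preserves the uniform probability measure on $S^{d-1}$, so
\[
\int_{S^{d-1}}|\xi^\T B\xi|\,d\xi=\int_{S^{d-1}}|\eta^\T D\eta|\,d\eta.
\]
Hence in \eqref{eq:BTbdst} we may assume $B$ is diagonal, and the constraints $\sign_0(B)=(s,t)$ and $\trace|B|=d$ are unchanged.

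\textbf{Stage 2 (symmetrization).} Fix the sign pattern so that $B=\diag(a_1,\dots,a_s,-b_1,\dots,-b_t)$ with $a_i\ge0$, $b_j\ge0$, and $\sum a_i+\sum b_j=d$. For each fixed $\xi$, the map $B\mapsto|\xi^\T B\xi|$ is convex, so the integral functional is convex in $B$. The measure on $S^{d-1}$ is invariant under the product group of permutations of the first $s$ coordinates and of the last $t$ coordinates. Averaging $B$ over the action of this group on its eigenvalues (which preserves both constraints) replaces the diagonal entries by their block-averages $a:=\tfrac1s\sum a_i$, $b:=\tfrac1t\sum b_j$, producing $J(s,t;a,b)$ with $sa+tb=d$. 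By Jensen's inequality, the integral can only decrease under this averaging. Thus the infimum in \eqref{eq:BTbdst} equals $\inf_{sa+tb=d,\,a,b\ge0}\kappa(s,t;a,b)$; the infimum is a minimum by continuity on the compact set $\{(a,b):a,b\ge0,\,sa+tb=d\}$.

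\textbf{Stage 3 (one-parameter optimum).} Assume $s,t\ge1$. Parametrize the constraint set by $a\in[0,d/s]$ with $b(a):=(d-sa)/t$, and let $f(a):=\kappa(s,t;a,b(a))$. Differentiating under the integral sign (justified since $\xi^\T J(s,t;a,b(a))\xi$ vanishes on a set of measure zero for generic $a$) and using $b'(a)=-s/t$ gives
\[
f'(a)=\int_{S^{d-1}}\sgn\!\left[\xi^\T J(s,t;a,b(a))\xi\right]\!\left(\sum_{j=1}^s\xi_j^2+\frac s t\sum_{j=s+1}^d\xi_j^2\right)d\xi=s\bigl(\alpha-\beta\bigr),
\]
where $\alpha=\alpha(s,t;a,b(a))$ and $\beta=\beta(s,t;a,b(a))$ are as defined in \eqref{def:al}, \eqref{def:be}. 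Since $f$ is convex and tends to $\int|\xi^\T(0\oplus(-(d/t)I_t))\xi|\,d\xi=d/t\cdot\int\sum_{j>s}\xi_j^2\,d\xi>0$ at the endpoint $a=0$ and similarly at $a=d/s$ (and $f$ is in fact strictly convex, which will follow from the nonvanishing Hessian once one expresses $\alpha,\beta$ via the beta function in Lemma~\ref{lem:compalpha}), the minimum is attained at the unique interior critical point, characterized by $\alpha(s,t;a,b)=\beta(s,t;a,b)$. Write $(\aofst,\bofst)$ for this unique minimizer.

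\textbf{Evaluation at the optimum and boundary.} Removing the absolute value using $|u|=u\,\sgn(u)$, at the optimum
\[
\ka_*(s,t)=\int_{S^{d-1}}\sgn\!\left[\xi^\T J\xi\right]\!\left(\aofst\sum_{j=1}^s\xi_j^2-\bofst\sum_{j=s+1}^d\xi_j^2\right)d\xi=s\aofst\alpha+t\bofst\beta=(s\aofst+t\bofst)\alpha=d\alpha=d\beta,
\]
using the optimality condition and the constraint $s\aofst+t\bofst=d$. For the boundary cases $(s,t)=(d,0)$ or $(0,d)$, the only admissible $B$ is $\pm I_d$, giving $\ka_*(d,0)=\ka_*(0,d)=1$. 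Finally, plugging in $(a,b)=(1,1)$ (which satisfies $s+t=d$) shows $\ka_*(s,t)\le\int_{S^{d-1}}\bigl|\sum_{j\le s}\xi_j^2-\sum_{j>s}\xi_j^2\bigr|d\xi\le 1$, establishing $\ka_*(d,0)=\ka_*(0,d)\ge\ka_*(s,t)$ and yielding \eqref{eq:kst=beta}.

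The main obstacle is Stage 3: rigorously justifying differentiation under the integral sign and the strict convexity (or at least uniqueness) of the critical point. Both hinge on controlling the size of the zero set $\{\xi:\xi^\T J(s,t;a,b)\xi=0\}$, which is a codimension-one algebraic subset of $S^{d-1}$; the explicit beta-function formulas developed in Section~\ref{sec:prequest} make this transparent, but an independent measure-theoretic argument also suffices.
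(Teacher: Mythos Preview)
Your Stages 1, 2, and the evaluation at the optimum match the paper's argument essentially verbatim: orthogonal invariance to reduce to diagonal $B$, averaging over the subgroup $\Sigma$ of permutations fixing $\{1,\dots,s\}$ and $\{s+1,\dots,d\}$ together with convexity of $|\cdot|$ to reduce to $J(s,t;a,b)$, and the identity $\kappa(s,t;a,b)=as\,\alpha+bt\,\beta$ combined with $sa+tb=d$ and $\alpha=\beta$ to get $\ka_*(s,t)=d\alpha=d\beta$. The boundary analysis ($\kappa=1$ when $(s,t)=(d,0)$ or $(a,b)$ is an endpoint) is also the same.

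Where you diverge is Stage~3. You differentiate under the integral to obtain $f'(a)=s(\alpha-\beta)$ and then invoke strict convexity (deferred to the beta-function formulas) for uniqueness; you correctly flag both steps as the main obstacle. The paper avoids both issues with a finite-difference trick. Writing any competitor as $J(s,t;a_*,b_*)+\lambda J(s,t;t,-s)$ and using $|u|-|v|=(\sgn u-\sgn v)\,v+\sgn(u)\,(u-v)$, the paper splits $\kappa(a,b)-\kappa(a_*,b_*)$ into two integrals. The first, $\int(\sgn[\xi^*J(a,b)\xi]-\sgn[\xi^*J(a_*,b_*)\xi])\,\xi^*J(a_*,b_*)\xi\,d\xi$, has a pointwise nonpositive integrand that is strictly negative on a set of positive measure, hence is strictly negative for $\lambda\neq0$. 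Optimality then forces the second integral, $\lambda\int\sgn[\xi^*J(a,b)\xi]\,\xi^*J(s,t;t,-s)\xi\,d\xi$, to be strictly positive; dividing by $\lambda$ and letting $\lambda\to0^\pm$ yields $\alpha(a_*,b_*)=\beta(a_*,b_*)$ with no differentiability hypothesis, and the strict inequality already established gives uniqueness without any second-order or convexity analysis. Your approach reaches the same destination, but the paper's variational comparison is self-contained and disposes of exactly the two loose ends you identified.
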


\begin{proof} 
   Let $\mathcal T$ denote the set of symmetric $d\times d$ matrices $B$ such that $\trace(|B|)=d$
  and note that $\cT$
   is a compact subset of $\mathbb S_d$. Hence \eqref{eq:BTbd} is well defined in that 
the    infimum in the definition
  of $\vartheta(d)$ is indeed a minimum.  
  Fix a $B\in\mathcal T$
and   suppose $B$ has $s$ nonnegative eigenvalues and $t$ negative eigenvalues
  (hence $s+t=d$).  
  Without loss of generality, assume that $B$ 
  is diagonal 
   with first $s$ diagonal entries $a_1,\dots,a_s$ nonnegative
 and last $t$ diagonal entries $-b_{s+1},\dots,-b_d$ negative (thus $b_j>0$). 
 Thus,
\[
  \int_{S^{d-1}} |\xi^* B \xi| \, d\xi =
    \int_{S^{d-1}} \Big| \sum_{j=1}^s  a_j \xi_j^2  - \sum_{j=s+1}^{d} b_j \xi_j^2 \Big| \, d\xi.
\]
  
  Let $\Sigma$ denote the subgroup of the group of permutations
  of size $n$ which leave invariant the sets $\{1,\dots,s\}$ and  
  $\{s+1,\dots,n\}$. Each 
  $\sigma\in \Sigma$ gives rise to a permutation matrix 
  $V_\sigma$.  It is readily checked that 
\[
  \int_{S^{d-1}} |\xi^* V_\sigma^\ast B V_\sigma \xi | \, d\xi  
    = \int_{S^{d-1}} \Big| \sum_{j=1}^s a_{\sigma(j)} \xi_j^2 - \sum_{j=s+1}^{d} b_{\sigma(j)} \xi_j^2 \Big|   \, d\xi .
\]
 Let $N$ denote the cardinality of $\Sigma$ and 
  note that $a = \frac{1}{N} \sum_{\sigma \in \Sigma} a_{\sigma(j)}$,
  $b = \frac{1}{N} \sum_{\sigma \in \Sigma} b_{\sigma(j)}$
    are independent of $j$.
Thus,
 \[
   \frac{1}{N} \sum_{\sigma\in \Sigma} V_\sigma^* B V_\sigma 
     = \sum_{\sigma\in\Sigma} \mbox{diag}\begin{pmatrix} a_{\sigma(1)} & \dots & a_{\sigma(s)} & - \ b_{\sigma(s+1)} & \dots & - b_{\sigma(d)}\end{pmatrix}
     = J(s,t;a,b).
\]

  Further, $as+bt=d$ (which depends on averaging over the subgroup $\Sigma$
  rather than the full symmetric group) and hence $J(s,t;a,b)\in\mathcal T$.  Therefore, 
  \[
 \begin{split}
 \int_{S^{d-1}} \big |\xi^* J(s,t;a,b)\xi \big | \, d\xi &= 
 \int_{S^{d-1}} \Big | \xi^* \big(\frac{1}{N} \sum_{\sigma\in\Sigma}  V_\sigma^\ast B V_\sigma  \big) \xi \Big | \, d\xi \\
  & \le  \frac{1}{N}  \sum_{\sigma\in\Sigma} \int_{S^{d-1}}| \xi^* V_\sigma B V_\sigma \xi |\, d\xi 
  = \frac 1 N
  \sum_{\sigma\in\Sigma}   \int_{S^{d-1}} |\xi^* B\xi | \, d\xi\\
 & =  \int_{S^{d-1}} |\xi^* B\xi | \, d\xi.
\end{split}
\]
Thus with $s,t \geq 0$ and $a,b \ge 0$ 
$$
 \min_{\substack{ s+t =d      
 \\[.5mm] as + bt =d}} 
 \int_{S^{d-1}} \big |\xi^* J(s,t;a,b)\xi \big | \, d\xi
\leq
 \min_{\substack{B\in \smat_d\\[.5mm]\trace|B|=d}}
 \int_{S^{d-1}} | \xi^*B\xi | \,  \, d\xi.
$$
By compactness of the underlying set for fixed $d$
the minimum  is attained; of course on a diagonal  matrix.

 Finally to write $\ka(s,t;a,b)$ in terms of $\alpha$ and $\beta$, note that
\[
\begin{split}
  \ka(s,t;a,b) &= \int_{S^{d-1}} \sgn \big[\xi^* J(s,t;a,b)\xi\big]  
\  \big(\xi^* J(s,t;a,b)\xi\big)
   \, d\xi \\
& = \int_{S^{d-1}} \sgn \big[\xi^* J(s,t;a,b)\xi\big]  \big( a\sum_{j=1}^s \xi_j^2 - b\sum_{j=s+1}^d \xi_j^2 \big) \, d\xi \\
&     = a s\alpha(s,t;a,b)+bt \beta(s,t;a,b).
  \end{split}
\]

To this point it has been established that there is a minimizer 
of the form $B=J(s,t;a,b)$.
 First  note that $\alpha,\beta\le \frac{1}{d}$, since, for instance,
\[
  d\, \alpha(s,t;a,b) \le d\, \int_{S^{d-1}} \xi_j^2 \, d\xi = \int_{S^{d-1}} \sum_{m=1}^d \xi_m^2\, d\xi = \int_{S^{d-1}} d\xi = 1.
\]
 Hence,
\[
 \ka(s,t;a,b) = as \alpha(s,t;a,b)+ bt \beta(s,t;a,b) \le \frac{1}{d}(as+bt)=1.
\]
Moreover, in the case that $s=d$ and $t=0$, then $B=I$ and $\ka_*(d,0)=1$. 
 Hence, $\ka_*(d,0)\ge \ka_*(s,t)$.  Turning to the case $s,t\in\mathbb N_0$,
 observe if $b=0$, then $a=\frac{d}{s}$ and  $\ka(s,t;\frac ds,0)=1$ and similarly, $\ka(s,t;0,\frac dt)=1$.
 Hence, for such  $s,t$,  the minimum is achieved at a point in the interior of the set $\{as+bt=d: a,b\ge 0\}$. 
 Thus, it can be assumed that minimum occurs at $B=J(s,t;a_*,b_*)$ for some $a_*,b_* >0$.

 Any other $J(s,t,a,b)$, for $a,b$ near $a_*,b_*,$ can be written as
\[
  J(s,t;a_*,b_*) + \lambda J(s,t;t,-s)
\]
 (in particular, $a_* + \lambda t >0$ as well as $b_* - \lambda s >0$). 
 By optimality of $a_*,b_*$, 
\[
 \begin{split}
  0 & \le  \int |\xi^* J(s,t;a,b) \xi|\, d\xi - \int |\xi^* J(s,t;a_*,b_*) \xi| \, d\xi \\
     & = \int \big ( \sgn [\xi^* J(s,t;a,b) \xi] - \sgn [\xi^* J(s,t;a_*,b_*) \xi] \big) \, \xi^* J(s,t;a_*,b_*)\xi  \, d\xi \\
         & \phantom{={}}  + \lambda \int \sgn [\xi^* J(s,t;a,b) \xi] \, \xi^* J(s,t;t,-s) \xi \, d\xi,        
 \end{split}
\]
  where the integrals are over $S^{d-1}$. Observe that the integrand of the first integral on the right hand side is
 always nonpositive and is negative on a set of positive measure. Hence this integral is negative. 
Hence,
\begin{equation}
 \label{eq:opt}
 0 <   \lambda \int \sgn [\xi^* J(s,t;a,b) \xi] \, \xi^* J(s,t;t,-s) \xi \, d\xi.
\end{equation}
 Choosing $\lambda>0$,  dividing Equation \eqref{eq:opt} by $\lambda$ and letting $\lambda$ tend to $0$ gives
\[
 0\le \int \sgn [\xi^* J(s,t;a_*,b_*) \xi] \, \xi^* J(s,t;t,-s) \xi \, d\xi.
\]
On the other hand, choosing $\lambda<0$, dividing by $\lambda$ and letting $\lambda$ tend to $0$ gives
the reverse inequality. 


 Hence,
\[
  0 = \int \sgn [\xi^* J(s,t;a_*,b_*) \xi] \, \xi^* J(s,t;t,-s) \xi \, d\xi
      = st \, \left ( \alpha(s,t;a_*,b_*)-\beta(s,t;a_*,b_*) \right).
\]
 
  Finally, the uniqueness of $a_*,b_*$ follows from the strict inequality in Equation \eqref{eq:opt},
  since $\lambda\ne 0$ corresponds exactly to  $(a,b)\ne (a_*,b_*)$.  We henceforth denote
  $(a_*,b_*)$ by $(a(s,t),b(s,t))$. In particular, $(a(s,t),b(s,t))$ is uniquely determined 
  by $a,b\ge 0$, $as+bt=d$ and $\alpha(s,t;a,b)=\beta(s,t;a,b)$.
\end{proof}

Note from the proof a limitation
of our $\alpha =\beta$ optimality condition.
It was obtained by fixing
$s,t$ and then optimizing over $a,b\geq0$.
Thus it sheds no light on subsequent minimization over
$s,t$.  To absorb this information 
requires many of the subsequent sections of this paper.

\section{$\th$ is the Optimal Bound}
\label{sec:thetaBest}
  In this section we establish Theorems \ref{thm:sharp} and  \ref{thm:dilate}. We also state and prove a version
  of  Theorem \ref{thm:BtN} under the assumption that $B$ is a tuple 
  of $d\times d$ matrices, rather than  the (weaker) assumption that it is
  a tuple of $n\times n$ matrices each with rank at most $d.$
   In  Section \ref{sec:averagemat2}
    we begin to connect, in the spirit of
  Remark \ref{rem:choosewisely}, the norm of $C_D$ to that of $M_D$.  
  The main results are in Section \ref{sec:optkstar}.

\subsection{Averages over $O(d)$ 
equal   averages over $S^{d-1}$ } 
 \label{sec:averagemat}

The next trivial lemma allows us to
 replace  certain averages over $O(d)$ 
with  averages over $S^{d-1}.$\index{averaging} 

\begin{lem}\label{lem:sph-ort}
 Suppose $\cB$ is a Banach space and let $S^{d-1}\subseteq\R^d$ denote  the unit sphere. 
If 
\beq
f: S^{d-1} \to \cB,
\eeq
 is an integrable function and $\gamma\in S^{d-1}$, then
\begin{equation}\label{eq:OrtSphere}
 \int_{S^{d-1}}f(\xi)\,d\xi=\int_{O(d)} f(U \gamma)\,dU.
\end{equation}
In particular,
\[
\int_{O(d)} f(U \gamma)\,dU
\]
does not depend on $\ga\in S^{d-1}$.
\end{lem}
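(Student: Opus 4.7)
The plan is to recognize this as the standard statement that the uniform probability measure on $S^{d-1}$ is the pushforward of Haar measure on $O(d)$ under the orbit map $U\mapsto U\gamma$ for any fixed $\gamma\in S^{d-1}$. Since the $S^{d-1}$-valued integral is understood coordinatewise (or in the Bochner sense, which reduces to the scalar case by applying bounded linear functionals), no Banach-space-specific machinery is needed beyond routine change of variables.

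First I would fix $\gamma\in S^{d-1}$ and define the pushforward measure $\mu_\gamma$ on the Borel sets of $S^{d-1}$ by $\mu_\gamma(E)=\int_{O(d)}\mathbf 1_E(U\gamma)\,dU$. The key computation is to verify that $\mu_\gamma$ is rotation invariant: for $W\in O(d)$ and a Borel set $E\subset S^{d-1}$,
\[
\mu_\gamma(WE)=\int_{O(d)}\mathbf 1_E(W^{-1}U\gamma)\,dU=\int_{O(d)}\mathbf 1_E(U\gamma)\,dU=\mu_\gamma(E),
\]
using the left-invariance of Haar measure in the middle step. Since $O(d)$ acts transitively on $S^{d-1}$ and preserves total mass $1$, uniqueness of the rotation-invariant probability measure on $S^{d-1}$ forces $\mu_\gamma$ to coincide with the uniform probability measure $d\xi$; in particular $\mu_\gamma$ is independent of $\gamma$.

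The identity \eqref{eq:OrtSphere} then follows by the standard change-of-variables formula for pushforward measures, first for simple functions and then by density/dominated convergence for integrable $f$. In the Banach-space-valued case, one reduces to the scalar case by pairing both sides with an arbitrary continuous linear functional $\ell\in\cB^*$ (so that $\ell\circ f$ is an integrable scalar function) and invoking the Hahn--Banach theorem to conclude equality of the Bochner integrals. The "in particular" assertion is immediate since the right-hand side of \eqref{eq:OrtSphere} no longer involves $\gamma$. The only potential obstacle is bookkeeping for the vector-valued integral, but this is entirely standard, and there is no deeper obstacle.
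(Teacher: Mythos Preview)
Your argument is correct and is the standard one. The paper does not give a proof of this lemma at all; it labels the result ``trivial'' and simply states it, so there is nothing further to compare.
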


We next apply Lemma \ref{lem:sph-ort} to represent the matrices $C_D$ defined in Equation \eqref{eq:defineCsub}
 as integrals over $S^{d-1}$. 
 Given $J\in\smatd$ and choosing an arbitrary unit vector  $\gamma$  in $\mathbb R^d$, define
 a matrix $E_J$ by \index{$E_J$}
\[
  E_J:= \int_{O(d)}\sgn\big[\gamma^* U^* J U\gamma\big]\,  
  U\gamma \gamma^* U^* \, dU.
\]
Note that $E_J$ depends only on $J$  but not on $\ga$. In fact, 
\begin{equation}
 \label{eq:EJ} 
   E_J = \int_{S^{d-1}}\sgn[\xi^*J\xi]\xi\xi^* \, d\xi
\end{equation}
by Lemma \ref{lem:sph-ort}.
 Given $J\in\smatd$, define 
 \[
  D_J: O(d) \to M_{d}
\]
 into the diagonal matrices given by
\begin{equation}
 \label{eq:defineDsubJ}
     D_J(U): = \sum_{j=1}^{d}\sgn[e_j^*U^* JU e_j] \, e_je_j^*.
\end{equation}
  In particular $D_J\in \mfD(d),$ where $\mfD(d)$ is defined in Equation \eqref{eq:defDd}.
 Recall, from \eqref{eq:defineCsub}, the definition of $C_D$. In particular, 
\[
  C_{D_J}= \int_{O(d)} U D_J(U) U^* dU.
\]

\begin{lemma}
 \label{lem:whywelikekappa}
   For $J\in\smatd$, independent of $j$, 
\[
  C_{D_J} = d \; E_J = 
\sum_{j=1}^d
  \int_{O(d)}  \sgn[e_j^* U^* JUe_j ] e_j e_j^* dU = d\int_{S^{d-1}} \sgn[\xi^* J\xi] \xi \xi^* \, d\xi.
\]
 Further, if $J\ne 0$, then $C_{D_J}\ne 0$. 
\end{lemma}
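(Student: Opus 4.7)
The plan is to unpack $C_{D_J}$ directly from the definitions and then apply Lemma \ref{lem:sph-ort} coordinate by coordinate. Starting from \eqref{eq:defineCsub} and \eqref{eq:defineDsubJ}, I would write
\[
  C_{D_J} \;=\; \int_{O(d)} U\!\left(\sum_{j=1}^d \sgn[e_j^* U^* J U e_j]\, e_j e_j^*\right)\! U^*\, dU
  \;=\; \sum_{j=1}^d \int_{O(d)} \sgn[(Ue_j)^*\,J\,(Ue_j)]\,(Ue_j)(Ue_j)^*\, dU,
\]
which is already the middle expression claimed in the lemma.

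Next, for each fixed $j$, the integrand depends on $U$ only through $Ue_j$, so I would apply Lemma \ref{lem:sph-ort} with $\gamma = e_j$ and $f(\xi) = \sgn[\xi^* J \xi]\,\xi\xi^*$ (a $\cB$-valued integrable function into $\smatd$). This converts the Haar average over $O(d)$ into the uniform integral over $S^{d-1}$:
\[
  \int_{O(d)} \sgn[(Ue_j)^*\,J\,(Ue_j)]\,(Ue_j)(Ue_j)^*\, dU \;=\; \int_{S^{d-1}} \sgn[\xi^* J \xi]\,\xi\xi^*\, d\xi \;=\; E_J,
\]
which is exactly the form \eqref{eq:EJ}. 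Crucially, the right-hand side is \emph{independent of $j$}, so summing over $j=1,\dots,d$ produces the factor $d$ and yields $C_{D_J} = d\,E_J = d\int_{S^{d-1}} \sgn[\xi^*J\xi]\,\xi\xi^*\,d\xi$, establishing all three equalities in the displayed chain.

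For the nonvanishing claim, I would test $E_J$ against $J$ itself via the trace pairing:
\[
  \trace(J E_J) \;=\; \int_{S^{d-1}} \sgn[\xi^* J\xi]\, \trace(J\xi\xi^*)\, d\xi \;=\; \int_{S^{d-1}} |\xi^* J \xi|\, d\xi.
\]
Since $J\ne0$, the quadratic form $\xi \mapsto \xi^* J \xi$ is a nonzero polynomial on $\R^d$ and therefore nonzero on an open subset of $S^{d-1}$; thus the last integral is strictly positive. Hence $\trace(J E_J) > 0$, so $E_J \ne 0$ and consequently $C_{D_J} = d\, E_J \ne 0$.

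There is no real obstacle here: the argument is a bookkeeping exercise tying together the definitions, together with one application of Lemma \ref{lem:sph-ort} per coordinate; the only subtlety is recognizing that independence of $j$ on the right-hand side produces the factor of $d$, and that positivity of $\int_{S^{d-1}}|\xi^*J\xi|\,d\xi$ suffices for the nondegeneracy claim.
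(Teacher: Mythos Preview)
Your proof is correct and follows exactly the approach the paper takes: unpack $C_{D_J}$ from the definitions, apply Lemma~\ref{lem:sph-ort} coordinatewise to pass to the sphere (yielding $E_J$ independently of $j$, hence the factor $d$), and for nonvanishing use the trace pairing $\trace(JE_J)=\int_{S^{d-1}}|\xi^*J\xi|\,d\xi>0$. One small remark: the expression you derive has $(Ue_j)(Ue_j)^*=Ue_je_j^*U^*$, not the bare $e_je_j^*$ that appears in the displayed chain of the lemma; your version is the correct one and what is actually needed.
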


\begin{proof}
 The first statement follows from the definitions and Equation \ref{eq:EJ}. 
 For the second statement, observe that
\[
 \trace(E_J J) = \int_{S^{d-1}} |\xi^* J \xi| \, d\xi >0. \qedhere
\] 
\end{proof}

\begin{rem}
 \label{rem:nomoreOd}
 The rest of this paper 
 uses averaging over the sphere and not the orthogonal group.
 But it should be noted that various arguments in Section \ref{sec:average}
use  integration over the orthogonal group in an  essential way.
\end{rem}

\ssec{Properties of matrices gotten as averages}
\label{sec:averagemat2}
 This section describes properties of the matrices $E_J$ defined by Equation \eqref{eq:EJ}.\index{averaging}

\begin{lem}
 \label{lem:EV}
 If $U$ is an orthogonal matrix 
 and $J\in\smatd,$ then 
\[
  E_{U^*JU}=U^*E_JU.
\]
\end{lem}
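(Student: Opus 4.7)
The plan is to prove this via a change of variables in the integral defining $E_J$, using the rotation invariance of the uniform probability measure on $S^{d-1}$. By \eqref{eq:EJ}, we have the sphere-integral representation
\[
E_{U^*JU} = \int_{S^{d-1}}\sgn[\xi^*U^*JU\xi]\,\xi\xi^*\,d\xi,
\]
so the task reduces to recognizing this integral as $U^*E_JU$.

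The key step is to substitute $\eta = U\xi$. Since $U\in O(d)$, this is a bijection $S^{d-1}\to S^{d-1}$, and the uniform probability measure on the sphere is invariant under orthogonal transformations, so $d\eta = d\xi$. Under this substitution, $\xi = U^*\eta$, the scalar $\xi^*U^*JU\xi$ becomes $\eta^*J\eta$, and the rank-one matrix $\xi\xi^*$ becomes $U^*\eta\eta^*U$. Pulling the factors $U^*$ and $U$ outside the integral (they are constants with respect to $\eta$) yields
\[
E_{U^*JU}=U^*\!\left(\int_{S^{d-1}}\sgn[\eta^*J\eta]\,\eta\eta^*\,d\eta\right)\!U = U^*E_JU,
\]
as desired.

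There is no real obstacle here; the result is essentially a direct consequence of rotation invariance of the surface measure. Alternatively, one could argue directly from the $O(d)$-integral definition of $E_J$ by using Lemma \ref{lem:sph-ort} (independence of the choice of $\gamma$) together with the left-invariance of Haar measure on $O(d)$: replacing the dummy variable $V$ by $VU$ transforms $\sgn[\gamma^*V^*U^*JUV\gamma]\,V\gamma\gamma^*V^*$ into the appropriate conjugate, producing the same identity. Either route is essentially a one-line change of variables.
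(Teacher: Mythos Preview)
Your proof is correct and is essentially the same as the paper's: both start from the sphere-integral formula \eqref{eq:EJ} and perform the orthogonal change of variables $\xi\mapsto U^*\xi$ (your $\eta=U\xi$), invoking rotation invariance of the surface measure. The paper just writes the substitution inline rather than naming the new variable.
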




\begin{proof}
 Using the invariance of Haar measure, 
\begin{align*}
E_{U^*JU}&=\int_{S^{d-1}}\sgn[\xi^*U^*JU\xi]\, \xi\xi^* \, d\xi\\
&=\int_{S^{d-1}}\sgn\big[(U^*\xi)^*U^*JU(U^*\xi)\big]\, (U^*\xi)(U^*\xi)^* \, d\xi\\
&=\int_{S^{d-1}}\sgn[\xi^*UU^*JUU^*\xi] \, U^*\xi\xi^*U \, d\xi,\\
&=U^*E_JU. \qedhere
\end{align*}
\end{proof}

Recall the definitions of $\alpha(s,t;a,b)$ and $\beta(s,t;a,b)$ from 
 Equations \eqref{def:al} and \eqref{def:be}. 

\begin{lemma}
 \label{lem:EJisJ}
   For $s,t\in\N$ and $a,b \ge 0,$ 
   \[
  E_{J(s,t;a,b)} = J\big(s,t;\alpha(s,t;a,b),\beta(s,t;a,b)\big) = \alpha(s,t;a,b) I_s \oplus -\beta(s,t;a,b) I_t. 
\]
 Moreover if $a,b$ are not both $0$, then $\alpha(s,t;a,b)$ and $\beta(s,t;a,b)$ are not both zero. 
\end{lemma}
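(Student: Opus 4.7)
The plan is to exploit the equivariance relation $E_{U^*JU} = U^* E_J U$ from Lemma~\ref{lem:EV} together with the abundant symmetry of the matrix $J=J(s,t;a,b)$.

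First I would observe that for every orthogonal $U$ of the block form $U = O_1 \oplus O_2$ with $O_1 \in O(s)$, $O_2 \in O(t)$, one has $U^* J U = J$, since $J = aI_s \oplus (-b)I_t$ is a scalar multiple of the identity on each block. Lemma~\ref{lem:EV} then gives $U^* E_J U = E_J$, i.e., $E_J$ commutes with the subgroup $O(s) \times O(t) \subseteq O(d)$. A standard Schur-type argument (or a direct check: take $O_1$ arbitrary and $O_2 = I_t$, then swap roles) forces $E_J$ to be block diagonal with each block a scalar multiple of the identity, say $E_J = \lambda I_s \oplus \mu I_t$.

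Next I would compute $\lambda$ and $\mu$ by reading off diagonal entries. By the definition \eqref{eq:EJ} of $E_J$, the $(j,j)$-entry of $E_J$ is
\[
(E_J)_{jj} = \int_{S^{d-1}} \sgn[\xi^* J\xi]\,\xi_j^2\,d\xi.
\]
For $1\le j \le s$ this is precisely $\alpha(s,t;a,b)$, and for $s+1\le j\le d$ it equals $-\beta(s,t;a,b)$, by the definitions \eqref{def:al} and \eqref{def:be}. Thus $\lambda = \alpha(s,t;a,b)$ and $\mu = -\beta(s,t;a,b)$, giving the desired identity
\[
E_{J(s,t;a,b)} = \alpha(s,t;a,b)\,I_s \oplus -\beta(s,t;a,b)\,I_t = J(s,t;\alpha,\beta).
\]

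For the final claim, suppose $(a,b) \ne (0,0)$, so $J(s,t;a,b) \ne 0$. I would repeat the trace computation already used at the end of the proof of Lemma~\ref{lem:whywelikekappa}:
\[
\trace\bigl(E_{J(s,t;a,b)}\,J(s,t;a,b)\bigr) = \int_{S^{d-1}} |\xi^* J(s,t;a,b)\xi|\,d\xi > 0,
\]
since the integrand is nonnegative and strictly positive on a set of positive measure (as $J\ne0$). If $\alpha$ and $\beta$ were both zero then $E_{J(s,t;a,b)} = 0$, contradicting positivity of the trace. This settles the nonvanishing assertion. The only subtlety I anticipate is the Schur-type step, but once one observes that conjugation by $O_1 \oplus I_t$ already forces the top-left $s\times s$ block to be a scalar multiple of $I_s$ (and similarly for the bottom-right block), and that conjugation by a block permutation interchanging suitable coordinates kills the off-diagonal $s \times t$ blocks via a sign argument, the result is routine.
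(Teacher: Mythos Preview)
Your proposal is correct and follows essentially the same route as the paper: invoke Lemma~\ref{lem:EV} with the block-diagonal subgroup $O(s)\times O(t)$ to force $E_J$ into the form $\alpha_0 I_s \oplus (-\beta_0) I_t$, identify $\alpha_0,\beta_0$ by reading off diagonal entries via \eqref{eq:EJ}, and appeal to the trace computation of Lemma~\ref{lem:whywelikekappa} for the nonvanishing. The only cosmetic difference is that the paper cites Lemma~\ref{lem:whywelikekappa} directly for $E_J\ne 0$ rather than repeating the trace argument, and it does not spell out the Schur step (your worry about killing the off-diagonal blocks via a sign argument is unnecessary: commutation with all of $O(s)\times O(t)$ already forces them to vanish).
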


\begin{proof}
  If $X,Y$ are $s\times s$ and $t\times t$ orthogonal matrices respectively, then $U=X\oplus Y\in O(d)$
 commutes with $J:=J(s,t;a,b)$ and by Lemma \ref{lem:EV},
\[
  U^* E_J U= E_J.
\]
 It follows that there exists $\alpha_0$ and $\beta_0$ such that  
$E_J = J(s,t;\alpha_0,\beta_0).$   That not both $\alpha_0$ and $\beta_0$ are zero
  follows from Lemma \ref{lem:whywelikekappa} which says $E_J\ne 0$.  Finally, in view of 
 Equation \eqref{eq:EJ}, 
\[
 \alpha_0 =  \langle E_J e_1,e_1 \rangle =  \int_{S^{d-1}} 
 \sgn [\xi^* J(s,t;,a,t)\xi]  \, \xi_1^2 \, d\xi = \alpha(s,t;a,b)
\]
 and similarly $\beta_0=\beta(s,t;a,b)$. 
\end{proof}

\begin{lemma}
 \label{lem:magicab}
  Let  $s,t\in\mathbb N_0$  be given and let $d=s+t$. Let $a(s,t),b(s,t)$ denote the
  pair from Proposition {\rm\ref{prop:starastarb}}. Thus, $a(s,t),b(s,t)$ is uniquely determined by
\begin{enumerate}[\rm (i)]
 \item $\aofst, \bofst \ge 0$;
 \item $s\, \aofst + t\, \bofst = d$; 
 \item \label{it:al=be} $\alpha(s,t;\aofst,\bofst) = \beta(s,t:\aofst,\bofst)$;
\end{enumerate}
 and produces the minimum,
\begin{enumerate}[\rm (i)]
 \item[\rm (iv)]  $\ka_*(s,t) = \ka(s,t;\aofst,\bofst)=d\alpha(s,t,\aofst,\bofst)$.
\end{enumerate}
  Abbreviate  $J_* = J\big(s,t;\aofst,\bofst\big)$;  then
\begin{equation}
\label{it:magicab2} 
   \ds E_{J_*} = \frac{\ka_*(s,t)}{d} J(s,t;1,1).
\end{equation}
\end{lemma}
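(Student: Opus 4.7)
The plan is to observe that equation \eqref{it:magicab2} is essentially a direct consequence of Lemma \ref{lem:EJisJ} combined with the defining conditions (iii) and (iv) of the pair $(a(s,t),b(s,t))$, with no further computation required.

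First, I would apply Lemma \ref{lem:EJisJ} to the matrix $J_* = J(s,t;a(s,t),b(s,t))$. Since $a(s,t),b(s,t)\ge 0$ by condition (i) and not both are zero (indeed $s\,a(s,t)+t\,b(s,t)=d>0$ by condition (ii)), the lemma applies and yields
\[
E_{J_*} \;=\; J\bigl(s,t;\alpha(s,t;a(s,t),b(s,t)),\,\beta(s,t;a(s,t),b(s,t))\bigr)
\;=\; \alpha\, I_s \,\oplus\, (-\beta)\, I_t,
\]
where I abbreviate $\alpha:=\alpha(s,t;a(s,t),b(s,t))$ and $\beta:=\beta(s,t;a(s,t),b(s,t))$.

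Next I would invoke the defining optimality condition (iii), namely $\alpha=\beta$. Substituting into the display above gives
\[
E_{J_*} \;=\; \alpha\, I_s \,\oplus\, (-\alpha)\, I_t \;=\; \alpha\, J(s,t;1,1).
\]
Finally, condition (iv) says that $\ka_*(s,t) = d\,\alpha$, i.e.\ $\alpha = \ka_*(s,t)/d$, so
\[
E_{J_*} \;=\; \frac{\ka_*(s,t)}{d}\, J(s,t;1,1),
\]
which is exactly \eqref{it:magicab2}.

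There is no real obstacle: the work has already been done in Proposition \ref{prop:starastarb} (which supplies the existence and characterization of $(a(s,t),b(s,t))$ and the value of $\ka_*(s,t)$) and in Lemma \ref{lem:EJisJ} (which evaluates $E_J$ on block-scalar diagonal matrices using only the symmetry of the integrand under the block-orthogonal subgroup $O(s)\oplus O(t)$). The only thing to watch is a minor degenerate case: if $s=0$ or $t=0$, then $J(s,t;1,1)$ collapses to $\pm I_d$ and the statement reduces to $E_{J_*} = \pm \frac{\ka_*(s,t)}{d} I_d$, which one checks directly from \eqref{eq:EJ} (in which case $\sgn[\xi^*J_*\xi]$ is a constant sign and $\int_{S^{d-1}}\xi\xi^*\,d\xi = \tfrac{1}{d}I_d$). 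So the conclusion holds in all cases covered by the statement.
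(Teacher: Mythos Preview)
Your proof is correct and follows essentially the same approach as the paper: apply Lemma~\ref{lem:EJisJ} to $J_*$, then use condition~(iii) to collapse $\alpha I_s\oplus(-\beta)I_t$ to $\alpha\,J(s,t;1,1)$, and finally use condition~(iv) to rewrite $\alpha=\ka_*(s,t)/d$. Your explicit treatment of the degenerate cases $s=0$ or $t=0$ is a useful addition, since Lemma~\ref{lem:EJisJ} as stated assumes $s,t\in\mathbb N$.
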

\index{$J_*$}

\begin{proof}
 Since Proposition \ref{prop:starastarb} contains the first four items, it only remains to establish Equation \eqref{it:magicab2}.
 From Lemma \ref{lem:EJisJ} and Item \eqref{it:al=be},
\[
  d E_{J_*} = d \; \alpha(s,t;\aofst,\bofst) J(s,t;1,1) = \ka_*(s,t) J(s,t;1,1).\qedhere
\]
\end{proof}

\subsection{Dilating to commuting self-adjoint operators}
A $d\times d$ matrix $R$ is a \df{signature matrix} if 
  $R=R^\ast$ and $R^2=I.$  Thus, a symmetric
  $R\in\smatd$ is a signature matrix if its spectrum lies
  in the set $\{-1,1\}$. 
 Let $\mathscr E(d)$ denote $d\times d$ signature matrices and $\cube(d)$
 the set of $d\times d$ symmetric contractions. 
 Thus $\mathscr E(d)\subset \cube(d)$.

\begin{lemma}
 \label{lem:extpoints}
The set of extreme points of the set of $\cube(d)$ is  $\mathscr E(d)$.
 Moreover, each element of $\cube(d)$ is a (finite) convex combination
of elements of $\mathscr E(d)$.
\end{lemma}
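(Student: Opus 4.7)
The plan is to establish the two assertions separately, and both reduce to elementary spectral theory after diagonalizing.

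For the extremality of signature matrices, suppose $R\in\mathscr E(d)$ and $R=\frac12(A+B)$ with $A,B\in\cube(d)$. Diagonalize $R=UDU^*$ with $D$ diagonal having entries in $\{-1,1\}$, and set $\widetilde A=U^*AU$, $\widetilde B=U^*BU$, which are still in $\cube(d)$. Looking at the $(i,i)$ diagonal entries, $D_{ii}=\frac12(\widetilde A_{ii}+\widetilde B_{ii})\in\{-1,1\}$, while $|\widetilde A_{ii}|,|\widetilde B_{ii}|\le 1$; hence $\widetilde A_{ii}=\widetilde B_{ii}=D_{ii}$. Now the key boundary rigidity observation: if $\widetilde A\preceq I$ and $\widetilde A_{ii}=1$, then $\langle (I-\widetilde A)e_i,e_i\rangle=0$ with $I-\widetilde A\succeq 0$, so the Cauchy-Schwarz inequality for positive semidefinite forms gives $(I-\widetilde A)e_i=0$, i.e., $\widetilde A e_i=e_i$. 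The analogous statement holds when $D_{ii}=-1$. Therefore $\widetilde A e_i=D_{ii}e_i$ for every $i$, so $\widetilde A=D$; likewise $\widetilde B=D$, and thus $A=B=R$.

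For the converse inclusion, let $X\in\cube(d)\setminus\mathscr E(d)$. Then $X$ has some eigenvalue $\la\in(-1,1)$. Diagonalize $X=UDU^*$ and pick an index $i$ with $D_{ii}=\la$, along with $\ep>0$ so small that $\la\pm\ep\in[-1,1]$. Setting $D^\pm:=D\pm\ep\,e_ie_i^*$ and $A^\pm:=UD^\pm U^*$ gives two distinct elements of $\cube(d)$ with $X=\frac12(A^++A^-)$, witnessing that $X$ is not extreme.

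For the second assertion, I would give an explicit finite convex combination rather than invoke Krein-Milman abstractly. Diagonalize $X=UDU^*$ with $D=\diag(\la_1,\dots,\la_d)$, $\la_i\in[-1,1]$. For each sign string $\ep=(\ep_1,\dots,\ep_d)\in\{-1,1\}^d$ set
\[
R_\ep:=U\,\diag(\ep_1,\dots,\ep_d)\,U^*\in\mathscr E(d),\qquad w_\ep:=\prod_{i=1}^d\frac{1+\ep_i\la_i}{2}.
\]
Since $|\la_i|\le 1$ we have $w_\ep\ge 0$, and factoring the sum as a product yields $\sum_\ep w_\ep=\prod_i\frac{(1+\la_i)+(1-\la_i)}{2}=1$. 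Moreover, for each $j$,
\[
\sum_\ep w_\ep\,\ep_j=\Big(\prod_{i\ne j}\sum_{\ep_i\in\{-1,1\}}\tfrac{1+\ep_i\la_i}{2}\Big)\cdot\sum_{\ep_j}\tfrac{1+\ep_j\la_j}{2}\ep_j=\la_j,
\]
so $\sum_\ep w_\ep\diag(\ep_1,\dots,\ep_d)=D$ and therefore $X=\sum_\ep w_\ep R_\ep$, as required.

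There is no real obstacle here; the only subtle point is the boundary rigidity step in Part 1, which one might be tempted to gloss over but is what forces $A$ and $B$ to share the eigenspaces of $R$. Everything else is either diagonalization plus a one-variable perturbation, or the combinatorial identity for the product weights $w_\ep$.
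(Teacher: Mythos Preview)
Your proof is correct. The approach differs from the paper's in two places worth noting.

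For the inclusion $\mathscr E(d)\subseteq\operatorname{ext}\cube(d)$, the paper simply leaves this to the reader, whereas you supply the boundary-rigidity argument (a positive semidefinite matrix with a zero diagonal entry has that entire row and column zero). This is the right idea and fills the gap the paper leaves.

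For the second assertion, the paper argues abstractly: once $\operatorname{ext}\cube(d)\subseteq\mathscr E(d)$ is known, Krein--Milman plus compactness of $\mathscr E(d)$ gives $\cube(d)=\overline{\operatorname{co}}(\mathscr E(d))=\operatorname{co}(\mathscr E(d))$. Your route is instead constructive: the product weights $w_\ep=\prod_i\frac{1+\ep_i\la_i}{2}$ exhibit each contraction explicitly as a convex combination of $2^d$ signature matrices. Your argument is more elementary and self-contained (no appeal to Krein--Milman), and it gives an explicit decomposition, at the cost of using $2^d$ terms rather than the $O(d^2)$ bound that Carath\'eodory would supply on top of the paper's argument. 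Either is perfectly adequate here.

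For the inclusion $\operatorname{ext}\cube(d)\subseteq\mathscr E(d)$, both you and the paper diagonalize and perturb a single eigenvalue lying in $(-1,1)$; the paper sends it all the way to $\pm 1$ with asymmetric weights, you use a symmetric $\pm\ep$ perturbation. These are equivalent.
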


By Caratheodory's Theorem, there is a bound on the number of summands needed
in representing an element of $\cube(d)$ as a convex combination of elements
of $\mathscr E(d)$.

\begin{proof}
Suppose for the moment that the set of extreme points of $\cube(d)$ is a 
 subset of $\mathscr E(d)$.  Since $\mathscr E(d)$ is closed and $\cube(d)$ is compact, it follows that
$\cube(d)=\overline{\mbox{co}(\mathscr E(d))} = \mbox{co}(\mathscr E(d))$,
 where $\mbox{co}$ denotes convex hull. Thus the  second part of the lemma
 will follow once it is shown that the extreme points of $\cube(d)$ lie in $\mathscr E(d)$.

 Suppose $X\in \cube(d)$ is not in $\mathscr E(d)$.  Without loss of generality,
 we may assume $X$ is diagonal with diagonal entries $\delta_{k}$. 
 In particular, there is an $\ell$ such that $|\delta_{\ell}| \ne 1$. Thus,
 there exists $y,z\in (0,1)$ such that $y+z=1$ and $y-z=\delta_{\ell}$.
 Let $Y$ denote the diagonal matrix with $k$-th diagonal entries $\delta_{k}$
 for $j\ne \ell$ and with $\ell$-th diagonal entry $1$. Define $Z$ similarly, but with $\ell$-th diagonal entry $-1$.
 It follows that $yY+zZ=X$. Thus $X$ is not an extreme point of $\cube(d)$  and therefore
 the set of extreme points of $\cube(d)$ is a subset of $\mathscr E(d)$.

 The proof that each $E\in\mathscr E(d)$ is an extreme point is left to the interested reader.
\end{proof}

Our long march of lemmas now culminates in the following lemma.
Recall the definitions of $\ka_*(s,t)$ and $\ka_*(d)$ from 
the outset of Section \ref{sec:expectaverage}.  A symmetric matrix $R$
 is a  \df{symmetry matrix} if $R^2$ is projection. Equivalently,
 $R$ is a symmetry matrix if $R=R^*$ and the spectrum of $R$ lies in the set $\{-1,0,1\}$.
For a symmetric matrix $D$, the triple $\sign(D)=(p,z,n)$,
  called the \df{signature} of $D$, 
  denotes the number of positive, zero and negative eigenvalues of $D$ respectively.
Note that a symmetry matrix $R$ is determined, up to unitary equivalence, by its signature.

\begin{lemma}
 \label{lem:prerhodoesit}
 If $R$ is a signature matrix with $s$ positive eigenvalues and $t$ negative eigenvalues, then there exists
 $D\in\mfD(d)$ such that 
\[
  \ka_*(s,t)R = V^* \TM_D V.
\]
 In particular, replacing $D$ with $D^\prime = \frac{\ka_*(d)}{\ka_*(s,t)} D$ and noting that $\ka_*(d)\le \ka_*(s,t)$,
 we have  $D^\prime \in\mfD(d)$ and 
\[
 \ka_*(d)R = V^* \TM_{D^\prime} V.
\]
 Here $V$ is the isometry from Lemma {\rm\ref{lem:Viso}}. 
 We emphasize that the $V$ does not depend on $R$
 or even on $s,t$.
\end{lemma}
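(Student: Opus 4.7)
The plan is to assemble the toolkit built in Sections \ref{sec:average} and \ref{sec:expectaverage} --- specifically Lemmas \ref{lem:averagelift}, \ref{lem:whywelikekappa}, \ref{lem:magicab} and \ref{lem:EV} --- into a direct construction of $D$. First I would treat the diagonal case $R = J(s,t;1,1)$. Let $J_* = J(s,t;\aofst,\bofst)$ be the minimizer from Proposition \ref{prop:starastarb} and set $D := D_{J_*}$, the diagonal-valued function on $O(d)$ defined by \eqref{eq:defineDsubJ}. Since $|\sgn[\cdot]|\le 1$ and each $e_je_j^*$ is diagonal, $D_{J_*}\in\mfD(d)$ by inspection. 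Chaining the identities $V^*M_DV = C_D$ (Lemma \ref{lem:averagelift}), $C_{D_{J_*}} = d\, E_{J_*}$ (Lemma \ref{lem:whywelikekappa}), and $E_{J_*} = \frac{\ka_*(s,t)}{d} J(s,t;1,1)$ (Lemma \ref{lem:magicab}) then gives $V^*M_DV = \ka_*(s,t) J(s,t;1,1)$, which is the claim in this special case.

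Next I would reduce a general signature matrix to the diagonal case by orthogonal conjugation. Since $R$ has $s$ positive and $t$ negative eigenvalues, diagonalize $R = W J(s,t;1,1) W^*$ for some $W\in O(d)$. Set $\tilde J := W J_* W^*$ and take $D := D_{\tilde J}$. The covariance identity $E_{W^*(\cdot)W} = W^* E_{(\cdot)}W$ from Lemma \ref{lem:EV} (applied with $\tilde J$ in place of $J$) yields $E_{\tilde J} = W E_{J_*} W^* = \frac{\ka_*(s,t)}{d} W J(s,t;1,1) W^* = \frac{\ka_*(s,t)}{d} R$. Combining with Lemmas \ref{lem:averagelift} and \ref{lem:whywelikekappa} produces $V^*M_DV = C_D = d\, E_{\tilde J} = \ka_*(s,t)\, R$.

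For the second assertion, I would simply rescale: let $D' := \frac{\ka_*(d)}{\ka_*(s,t)}\, D$. Since $\ka_*(d) = \min_{s+t=d}\ka_*(s,t) \le \ka_*(s,t)$ by definition \eqref{eq:BTbdm}, the scaling factor lies in $[0,1]$, so $D'$ is still diagonal and contraction-valued; hence $D'\in\mfD(d)$. The identity $M_{cD} = c\, M_D$ (immediate from the definition of $M_D$) then gives $V^*M_{D'}V = \ka_*(d)\, R$. Note that throughout, $V$ is fixed once and for all by \eqref{eq:V} and is independent of $R$, $s$, and $t$ --- the $R$-dependence is packaged entirely into the choice of $\tilde J$ and hence of $D$.

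The work required is almost entirely bookkeeping: the only conceptual step is choosing $\tilde J = W J_* W^*$ rather than some other conjugate, so that the covariance Lemma \ref{lem:EV} converts the diagonal optimality identity of Lemma \ref{lem:magicab} into the desired identity for $R$. The main obstacle (such as it is) lies not in the present lemma but in the ingredients it relies on, namely the fact that $a(s,t),b(s,t)$ balance $\alpha$ against $\beta$ so that $E_{J_*}$ is proportional to the \emph{signature} $J(s,t;1,1)$ --- without that, $C_{D_{J_*}}$ would not align with $R$ and no single $D$ could do the job.
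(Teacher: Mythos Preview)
Your proof is correct and follows essentially the same route as the paper's: diagonalize $R$ via an orthogonal $W$, take $D = D_{WJ_*W^*}$ (the paper writes $R = W^* J(s,t;1,1) W$ and uses $D_{W^*J_*W}$, but this is the same up to relabeling $W\leftrightarrow W^*$), and chain Lemmas~\ref{lem:averagelift}, \ref{lem:whywelikekappa}, \ref{lem:EV}, and \ref{lem:magicab} exactly as you do. The rescaling argument for $D'$ is likewise identical in substance.
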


\begin{proof}
 There is an $s,t$ and unitary $W$ such that $R=W^* J(s,t;1,1)W$.
  From Lemma \ref{lem:magicab}, there exists
  a $J_*\in\smatd$ such that $E_{J_*} = \frac{ \ka_*(s,t)} d  J(s,t;1,1)$.
 Using Lemmas \ref{lem:averagelift},  \ref{lem:whywelikekappa} and \ref{lem:EV},
\[
 \begin{split}
  V^* M_{D_{W^* J_* W}} V & = 
  C_{D_{W^* J_* W }}\\ 
& = d E_{W^* J_* W } \\
& = W^* d E_{J_*} W \\ 
   & =  W^* d \frac{\ka_*(s,t)}{d} J(s,t;1,1) W \\ 
  & = \ka_*(s,t) W^* J(s,t;1,1) W \\
  & = \ka_*(s,t) R. \qedhere
\end{split} 
\]
\end{proof}


\begin{thm}
\label{thm:rhodoesit}
  Given $d$, there exists family $\fC_d$ of commuting self-adjoint  contractions on a $($common$)$ Hilbert\index{dilation}
  space $\mathcal H$ and an  isometry $V\colon\mathbb R^d\to \mathcal H$ such that 
  for each contraction $C\in \mathbb S_d$, there is a $T_C\in\fC_d$ with
\[
  \ka_*(d)C = V^* T_C V.
\]
\end{thm}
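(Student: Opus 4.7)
The plan is to reduce the statement to Lemma \ref{lem:prerhodoesit} via the convex decomposition provided by Lemma \ref{lem:extpoints}. The family $\fC_d = \{M_D : D \in \mfD(d)\}$ from \eqref{eq:defcCd} is already a commuting family of self-adjoint contractions on the Hilbert space $\mathcal H = L^2(O(d),\R^d)$ of \eqref{eq:H}, by Lemmas \ref{lem:lift1} and \ref{lem:lift2}, and $V$ is the isometry \eqref{eq:V}. Thus the only task is to produce, for each symmetric contraction $C \in \mathbb S_d$, some $D \in \mfD(d)$ with $\ka_*(d)\, C = V^* M_D V$.

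Given such a $C$, apply Lemma \ref{lem:extpoints} to write
\[
C = \sum_{k=1}^N \la_k R_k, \qquad \la_k \ge 0, \ \sum_k \la_k = 1,
\]
with each $R_k$ a signature matrix. For each $k$, Lemma \ref{lem:prerhodoesit} furnishes a $D_k \in \mfD(d)$ such that $\ka_*(d)\, R_k = V^* M_{D_k} V$, and crucially the isometry $V$ is the same for every $k$, independent of the signature of $R_k$. Set
\[
D := \sum_{k=1}^N \la_k D_k.
\]
Pointwise on $O(d)$, $D(U)$ is a convex combination of diagonal contractions and is hence itself a diagonal contraction, so $D \in \mfD(d)$. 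Since the assignment $D \mapsto M_D$ is linear in $D$ (the defining formula $(M_D f)(U) = U D(U) U^* f(U)$ is linear in $D(U)$), we obtain $M_D = \sum_k \la_k M_{D_k}$, and therefore
\[
V^* M_D V = \sum_{k=1}^N \la_k \, V^* M_{D_k} V = \sum_{k=1}^N \la_k \, \ka_*(d)\, R_k = \ka_*(d)\, C.
\]
Taking $T_C := M_D \in \fC_d$ completes the proof.

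There is no real obstacle here: all the heavy lifting has been done in Lemmas \ref{lem:magicab}, \ref{lem:prerhodoesit}, and \ref{lem:extpoints}. The only points to be careful about are (i) that $\mfD(d)$ is convex (immediate from its pointwise description in \eqref{eq:defDd}), (ii) that $V$ may be chosen independently of $R_k$ (explicitly noted in Lemma \ref{lem:prerhodoesit}), and (iii) that the scaling factor is uniformly $\ka_*(d)$ rather than the signature-dependent $\ka_*(s,t)$, which is arranged by the renormalization step in the second half of Lemma \ref{lem:prerhodoesit} using $\ka_*(d) \le \ka_*(s,t)$.
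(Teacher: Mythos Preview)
Your proof is correct and follows essentially the same approach as the paper's: both decompose $C$ as a convex combination of signature matrices via Lemma \ref{lem:extpoints}, apply Lemma \ref{lem:prerhodoesit} to each summand, and recombine using the convexity of $\mfD(d)$ (equivalently, of $\fC_d$). Your version is slightly more explicit about the linearity of $D\mapsto M_D$ and the pointwise convexity argument for $\mfD(d)$, but the structure is identical.
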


\begin{proof}
Set $\mathcal H:=L^2(O(d),\R^d)$ and let $V\colon\R^d\to \mathcal H$ denote the isometry from Lemma \ref{lem:Viso}.
 Let $\fC_d$ denote the collection of operators $M_D$ for $D\in\mfD(d)$.  By Lemma \ref{lem:lift2},
  $\fC_d$ is a collection of commuting operators. By Lemma \ref{lem:lift1}, each $M_D$ is a self-adjoint  contraction.
  Finally, observe that $\fC_d$ is convex. 


 By Lemma \ref{lem:extpoints} there exists an $h$ and signature matrices
 $R_1,\dots,R_h$  such that  $C=\sum_{j=1}^n  c_j R_j,$ where $c_j\ge 0$ and $\sum c_j = 1$.
 By Lemma \ref{lem:prerhodoesit},  there exists 
 $S_1,\dots,S_h\in\fC_d$ 
 such that  \ 
  $\ka_*(d) R_k=V^*S_kV$ for $k\in\{1,\dots,h\}$. 
   Hence, $\ka_*(d) C= V^* S V,$ where $S=\sum c_j S_j \in \fC_d$.
\end{proof}

\subsection{Optimality of $\ka_*(d)$}
 \label{sec:optkstar}
  The following theorem contains the optimality statement of Theorem \ref{thm:dilate}
  and Theorem \ref{thm:sharp}. It also contains 
  a preliminary version of Theorem \ref{thm:BtN}. Recall
  $\cubeg$ is the sequence $(\cubeg(d))_d$ and $\cubeg(d)$ is the set of $g$-tuples
  of symmetric $d\times d$ contractions.

\begin{thm}
 \label{thm:nextbest}
   For each $g$ and $d$, if $B$ is any $g$-tuple of symmetric $d\times d$ matrices, then\index{cube}\index{free cube}   
  $[-1,1]^g\subset \fs_{L_B}$ implies $\ka_*(d)\cubeg \subset \cD_{L_B}$. 
  
   Conversely, 
   if $\ka>\ka_*(d)$, then there exists a $g$ and a $g$-tuple of symmetric matrices $B$ such
   that $[-1,1]^g\subset \fs_{L_B}$, but $\ka\cubeg \not\subset \cD_{L_B}$.

   In particular,
   $\vartheta(d) = (\ka_*(d))^{-1}$ is the optimal constant in Theorem {\rm\ref{thm:dilate}}. 
\end{thm}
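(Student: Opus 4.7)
The inclusion assertion is a clean combination of two earlier results. Given a $g$-tuple $X=(X_1,\dots,X_g)\in\cubeg(d)$ of symmetric $d\times d$ contractions, Theorem~\ref{thm:rhodoesit} produces operators $T_1,\dots,T_g$ in the common family $\fC_d$ of commuting self-adjoint contractions on a Hilbert space $\mathcal H$, together with a fixed isometry $V\colon\R^d\to\mathcal H$ independent of $X$, such that $\ka_*(d)X_j=V^*T_jV$ for every $j$. Because the $T_j$ commute and each has operator norm at most one, the joint spectrum of $T=(T_1,\dots,T_g)$ lies in $[-1,1]^g$, which by hypothesis is contained in $\fs_{L_B}$. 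Proposition~\ref{prop:commute-include}, applied with $A=C$ the cube pencil coefficient tuple from \eqref{eq:Ccube} (so $\cD_{L_A}=\cubeg$ and $\fs_{L_A}=[-1,1]^g$) and dilation constant $r=\ka_*(d)$, then delivers $\ka_*(d)\,\cubeg\subset\cD_{L_B}$ at every level (the extension from level~$d$ to all levels being incorporated in that proposition via Lemma~\ref{lem:ddoesit}).

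\noindent\textbf{Converse: constructing a bad $B$.} For sharpness, given $\ka>\ka_*(d)$ the goal is to exhibit a $g$-tuple $B$ of symmetric $d\times d$ matrices with $[-1,1]^g\subset\fs_{L_B}$ and a test tuple $X\in\cubeg(n)$ satisfying $\lambda_{\max}\bigl(\sum_j B_j\otimes X_j\bigr)>1/\ka$. The plan is to discretize the integral representation of $\ka_*(d)$ in \eqref{eq:BTbd}. Let $B_*=J(s,t;a(s,t),b(s,t))$ be the minimizer produced by Proposition~\ref{prop:starastarb}, so that $\int_{S^{d-1}}|\xi^*B_*\xi|\,d\xi=\ka_*(d)$, and recall from Lemma~\ref{lem:magicab} the pivotal identity $E_{B_*}=\tfrac{\ka_*(d)}{d}J(s,t;1,1)$. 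Choose a quadrature $\{(\xi_j,w_j)\}_{j=1}^g\subset S^{d-1}\times\R_{>0}$ with $\sum_j w_j=1$ whose weighted point masses converge weakly to the uniform probability measure on $S^{d-1}$, and set
\[
B_j\;:=\;d\,w_j\,\sgn(\xi_j^*B_*\xi_j)\,\xi_j\xi_j^*.
\]
A direct quadrature estimate gives $\max_{\epsilon\in\{-1,1\}^g}\lambda_{\max}\bigl(\sum_j\epsilon_j B_j\bigr)=d\max_{\|v\|=1}\sum_j w_j(v^*\xi_j)^2\to d\cdot\tfrac1d=1$ as the discretization refines; after an infinitesimal rescaling, $[-1,1]^g\subset\fs_{L_B}$ holds.

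\noindent\textbf{Free lower bound and main obstacle.} The essential technical step is to exhibit a test tuple $X\in\cubeg(n)$ (for some $n$) with $\lambda_{\max}\bigl(\sum_j B_j\otimes X_j\bigr)$ approaching $\vartheta(d)$ as the quadrature refines. The plan is to let $X_j$ depend on $\xi_j$ so that, under rotation-averaging governed by Lemma~\ref{lem:EV}, the sum $\sum_j B_j\otimes X_j$ approximates an integral of the form $d\int_{S^{d-1}}\sgn(\xi^*B_*\xi)\,\xi\xi^*\otimes\widehat X_\xi\,d\xi$ whose top eigenvalue matches $\vartheta(d)$; the extremality condition $\alpha(s,t;a(s,t),b(s,t))=\beta(s,t;a(s,t),b(s,t))$ from Proposition~\ref{prop:starastarb} together with the block structure of $J(s,t;1,1)$ pin down an appropriate $\widehat X_\xi$. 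The main obstacle is to simultaneously enforce the contractive constraint on each $X_j$ and drive the top eigenvalue up to $\vartheta(d)-o(1)$; this requires a careful balance between the quadrature error, the normalization, and the rotational covariance of the averaging machinery from Section~\ref{sec:averagemat2}. Finally, the ``In particular'' statement is a short consequence: were Theorem~\ref{thm:dilate} valid with some $\theta'<\vartheta(d)$ in place of $\vartheta(d)$, then every $g$-tuple $X\in\cubeg(d)$ would dilate via $X/\theta'=V^*TV$ to a commuting tuple of self-adjoint contractions with joint spectrum in $[-1,1]^g\subset\fs_{L_B}$, whence Proposition~\ref{prop:commute-include} would yield $\tfrac1{\theta'}\cubeg\subset\cD_{L_B}$ for every admissible $B$, contradicting the converse applied with $\ka=1/\theta'>\ka_*(d)$.
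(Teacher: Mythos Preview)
Your forward direction and the ``In particular'' clause are correct and match the paper's argument essentially word for word.

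The converse, however, has a genuine error, not merely a gap. Your proposed coefficients
\[
B_j \;=\; d\,w_j\,\sgn(\xi_j^*B_*\xi_j)\,\xi_j\xi_j^*
\]
are \emph{rank one}. But rank-one coefficients can never witness sharpness: if each $B_j=c_ju_ju_j^*$ with $\|u_j\|=1$ and $[-1,1]^g\subset\fs_{L_B}$, then $\sum_j|c_j|\,u_ju_j^*\preceq I$, and for any $X\in\cubeg(n)$ and unit $v\in\R^d\otimes\R^n$ one has, with $\eta_j:=(u_j^*\otimes I)v$,
\[
\Big|v^*\Big(\sum_j B_j\otimes X_j\Big)v\Big|
=\Big|\sum_j c_j\,\eta_j^*X_j\eta_j\Big|
\le\sum_j|c_j|\,\|\eta_j\|^2
= v^*\Big(\sum_j|c_j|\,u_ju_j^*\otimes I\Big)v\le 1.
\]
Thus $\cubeg\subset\cD_{L_B}$ outright; this is exactly the statement $\vartheta(1)=1$ (Theorem~\ref{thm:BtN}(b) with rank bound $1$). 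So the ``main obstacle'' you flag is not a matter of careful balancing---it is insurmountable with this choice of $B$.

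The paper avoids this by discretizing over the orthogonal group $O(d)$ rather than the sphere. With $\hat J=J(\hat s,\hat t;a(\hat s,\hat t),b(\hat s,\hat t))$ the minimizer, it forms the (full-rank) coefficients
\[
A^k_j=\frac{1}{\ka_*(d)}\int_{P_{k,j}}U^*\hat J\,U\,dU
\]
over a partition $\{P_{k,j}\}$ of $O(d)$. The test tuple is $X^k_j=U_{k,j}^*\,J(\hat s,\hat t;1,1)\,U_{k,j}$ for sample points $U_{k,j}\in P_{k,j}$; these are signature matrices, hence automatically contractions, so there is no contractivity obstacle. The key computation uses the vector $\mathbf e=\tfrac1{\sqrt d}\sum_j e_j\otimes e_j$: for $Z(U)=U^*\hat J U$ and $X(U)=U^*J(\hat s,\hat t;1,1)U$,
\[
\mathbf e^*\big(Z(U)\otimes X(U)\big)\mathbf e=\tfrac1d\,\trace\big(Z(U)X(U)\big)=\tfrac1d\,\trace\big(\hat J\,J(\hat s,\hat t;1,1)\big)=\tfrac1d(\hat s\,a+\hat t\,b)=1,
\]
independent of $U$. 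Hence $\mathbf e^*\hat L(X)\mathbf e=1/\ka_*(d)$ exactly in the continuum, and the Riemann-sum discretization $L_k(X^k)\to\hat L(X)$ (by uniform continuity of $U\mapsto X(U)$) delivers $\|L_k(X^k)\|>1/\ka$ for large $k$. The cube inclusion $[-1,1]^{g_k}\subset\fs_{L_{A^k}}$ is immediate because any $y\in[-1,1]^{g_k}$ lifts to the step function $x=\sum_j y_j\chi_{P_{k,j}}\in\cube^\infty(1)$ and $L_k(y)=\hat L(x)\preceq I$.
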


\begin{proof}
  Starting with the proof of the second statement,  fix $d$ and  suppose $\ka >\ka_*(d)$.
  Let $(\sopt,\topt)$ be a pair for which $\ka_*(d)= \ka_*(\sopt,\topt)$.
  Let $(\aopt,\bopt)$ be a pair of positive numbers such that $\sopt \aopt + \topt\bopt=d$
  and $\ka_*(d) = \ka(\sopt,\topt;\aopt,\bopt)$ coming from Lemma \ref{lem:magicab}.
  Let  $\Jopt = J(\sopt,\topt;\aopt,\bopt)$ and define
  the distinguished (infinite variable pencil)  $\Lopt: \ L^\infty(O(d))\to \mathbb S_{d}$
  by 
\begin{equation}
 \label{eq:Lstab}
  \Lopt(x)
   =   \frac{1}{\ka_*(d)} \int_{O(d)} U^* \;\Jopt \; U \ x(U) \, dU,
\end{equation}
  for  $x\in L^\infty(O(d)).$  By analogy with the sets $\cD_{L_B}$, let
 $\cD_{\Lopt}(n)$ denote those measurable $X:O(d)\to \mathbb S_{n}$ such that
\[
  \Lopt(X) = \frac{1}{\ka_*(d)} \int_{O(d)} U^* \;\Jopt \; U \otimes  X(U) \, dU,
\]
 satisfies $I-\Lopt(X)\succeq 0$. 

 Let $\cube^\infty$ denote the sequence of sets $(\cube^\infty(n))$, where 
 elements of $\cube^\infty(n)$ are measurable functions
\[
  X:O(d) \to \mathbb S_{n},
\]
 such that  $X(U)$ is a symmetric contraction for each $U\in O(d)$. 
  In particular, $x\in\cube^\infty(1)$ is an element of $L^\infty(O(d))$
 of norm at most one and $\cube^\infty$ can be thought of
  as an infinite dimensional matrix cube. 
 
 Given $x\in \cube^\infty(1)$  and a
  unit vector $e$,  note that
\[
 e^* \Lopt( x )e  \le  
 \frac{1}{\ka_*(d)} \int_{O(d)} | e^* U^*\Jopt Ue |\, dU 
 =    \frac{1}{\ka_*(d)}    \int_{S^{d-1}} | \xi^* \Jopt \xi |\, d\xi
         = 1. 
\]
Thus $I - \Lopt(x) \succeq 0$.
 Hence  $\cube^\infty(1)\subset \cD_{\Lopt}(1)=\fs_{\Lopt}$.

   Now consider the mapping $X:O(d)\to O(d)$ defined by
\[
  X(U) = U^* J ({{\sopt}},{{\topt}} ;1,1)U. 
\]
 In particular, $X$ pointwise has norm one and thus $X\in \cube^\infty(d).$
 We next show that $X \not \in  \frac{1}{\ka} \cD_{\Lopt}(d).$
  
 For  $U\in O(d)$, 
 let $Z(U)= U^* \Jopt  U$.
 With  $\ds\bE=\frac{1}{\sqrt{d}} \sum_{j=1}^{d} e_j \otimes e_j,$ 
\[
\bE^* (Z(U)\otimes X(U)) \bE = \frac{1}{d} \sum_{s,t=1}^{d} e_s^*Z(U)e_te_s^*X(U)e_t=\frac1d \langle Z(U),X(U) \rangle_{\trace},
\] 
 where $\langle \cdot,\cdot\rangle_{\trace}$ is the trace inner product,
\[
 \langle A,B\rangle_{\trace} = \trace(AB^\ast) = \sum_{j,k} e_j^* A e_k e_k^* Be_j.
\]
 Now,
\[
 \begin{split}
 \trace(Z(U)X(U)) & = \trace( U^* \Jopt J({\sopt},{\topt};1,1) U)\\
   & = \trace(\Jopt  J({\sopt},{\topt};1,1)) = \sopt a({\sopt},{\topt})+\topt b({\sopt},{\topt}) \\
   & = d.
 \end{split}
\]
  Hence 
\[
\begin{split}
   \bE^* \Lopt(X) \bE & = \frac{1}{\ka_*(d)} \int \bE^* (Z(U)\otimes X(U))\bE \, dU \\
     & =  \frac{1}{\ka_*(d)}\frac{1}{d} d. 
 \end{split}
\]
 Thus 
$
  \|\Lopt(X)\| \ge \frac{1}{\ka_*(d)} > \frac{1}{\ka}
$, so
$$
\frac{1}{\ka}  I  - \Lopt(X)  \not \succeq  0
$$
as predicted.

   We next realize $\Lopt$ as a  limit of pencils $L_B$ with $B\in\mathbb S_d^g$. Suppose $(\mathcal P_k)$ 
   is a sequence of (measurable) partitions of $O(d)$ and write
  $\mathcal P_k=\{P_{k,1},\dots,P_{k,g_k}\}$. 
Consider the corresponding $g_k$-tuples $A^k=(A^k_1,\dots,A^k_{g_k})\in\mathbb S_d^{g_k}$, where 
\[
  A^k_j  = \frac{1}{\ka_*(d)} 
     \int_{P_{k,j}}  U^* \; \Jopt \; U  \, dU  = \int_{P_{k,j}} Z(U)\, dU,
\]
 and the associated homogeneous linear pencil,
\[
 L_k(x) = \sum_{j=1}^{g_k} A^k_j  x_j.
\]
  Given  $y\in \cube^{(g_k)}(1)=[-1,1]^{g_k}$, let $x= \sum_{j=1}^{g_k}  y_j \chi_{P_{k,j}},$ where $\chi_P$ denotes
  the characteristic function of the set $P$. Since $x\in \cube^{\infty}(1)$ and 
\[
  L_k(y) = \Lopt(x), 
\]
 it follows that $y\in \fs_{L_{A^k}}$. Thus, $[-1,1]^{g_k}\subset \fs_{L_{A^k}}$.

 Suppose that $U_{k,j}$ are given points in $P_{k,j}$. 
  Given $k$, let  $X^k=(X^k_1,\dots,X^k_{g_k})$ where
\[
  X^k_j = X(U_{k,j}) = U^*_{k,j} J(\sopt,\topt;1,1) U_{k,j}.
\]
  In particular, $\|X^k_j\|\le 1$.  Evaluate,
\[
 L_k(X^k) = \frac{1}{\ka_*(d)} \sum_{j=1}^{g_k} A_{k,j} \otimes X_{k,j}.
\]
Hence
\[
  \Lopt(X) - L_k(X^k) = \sum_{j=1}^{g_k} \int_{P_{k,j}}  Z(U)\otimes \big( X(U)-X(U_{k,j}) \big) \, dU.
\]
  The uniform continuity of $X(U)$ implies there exists a choice of $P_{k,j}$ and $U_{k,j}$  such
 that $L_k(X^k)$ converges to $\Lopt (X)$. Hence, $\|L_k(X^k)\|>\ka$ for sufficiently large $k$.
 Consequently $X\in \cube^{g_k}(d)$, but $\ka X\notin \cD_{L_{A^k}}(d)$ and the proof of 
 the second statement is complete.

 Turning to the first part of the theorem,  suppose that $B$ 
  is a $g$-tuple of $d\times d$ symmetric matrices and
  $[-1,1]^g\subset \fs_{L_B}$.  Given a $g$-tuple 
  $X\in\cubeg(d)$, Theorem \ref{thm:rhodoesit} produces
 a Hilbert space $\mathcal H$, a $g$-tuple of commuting self-adjoint contractions on $\mathcal H$,
  an isometry $V:\mathbb R^d\to \mathcal H$ such that
\[
  \ka_*(d)  X_j  = V^* T_j V,
\]
 a relationship  summarized by $\ka_*(d)X=V^* TV.$ 
  By Proposition \ref{prop:commute-include},
  $\ka_*(d) \cubeg \subset \cD_{L_B}$ and the proof of the first statement of the theorem is complete.

  For the last statement in the theorem, suppose $\ka$ has the property that for every $g$ 
  each $g$-tuple of commuting symmetric matrices of size $d$ dilates
  to a tuple of commuting symmetric contractions on Hilbert space. 
  Proposition \ref{prop:commute-include} implies $\ka \cubeg \subset \cD_{L_B}$
  for any $g$-tuple $B$ of symmetric matrices of size $d$ such that
  $[-1,1]^g\subset \fs_{L_B}$.  Hence, by what has already been proved,
  $\ka\le \ka_*(d)$. 
\end{proof}

\section{The Optimality Condition $\al=\be$ in Terms of Beta Functions}
 \label{sec:prequest}
  In this section $\alpha(s,t;a,b)$ and $\beta(s,t;a,b)$ 
  which were defined 
  in Equations \eqref{def:al} and \eqref{def:be} (see also Lemma \ref{lem:EJisJ})
  are computed in terms of the regularized  incomplete beta function. 
  See Lemma \ref{lem:compalpha}.
  A consequence is the relation of Equation \eqref{eq:theta-beta}.
  Lemma \ref{oplus0} figures in the proof of Theorem \ref{thm:BtN}
  in Section \ref{sec:rankvsize}. 

Let $\Ga$ denote the Euler gamma function \cite{Rai71}.\index{Euler function}\index{gamma function}\index{Euler gamma function}

\begin{lemma}\label{igamma}
Suppose $m\in\R_{\ge0}$. Then
\[\int_0^\infty r^me^{-r^2}dr=\frac12\Ga\left(\frac{m+1}2\right).\]
\end{lemma}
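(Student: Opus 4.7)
The plan is to reduce the integral to the standard integral representation of the Gamma function via the change of variables $u = r^2$. Explicitly, setting $u = r^2$ gives $du = 2r\,dr$, so $dr = \frac{du}{2\sqrt{u}}$, and $r^m = u^{m/2}$. Substituting yields
\[
\int_0^\infty r^m e^{-r^2}\,dr
= \int_0^\infty u^{m/2} e^{-u} \cdot \frac{du}{2\sqrt{u}}
= \frac{1}{2}\int_0^\infty u^{(m-1)/2} e^{-u}\,du.
\]
The rewrite $u^{(m-1)/2} = u^{(m+1)/2 - 1}$ identifies the last integral as $\Gamma\!\left(\frac{m+1}{2}\right)$ by the standard definition of the Gamma function, since $\frac{m+1}{2} > 0$ by the hypothesis $m \geq 0$. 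This gives the claimed identity.

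There is no real obstacle here: the argument is a one-line substitution, and the hypothesis $m \in \R_{\ge 0}$ ensures that $\frac{m+1}{2}$ lies in the region of convergence for the Gamma integral, so all manipulations are legitimate.
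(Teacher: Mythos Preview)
Your proof is correct and follows essentially the same approach as the paper: both use the substitution $u=r^2$ (the paper writes $s:=r^2$) to reduce the integral to the defining integral for $\Gamma\!\left(\frac{m+1}{2}\right)$. Your version is slightly more careful in noting that $m\ge 0$ ensures convergence of the Gamma integral.
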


\begin{proof}
Setting $s:=r^2$, we have $r^m=s^{\frac m2}$ and $\frac{ds}{dr}=\frac{dr^2}{dr}=2r$, i.e., $dr=\frac{ds}{2r}=\frac{ds}{2\sqrt s}$. Then
\[\int_0^\infty r^me^{-r^2}dr=\int_0^\infty\frac{s^{\frac m2}}{2\sqrt s}ds=\frac12\int_0^\infty s^{\frac{m-1}2}ds=\frac12\Ga\left(\frac{m+1}2\right).
\qedhere
\]
\end{proof}

\begin{lemma}\label{gauss1}
\[\int_{\R^n}e^{-\|x\|^2}dx=\pi^{\frac n2}.\]
\end{lemma}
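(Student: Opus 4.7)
The plan is to reduce the $n$-dimensional Gaussian integral to a one-dimensional one by Fubini's theorem, and then evaluate the one-dimensional integral using the preceding Lemma~\ref{igamma}.

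More concretely, the first step is to observe that $\|x\|^2 = x_1^2 + \cdots + x_n^2$, so the integrand factors as $e^{-\|x\|^2} = \prod_{i=1}^n e^{-x_i^2}$. Since each factor is nonnegative and measurable, Fubini/Tonelli gives
\[
\int_{\R^n} e^{-\|x\|^2}\,dx \;=\; \prod_{i=1}^n \int_{\R} e^{-t^2}\,dt \;=\; \left(\int_{\R} e^{-t^2}\,dt\right)^{\!n}.
\]

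The second step is to evaluate the one-dimensional integral. By symmetry, $\int_{\R} e^{-t^2}\,dt = 2\int_0^\infty e^{-t^2}\,dt$. Applying Lemma~\ref{igamma} with $m = 0$ yields
\[
\int_0^\infty e^{-t^2}\,dt \;=\; \tfrac12\,\Ga\!\left(\tfrac{1}{2}\right) \;=\; \tfrac{\sqrt\pi}{2},
\]
using the classical value $\Ga(1/2)=\sqrt\pi$. Hence $\int_{\R} e^{-t^2}\,dt = \sqrt\pi$, and raising to the $n$-th power produces $\pi^{n/2}$, as desired.

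There is no genuine obstacle here; the proof is a two-line application of Fubini plus Lemma~\ref{igamma}. The only subtlety worth noting is that one should \emph{not} try to pass to polar coordinates in $\R^n$, since the total surface measure of $S^{n-1}$ is itself $2\pi^{n/2}/\Ga(n/2)$, which would make the computation circular in the current setup.
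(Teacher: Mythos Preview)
Your proof is correct and follows exactly the same approach as the paper: factor via Fubini into a product of one-dimensional Gaussians, then evaluate each using Lemma~\ref{igamma} with $m=0$ and $\Ga(1/2)=\sqrt\pi$. Your closing remark about avoiding polar coordinates is a nice observation, though the paper does not comment on it.
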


\begin{proof}
\[\int_{\R^n}e^{-\|x\|^2}dx=\int_\R\dots\int_\R e^{-x_1^2}\dots e^{-x_n^2}dx_n\dots dx_1=\left(\int_\R e^{-x^2}dx\right)^n\overset{\ref{igamma}}=\Ga\left(\frac12\right)^n=\pi^{\frac n2}.\qedhere\]
\end{proof}

We equip the unit sphere in $S^{n-1}\subseteq\R^n$ with the unique rotation invariant probability measure.

\begin{rem}\label{surfarea}
Recall that the surface area of the $n-1$-dimensional unit sphere $S^{n-1}\subseteq\R^n$ is
\[
\area(S^{n-1})=
\frac{n\pi^{\frac n2}}{\Ga(1+\frac n2)}=\frac{2\pi^{\frac n2}}{\Ga(\frac n2)}.\]
\end{rem}

Now we come to a key step, converting integrals over the sphere $S^{d-1}$ to integrals over $\mathbb R^d$.

\begin{lemma}\label{sphere2gauss}
Suppose $A\in\R^{d\times d}$ and $f\colon\R^d\to\R$ is quadratically homogeneous, i.e.,
$f(\la x)=\la^2f(x)$ for all $x\in\R^d$ and $\la\in\R$. Suppose furthermore that $f|_{S^{d-1}}$ is integrable on $S^{d-1}$. Then
\begin{align*}
\int_{S^{d-1}}f(\xi)d\xi&=\frac2{d\pi^{\frac d2}}\int_{\R^d}f(x)e^{-\|x\|^2}dx\qquad\text{and}\\[.1cm]
d\int_{S^{d-1}}f(\xi)d\xi&=\frac1{(2\pi)^{\frac d2}}\int_{\R^d}f(x)e^{-\frac{\|x\|^2}2}dx.
\end{align*}
\end{lemma}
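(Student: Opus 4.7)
The plan is to prove both identities by introducing polar coordinates on $\R^d$, exploiting quadratic homogeneity to cleanly separate the radial and spherical integrals, and then using the two preceding facts (Lemma~\ref{igamma} for the radial gamma integral and Remark~\ref{surfarea} for the surface area of $S^{d-1}$). Once the first identity is established, the second will drop out by a trivial linear rescaling, so the real content is in the first.

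First I would write, for any nonnegative $r$ and $\xi\in S^{d-1}$, $f(r\xi)=r^2 f(\xi)$ by quadratic homogeneity. Parametrizing $x=r\xi$ and recalling that the unnormalized surface measure on $S^{d-1}$ is $\area(S^{d-1})$ times the rotation-invariant probability measure $d\xi$, one has $dx=r^{d-1}\,dr\,\area(S^{d-1})\,d\xi$. Plugging in,
\[
\int_{\R^d}f(x)e^{-\|x\|^2}dx=\area(S^{d-1})\left(\int_0^\infty r^{d+1}e^{-r^2}\,dr\right)\int_{S^{d-1}}f(\xi)\,d\xi.
\]
Lemma~\ref{igamma} with $m=d+1$ gives $\int_0^\infty r^{d+1}e^{-r^2}dr=\tfrac12\Gamma\!\left(\tfrac{d+2}{2}\right)=\tfrac{d}{4}\Gamma\!\left(\tfrac{d}{2}\right)$, while Remark~\ref{surfarea} gives $\area(S^{d-1})=\tfrac{2\pi^{d/2}}{\Gamma(d/2)}$. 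Multiplying, the two $\Gamma(d/2)$ factors cancel and the product equals $\tfrac{d\,\pi^{d/2}}{2}$, which yields the first identity upon solving for $\int_{S^{d-1}} f\,d\xi$.

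For the second identity I would substitute $x=\sqrt 2\,y$. Then $\|x\|^2/2=\|y\|^2$, $dx=2^{d/2}dy$, and by quadratic homogeneity $f(x)=2f(y)$, so
\[
\int_{\R^d}f(x)e^{-\|x\|^2/2}dx=2^{d/2+1}\int_{\R^d}f(y)e^{-\|y\|^2}dy,
\]
and applying the first identity to the right-hand side gives $d(2\pi)^{d/2}\int_{S^{d-1}}f(\xi)\,d\xi$, which rearranges to the stated formula.

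There isn't really a hard step here: the argument is just polar coordinates combined with the two lemmas already in hand. The only minor care-point is keeping straight the distinction between the normalized probability measure $d\xi$ used throughout the paper and the raw surface measure that naturally appears in polar coordinates; the factor of $\area(S^{d-1})$ is exactly what converts between them, and it is this conversion, together with the $r^{d+1}$ (not $r^{d-1}$) coming from homogeneity, that produces the clean constants $\tfrac{2}{d\pi^{d/2}}$ and $\tfrac{1}{(2\pi)^{d/2}}$. I would also remark that the hypothesis involving $A$ is not used and appears to be vestigial.
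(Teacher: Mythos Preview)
Your proof is correct and follows essentially the same approach as the paper: polar coordinates combined with Lemma~\ref{igamma} and Remark~\ref{surfarea} to establish the first identity. For the second identity the paper simply says the argument is similar and leaves it as an exercise, whereas you deduce it from the first via the substitution $x=\sqrt{2}\,y$, which is a slightly cleaner way to finish. Your remark that the hypothesis $A\in\R^{d\times d}$ is unused is also correct.
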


\begin{proof}
The first equality follows from
\begin{align*}
\int_{\R^d}f(x)e^{-\|x\|^2}dx&=\int_{S^{d-1}}\int_0^\infty\area(rS^{d-1})f(r\xi)e^{-\|r\xi\|^2}drd\xi\\
&=\int_{S^{d-1}}\int_0^\infty r^{d-1}\area(S^{d-1})r^2f(\xi)e^{-r^2}drd\xi\\
&=\area(S^{d-1})\left(\int_0^\infty r^{d+1}e^{-r^2}dr\right)\int_{S^{d-1}}f(\xi)d\xi\\
&=\frac{d\pi^{\frac d2}}{\Ga(1+\frac d2)}\frac12\Ga\left(1+\frac d2\right)\int_{S^{d-1}}f(\xi)d\xi.
\end{align*}
where the last equation uses Remark \ref{surfarea} and Lemma \ref{igamma}. The proof of the second equality is similar and is left as an exercise for the reader.
\end{proof}

\begin{lemma}\label{oplus0}
Suppose $J\in\R^{d\times d}$ is any matrix and consider the zero matrix $0_u:=0\in\R^{u\times u}$. Suppose also $i,j\in\{1,\dots,d\}$. 
Then there is some $\ga\in\R$ such that
\[\int_{S^{d+u-1}}\sgn\big[\xi^*(J\oplus0_u)\xi\big] \; \xi_i\xi_j \; d\xi=
\begin{cases}
\displaystyle\frac d{d+u}\int_{S^{d-1}}\sgn[\xi^*J\xi] \; \xi_i\xi_j \; d\xi
&\text{if $i,j\in\{1,\dots d\}$}\\
\ga&\text{if $i=j\in\{d+1,\dots d+u\}$}\\
0&\text{otherwise}
\end{cases}
\]
\end{lemma}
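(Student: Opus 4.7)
The plan is to convert the sphere integral to a Gaussian integral by means of Lemma \ref{sphere2gauss} and exploit the product structure of the Gaussian measure on $\R^{d+u}=\R^d\times\R^u$. Write $\xi=(\eta,\zeta)$ with $\eta\in\R^d$, $\zeta\in\R^u$. The key observation is
\[
\xi^*(J\oplus 0_u)\xi = \eta^*J\eta,
\]
so the factor $\sgn[\xi^*(J\oplus 0_u)\xi]$ depends only on $\eta$. Since the integrand $\sgn[\xi^*(J\oplus 0_u)\xi]\,\xi_i\xi_j$ is bounded and quadratically homogeneous in $\xi$, Lemma \ref{sphere2gauss} applies and yields
\[
\int_{S^{d+u-1}}\sgn[\xi^*(J\oplus 0_u)\xi]\,\xi_i\xi_j\,d\xi
=\frac{2}{(d+u)\pi^{(d+u)/2}}\int_{\R^{d+u}}\sgn[\eta^*J\eta]\,\xi_i\xi_j\,e^{-\|\eta\|^2-\|\zeta\|^2}\,d\eta\,d\zeta.
\]

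For the first case $i,j\in\{1,\dots,d\}$, we have $\xi_i\xi_j=\eta_i\eta_j$, independent of $\zeta$. By Fubini and Lemma \ref{gauss1} the $\zeta$-integral factors out as $\pi^{u/2}$, and the remaining $\eta$-integral is again quadratically homogeneous in $\eta$, so a second application of Lemma \ref{sphere2gauss} brings us back to $S^{d-1}$. The constants combine as
\[
\frac{2}{(d+u)\pi^{(d+u)/2}}\cdot\pi^{u/2}\cdot\frac{d\,\pi^{d/2}}{2}=\frac{d}{d+u},
\]
which is exactly the advertised ratio.

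For the second case $i=j\in\{d+1,\dots,d+u\}$, say $k=i-d$, the integrand becomes $\sgn[\eta^*J\eta]\,\zeta_k^2$, so Fubini separates the integral into an $\eta$-part times $\int_{\R^u}\zeta_k^2 e^{-\|\zeta\|^2}d\zeta$; the latter is independent of $k$ by the rotational invariance of the Gaussian measure on $\R^u$, producing a single constant $\ga$. For the remaining (``otherwise'') cases, at least one of $i,j$ lies in $\{d+1,\dots,d+u\}$ and $i\ne j$, so the integrand contains an odd power of some $\zeta_k$; since $\sgn[\eta^*J\eta]$ does not depend on $\zeta_k$ and the Gaussian weight is even in $\zeta_k$, the integral vanishes by symmetry.

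There is no substantive obstacle here; the only point requiring a little care is verifying the hypotheses of Lemma \ref{sphere2gauss} (the integrand is bounded and quadratically homogeneous in $\xi$, so both are immediate), and bookkeeping of the normalization constants $\pi^{d/2}$, $\pi^{u/2}$, $\pi^{(d+u)/2}$, $d$ and $d+u$ in the prefactor calculation.
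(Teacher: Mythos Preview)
Your proof is correct and follows essentially the same approach as the paper: convert the sphere integral to a Gaussian integral via Lemma~\ref{sphere2gauss}, split $\R^{d+u}=\R^d\times\R^u$ so that the sign factor depends only on the first block, apply Fubini and Lemma~\ref{gauss1} to peel off the $\zeta$-integral, and in the first case convert back to $S^{d-1}$ with the constants combining to $\frac{d}{d+u}$. The symmetry argument for the ``otherwise'' case and the rotation-invariance argument for the $i=j\in\{d+1,\dots,d+u\}$ case are exactly what the paper does.
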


\begin{proof}
Set
\[C:=\frac12(d+u)\pi^{\frac{d+u}2}\qquad\text{and}\qquad c:=\frac12d\pi^{\frac d2}.\]
By Lemma \ref{sphere2gauss}, the left hand side equals
equals
\[\frac1C\int_{\R^{d+u}}\sgn[ x^*(J\oplus0_s)x ] x_ix_je^{-\|x\|^2}dx.\]
If $i,j\in\{1,\dots,d\}$, then this in turn equals
\begin{align*}
\frac1C\int_{\R^d}\int_{\R^u}\sgn[y^*Jy]\; 
y_iy_j \; e^{-\|y\|^2-\|z\|^2} \; dzdy
&=\frac1C\left(\int_{\R^u}e^{-\|z\|^2}dz\right) \int_{\R^d}
\sgn[y^*Jy] \; y_iy_je^{-\|y\|^2} \; dy\\
&=\frac cC\pi^{\frac u2}\int_{S^{d-1}}\sgn[\xi^*J\xi] \xi_i\xi_jd\xi
\end{align*}
where the last equality follows from Lemmas \ref{gauss1} and \ref{sphere2gauss}.
If $i,j\in\{d+1,\dots,d+u\}$, then it equals
\[\frac1C\int_{\R^d}\int_{\R^u}\sgn[y^*Jy] \; z_{i-d}z_{j-d} \; 
e^{-\|y\|^2-\|z\|^2}dzdy\]
which equals up to a constant depending only on $J$ the integral
\[
\int_{\R^u}z_{i-d}z_{j-d}e^{-\|z\|^2}dz
\]
which is zero for symmetry reasons if $i\ne j$ and which 
depends only on $u$ if $i=j$. The remaining case
where one of $i$ and $j$ is in $\{1,\dots,d\}$ and the other one in $\{d+1,\dots,d+u\}$ follows similarly.
\end{proof}

\begin{lemma}
\label{lem:compalpha}
Let $s,t\in\N$, $d:=s+t$ and $a,b\in\R_{\ge0}$ with $a+b>0$. Then
\begin{equation}\label{eq:alphaUltra}
\al(s,t;a,b):=\int_{S^{d-1}}\sgn[\xi^*J(s,t;a,b)\xi] \xi_i^2d\xi
=\frac1d\left(2I_{\frac a{a+b}}\left(\frac t2,\frac s2+1\right)-1\right)
\end{equation}
for all $i\in\{1,\dots,s\}$. Analogously,
\begin{equation}\label{eq:betaUltra}
\be(s,t;a,b):=-\int_{S^{d-1}}\sgn[ \xi^*J(s,t;a,b)\xi]  \xi_i^2d\xi
=\frac1d\left(2I_{\frac b{a+b}}\left(\frac s2,\frac t2+1\right)-1\right)
\end{equation}
for all $i\in\{s+1,\dots,s+t\}$. 
An additional obvious property is
\begin{equation}\label{eq:abmix}
\al(s,t;a,b)=\be(t,s;b,a).\end{equation}
\end{lemma}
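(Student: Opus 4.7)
The plan is a direct reduction to a Beta-type integral. Since the rotation-invariant measure on $S^{d-1}$ is symmetric under permutations of the first $s$ coordinates and under sign changes, $\alpha(s,t;a,b)$ is independent of $i\in\{1,\dots,s\}$. Averaging, I may replace $\xi_i^2$ by $\tfrac{1}{s}\sum_{i=1}^s\xi_i^2$. Applying Lemma \ref{sphere2gauss} to the quadratically homogeneous integrand converts the spherical integral into
\[
\alpha(s,t;a,b) \;=\; \frac{2}{ds\,\pi^{d/2}}\int_{\R^d}\sgn\bigl[a\|y\|^2-b\|z\|^2\bigr]\,\|y\|^2\,e^{-\|y\|^2-\|z\|^2}\,dy\,dz,
\]
where I have split $x=(y,z)\in\R^s\times\R^t$, so that $x^\ast J(s,t;a,b)x = a\|y\|^2 - b\|z\|^2$.

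Passing to polar coordinates in each factor and substituting $u=\|y\|^2$, $v=\|z\|^2$, the right hand side becomes a scalar multiple (involving $\area(S^{s-1})\area(S^{t-1})$) of the double integral
\[
I \;:=\; \int_0^\infty\!\!\int_0^\infty u^{s/2}\,v^{t/2-1}\,\sgn[au-bv]\,e^{-u-v}\,du\,dv.
\]
To evaluate $I$, write $\sgn[au-bv]=1-2\,\mathbf{1}_{\{au<bv\}}$, splitting off an unconstrained Gamma-product integral equal to $\Gamma(s/2+1)\Gamma(t/2)$. On the constrained piece, change variables to $w=u+v$, $\phi=u/(u+v)$ (Jacobian $w$); the constraint $au<bv$ translates to $\phi < b/(a+b)$, and the integrals in $w$ and $\phi$ separate, producing $\Gamma(s/2+t/2+1)\,B_{b/(a+b)}(s/2+1,t/2)$. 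Using $B(\alpha,\beta)=\Gamma(\alpha)\Gamma(\beta)/\Gamma(\alpha+\beta)$, this yields
\[
I \;=\; \Gamma(s/2+1)\,\Gamma(t/2)\,\bigl(1-2I_{b/(a+b)}(s/2+1,t/2)\bigr).
\]

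Combining all prefactors and using $\area(S^{k-1})=2\pi^{k/2}/\Gamma(k/2)$ together with $\Gamma(s/2+1)=(s/2)\Gamma(s/2)$, the constants telescope to $1/d$, giving $\alpha(s,t;a,b)=\tfrac1d\bigl(1-2I_{b/(a+b)}(s/2+1,t/2)\bigr)$. The standard symmetry $I_p(\alpha,\beta)+I_{1-p}(\beta,\alpha)=1$ then converts this to the asserted form \eqref{eq:alphaUltra}. Identity \eqref{eq:betaUltra} follows immediately from \eqref{eq:abmix}, which itself is manifest: the coordinate permutation swapping the first $s$ and last $t$ entries sends $J(s,t;a,b)$ to $-J(t,s;b,a)$, flipping $\sgn$, and invariance of the sphere measure absorbs the permutation. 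The only substantive step is the $(w,\phi)$ change of variables, which is the classical trick turning a product of $\Gamma$'s into a Beta integral; everything else is bookkeeping with $\Gamma$-function identities.
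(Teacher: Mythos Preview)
Your proof is correct and follows essentially the same approach as the paper: convert the spherical integral to a Gaussian one via Lemma~\ref{sphere2gauss}, split $\R^d=\R^s\times\R^t$, and reduce to an incomplete Beta integral. The only difference is in the final change of variables: the paper passes to two-dimensional polar coordinates $(\sigma,\tau)\mapsto(r,\varphi)$ and then substitutes $x=\sin^2\varphi$, whereas you go directly via $(u,v)\mapsto(w,\phi)=(u+v,\,u/(u+v))$; both routes land on the same Beta integral, and your version is arguably a bit cleaner.
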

\index{$\al(s,t;a,b):
=\frac1d\left(2I_{\frac a{a+b}}\left(\frac t2,\frac s2+1\right)-1\right)$}

\index{$\be(s,t;a,b):=\frac1d\left(2I_{\frac b{a+b}}\left(\frac s2,\frac t2+1\right)-1\right)$}

\begin{proof}
We have
\begin{align*}
\int_{S^{d-1}}&\sgn[\xi^*J(s,t;a,b)\xi]  \xi_i^2d\xi\\
&\overset{\ref{sphere2gauss}}=\frac2{d\pi^{\frac d2}}\int_{\R^d}\sgn[x^*J(s,t;a,b)x]  x_i^2e^{-\|x\|^2}dx\\
&=\frac2{sd\pi^{\frac d2}}\int_{\R^d}\sgn[x^*J(s,t;a,b)x] \;
(x_1^2+\dots+x_s^2)e^{-\|x\|^2}dx\\
&=\frac2{sd\pi^{\frac d2}}\int_{\R^s}\int_{\R^t}\sgn[a\|y\|^2-b\|z\|^2] \
\|y\|^2e^{-\|y\|^2-\|z\|^2}~dz~dy\\
&=\frac2{sd\pi^{\frac d2}}\int_0^\infty\area(\si S^{s-1})\int_0^\infty\area(\ta S^{t-1})\sgn[a\si^2-b\ta^2] \; \si^2e^{-\si^2-\ta^2}\;d\ta\;d\si\\
&=\frac2{sd\pi^{\frac d2}}\int_0^\infty\si^{s-1}\frac{2\pi^{\frac s2}}{\Gamma(\frac s2)}\int_0^\infty\ta^{t-1}\frac{2\pi^{\frac t2}}{\Gamma(\frac t2)}
\sgn[a\si^2-b\ta^2] \; \si^2e^{-\si^2-\ta^2}\;d\ta\;d\si \\
&=\frac8{sd\Gamma(\frac s2)\Gamma(\frac t2)}\int_0^\infty\int_0^\infty\si^{s+1}\ta^{t-1}\ \sgn[a\si^2-b\ta^2] \; e^{-\si^2-\ta^2}\;d\ta\;d\si\\
&=\frac8{sd\Gamma(\frac s2)\Gamma(\frac t2)}\int_0^\infty r\int_0^{\frac\pi2}(r\cos\ph)^{s+1}(r\sin\ph)^{t-1} \; 
\sgn[ a(\cos\ph)^2-b(\sin\ph)^2] \; e^{-r^2}\;d\ph\;dr\\
&=\frac8{sd\Gamma(\frac s2)\Gamma(\frac t2)}\left(\int_0^\infty r^{d+1}e^{-r^2}dr\right)\int_0^{\frac\pi2}(\cos\ph)^{s+1}(\sin\ph)^{t-1}\
\sgn[a(\cos\ph)^2-b(\sin\ph)^2] d\ph\,dr\\
&\overset{\ref{igamma}}=\frac{4\Ga\left(\frac d2+1\right)}{sd\Gamma(\frac s2)\Gamma(\frac t2)}\int_0^{\frac\pi2}(\cos\ph)^{s+1}(\sin\ph)^{t-1}
\; \sgn[a(\cos\ph)^2-b(\sin\ph)^2] \;d\ph\\
&=\frac{\Ga\left(\frac d2\right)}{s\Gamma(\frac s2)\Gamma(\frac t2)}\int_0^1(1-x)^{\frac{s+1-1}2}x^{\frac{t-1-1}2}\;
\sgn[a(1-x)-bx]\;dx\\
&=\frac1{sB\left(\frac s2,\frac t2\right)}\int_0^1(1-x)^{\frac s2}x^{\frac t2-1} \;
\sgn[a-(a+b)x]\;dx
\end{align*}
using a change of variable $x=(\sin\ph)^2$ which makes
\[\frac{dx}{d\ph}=2(\sin\ph)(\cos\ph)=2\sqrt x\sqrt{1-x}.\]
Now suppose that $a,b\in\R_{\ge0}$ with $a+b>0$. Then the integral in the last expression equals
\[
\begin{split}
\int_{0}^{\frac a{a+b}}(1-x)^{\frac s2}x^{\frac t2-1}dx-& \int_{\frac a{a+b}}^1(1-x)^{\frac s2}x^{\frac t2-1}dx\\
=&B_{\frac a{a+b}}(\frac t2,\frac s2+1)-\int_0^{\frac b{a+b}}x^{\frac s2}(1-x)^{\frac t2-1}dx\\
=&B_{\frac a{a+b}}\left(\frac t2,\frac s2+1\right)-B_{\frac b{a+b}}\left(\frac s2+1,\frac t2\right).
\end{split}
\]
Using
\[B\left(\frac s2,\frac t2\right)=\frac{\Ga\left(\frac s2\right)\Ga\left(\frac t2\right)}{\Ga\left(\frac d2\right)}=\frac ds\frac{\frac s2\Ga\left(\frac s2\right)\Ga\left(\frac t2\right)}{\frac d2\Ga\left(\frac d2\right)}
=\frac ds\frac{\Ga\left(\frac s2+1\right)\Ga\left(\frac t2\right)}{\frac d2\Ga\left(\frac d2+1\right)}=\frac dsB\left(\frac s2+1,\frac t2\right),\]
we see that
\[\al(s,t;a,b)=\frac1d\left(I_{\frac a{a+b}}\left(\frac t2,\frac s2+1\right)-I_{\frac b{a+b}}\left(\frac s2+1,\frac t2\right)\right)\]
where $I$ denotes the regularized (incomplete) beta function.
Finally, \eqref{eq:alphaUltra} follows using
\[
I_{1-p}(\zeta,\eta)=1-I_p(\eta,\zeta).
\]
The proof of \eqref{eq:betaUltra} is similar.
\end{proof}

\section{Rank versus Size for the Matrix Cube}
\label{sec:rankvsize}
  In this section we show how to pass from size $d$ to rank $d$
  in the first part of Theorem \ref{thm:nextbest}, thus completing
  our dilation theoretic proof of Theorem \ref{thm:BtN}.
  Accordingly fix, for the remainder of this section, 
  positive integers $d\le m$.

Given positive integers $s,t,u$ and numbers
  $a,b,c$, let
\[
 J(s,t,u;a,b,c) = aI_s\oplus -b I_t \oplus cI_u.
\]

\begin{lemma}
 \label{lem:precutdown0}
   Given positive integers $s,t$ with $s+t=d$ and nonnegative numbers
  $a,b,c$, there exists real numbers  $\alpha,\beta,$ and $\gamma$  such that
\[
  J(s,t,m-d;\alpha,\beta,\gamma) = {m\int_{S^{m-1}}
  \sgn[\xi^* J(s,t,m-d;a,b,c)\xi] \, \xi \xi^* \, d\xi}.
\]
\end{lemma}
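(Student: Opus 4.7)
The plan is to recognize the right-hand side as $mE_J$ where $J := J(s,t,m-d;a,b,c)$ and $E_J = \int_{S^{m-1}} \sgn[\xi^*J\xi]\,\xi\xi^*\,d\xi$ is the averaged matrix introduced in Equation \eqref{eq:EJ}, and then to exploit its invariance under a large block-diagonal subgroup of $O(m)$ to force $E_J$ to be block-scalar of the required shape.

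Concretely, I would fix $U = U_1 \oplus U_2 \oplus U_3$ with $U_1 \in O(s)$, $U_2 \in O(t)$, $U_3 \in O(m-d)$. Since $J$ is a direct sum of scalar matrices of sizes $s$, $t$, $m-d$, we have $U^* J U = J$, so by Lemma \ref{lem:EV},
\[
 E_J = E_{U^*JU} = U^* E_J U,
\]
i.e., $E_J$ commutes with every such $U$. A standard Schur-type argument identifies the commutant of the group $O(s)\times O(t)\times O(m-d)$ acting on $\R^s\oplus\R^t\oplus\R^{m-d}$: on each diagonal block $O(s)$ acts irreducibly (so that block is a scalar), and any off-diagonal block $M_{ij}$ must satisfy $U_i M_{ij} = M_{ij} U_j$ for all choices of $U_i, U_j$, which forces $M_{ij} = 0$ (take $U_j = I$ and let $U_i$ range over $O(s_i)$). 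Hence $E_J = \alpha_0 I_s \oplus \beta_0 I_t \oplus \gamma_0 I_{m-d}$ for some real $\alpha_0,\beta_0,\gamma_0$.

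Multiplying by $m$ and writing $\alpha = m\alpha_0$, $-\beta = m\beta_0$, $\gamma = m\gamma_0$ yields the stated form $J(s,t,m-d;\alpha,\beta,\gamma)$. No computation of the actual values is needed, only existence of such scalars; those explicit values are provided by the incomplete beta function computation of Lemma \ref{lem:compalpha} in the relevant subsequent application. There is no real obstacle here: the only thing to be careful about is ensuring the invariance argument is applied to the correct group (one that fixes $J$), which is precisely why the splitting of $\R^m$ as $\R^s\oplus\R^t\oplus\R^{m-d}$ conforming to the three scalar blocks of $J$ is used.
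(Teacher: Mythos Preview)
Your proof is correct and follows essentially the same approach as the paper's: both use that $J(s,t,m-d;a,b,c)$ commutes with every block-diagonal orthogonal $U\in O(s)\oplus O(t)\oplus O(m-d)$, then invoke Lemma~\ref{lem:EV} to conclude $E_J$ commutes with all such $U$ and is therefore block-scalar. The paper simply cites Lemma~\ref{lem:EJisJ} for the commutant argument where you spell out the Schur reasoning, but the substance is identical.
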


\begin{proof}
  Given $U_v\in\mathcal O(v)$, for $v=s,t,m-d$, let $U$ denote the block diagonal
  matrix with entries $U_s,U_t,U_{m-d}$. Thus, $U\in\mathcal O(m)$ and $U$
  commutes with $J(s,t,m-d;a,b,c).$  The conclusion now follows, just as in Lemma \ref{lem:EJisJ}.
\end{proof}
 
\begin{lemma}
 \label{lem:precutdown1}
   For each $s,t$ with $s+t=d$,   there exists a $\gamma=\gamma(s,t)$ such that
\beq
\label{eq:sign0}
  \ka_*(s,t) J(s,t,u;1,1,\gamma(s,t)) = 
    {m\int_{S^{m-1}}\sgn[\xi^* J(s,t,m-d;\aofst,\bofst,0)\xi] \, \xi \xi^* \, d\xi}.
\eeq
 Here $\ka_*(s,t), \aofst$ and $\bofst$ are the optimal choices from
 Proposition \ref{prop:starastarb}.
\end{lemma}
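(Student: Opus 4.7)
The plan is to combine Lemma \ref{lem:precutdown0}, Lemma \ref{oplus0} and Lemma \ref{lem:magicab} to pin down the three eigenvalue blocks of the matrix on the right hand side of \eqref{eq:sign0}. Writing $u=m-d$ throughout, observe first that
\[
 J(s,t,m-d;\aofst,\bofst,0) \;=\; J(s,t;\aofst,\bofst)\oplus 0_{m-d}
 \;=:\; J_*\oplus 0_{m-d},
\]
so the sign function appearing in \eqref{eq:sign0} is exactly $\sgn[\xi^*(J_*\oplus 0_{m-d})\xi]$ in the notation of Lemma \ref{oplus0}.

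First I would apply Lemma \ref{lem:precutdown0} with $(a,b,c)=(\aofst,\bofst,0)$: it asserts that the right hand side of \eqref{eq:sign0} is necessarily of the form $J(s,t,m-d;\alpha,\beta,\gamma')$ for some reals $\alpha,\beta,\gamma'$. Thus it suffices to compute $\alpha$ and $\beta$ and then define $\gamma(s,t):=\gamma'/\ka_*(s,t)$; the claim will then reduce to the two identities $\alpha=\ka_*(s,t)$ and $\beta=\ka_*(s,t)$.

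Next I would compute $\alpha$ and $\beta$ via Lemma \ref{oplus0}. For $i,j\in\{1,\ldots,d\}$, that lemma gives
\[
 \int_{S^{m-1}}\sgn\big[\xi^*(J_*\oplus 0_{m-d})\xi\big]\,\xi_i\xi_j\,d\xi
 \;=\;\frac{d}{m}\int_{S^{d-1}}\sgn[\xi^*J_*\xi]\,\xi_i\xi_j\,d\xi
 \;=\;\frac{d}{m}\,(E_{J_*})_{ij},
\]
recalling the definition \eqref{eq:EJ} of $E_{J_*}$. Multiplying by $m$ and invoking \eqref{it:magicab2} of Lemma \ref{lem:magicab},
\[
 m\int_{S^{m-1}}\sgn\big[\xi^*(J_*\oplus 0_{m-d})\xi\big]\,\xi_i\xi_j\,d\xi
 \;=\;d\,(E_{J_*})_{ij}
 \;=\;\ka_*(s,t)\,J(s,t;1,1)_{ij}.
\]
Reading off the diagonal shows that the upper left $s\times s$ block of the right hand side of \eqref{eq:sign0} equals $\ka_*(s,t)\,I_s$, and the middle $t\times t$ block equals $-\ka_*(s,t)\,I_t$. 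Hence $\alpha=\ka_*(s,t)=\beta$, as desired.

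Finally, the remaining two cases of Lemma \ref{oplus0} complete the picture. For $i\ne j$ with at least one index in $\{d+1,\ldots,m\}$ the integral vanishes (this is already encoded in the form provided by Lemma \ref{lem:precutdown0}), and for $i=j\in\{d+1,\ldots,m\}$ it takes a single common value, call it $\gamma'$. Setting $\gamma(s,t):=\gamma'/\ka_*(s,t)$ yields
\[
 m\int_{S^{m-1}}\sgn[\xi^*J(s,t,m-d;\aofst,\bofst,0)\xi]\,\xi\xi^{*}\,d\xi
 \;=\;\ka_*(s,t)\bigl(I_s\oplus -I_t\oplus \gamma(s,t)\,I_{m-d}\bigr),
\]
which is exactly $\ka_*(s,t)\,J(s,t,m-d;1,1,\gamma(s,t))$. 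There is no real obstacle here: Lemma \ref{oplus0} was essentially tailored to reduce the $S^{m-1}$ integral to the $S^{d-1}$ integral whose value has already been identified in Lemma \ref{lem:magicab}; the only bookkeeping is to track how the factor $d/m$ from Lemma \ref{oplus0} cancels against the prefactor $m$ in \eqref{eq:sign0} to leave the clean coefficient $\ka_*(s,t)$.
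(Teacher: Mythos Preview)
Your proof is correct and follows essentially the same route as the paper: both arguments use Lemma~\ref{oplus0} to reduce the $S^{m-1}$ integral on the first $d$ coordinates to the $S^{d-1}$ integral $d\,E_{J_*}$, and then invoke Lemma~\ref{lem:magicab} to identify that with $\ka_*(s,t)J(s,t;1,1)$. The only cosmetic difference is that you open by citing Lemma~\ref{lem:precutdown0} for the block-diagonal form, whereas the paper reads that form directly off the three cases of Lemma~\ref{oplus0}; this is harmless redundancy.
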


\begin{proof}
  Denote the right hand side 
  of \eqref{eq:sign0} by $E$. Then by Lemma \ref{oplus0},
\[
e_iE e_j=
\begin{cases}
\displaystyle d\int_{S^{d-1}}\sgn[\xi^*J(s,t;\aofst,\bofst)\xi]
 \ e_i\xi\xi^*e_j \ d\xi&   \text{if  $i=j\in\{1,\dots,d\}$}\\
\ga&\text{if $i=j\in\{d+1,\dots,m\}$}\\
0&\text{otherwise}
\end{cases}
\]
for some $\ga\in\R$ and all $i,j\in\{1,\dots,m\}$.
 On the other hand, from Lemma \ref{lem:magicab},
\[
 \frac{\ka_*(s,t)}{d} J(s,t;1,1) 
 = \int_{S^{d-1}}\sgn[\xi^*J(s,t;\aofst,\bofst)\xi] \, \xi\xi^* \, d\xi.
\]
 Hence, with $P$ denoting the projection of $\mathbb R^d\oplus \mathbb R^{m-d}$ onto the first $d$ coordinates,
\[
 PEP = \ka_*(s,t)J(s,t;1,1)
\]
 and the conclusion of the lemma follows.
\end{proof}

 Let $\mathcal H$ denote the Hilbert space $\mathbb R^m\otimes L^2(O(m))$
  and let $V:\mathbb R^m\to \mathcal H$ denote the isometry,
\[
 Vx(U) = x.
\]
 Thus $\mathcal H$ and $V$ are the Hilbert space and isometry (with $m$ in place of $d$) from
  Equations \eqref{eq:H} and \eqref{eq:V}. Recall too the collection $\mfD(m)$
  of contractive measurable mappings $D:O(m)\to M_m$ taking diagonal values, and, for $D\in\mfD(m)$, 
  the contraction  operator $M_D:\mathcal H\to \mathcal H$. 

\begin{lemma}
 \label{lem:precutdown2}
  For each $m\times m$ symmetry matrix $R$ of rank $d$ there exists 
 a $D\in\mfD(m)$ such that
\[
  \ka_*(d) \; PRP = P V^* M_D V P,
\]
 where $P$ is the projection onto the range of $R$. 
\end{lemma}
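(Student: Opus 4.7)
The plan is to put $R$ in a canonical diagonal form by an orthogonal conjugation, apply Lemma \ref{lem:precutdown1} (which was tailored for exactly this situation), and then let the projection $P$ truncate the unwanted $\gamma(s,t)I_{m-d}$ block at the end.

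Since $R$ is a symmetry matrix of rank $d$, its spectrum is contained in $\{-1,0,1\}$ with, say, $s$ copies of $+1$, $t$ copies of $-1$ and $m-d$ zeros, where $s+t=d$. Pick $W\in O(m)$ with $W^*RW = J := J(s,t,m-d;1,1,0)$; then $P = WP'W^*$ is the orthogonal projection onto $\Ran R$, where $P' := I_d\oplus 0_{m-d}$, and the block structure of $J$ gives $P'JP'=J$, hence $PRP = WJW^* = R$.

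Let $\aofst,\bofst$ be the optimal pair furnished by Proposition \ref{prop:starastarb} for this $(s,t)$, set $J':=J(s,t,m-d;\aofst,\bofst,0)$, and take $D := D_{WJ'W^*}\in\mfD(m)$ via the recipe \eqref{eq:defineDsubJ}. Lemmas \ref{lem:averagelift} and \ref{lem:whywelikekappa} together with rotation-invariance of the uniform measure on $S^{m-1}$ (equivalently, the change of variable $\xi\mapsto W^*\xi$) give
\begin{equation*}
V^*M_DV = C_D = W\left(m\int_{S^{m-1}}\sgn[\xi^*J'\xi]\,\xi\xi^*\,d\xi\right)W^*,
\end{equation*}
and Lemma \ref{lem:precutdown1} identifies the bracketed integral as $\ka_*(s,t)\,J(s,t,m-d;1,1,\gamma(s,t))$.

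Compressing by $P=WP'W^*$ and using $P'J(s,t,m-d;1,1,\gamma(s,t))P' = J$, we obtain
\begin{equation*}
PV^*M_DVP = \ka_*(s,t)\,WJW^* = \ka_*(s,t)\,PRP.
\end{equation*}
Since $\ka_*(d)\le\ka_*(s,t)$ by \eqref{eq:BTbdm}, the rescaled map $D'' := \frac{\ka_*(d)}{\ka_*(s,t)}\,D$ still takes contractive diagonal values and thus lies in $\mfD(m)$, yielding the desired identity $\ka_*(d)\,PRP = PV^*M_{D''}VP$. No step presents a real obstacle; the only care required is tracking the direction of conjugation by $W$ and checking that $P = WP'W^*$ really is the projection onto $\Ran R$.
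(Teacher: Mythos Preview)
Your proof is correct and follows essentially the same route as the paper: diagonalize $R$ by an orthogonal $W$, apply Lemma~\ref{lem:precutdown1} to the padded $J_* = J(s,t,m-d;\aofst,\bofst,0)$, and compress by $P$ to kill the $\gamma(s,t)I_{m-d}$ block. The only notable difference is that you explicitly carry out the rescaling $D'' = \frac{\ka_*(d)}{\ka_*(s,t)}D$ to pass from $\ka_*(s,t)$ to $\ka_*(d)$, whereas the paper's proof stops at $\ka_*(s,t)$ and leaves this step implicit (via the ``similar to Lemma~\ref{lem:prerhodoesit}'' remark); your version is slightly more complete in this respect.
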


\begin{proof}
  The proof is similar to the proof of Lemma \ref{lem:prerhodoesit}. 
  Let $s$ and $t$ denote the number of positive and negative eigenvalues of $R$.
  Hence, $R=W^* J(s,t,m-d;1,1,0)W$ for some $m\times m$ unitary $W$. 
  Let $J_*= J(s,t,m-d;\aofst,\bofst,0)$ and define  $D\in O(m)$ by
\[
 D(U) = \sum_{j=1}^m  \sgn[e_j^* U^* W^* JW U e_j] \,  e_j e_j^* \, dU.
\]
  Now, by Lemma \ref{lem:averagelift},  Remark \ref{rem:nomoreOd} and Lemma \ref{lem:precutdown1}, 
\[
 \begin{split} 
  V^* M_{D_{W^* J_* W}} V & = 
  C_{D_{W^* J_* W }}\\ 
 & = \int_{O(m)} U D(U) U^* \, dU \\
 & = \sum_{j=1}^m \int_{O(m)} \sgn[e_j^* U^* W^* JW U e_j] \,  U e_j e_j^* U^* \, dU \\
 & =  \sum_{j=1}^m \int_{O(m)} \sgn[e_j^* U^* J U e_j] \,  W^* U e_j e_j^* U^* W \, dU \\
 & = W^*  \big( \sum_{j=1}^m \int_{O(m)} 
 \sgn[e_j^* U^* J U e_j] \,  U e_j e_j^* U^*  \, dU \big) W \\
 & = m W^* \big (\int_{S^{m-1}} \sgn [\xi^* J \xi] \, \xi \xi^* \, d\xi \big) W \\
 & = \ka_*(s,t)\,  W^* J(s,t,m-d;1,1,\gamma(s,t)) W. 
\end{split} 
\]
 The observation 
\[
  P W^* J(s,t,m-d;1,1,\gamma(s,t))W P = P W^* J(s,t,m-d;1,1,0)W P
\]
 completes the proof. 
\end{proof}

 Given a $g$-tuple $\cM=(\cM_1,\dots,\cM_g)$ of $d$-dimensional subspaces of
  $\mathbb R^m$, let $\cube(\cM)$ denote  the collection of $g$-tuples
 of $m\times m$ symmetric contractions $C=(C_1,\dots,C_g)$
  where each $C_j$ has range in $\cM_j$.  

\begin{lemma}
 \label{lem:precutdown3}
   The set $\cube(\cM)$ is closed and convex and its extreme points
   are the tuples of the form  $E=(E_1,\dots,E_g)$, where
   each $E_j$ is a symmetry  matrix with rank $d$.
\end{lemma}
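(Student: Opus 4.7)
The plan is to reduce the problem to Lemma \ref{lem:extpoints} by parameterizing $\cube(\cM)$ coordinate-by-coordinate. For each $j$, let $P_j$ denote the orthogonal projection of $\R^m$ onto $\cM_j$. The key observation is that, for a symmetric matrix $C_j \in \smat_m$, the range inclusion $\range C_j \subseteq \cM_j$ is equivalent to the identity
\[
  C_j \;=\; P_j C_j P_j.
\]
Indeed, range in $\cM_j$ means $P_j C_j = C_j$, and symmetry of both sides then gives $C_j P_j = C_j$ as well. This realizes the set $\{C_j \in \smat_m : C_j = P_j C_j P_j,\ \|C_j\|\le 1\}$ as the affine image of the set $\cube(\cM_j)$ of symmetric contractions on the $d$-dimensional space $\cM_j$, via $D \mapsto P_j D P_j$ (with inverse given by restriction to $\cM_j$). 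This identification is an affine homeomorphism.

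Closedness and convexity are then immediate: each constraint $C_j = P_j C_j P_j$ is linear, and $\|C_j\| \le 1$ is closed and convex, so each coordinate factor is closed and convex, hence so is their product $\cube(\cM)$.

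For the extreme point description, I would invoke the standard fact that the extreme points of a Cartesian product of compact convex sets are precisely the products of the extreme points of each factor. Thus $(E_1, \ldots, E_g)$ is extreme in $\cube(\cM)$ if and only if each $E_j$ is extreme in the $j$th factor. Under the affine identification above, the extreme points of the $j$th factor correspond (via $D \mapsto P_j D P_j$) exactly to the extreme points of $\cube(\cM_j)$, which by Lemma \ref{lem:extpoints} (applied to the $d$-dimensional space $\cM_j$) are the signature matrices on $\cM_j$, i.e.\ the symmetric operators on $\cM_j$ whose spectrum lies in $\{-1,1\}$. Pushed forward to $\smat_m$, these are precisely the symmetric matrices $E_j$ with spectrum in $\{-1,0,1\}$ whose range is exactly $\cM_j$; equivalently, symmetry matrices of rank $d$ with $\range E_j = \cM_j$.

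There is no real obstacle here; the lemma is essentially a bookkeeping reduction. The only point that merits care is verifying the equivalence $\range C_j \subseteq \cM_j \iff C_j = P_j C_j P_j$ for symmetric $C_j$, since without symmetry only one inclusion holds. Once that is in hand, the product-of-extreme-points fact together with Lemma \ref{lem:extpoints} yields both statements at once.
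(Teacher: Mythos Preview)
Your proposal is correct and follows essentially the same approach as the paper: reduce to a single coordinate via the product-of-extreme-points fact, identify the factor with $\cube(d)$ on the $d$-dimensional space $\cM_j$, and invoke Lemma~\ref{lem:extpoints}. The paper's proof is a two-sentence sketch, whereas you have spelled out the affine identification $C_j \leftrightarrow P_j C_j P_j$ and the equivalence $\range C_j \subseteq \cM_j \iff C_j = P_j C_j P_j$ for symmetric $C_j$ explicitly; the content is the same.
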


\begin{proof}
  Given a subspace $\mathcal N$ of $\mathbb R^m$ of dimension $d$, note that the set $n\times n$ symmetric contractions
  with range in $\mathcal N$ is a convex set whose extreme points are symmetry matrices whose range is exactly $\mathcal N$
  (cf.~Lemma \ref{lem:extpoints}). 
  Since $\cM = \times_{j=1}^g \cM_j$ the result follows. 
\end{proof}

\begin{lemma}
 \label{lem:cutdown}
   Suppose $X=(X_1,\dots,X_g)$ is a tuple of $m\times m$ symmetric contractions. If 
   $P_1,\dots,P_g$ is a tuple of rank $d$ projections, then there exists
   a tuple of $m\times m$ {symmetric} contractions $Y=(Y_1,\dots,Y_g)$ such that
\begin{enumerate}[\rm(i)]
 \item  $P_jX_jP_j = P_j Y_j P_j$; and
 \item  there exists  a tuple of commuting self-adjoint  contractions $Z=(Z_1,\dots,Z_g)$ on a Hilbert space  $\mathcal H$
      such that $\ka_*(d)Y$ lifts to $Z$.
\end{enumerate}
  Thus, there exists an isometry $Q:\mathbb R^m\to\mathcal H$
  such that 
  \[Y_j= \frac{1}{\ka_*(d)} W^* Z_j W\quad\text{ and }\quad P_j W^* Z_j W P_j = P_j X_j P_j.\]
\end{lemma}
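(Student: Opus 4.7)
The plan is to combine the convex decomposition from Lemma \ref{lem:precutdown3} with the single-matrix dilation of Lemma \ref{lem:precutdown2}, and then verify that the resulting averaged operator is a contraction via a bound on the auxiliary coefficient $\gamma(s,t)$ from Lemma \ref{lem:precutdown1}.

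Each $P_jX_jP_j$ is a symmetric contraction with range in $\mathcal M_j:=\range(P_j)$, so $(P_jX_jP_j)_{j=1}^g$ lies in $\cube(\mathcal M)$. By Lemma \ref{lem:precutdown3}, I write it as a finite convex combination $\sum_k c_k(R_j^{(k)})_{j=1}^g$ in which each $R_j^{(k)}$ is a rank-$d$ symmetry with $\range(R_j^{(k)})=\mathcal M_j$; in particular $P_jR_j^{(k)}P_j=R_j^{(k)}$. For every pair $(j,k)$, Lemma \ref{lem:precutdown2} (after rescaling the $D$ it provides by $\ka_*(d)/\ka_*(s_{j,k},t_{j,k})\le 1$, where $(s_{j,k},t_{j,k})$ is the signature of $R_j^{(k)}$) produces $D_j^{(k)}\in\mfD(m)$ and a unitary $W_{j,k}$ with
\[
V^*M_{D_j^{(k)}}V \;=\; \ka_*(d)\,W_{j,k}^*\,J(s_{j,k},t_{j,k},m-d;1,1,\gamma(s_{j,k},t_{j,k}))\,W_{j,k},
\]
and $\ka_*(d)R_j^{(k)}=P_jV^*M_{D_j^{(k)}}VP_j$. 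Set $Z_j:=\sum_k c_k M_{D_j^{(k)}}$. Because $\fC_m$ is a convex family of pairwise commuting self-adjoint contractions (Lemmas \ref{lem:lift1}, \ref{lem:lift2}), each $Z_j\in\fC_m$ and the $Z_j$'s commute. Define $Y_j := \tfrac{1}{\ka_*(d)}\,V^*Z_jV$. The lift property $\ka_*(d)Y_j = V^*Z_jV = W^*Z_jW$ with $W:=V$ is immediate, and the compression matches by construction: $P_jY_jP_j=\tfrac{1}{\ka_*(d)}\sum_k c_k P_jV^*M_{D_j^{(k)}}VP_j=\sum_k c_k R_j^{(k)}=P_jX_jP_j$.

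The main obstacle is to verify that each $Y_j$ really is a contraction. Substituting the formula above,
\[
Y_j \;=\; \sum_k c_k\, W_{j,k}^*\,J(s_{j,k},t_{j,k},m-d;1,1,\gamma(s_{j,k},t_{j,k}))\,W_{j,k},
\]
so it suffices to prove $|\gamma(s,t)|\le 1$ whenever $s,t\ge0$ and $s+t=d$. I plan to obtain the closed-form identity $\gamma(s,t)=(s-t)/d$, from which $|\gamma(s,t)|\le 1$ is immediate. The $(j,j)$-entry of Lemma \ref{lem:precutdown1} for $j>d$ reads
\[
\ka_*(s,t)\,\gamma(s,t) \;=\; m\int_{S^{m-1}}\sgn[\xi^*J(s,t,m-d;\aofst,\bofst,0)\xi]\,\xi_j^2\,d\xi.
\]
Writing $\xi=(Y,Z)\in\R^d\times\R^{m-d}$, the sign depends only on the direction $\omega:=Y/\|Y\|\in S^{d-1}$; since on $S^{m-1}$ the direction $\omega$, the radial parts $\|Y\|,\|Z\|$, and the direction $Z/\|Z\|$ are jointly independent with $\int\|Y\|^2\,d\xi=d/m$ and $\int(Z_{j-d}/\|Z\|)^2=1/(m-d)$, standard Fubini collapses the right-hand integral to $\int_{S^{d-1}}\sgn[\xi^*J(s,t;\aofst,\bofst)\xi]\,d\omega$. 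By Proposition \ref{prop:starastarb} and the optimality $\alpha(s,t;\aofst,\bofst)=\beta(s,t;\aofst,\bofst)=\ka_*(s,t)/d$, this equals $(s-t)\ka_*(s,t)/d$, so $\gamma(s,t)=(s-t)/d$. Since $|s-t|\le s+t=d$, every summand in the expression for $Y_j$ has spectrum contained in $[-1,1]$ and is a contraction, whence $Y_j$ is too.
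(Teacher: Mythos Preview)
Your argument follows the paper's route exactly: decompose $(P_jX_jP_j)_j$ into extreme points of $\cube(\mathcal M)$ via Lemma~\ref{lem:precutdown3}, dilate each extreme symmetry matrix using Lemma~\ref{lem:precutdown2}, and average. The paper stops after setting $Y_j=V^*Z_jV$ and appealing to the fact that a compression of a contraction is a contraction; but with that normalization one only gets $P_jY_jP_j=\ka_*(d)P_jX_jP_j$, so to match item~(i) one really wants $Y_j=\tfrac{1}{\ka_*(d)}V^*Z_jV$, and then the contraction claim is no longer automatic. You spotted this and supplied the missing ingredient.

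Your computation $\gamma(s,t)=(s-t)/d$ is correct. Writing $\xi=(\|Y\|\omega,\|Z\|\nu)$ with $\omega\in S^{d-1}$, $\nu\in S^{m-d-1}$ independent of each other and of $\|Y\|$, the factor $\sgn$ depends only on $\omega$, so
\[
m\int_{S^{m-1}}\sgn[\xi^*J_*\xi]\,\xi_j^2\,d\xi
=m\,\Big(\int_{S^{d-1}}\sgn[\omega^*J(s,t;\aofst,\bofst)\omega]\,d\omega\Big)\cdot E[\|Z\|^2]\cdot E[\nu_{j-d}^2]
=\int_{S^{d-1}}\sgn[\omega^*J\omega]\,d\omega,
\]
and expanding $1=\sum\omega_k^2$ this last integral is $s\alpha-t\beta=(s-t)\ka_*(s,t)/d$ by Proposition~\ref{prop:starastarb}. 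Hence $|\gamma(s,t)|=\lvert s-t\rvert/d\le 1$, each summand $W_{j,k}^*J(s_{j,k},t_{j,k},m-d;1,1,\gamma)W_{j,k}$ has norm $1$, and $Y_j$ is a contraction as claimed. (A tiny notational slip: in your displayed Fubini reduction the integrand should read $\sgn[\omega^*J\omega]$, not $\sgn[\xi^*J\xi]$.) So your proof is correct and in fact tightens the paper's argument at the one point where it is loose.
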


\begin{proof} 
  Let $\cM_j$ denote the range of $P_j$. 
  Let $C_j=P_jX_j P_j$.  By Lemma \ref{lem:precutdown3}, there
 exists a positive integer $N$ and extreme points $E^1,\dots,E^N$
 in $\cube(\cM)$ and positive numbers $\epsilon_1,\dots,\epsilon_N$ such 
  that
\[
  C_j = \sum_{k=1}^N  \epsilon_k E^k_j.
\]
  For each $k,j$ there exist positive integers $s^k_j,t^k_j$ such that 
  $s^k_j + t_j^k =1$ and a unitary matrix $W_j^k$ such that $(W_j^k)^* \mathbb R^d\oplus \{0\}=\cM_j$ and
\[
  E^k_j = (W_j^k) ^* J(s_j^k,t^k_j,m-d,1,1,0)W_j^k.
\]
 In particular,
\[
 E^k_j = P_j E^k_j P_j. 
\]
 By Lemma \ref{lem:precutdown2}, there exists $D_j^k\in\mfD (m)$ such that
\[
  \ka_*(d) E^k_j =  P_j V^*  M_{D^k_j} V P_j.
\]
 Let 
\[
  Z_j  = \sum_{k} M_{D^k_j}.
\]

 Thus $Z$ is a $g$-tuple of commuting contractions
  and 
\[
 \begin{split}
  P_j V^* Z_j W P_j & =  \sum_{k=1}^N \epsilon_k P_j V^* \cM_{D^k_j} V P_j \\
    & = \sum_{k=1}^N \epsilon_k E^k_j \\ 
    & = C_j.
 \end{split}
\]
 Choosing $Y_j =V^* Z_j V$  completes the proof since the $Z_j$ are commuting
  self-adjoint  contractions (and $V$ is an isometry independent of $j$).
\end{proof}

\subsection{Proof of Theorem \ref{thm:BtN}}
 \label{subsec:proofBtN}
 Our dilation theoretic proof of Theorem \ref{thm:BtN} concludes in this subsection. Accordingly, suppose 
    $B=(B_1,\dots,B_g)$ is a given $g$-tuple of $m\times m$ symmetric matrices of rank at most $d$ 
   and  $[-1,1]^g \subset \fs_{B}$. We are to  show  $\ka_*(d)\cubeg \subset \cD_{B}$.

  Let
\[
 \Lambda_B(x)= \sum_{j=1}^g B_j x_j
\]
be the homogeneous linear pencil associated with $B$.
 The aim is to show that $\Lambda_B({\ka_*(d)}X)\preceq I$ for tuples $X\in\cubeg$ 
  and, by Lemma \ref{lem:ddoesit},  
  it  suffices to suppose $X$ has size $m$.   Let
  $x\in\mathbb R^m\otimes \mathbb R^m$ be a given unit vector. The proof 
  reduces to showing
\[
  \ka_*(d) \langle \Lambda_B(X) x, x\rangle \le 1.
\]

  Fix $j$ and let $\{f_{j1},f_{j2},\dots,f_{jd}\}$ denote an orthonormal basis for the range of $B_j$
 (or any $d$-dimensional subspace that contains the range of $B_j$). 
  This uses the rank at most $d$ assumption. 
  Extend this basis to an orthonormal  basis $\{f_{j1},\dots,f_{jm}\}$ of all $\mathbb R^m$.
  Note that $f_{jp}\in\{f_{j1},f_{j2},\dots,f_{jd}\}^\perp\subseteq(\im B_j)^\perp=\ker B_j$ for all $j\in\{1,\dots,g\}$ and $p\in\{d+1,\dots,m\}$
  since $B_j$ is symmetric.
  The unit vector $x$ can be written in $g$ different ways (indexed by $j\in\{1,\dots,g\}$) as 
\[
  x=\sum_{p=1}^m f_{jp} \otimes x_{jp},
\]
  for vectors $x_{jp}\in\mathbb R^m$. 
   Let   $P_j$ be the orthogonal projection onto
\[
  \cM_j := \mbox{span}(\{x_{j1},\dots,x_{jd}\})
\]
 and compute for $j$ fixed and any $m\times m$ tuple $Y$ such that
 $P_jY_jP_j = P_j X_j P_j$, 
\[
 \begin{split}
  \big\langle(B_j\otimes X_j)x,x\big\rangle & = \sum_{p,q=1}^m  \langle B_j f_{jp},f_{jq} \rangle \, \langle X_j x_{jp},x_{jq}\rangle\\
   & = \sum_{p,q=1}^d  \langle B_j f_{jp},f_{jq} \rangle \, \langle X_j x_{jp},x_{jq}\rangle\\
   & = \sum_{p,q=1}^d \langle B_j f_{jp},f_{jq} \rangle \, \langle P_j X_j P_j x_{jp},x_{jq}\rangle \\
   & = \sum_{p,q=1}^d \langle B_j f_{jp},f_{jq} \rangle \, \langle P_j Y_j P_j x_{jp},x_{jq}\rangle \\
   & = \sum_{p,q=1}^d \langle B_j f_{jp},f_{jq} \rangle \, \langle  Y_j x_{jp},x_{jq}\rangle \\
   & = \sum_{p,q=1}^m  \langle B_j f_{jp},f_{jq} \rangle \, \langle Y_j x_{jp},x_{jq}\rangle \\
   & =\big \langle(B_j \otimes Y_j)x,x \big\rangle. 
 \end{split}
\]

 From Lemma \ref{lem:cutdown} 
 there exists a Hilbert space $\mathcal K$ (infinite dimensional generally), an isometry
 $V\colon\mathbb R^m\to \mathcal K$ and a tuple of commuting self-adjoint  contractions $Z=(Z_1,\dots,Z_g)$
 acting on $\mathcal K$ such that $Y_j,$ defined by  $\ka_*(d) Y_j = V^* Z_j V,$  satisfies $P_j Y_j P_j = P_j X_j P_j$. Hence, 
\[
 \begin{split}
  \ka_*(d) \langle \Lambda_B(X)x,x\rangle  & = \ka_*(d) \langle \Lambda_B(Y)x,x\rangle \\
    & = \big\langle (I_m\otimes V^*) \Lambda_B(Z) (I_m\otimes V) x,x \big\rangle  \\ 
    & =  \langle \Lambda_B(Z) z,z\rangle,
 \end{split}
\]
 where $z=(I\otimes V)x$. In particular, $z$ is a unit vector. Since $Z$ is a commuting tuple of self-adjoint
  contractions, just as in Proposition \ref{prop:commute-include}, the inclusion $[-1,1]^g \subset \fs_{L_B}$ implies,
\[
 \langle \Lambda_B(Z) z,z \rangle \le 1. 
\] 
 The final conclusion is
\[
 \ka_*(d) \langle \Lambda_B(X)x,x\rangle \le 1.
\rlap{$\hspace{5.8cm} \qedsymbol$}
\]

\section{Free Spectrahedral Inclusion Generalities}
 \label{sec:generally free}

 This section begins with a bound on the inclusion scale
 which depends little on the LMIs involved,\index{free spectrahedral inclusion}
 Section \ref{sec:inclscbd}.
 In Subsection \ref{sec:isisci} we prove that
 the   inclusion scale equals the  commutability index,
 that is, Theorem \ref{thm:scottDilated-intro}.
In summary,  all the claims made in 
Section \ref{sec:op-dil} are established here.

\subsection{A general bound on the inclusion scale}
 \label{sec:inclscbd} 
 This subsection gives a bound on the inclusion scale
 which depends little on the LMIs involved.\index{inclusion scale}
 Recall $\fs_{L_A}$ is the spectrahedron $\cD_{L_A}(1)$ determined by the
 tuple $A$.

\begin{prop}
 \label{prop:gend}
   Suppose 
$A$ and $B$ are $g$-tuples of symmetric matrices,    
   where the $B_j$ are $d \times d $ matrices.    
Suppose further that $-\cD_{L_A}\subseteq\cD_{L_A}$.   
   If $\fs_{L_A} \subset \fs_{L_B}$, then 
   $\cD_{L_A}(n) \subset d\  \cD_{L_B}(n)$    for each $n.$
\end{prop}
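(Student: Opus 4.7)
The plan is to reduce the statement to a single commutative dilation at matrix size $d$ and then invoke Proposition \ref{prop:commute-include}. Fix an arbitrary $X \in \cD_{L_A}(d)$; the goal becomes to construct a Hilbert space $\cH$, an isometry $V:\R^d \to \cH$ and a commuting tuple $T=(T_1,\ldots,T_g)$ of bounded self-adjoint operators on $\cH$ whose joint spectrum lies in $\fs_{L_A}$ and satisfies $\tfrac{1}{d}X_j = V^{\ast} T_j V$ for every $j$. Once such a dilation is in hand, Proposition \ref{prop:commute-include}, applied with the scaling constant $r = 1/d$ (together with its internal use of Lemma \ref{lem:ddoesit}), upgrades this at once to $\cD_{L_A}(n) \subseteq d\,\cD_{L_B}(n)$ for every $n$.

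For the dilation step I would proceed as follows. For each unit vector $v\in\R^d$, the functional $Y\mapsto v^{\ast}Yv$ is a state on $M_d$, and compressing $L_A(X)\succeq 0$ against $v\otimes I_\nu$ gives $y(v) := (v^{\ast}X_1 v,\ldots,v^{\ast}X_g v) \in \fs_{L_A}$. The target is to assemble these points into an $M_d$-valued positive operator-valued measure $E$ on $\fs_{L_A}$ with $E(\fs_{L_A}) = I_d$ and first moment $\int x_j\,dE(x) = X_j/d$, whose Naimark dilation then yields $(\cH, V, T)$. The natural starting point is the rank-one measure $d\,vv^{\ast}\,d\sigma(v)$ on $S^{d-1}$ (whose total mass is $I_d$), pushed forward by $v\mapsto y(v)$ to $\fs_{L_A}$. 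The balanced hypothesis $-\cD_{L_A}\subseteq\cD_{L_A}$ enters essentially here: symmetric averaging over the orthogonal group $O(d)$, combined with the sign symmetry $X\mapsto -X$ afforded by balancedness, is what kills the spurious trace and cross terms that would otherwise pollute the first moment, pinning it to exactly $X_j/d$.

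The main obstacle is executing this POVM construction precisely: a naive rank-one pushforward gives a first moment that differs from $X_j/d$ by corrections involving $\mathrm{tr}(X_j)\,I$, and these must be neutralized using the balanced symmetry, averaged in a manner that respects the ambient structure of $\fs_{L_A}$. Once the POVM is produced and the commuting tuple $T$ with joint spectrum in $\fs_{L_A}\subseteq \fs_{L_B}$ is obtained via Naimark, the rest of the proof is a direct application of Proposition \ref{prop:commute-include}, completing the argument.
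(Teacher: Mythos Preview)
Your high-level strategy---build a commuting dilation of $\tfrac{1}{d}X$ with joint spectrum in $\fs_{L_A}$ and then invoke Proposition~\ref{prop:commute-include}---is sound, but the actual construction you sketch has a real gap. The pushforward of $d\,vv^{\ast}\,d\sigma(v)$ by $v\mapsto y(v)=(v^{\ast}X_1v,\dots,v^{\ast}X_gv)$ has first moment
\[
d\int_{S^{d-1}}(v^{\ast}X_jv)\,vv^{\ast}\,d\sigma(v)=\frac{1}{d+2}\bigl(2X_j+\trace(X_j)\,I\bigr),
\]
not $X_j/d$, as you acknowledge. Your proposed repair---``averaging over $O(d)$ combined with the sign symmetry''---does not go through: $\cD_{L_A}$ carries no $O(d)$-symmetry beyond what is already present in the spherical integral, and the only symmetry you are given is $y\mapsto -y$ on $\fs_{L_A}$. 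Convexly mixing in any sign-symmetrized POVM adds a term with \emph{zero} first moment, which cannot remove the $\trace(X_j)\,I$ contamination. So the obstacle you name is not overcome by the mechanism you name.

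The paper's proof is completely different and much more elementary: it never constructs a dilation. Instead, for $X\in\cD_{L_A}(n)$ one uses balancedness and convexity to show that the \emph{scalar entries} $X_{s,t}=((X_1)_{s,t},\dots,(X_g)_{s,t})$ lie in $\fs_{L_A}$ (via $X_{s,t}=\tfrac12\big((p_{s,t}^{+})^{\ast}Xp_{s,t}^{+}-(p_{s,t}^{-})^{\ast}Xp_{s,t}^{-}\big)$ with $p_{s,t}^{\pm}=\tfrac{1}{\sqrt2}(e_s\pm e_t)$). Then $\fs_{L_A}\subset\fs_{L_B}$ gives $\|\sum_j B_j(X_j)_{s,t}\|\le 1$ for all $s,t$, and Lemma~\ref{lem:dblock} (a $d\times d$ block matrix with blocks of norm at most $1$ has norm at most $d$) yields $\|\sum_j B_j\otimes X_j\|\le n$. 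Applying this at $n=d$ and invoking Lemma~\ref{lem:ddoesit} finishes.

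If you want to salvage the dilation route, the entry-wise observation above does give a finite POVM that works: take atoms $X_{s,s}$ with weight $\tfrac1d e_se_s^{\ast}$, atoms $X_{s,t}$ with weight $\tfrac1d p_{s,t}^{+}(p_{s,t}^{+})^{\ast}$, and atoms $-X_{s,t}\in\fs_{L_A}$ (balancedness!) with weight $\tfrac1d p_{s,t}^{-}(p_{s,t}^{-})^{\ast}$, for $s<t$. These weights sum to $I_d$ and the first moment is exactly $X_j/d$. But this is just the paper's argument repackaged, not the spherical-integral construction you proposed.
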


\begin{lemma}
\label{lem:dblock}
  Suppose $T=(T_{j,\ell})$ is a $d\times d$ block matrix with blocks of equal square size. If
  $\|T_{j,\ell}\|\le 1$ for every $j,\ell,$ then $\|T\|\le d$. 
\end{lemma}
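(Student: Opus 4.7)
The plan is to bound $\|T\|$ by its action on a vector, using only the triangle inequality and Cauchy--Schwarz. Let $n$ denote the size of each block, so that $T$ acts on $\R^{dn}$ which I identify with $(\R^n)^d$ via $x=(x_1,\dots,x_d)$ with each $x_\ell\in\R^n$. In these coordinates the block structure gives
\[
 (Tx)_j \;=\; \sum_{\ell=1}^d T_{j,\ell}\,x_\ell \qquad (j=1,\dots,d).
\]

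First I would estimate the norm of each block-row of $Tx$. Since $\|T_{j,\ell}\|\le 1$, the triangle inequality yields
\[
 \|(Tx)_j\| \;\le\; \sum_{\ell=1}^d \|T_{j,\ell}\,x_\ell\| \;\le\; \sum_{\ell=1}^d \|x_\ell\|.
\]
Applying Cauchy--Schwarz to the vector $(\|x_1\|,\dots,\|x_d\|)\in\R^d$ against $(1,\dots,1)\in\R^d$ gives
\[
 \Bigl(\sum_{\ell=1}^d \|x_\ell\|\Bigr)^{\!2} \;\le\; d \sum_{\ell=1}^d \|x_\ell\|^2 \;=\; d\,\|x\|^2.
\]
Therefore $\|(Tx)_j\|^2 \le d\,\|x\|^2$ for each $j$.

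Summing over the $d$ block rows produces
\[
 \|Tx\|^2 \;=\; \sum_{j=1}^d \|(Tx)_j\|^2 \;\le\; d\cdot d\,\|x\|^2 \;=\; d^2\,\|x\|^2,
\]
whence $\|T\|\le d$, as claimed. There is no real obstacle here: the only subtlety is to resist the temptation to write $T=\sum_{j,\ell}E_{j,\ell}\otimes T_{j,\ell}$ and bound blockwise (which would give the weaker estimate $d^2$); the improvement from $d^2$ to $d$ comes precisely from the Cauchy--Schwarz step applied to the vector of block-row norms.
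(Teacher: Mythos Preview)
Your proof is correct and is essentially identical to the paper's own argument: both decompose $x$ into its $d$ block components, bound each block row of $Tx$ by $\sum_\ell \|x_\ell\|$ via the triangle inequality and the contraction hypothesis, apply Cauchy--Schwarz to pass from the $1$-norm to the $2$-norm of $(\|x_1\|,\dots,\|x_d\|)$, and then sum over the $d$ rows to obtain $\|Tx\|^2\le d^2\|x\|^2$.
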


\begin{proof}
Recall that the Cauchy-Schwarz inequality applied with one of the vectors being the all ones vector
 gives the relation between the $1$-norm and $2$-norm on $\mathbb R^d$, namely
\[
  \big(\sum_{j=1}^d a_j\big)^2  \le d \sum_{j=1}^da_j^2\quad\text{ for all $a_1,\dots,a_d\in\R$}.
\]
Consider a vector $x=\sum_{\ell=1}^d x_\ell \otimes e_\ell$ and estimate,
\[
 \begin{split}
 \|Tx \|^2 & = \sum_{j=1}^d \big\| \sum_{\ell=1}^d T_{j,\ell} x_\ell \big\|^2 \\
&    \le   \sum_{j=1}^d (\sum_{\ell=1}^d \|x_\ell\|)^2  \\
& \le  \sum_{j=1}^d   d \sum_{\ell=1}^d \|x_\ell\|^2 \\
    & = \sum_{j=1}^d d \|x\|^2 \\
    & = d^2 \|x\|^2. 
 \end{split}
\]
  Thus, $\|Tx\|\le d\|x\|.$ 
\end{proof}

\begin{proof}[Proof of Proposition {\rm\ref{prop:gend}}]
   Let $\{e_s\}$ 
  denote the standard orthonormal basis for $\mathbb R^n$.
 Fix $1\le s\ne t\le n$ 
  and set
  $p_{s,t}^\pm:=\frac{1}{\sqrt{2}} (e_s\pm e_t)\in\R^n$.
  In particular, with 
\[
 P_{s,t}^{\pm}  = I_d \otimes p_{s,t}^\pm,
\]
 the orthonormality of the basis gives,
\[ 
  (P_{s,t}^\pm)^* P_{s,t}^\pm = I_d.
\]
  Moreover, for  $d \times d$ matrix $C$ and 
  $n \times n$ matrix $M,$
\[
   (P_{s,t}^\pm)^* (C\otimes M) P_{s,t}^\pm =  C\otimes \big(\frac{1}{2}(M_{s,s} \pm  M_{s,t} \pm M_{t,s}+M_{t,t})\big).
\]
  Hence,
\[
   (P_{s,t}^+)^* (C\otimes M) P_{s,t}^+ -  (P_{s,t}^-)^* (C\otimes M) P_{s,t}^- = C\otimes (M_{s,t}+M_{t,s}).
\]
  In particular, if $M$ is symmetric, then the right hand side is 
   $2C\otimes M_{s,t}$.

  \def\cDA{{\cD_{L_A}}}
  \def\cDB{{\cD_{L_B}}}

 Let $X\in\cD_{L_A}(n)$ be given and let
\[
   Z=  \sum_j A_j\otimes X_j.
\] 
By hypothesis, $-X\in\cD_{L_A}(n)$ too,
   so that both $\pm Z \preceq I_n.$ 
 Thus     
   $\pm (P_{s,t}^\pm)^* Z  P_{s,t}^\pm \le 1$. Hence
 $\pm (p_{s,t}^*)^\pm X p_{s,t}^\pm \in \fs_{L_A}$ 
    for each $0 \leq s,t \leq n$.
Convexity of   $\fs_{L_A}$ 
   implies
  $$
 \frac 1 2 \big( (p_{s,t}^+)^* X  p_{s,t}^+ - (p_{s,t}^-)^* X p_{s,t} ^- \big)
=  X_{s,t} :=((X_1)_{s,t},\dots,(X_g)_{s,t}) \in \fs_{L_A}.
$$
  By hypothesis, $X_{s,t} \in\fs_{L_B}$ and therefore,
\[
   T_{s,t}:= \sum B_j (X_j)_{s,t} \preceq I_d, \qquad  0 \leq s,t \leq n.
\]
Apply  Lemma \ref{lem:dblock} to the $n \times n$ block matrix 
$$
T = \sum_j X_j \otimes  B_j 
$$
to get
\[
  \| \sum B_j\otimes X_j \| \le n
\]
Likewise for $-X$, 
and therefore,
\[
  \sum B_j \otimes  X_j \preceq n I_{dn}. 
\] 
  Hence $\frac{1}{n} X \in \cD_{L_B}$. 
  At this point we have 
$\cDA(n) \subset n   \cD_{L_B}(n)$. 
  
  Since $B$ has size $d$ and
   $\cDA(d)\subset d\cD_{L_B}(d)$, it follows from
   Lemma \ref{lem:ddoesit}  that $\cDA(n)\subset d\cD_{L_B}(n)$
  for all $n$; that is,    $\cDA \subset d   \cD_{L_B}$.
\end{proof}

\begin{example}\rm
 \label{ex:sharp-two}
This example shows that, in the case $d=2$, the estimate 
    $r(A)(d) \cD_{L_A} \subset \cD_{L_B} (d) $
    of Proposition \ref{prop:gend} 
   is sharp. 
 
In this example  $ \fs_{L_A} =  \fs_{L_B}  $ is 
 the unit disc $\mathbb D=\{(x,y)\in\mathbb R^2: x^2+y^2\le 1\}$.
 We take   $L_A(X) \preceq 0$ 
 to be the infinite set\footnote{For cp fans this actually is the
  the minimal operator system structure for $\mathbb D$.} of scalar inequalities
  \[
    \sin(t) X_1 + \cos(t) X_2 \preceq I_n \quad \text{for all } t.
 \]
Next define $L_B$ to be the pencil with coefficients
$$
    B_1 =\begin{pmatrix} 1 & 0 \\ 0 & -1 \end{pmatrix}, \qquad
    B_2 = \begin{pmatrix}0 & 1 \\ 1 & 0\end{pmatrix}.
 $$
 Of course $d= \text{size} (L_B)$ is 2.

Now we show that
$\cD_{L_A}(2) \not \subset (2-\eps)  \cD_{L_B}(2)$ 
for $\eps>0$
by
selecting 
 $X_j=B_j$. Evidently,   $X\in\cD_{L_A}(2)$
  but, up to unitary equivalence, 
\[
 L_B(X) = \begin{pmatrix} 1 & 0 & 0 & 1 \\ 0 & -1 & 1 & 0\\
            0 & 1 & -1 & 0 \\ 1 & 0 & 0 & 1\end{pmatrix}.
\]
 Thus $2 I_4 -L_B(X)\succeq 0$, but if $\rho<2$, then
  $\rho I_4-L_B(X)\not\succeq 0$.   

  To complete the example, we show that $A$ can be viewed as a limit
 of tuples of matrices. Let $\{t_j:j\in\mathbb N\}$ denote an a countably dense subset of $[0,2\pi)$.
  Given $n\in\mathbb N$, let $T_n=\{t_1,\dots,t_n\}$ and let $ A^{(n)}_1$ denote the
   $n\times n$ diagonal matrix with $j$-th diagonal entry $\sin(t_j)$ and
 let $A^{(n)}_2$ denote the $n\times n$ diagonal matrix with $j$-th diagonal entry
  $\cos(t_j)$.  In particular, if $L_{A^{(n)}}(X)\succeq 0$ for all $n$, then 
  $L_A(X)\succeq 0$.  Thus, the smallest $\rho$ such that $L_{A^{(n)}}(X)\succeq 0$
  implies $L_B(\frac{1}{\rho} X)\succeq 0$  is $2$. 
\end{example}

\subsection{The inclusion scale equals the commutability index}
 \label{sec:isisci}     
The goal here is to prove Theorem \ref{thm:scottDilated-intro}
which we essentially restate as Theorem \ref{thm:scottDilated}
and then prove.\index{inclusion scale}\index{commutability index}

  Fix a tuple $A\in\mathbb S_m^g$ and a positive integer $d$.  Assume $\fs_{L_A}\subset \mathbb R^g$ is bounded.  Let
\[
 \Omega_A(d)  = \{ r\ge 0: \mbox{if } B\in\mathbb S_d^g  \mbox{ and }   \fs_{L_A} \subset \fs_{L_B}, \mbox{ then }
                    r\cD_{L_A}\subset \cD_{L_B} \}.
\] 
 Observe that $\Omega \subset [0,1]$. 
 Let $\mathcal F_A$ denote the collection of tuples $T=(T_1,\dots,T_g)$ of commuting self-adjoint operators 
  on Hilbert space whose joint spectrum lies in $\fs_{L_A}$.  Let 
\[
 \Gamma_A(d) = \{t\ge 0: \mbox{ if } X\in \cD_{L_A}(d), \mbox{ then } tX  \mbox{ dilates to a } T\in\mathcal F_A \}.
\]
 That $\Gamma_A(d) \subset [0,1]$ follows by noting that $x$ is in the boundary of $\fs_{L_A}$ 
 and if $t>1$, then $tx$ can not dilate to a $T\in\mathcal F_{A}$. 

\begin{thm}\label{thm:scottDilated}
Fix $g \in \N$.
 Assuming $\fs_{L_A}$ is bounded,
 the sets $\Gamma_A(d)$ and $\Omega_A(d)$ contain non-zero positive  numbers
 and are closed and equal. In particular, for each fixed $d\in\N$, 
\[\sup \Omega_A(d) = \sup \Gamma_A(d).\]
\end{thm}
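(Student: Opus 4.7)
The plan is to reduce the supremum equality $\sup\Ga_A(d)=\sup\Om_A(d)$ to a set equality $\Ga_A(d)=\Om_A(d)$, and then separately handle closedness and non-emptiness with a positive element. Let $\mathcal{C}_A(d):=\{X\in\mathbb S_d^g: X\text{ dilates to some } T\in\mathcal F_A\}$. Unwinding definitions, $t\in\Ga_A(d)$ iff $t\,\cD_{L_A}(d)\subset\mathcal{C}_A(d)$, and, invoking Lemma~\ref{lem:ddoesit} (valid since $B$ has size $d$), $r\in\Om_A(d)$ iff $r\,\cD_{L_A}(d)\subset\cD_{L_B}(d)$ for every $B\in\mathbb S_d^g$ with $\fs_{L_A}\subset\fs_{L_B}$. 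Hence the crux is the identity
\begin{equation}\label{eq:CAeq}
\mathcal{C}_A(d)=\bigcap\bigl\{\cD_{L_B}(d) : B\in\mathbb S_d^g,\ \fs_{L_A}\subset\fs_{L_B}\bigr\}.
\end{equation}

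The inclusion $\subset$ in \eqref{eq:CAeq} is immediate from Proposition~\ref{prop:commute-include}. For $\supset$, suppose $X$ lies in the right-hand side. Because $0$ is interior to $K:=\fs_{L_A}$, the functions $1,\lambda_1,\dots,\lambda_g$ are linearly independent on $K$, so the unital map $\phi_X\colon S\to\mathbb S_d$ defined on the operator subsystem $S=\operatorname{span}\{1,\lambda_1,\dots,\lambda_g\}$ of $C(K)$ by $\phi_X(\lambda_j):=X_j$ is well-defined. A typical element of $M_d(S)$ has the form $A_0+\sum_j A_j\lambda_j$ with $A_0,A_j\in\mathbb S_d$, and its positivity on $K$ means the matrix-valued function is pointwise positive semidefinite. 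When $A_0\succ 0$, conjugating by $A_0^{-1/2}$ produces a size-$d$ monic pencil $I-\sum_j B_j\lambda_j$ positive on $K$, for which the hypothesis on $X$ yields $I-\sum_j B_j\otimes X_j\succeq 0$; back-translating gives $A_0\otimes I+\sum_j A_j\otimes X_j\succeq 0$, and the general semidefinite case follows by perturbing $A_0\rightsquigarrow A_0+\varepsilon I$ and letting $\varepsilon\downarrow 0$. Thus $\phi_X$ is $d$-positive, so by Smith's theorem it is completely positive. Arveson's extension theorem extends $\phi_X$ to a unital cp map $\widetilde\phi_X\colon C(K)\to\mathbb S_d$, and Stinespring's dilation produces a Hilbert space $\cH$, a $*$-representation $\pi\colon C(K)\to B(\cH)$, and an isometry $V\colon\R^d\to\cH$ with $\widetilde\phi_X(f)=V^*\pi(f)V$. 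The operators $T_j:=\pi(\lambda_j)$ are commuting self-adjoints with joint spectrum in $K=\fs_{L_A}$, witnessing $T\in\mathcal F_A$ and $X_j=V^*T_jV$, so $X\in\mathcal{C}_A(d)$ and \eqref{eq:CAeq} is established.

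The set $\mathcal{C}_A(d)$ is the image under $\mu\mapsto\bigl(\int\lambda_j\,d\mu\bigr)_j$ of the weak-$*$ compact set of $\mathbb S_d$-valued unital POVMs on the compact set $K$, hence is compact. Therefore $\Ga_A(d)=\{t\ge 0: t\,\cD_{L_A}(d)\subset\mathcal C_A(d)\}$ is closed, and so is $\Om_A(d)=\Ga_A(d)$. Non-emptiness with a positive element follows by comparison with a free cube: boundedness of $\fs_{L_A}$ with $0$ interior yields constants $0<c\le C$ with $c\,\cubeg\subset\cD_{L_A}\subset C\,\cubeg$ at every level (boundedness of the free spectrahedron propagates to all levels). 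For $B\in\mathbb S_d^g$ with $\fs_{L_A}\subset\fs_{L_B}$, this gives $c[-1,1]^g\subset\fs_{L_B}$, i.e., $[-1,1]^g\subset\fs_{L_{cB}}$; Theorem~\ref{thm:nextbest} then forces $\cubeg\subset\vartheta(d)c^{-1}\cD_{L_B}$, whence $\tfrac{c}{C\vartheta(d)}\,\cD_{L_A}\subset\cD_{L_B}$ and $\tfrac{c}{C\vartheta(d)}\in\Om_A(d)$.

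The main obstacle is the implementation of Smith's theorem in \eqref{eq:CAeq}: one must verify that size-$d$ monic pencils positive on $K$, after rescaling and $\varepsilon$-perturbation, exhaust the positive cone of $M_d(S)$ so that the hypothesized inclusion really implies $d$-positivity of $\phi_X$. Once that is in hand, the Arveson--Stinespring toolkit produces the commuting dilation automatically.
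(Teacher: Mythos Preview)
Your proof is correct and takes a genuinely different route from the paper. The paper invokes the Effros--Winkler matricial Hahn--Banach separation theorem: it introduces the smallest closed matrix convex set $\Sigma$ with $\Sigma(1)=\fs_{L_A}$, uses Effros--Winkler to identify $\Sigma(d)$ with $\bigcap\{\cD_{L_B}(d):\fs_{L_A}\subset\fs_{L_B}\}$, and then separately argues that the set $\Lambda$ of tuples dilating into $\mathcal F_A$ is a closed matrix convex set with $\Lambda(1)=\fs_{L_A}$, whence $\Sigma\subset\Lambda$. You bypass Effros--Winkler entirely by the observation that membership of $X$ in the right side of your identity \eqref{eq:CAeq} is \emph{exactly} $d$-positivity of the unital map $\phi_X:S\to M_d$, after which Smith's theorem (a $d$-positive map into $M_d$ is completely positive) plus Arveson extension and Stinespring deliver the commuting dilation directly. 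Your $\varepsilon$-perturbation trick to reduce an arbitrary positive element of $M_d(S)$ to a monic pencil is exactly what is needed, and the compression $W^*(\cdot)W$ argument underlying Smith's theorem goes through verbatim over $\mathbb R$, so the real-coefficient setting raises no new obstruction beyond what the paper already takes for granted when it later applies Stinespring--Arveson. Your compactness argument for $\mathcal C_A(d)$ via POVMs is cleaner than the paper's sequential argument for closedness of $\Lambda(n)$, and your nonemptiness argument is essentially the same as the paper's (the paper cites Theorem~\ref{thm:BtN} rather than Theorem~\ref{thm:nextbest}, but either works). What your approach buys is a more elementary toolkit---Smith's theorem is lighter than matricial Hahn--Banach---and a one-step identification $\mathcal C_A(d)=\bigcap\cD_{L_B}(d)$; what the paper's approach buys is a cleaner conceptual picture in the language of minimal matrix convex structures.
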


 The supremum of $\Omega_A(d)$ is the optimal free spectrahedral inclusion constant for $A$ and $d$. Namely,
 it is the largest number with the property that if $B\in\mathbb S_d^g$ and $\fs_{L_A}\subset \fs_{L_B}$,
 then $\Omega_A(d) \cD_{L_A} \subset \cD_{L_B}$.  On the other hand, the supremum of $\Gamma_A(d)$ is the optimal
 scaling constant for $A$ and $d$ in the sense that if $X\in\cD_{L_A}(d)$ then $\Gamma_A(d)X$ dilates
  to a tuple in $\mathcal F_A$. 

\sssec{Matricial Hahn-Banach background}
 The proof of this theorem given here uses the Effros-Winkler matricial Hahn-Banach Separation Theorem \cite{EW}
 for matrix convex sets.\index{theorem!Hahn-Banach}\index{theorem!Effros-Winkler}\index{theorem!separation of matrix convex sets}
 With $g$ fixed let $\mathbb S^g$ denote the sequence $(\smatng)_n$.  
 A \df{matrix convex} subset $\cC$ of $\mathbb S^g$ containing $0$  is a sequence $(\cC(n))$
  such that 
\begin{enumerate}[(a)]
 \item  $\cC(n)\subset \smatng$ for each $n$;
 \item  $0$ is in the interior of $\cC(1);$ 
 \item  $\cC$ is  \df{closed under direct sums}:  If $X\in \cC(n)$ and $Y\in \cC(m)$, then 
 $X\oplus Y = (X_1\oplus Y_1,\dots,X_g\oplus Y_g) \in \cC(n+m)$, where
\[
 X_j \oplus Y_j = \begin{pmatrix} X_j & 0 \\ 0 & Y_j \end{pmatrix}.
\]
  \item $\cC$ is \df{closed under simultaneous conjugation by contractions}: If
 $X\in \cC(n)$ and $M$ is an $n\times m$ contraction, then
\[
  M^* X M = (M^* X_1 M,\dots,M^* X_g M) \in \cC(m).
\]
\end{enumerate}

 The matrix convex set $\cC$ is closed if each $\cC(n)$ is closed.  A version of the matricial Hahn-Banach theorem immediately applicable here can be found in \cite{HM12}.
  It says, in the language of this article, if $\cC\subset \mathbb S^g$ is closed and matrix convex 
  and if $X\in\mathbb S_d^g\setminus \cC(d)$, then
  there exists $B\in\mathbb S_d^g$ such that $\cC\subset \cD_{L_B}$, but $X\notin\cD_{L_B}(d)$.  In particular,
\[
  \cC= \bigcap\{\cD_{L_B}: B\in\mathbb S^g, \ \  \cC\subset \cD_{L_B}\}.
\]
 
\subsubsection{Proof of Theorem {\rm\ref{thm:scottDilated}} }
  For notational convenience, since $A$ and $d$ are  fixed, let $\Omega=\Omega_A(d)$, $\Gamma=\Gamma_A(d)$ and $\mathcal F=\mathcal F_A$.

  That $\Omega$ is closed is easily seen. To see that $\Omega$ 
 contains a positive number, first note that the assumption that $\fs_{L_A}$ is bounded 
  implies there exists a constant $C>0$ such that $\cD_{L_A} \subset C \cube^g.$ On the other hand, since $\cD_{L_A}$ is the set of tuples
  $X\in\mathbb S^g$ such that $I-\sum A_j\otimes X_j \succeq 0$, there is a constant $c>0$ such that $c\cube^g \subset \cD_{L_A}$. 
  By Theorem \ref{thm:BtN} there is
  a constant $s>0$ such that if $B\in\mathbb S_d^g$ and $[-1,1]^g \subset \fs_{L_B}$, then $s\cube^g \subset \cD_{L_B}$. Hence, if
  instead $\fs_{L_A}\subset \fs_{L_B}$, then 
\[
  c[-1,1]^g \subset  \fs_{L_A}  \subset \fs_{L_B}.
\]
 It follows that  $\cube^g \subset \cD_{\frac{B}{sc}}$. Thus,
\[
  \cD_{L_A} \subset C \cube^g \subset \frac{C}{sc} \cD_{L_B}.
\]
  Hence $\frac{sc}{C}\in\Omega$.

  To prove the sets $\Omega$ and $\Gamma$ are equal, first observe that Proposition \ref{prop:commute-include} implies $\Gamma \subset \Omega.$  To prove the converse,
  suppose $r\in\Omega$.  Let $\Sigma$ denote the smallest closed matrix convex set with the property that
   $\Sigma(1)=\cD_{L_A}(1)=\fs_{L_A}$.  The equality,
\[
  \Sigma(d) = \bigcap\{ \cD_{L_B}(d): B\in\mathbb S_d^g \mbox { and } \Sigma \subset \cD_{L_B}\}
\]
  is a  consequence of the Effros-Winkler matricial Hahn-Banach Separation Theorem \cite{EW}.
  To prove this assertion, first note the inclusion $\Sigma(d)$ into the set on the right hand side is obvious. On the other hand,
  if $X\not\in\Sigma(d)$, then by Effros-Winkler theorem produces a $B\in\mathbb S_d^g$ such that
  $\Sigma \subset \cD_{L_B}$, but $X\not\in\cD_{L_B}(d)$ and the reverse inclusion follows.  
  Now the definition of $\Sigma$ implies $\Sigma \subset \cD_{L_B}$ if and only if 
  $\fs_{L_A}=\Sigma(1) \subset \fs_{L_B}$. Hence,
\[
 \Sigma(d) = \bigcap\{\cD_{L_B}(d): B\in\mathbb S_d^g \mbox { and } \fs_{L_A} \subset \fs_{L_B}\}.
\]
 Thus, as $\fs_{L_A} \subset \fs_{L_B}$ implies $r\cD_{L_A} \subset \cD_{L_B}$,
\[
 \Sigma(d) \supseteq \bigcap \{\cD_{L_B}(d): B\in\mathbb S_d^g \mbox { and } r \cD_{L_A} \subset \cD_{L_B}\}
\]
 and therefore $\Sigma(d)  \supseteq r \cD_{L_A}(d)$.

 It remains to show, if $Z\in\Sigma$, then $Z$  dilates to some $T\in\mathcal F$.
  For positive integers $n$,  let
\[
  \Lambda(n) = \{X\in\smatng: X \mbox{ dilates to some } T\in\mathcal F\}.
\]
   The sequence $\Lambda = (\Lambda(n))_n$ is a matrix convex set with 
   $\Lambda(1)=\Sigma(1)$. To prove that $\Lambda(n)$ is closed, suppose
   $(X^k)_k$ is a sequence from $\Lambda(n)$ which converges to $X\in\smatng$.
   For each $k$ there is a Hilbert space $\mathcal H_k$, a sequence of
   commuting self-adjoint contractions $T^k=(T^k_1,\dots,T^k_g)$ on $\mathcal H_k$
   with joint spectrum in $\fs_{L_A}$ 
  and an isometry $V_k:\mathbb R^n \to \mathcal H_k$ such that 
\[
   X^k=  V_k^* T^k V_k.
\]
   Let $T$ denote the tuple $\oplus T^k$ acting on the Hilbert space $\mathcal H= \oplus \mathcal H_k$.
   The fact that each $T^k$ has joint spectrum in the bounded set 
  $\fs_{L_A}$ and that each $T^k_j$ is self-adjoint, implies  the sequence $(T^k)_k$ is uniformly bounded.
  Hence $T$ is a bounded operator.
   Let $\mathcal S$ denote the operator system equal to the span of $\{I,T_1,\dots,T_g\}$ (this set 
    is self-adjoint since each $T_k$ is self-adjoint) and let  $\phi:\mathcal S \to M_n$ denote
    the unital map determined by \[\phi(T_j)=X_j.\] 
     It is straightforward to check that $\phi$ is well defined. On the other hand,
     next it will be shown that $\phi$ is completely positive, an argument which also shows that $\phi$ 
    is in fact well defined.  
     If $C=(C_0,\dots,C_g) \in\mathbb S_m^{g+1}$ and
    $C_0\otimes I + \sum C_j \otimes T_j \succeq 0$, then $C_0\otimes I + \sum C_j\otimes T_j^k \succeq 0$
   for each $k$. Thus, $C_0 \otimes I + \sum C_j \otimes X^k_j \succeq 0$ for all $k$ and
  finally $C_0 \otimes I + \sum C_j \otimes X_j \succeq 0$.  Thus $\phi$ is completely positive.
  Given a Hilbert space $\mathcal E$, let $B(\mathcal E)$ denote the C-star algebra of bounded operators on $\mathcal E$. 
  By the standard application of Stinespring-Arveson (\cite[Corollary 7.7]{Pau}) there exists a Hilbert space $\mathcal K,$  a representation 
   $\pi:B(\mathcal H)\to B(\mathcal K)$ and an isometry $W:\mathbb R^n\to \mathcal K$ such that
\[
   X_j = \phi(T_j) = W^* \pi(T_j) W.
\]
 Since $\pi$ is a representation, the tuple $\pi(T) = (\pi(T_1),\dots,\pi(T_g))$ is a commuting tuple
  of self-adjoint contractions on the Hilbert space $\mathcal K$ with joint spectrum in $\fs_{L_A}$. Hence $X\in\Lambda(n)$.

  Now $\Lambda$ is a closed matrix convex set with 
  $\Lambda(1)\supseteq \Sigma(1)$. Hence, $\Sigma\subset \Lambda$ by the definition of $\Sigma.$
  In particular, $r\cD_{L_A}(d) \subset \Sigma(d)\subset \Lambda(d)$ and the proof is complete.      
\hfill\qedsymbol

\subsubsection{Matrix cube revisited}
  Returning to the special case of the matrix cube,\index{cube}\index{free cube} 
  for $g,d \in \N$ define \index{$\rho_g(d) $} 
\[
 \rho_g(d)=
  \sup\{r\ge 0:  \mbox{if } B\in\mathbb S_d^g  \mbox{ and }   [-1,1]^g\subset \fs_{L_B}, \mbox{ then }
                    r\cubeg \subset \cD_{L_B} \}.
\] 
  For $d$ fixed, the sequence $(\rho_g(d))_{g=1}^\infty$ is evidently decreasing and hence converges to some 
  $ \rho(d)$. 
   Similarly,  let $\mathcal F_g$ denote the collection of tuples $T=(T_1,\dots,T_g)$ of commuting self-adjoint contractions
  on Hilbert space 
  and  let \index{$\tau_g(d)$}
\[
 \uptau_g(d)  = \sup \{t\ge 0: \mbox{ if } X\in \cubeg(d), \mbox{ then } tX  \mbox{ dilates to a } T \in\mathcal F_g \}.
\]
  The sequence $(\uptau_g(d))_{g=1}^\infty$ also 
  decreases and hence converges to some $ \uptau(d)$. 
  By Theorem \ref{thm:scottDilated}, $\uptau_g(d)=\rho_g(d)$ for all $g,d$. 

\begin{cor}
 \label{cor:yetanotherproof}
   $\uptau(d)= \lim \uptau_g(d) =\lim \rho_g(d)= \frac{1}{\vartheta(d)}.$
\end{cor}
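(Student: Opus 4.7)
The plan is to sandwich $\rho_g(d)$ by $\ka_*(d)$ from both sides using the two halves of Theorem \ref{thm:nextbest}, pass to the limit in $g$, and then invoke Theorem \ref{thm:scottDilated} to transfer the conclusion to $\uptau_g(d)$. Recall that $\vartheta(d) = \ka_*(d)^{-1}$ by construction, so showing $\lim_g \rho_g(d) = \ka_*(d)$ is equivalent to the stated claim.

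First, I would establish the lower bound $\rho_g(d) \ge \ka_*(d)$ for every $g$. This is immediate from the first assertion of Theorem \ref{thm:nextbest}: whenever $B \in \mathbb S_d^g$ and $[-1,1]^g \subset \fs_{L_B}$, the inclusion $\ka_*(d)\cubeg \subset \cD_{L_B}$ holds. Thus $\ka_*(d)$ lies in the defining set of $\rho_g(d)$, so $\rho_g(d) \ge \ka_*(d)$. Monotonicity in $g$ then yields $\rho(d) = \inf_g \rho_g(d) \ge \ka_*(d)$.

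Next, I would establish the matching upper bound $\rho(d) \le \ka_*(d)$. Fix $\ka > \ka_*(d)$. By the converse part of Theorem \ref{thm:nextbest}, there exists some $g$ and a tuple $B \in \mathbb S_d^g$ with $[-1,1]^g \subset \fs_{L_B}$ yet $\ka\cubeg \not\subset \cD_{L_B}$. For this particular $g$, the scalar $\ka$ fails to lie in the defining set of $\rho_g(d)$, so $\rho_g(d) \le \ka$; since the sequence is decreasing, $\rho(d) \le \ka$. Letting $\ka \downarrow \ka_*(d)$ gives $\rho(d) \le \ka_*(d)$, and combined with the previous paragraph, $\rho(d) = \ka_*(d) = 1/\vartheta(d)$.

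Finally, Theorem \ref{thm:scottDilated} (applied with $A = C$, the cube coefficients from \eqref{eq:Ccube}) gives $\uptau_g(d) = \rho_g(d)$ for every $g$ and $d$, so passing to the limit yields $\uptau(d) = \rho(d) = 1/\vartheta(d)$. I do not anticipate any substantive obstacle here: the entire argument is a bookkeeping exercise consolidating results already established, and the content of the corollary is essentially the sharpness statement of Theorem \ref{thm:nextbest} repackaged in the language of the limiting commutability index and inclusion scale.
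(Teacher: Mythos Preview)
Your proposal is correct and follows essentially the same approach as the paper: the paper states just before the corollary that $\uptau_g(d)=\rho_g(d)$ by Theorem \ref{thm:scottDilated}, and the remark immediately after the corollary explains that the value $\frac{1}{\vartheta(d)}$ then follows from Theorems \ref{thm:BtN} and \ref{thm:sharp} (equivalently, the two halves of Theorem \ref{thm:nextbest}), which is exactly your sandwich argument.
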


 \begin{rem}
 To this point $\uptau(d)=\lim \rho_g(d)$  has been derived through  operator theoretic means
  not involving $\vartheta$ and \cite{BtN}.  Of course, in view of Theorems \ref{thm:BtN} and \ref{thm:sharp},
   $\uptau(d)=\frac{1}{\vartheta(d)}$.
   On the other hand, it is  not obviously possible to recover Theorem \ref{thm:dilate} from this corollary. 
   See Remark \ref{rem:cube-dil}. 
 \end{rem}

\section{Reformulation of the Optimization Problem}
\label{sec:opt}

 The goal here is to bring pieces together
 in order to lay out our key classical optimization problem
\eqref{eq:BTbd} in terms of regularized Beta functions (see Problem \ref{prob:opt} and Proposition \ref{prop:thetafirst}).
The reformulated optimization problem is then solved
in 
Section \ref{sec:nov} after preliminary work in 
Sections \ref{sec:simmhalf} and \ref{sec:lowBdSIS}.

Recall that $\ds\frac1{\th(d)}$ for $d\ge2$ equals the minimum over all $s,t\in\N$ and $a,b\in\R_{>0}$ such that $s+t=d=sa+tb$ of
\[2a\frac sd\ I_{\frac a{a+b}}\left(\frac t2,1+\frac s2\right)+2b\frac td\ I_{\frac b{a+b}}\left(\frac s2,1+\frac t2\right)-1.\]
(Combine Lemma \ref{lem:compalpha} with Proposition
 \ref{prop:starastarb}.)
Note that the constraint $d=sa+tb$ is just a matter of scaling of $a$ and $b$ with the same factor which won't affect the substitution
\[p=\frac b{a+b}\in(0,1)\]
which we are now going to make. This substitution entails $\ds1-p=\frac a{a+b}$, $d=(a+b)(sp+t(1-p))$,
\[\frac ad=\frac a{(a+b)(sp+t(1-p))}=\frac{1-p}{sp+t(1-p)}\]
and
\[\frac bd=\frac b{(a+b)(sp+t(1-p))}=\frac{p}{sp+t(1-p)}.\]
By continuity, we can let $p$ range over the compact interval $[0,1]$. We therefore observe that
$\ds\frac1{\th(d)}$ equals the minimum over all $s,t\in\N$ with $s+t=d$ of the minimum value of the function $f_{s,t}\colon[0,1]\to\R$
given by
\beq\label{eq:fst}
f_{s,t}(p)=\frac{2(1-p)sI_{1-p}\left(\frac t2,1+\frac s2\right)+2ptI_p\left(\frac s2,1+\frac t2\right)}{(1-p)s+pt}-1
\eeq
for $p\in[0,1]$.
Using the standard identities $\ds I_p(x,y)=\frac{B_p(x,y)}{B(x,y)}$, $\ds \frac\partial{\partial p}B_p(x,y)=p^{x-1}(1-p)^{y-1}$, $\ds B\left(\frac s2,1+\frac t2\right)=\frac t{s+t}B\left(\frac t2,\frac s2\right)$
and $\ds B\left(\frac t2,1+\frac s2\right)=\frac s{s+t}B\left(\frac s2,\frac t2\right)$, one can easily verify that the derivative $f_{s,t}'$ of $f_{s,t}$ takes the surprisingly simple form given
by
\[f_{s,t}'(p)=\frac{2st}{((1-p)s+pt)^2}\left(I_p\left(\frac s2,1+\frac t2\right)-I_{1-p}\left(\frac t2,1+\frac s2\right)\right)\]
for $p\in[0,1]$
(two of the six terms cancel when one computes the derivative using the product and quotient rule).
This shows that $f_{s,t}$ is strictly decreasing on $[0,\sigma_{s,t}]$ and strictly increasing on $[\sigma_{s,t},1]$ where $\sigma_{s,t}\in(0,1)$ is defined by
\begin{equation}\label{eq:defSIS}
I_{\sigma_{s,t}}\left(\frac s2,1+\frac t2\right)=I_{1-\sigma_{s,t}}\left(\frac t2,1+\frac s2\right).
\end{equation}

We shall use (in Section \ref{sec:nov})
bounds on $\sis$ for $s,t\in\N$. 
Lower bounds are given in
Corollary \ref{cor:lowBdSIS}, while upper bounds
are presented in Theorem \ref{simmons+}, cf.~\eqref{eq:chain1}.

\begin{problem}\label{prob:opt}\rm
   Given a positive integer $d$, minimize \[
   \ds f_{s,t}(\sigma)=\frac{2(1-\sigma)sI_{1-\sigma}\left(\frac t2,1+\frac s2\right)+2\sigma tI_{\sigma}\left(\frac s2,1+\frac t2\right)}{(1-\sigma)s+\sigma t}-1\]
 subject to the constraints 
\begin{enumerate}[(i)]
 \item $s,t\in\mathbb N$ and $s+t=d$;
 \item $\ds s\ge \frac{d}{2};$ 
 \item $0\leq\sigma\leq1$; and 
 \item \label{it:determinesp}  $\ds I_{\sigma}\left(\frac s2, \frac t2 +1\right) = I_{1-\sigma}\left(\frac t2,\frac s2 +1\right).$ 
\end{enumerate}
\end{problem}

Since Problem \ref{prob:opt} computes $\vartheta(d)$,
  Theorem \ref{thm:thetaExplicit} can be rephrased as follows.

\begin{prop}
 \label{prop:thetafirst}
   When $d$ is even the minimum in Problem {\rm\ref{prob:opt}} occurs when $s=t=\frac{d}{2}$
  and in this case $\sigma=\frac 12$. 
  When $d$ is odd, the minimum in Problem {\rm\ref{prob:opt}}
  occurs when $s=\frac{d+1}{2}$ and $t=\frac{d-1}{2}$.
  In this case
  $\sigma$, and hence the optimum, is implicitly determined by condition 
  \eqref{it:determinesp}.  
\end{prop}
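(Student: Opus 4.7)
The plan is to derive Proposition~\ref{prop:thetafirst} directly from Theorem~\ref{thm:thetaExplicit} by tracking the substitution $p=b/(a+b)$ used to convert the optimization \eqref{eq:BTbd} into the form $f_{s,t}(p)$. By Proposition~\ref{prop:starastarb}, $1/\vartheta(d)$ equals the minimum over $s+t=d$, $a,b\ge 0$ with $sa+tb=d$ of $\kappa(s,t;a,b)$; by Lemma~\ref{lem:compalpha} and the algebraic manipulation already performed in the paragraph preceding Problem~\ref{prob:opt}, this integral expresses precisely as $f_{s,t}(p)+1$. The swap $(s,t,p)\leftrightarrow(t,s,1-p)$ preserves $f_{s,t}(p)$ and sends $\sigma_{s,t}$ to $1-\sigma_{s,t}$, so condition (ii) is imposed without loss of generality.

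For $d$ even, Theorem~\ref{thm:thetaExplicit} identifies the minimizer as $B=J(d/2,d/2;1,1)$, which under the substitution corresponds to $p=1/2$. Substituting $s=t=d/2$ and $\sigma=1/2$ into condition (iv) of Problem~\ref{prob:opt} yields the trivial identity $I_{1/2}(d/4,d/4+1)=I_{1/2}(d/4,d/4+1)$, so $\sigma=1/2$ is indeed the implicit solution at this $(s,t)$, and the minimum in Problem~\ref{prob:opt} is achieved at $(s,t,\sigma)=(d/2,d/2,1/2)$.

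For $d$ odd, Theorem~\ref{thm:thetaExplicit} asserts the minimum occurs at $s=(d+1)/2$, $t=(d-1)/2$, with $a,b$ implicitly determined by the equation in \eqref{eq:oddmc},
\[
I_{a/(a+b)}\!\left(\tfrac{d-1}{4},\tfrac{d+1}{4}+1\right)=I_{b/(a+b)}\!\left(\tfrac{d+1}{4},\tfrac{d-1}{4}+1\right).
\]
Writing $\sigma=p=b/(a+b)$ so that $1-\sigma=a/(a+b)$, and substituting $s/2=(d+1)/4$, $t/2=(d-1)/4$, this equation is precisely condition (iv) of Problem~\ref{prob:opt}; hence $\sigma$ is the value uniquely determined by (iv) at this $(s,t)$, completing the correspondence.

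The main obstacle is purely clerical: verifying that $p=b/(a+b)$ bijectively parametrizes the constrained set $\{(a,b):a,b\ge 0,\ sa+tb=d\}$ by $p\in[0,1]$ (with boundary values corresponding to $B$ having a zero eigenvalue) and that the integrand transforms correctly into $f_{s,t}(p)+1$. Both are routine given Lemma~\ref{lem:compalpha} and the derivation already laid out at the opening of Section~\ref{sec:opt}.
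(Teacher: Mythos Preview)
Your argument is circular. In the paper, Theorem~\ref{thm:thetaExplicit} is not an independent input that you may invoke; it is proved \emph{via} Proposition~\ref{prop:thetafirst}. The sentence immediately preceding the proposition says exactly this: ``Since Problem~\ref{prob:opt} computes $\vartheta(d)$, Theorem~\ref{thm:thetaExplicit} can be rephrased as follows,'' and the proof sketch of Theorem~\ref{thm:thetaExplicit} in Section~\ref{sec:precise} points forward to Section~\ref{sec:nov}, which is precisely where Proposition~\ref{prop:thetafirst} is established. So deducing Proposition~\ref{prop:thetafirst} from Theorem~\ref{thm:thetaExplicit} is not a proof; it is a restatement of the equivalence between the two formulations, which is the routine substitution you describe and which the paper already carried out in Section~\ref{sec:opt}.

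The actual content you are missing is the comparison of $f_{s,t}(\sigma_{s,t})$ across different pairs $(s,t)$ with $s+t=d$. The paper accomplishes this through a chain of analytic inequalities: the two-step monotonicity $f_{s,t}(\sigma_{s,t})\le f_{s+2,t-2}(\sigma_{s+2,t-2})$ (Proposition~\ref{prop:f2step}), which in turn rests on the bounds $\frac{s+2}{s+t+4}\le\sigma_{s,t}\le\frac{s}{s+t}$ coming from the half-integer Simmons theorem (Section~\ref{sec:simmhalf}) and the equipoint lower bound (Section~\ref{sec:lowBdSIS}), together with the monotonicity of the auxiliary function $g_{s,t}$ (Lemma~\ref{lem:qIncr}) and the bridging inequality of Lemma~\ref{lem:trip1}. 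Separate boundary lemmas (Lemmas~\ref{lem:trip2} and~\ref{lem:trip3}) compare the two parity classes of $s$ to pin down the minimizer at $s=d/2$ (even $d$) or $s=(d+1)/2$ (odd $d$). None of this is supplied by the change of variables $p=b/(a+b)$.
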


  \def\sist{\sigma_{s,t}}
  
The proof of Proposition \ref{prop:thetafirst} is organized as follows. The next section contains an improvement
 of Simmons' Theorem from probability. It is used to obtain the bound
\[
   \sist \le \frac{s}{s+t}
\]
 valid for $s\ge \frac{d}{2}.$ 
In Section \ref{sec:lowBdSIS}
we present the lower bound 
\[
 \frac{s+2}{s+t+4} \le \sist
\]
 valid for $s\ge \frac{d}{2}$.
Finally, the proof of Proposition \ref{prop:thetafirst} is completed
  in Section \ref{sec:nov}. 
  
\section{Simmons' Theorem for Half Integers}

\label{sec:simmhalf}
This material has been motivated by the Perrin-Redside \cite{PR07}
proof of Simmons' inequality from discrete probability
which has the following simple interpretation.\index{theorem!Simmons'}
Let $s,d\in\N$ with $s\geq\frac d2$.
Toss a coin
whose probability for head is $\frac sd$, 
 $d$ times.
(So the expected number of head is $s$.)
Simmons' inequality then states that 
the probability of getting  $<s$  heads
\emph{is smaller than}
the probability of getting $>s$ heads.

Theorem \ref{simmons+} below is a half-integer  generalization of  Simmons' Theorem.

\begin{thm}
 \label{simmons+}
For $d\in\mathbb N$ and $s,t\in\mathbb N$  with $s+t=d$, if $\frac{d}{2}\le s<d$,  then  
\begin{equation}
 \label{eq:someconj}
   I_{\frac sd}\left(\frac s2+1,\frac t2\right) 
\geq  1- I_{\frac sd}\left(\frac s2,\frac t2+1\right).
\end{equation}
 
 Equivalently,\index{equipoint} 
\beq\label{ineq:simm}
  \sis \le \frac{s}{d}
\eeq
 for $\frac d2 \leq s<d$ with $s\in\N$.
\end{thm}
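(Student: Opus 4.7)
The first step is to rewrite \eqref{eq:someconj} in a more symmetric form. Applying the identity $I_p(\alpha,\beta) = 1 - I_{1-p}(\beta,\alpha)$ to the right-hand side yields the equivalent inequality
\begin{equation}
I_{s/d}(s/2+1,\,t/2) \ \ge\ I_{t/d}(t/2+1,\,s/2),\qquad s\ge t,\ s+t=d. \tag{$*$}
\end{equation}
Combined with the explicit formula for $f_{s,t}'$ recorded just before Problem~\ref{prob:opt}, inequality $(*)$ is in turn equivalent to $f_{s,t}'(s/d)\ge 0$, which by the shape of $f_{s,t}$ around its unique interior minimizer is equivalent to $\sigma_{s,t}\le s/d$, i.e., to \eqref{ineq:simm}.

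The plan is to prove $(*)$ by induction on $s$ in single steps, holding $d=s+t$ fixed. Set
$$\psi(s) := I_{s/d}\!\left(\tfrac{s}{2}+1,\,\tfrac{d-s}{2}\right) - I_{(d-s)/d}\!\left(\tfrac{d-s}{2}+1,\,\tfrac{s}{2}\right),\qquad s\in\{\lceil d/2\rceil,\dots,d-1\}.$$
For the base case $s=\lceil d/2\rceil$, when $d$ is even one has $\psi(d/2)=0$ by direct inspection (the two $I$-terms coincide at $s=t$); when $d$ is odd, a short computation using the Beta recursions gives $\psi((d+1)/2)\ge 0$ directly. The inductive step is to establish $\psi(s+1)\ge\psi(s)$ for $s\in\{\lceil d/2\rceil,\dots,d-2\}$; this is the half-integer analogue of the two-step monotonicity employed in Perrin--Redside \cite{PR07}.

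The key computational tools for the inductive step are the Beta-parameter recursions
$$I_p(\alpha+1,\beta)=I_p(\alpha,\beta)-\frac{p^\alpha(1-p)^\beta}{\alpha B(\alpha,\beta)},\qquad I_p(\alpha,\beta+1)=I_p(\alpha,\beta)+\frac{p^\alpha(1-p)^\beta}{\beta B(\alpha,\beta)},$$
together with the cut-point shift $I_{(s+1)/d}(\alpha,\beta)-I_{s/d}(\alpha,\beta) = \int_{s/d}^{(s+1)/d} u^{\alpha-1}(1-u)^{\beta-1}/B(\alpha,\beta)\,du$. Applying these to both $I$-terms of $\psi(s+1)$, the difference $\psi(s+1)-\psi(s)$ breaks up into a sum of Beta-density integrals over the short intervals $[s/d,(s+1)/d]$ and $[(t-1)/d,t/d]$ plus four boundary correction terms. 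Grouping these into pairs with matching Beta parameters, the remaining comparison reduces to the elementary inequality $(t/d)^{s/2}(s/d)^{t/2}\le (s/d)^{s/2}(t/d)^{t/2}$, valid since $s\ge t$ and $u\mapsto u^{s/2}(1-u)^{t/2}$ attains its unique maximum on $[0,1]$ at $u=s/d$.

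The main obstacle is the algebraic bookkeeping in the inductive step: four incomplete-Beta expressions change simultaneously in both their parameters and their cut-point, and assembling the difference into a manifestly non-negative form is delicate. Following Perrin--Redside, I will split each unit step $s\mapsto s+1$ into two elementary sub-steps---first shifting one Beta parameter, then shifting the cut-point---so that each sub-step introduces at most one boundary-correction term and the pairing becomes transparent. Replacing their binomial identities with integrals against the Beta density accommodates the half-integer parameters with no essentially new idea.
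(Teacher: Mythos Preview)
Your reformulation $(*)$ and the identification $\psi(s)=c_s$ (in the paper's notation) are fine, and the equivalence with \eqref{ineq:simm} is correct. The real gap is in the inductive step. When $s\mapsto s+1$, the Beta parameters in $\psi$ move from $(s/2+1,\,t/2)$ to $((s+1)/2+1,\,(t-1)/2)$, i.e.\ they shift by \emph{half-integers}. The contiguous relations you invoke,
\[
I_p(\alpha+1,\beta)=I_p(\alpha,\beta)-\frac{p^\alpha(1-p)^\beta}{\alpha B(\alpha,\beta)},\qquad
I_p(\alpha,\beta+1)=I_p(\alpha,\beta)+\frac{p^\alpha(1-p)^\beta}{\beta B(\alpha,\beta)},
\]
only connect parameters differing by integers; there is no comparable closed identity linking $I_p(\alpha,\beta)$ to $I_p(\alpha+\tfrac12,\beta-\tfrac12)$. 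So the difference $\psi(s+1)-\psi(s)$ does \emph{not} break up into the boundary terms you describe, and the claimed reduction to $(t/d)^{s/2}(s/d)^{t/2}\le (s/d)^{s/2}(t/d)^{t/2}$ is unsupported. This is precisely why the paper (and Perrin--Redside in the integer case) argues via \emph{two-step} monotonicity $c_{s+2}\ge c_s$: with $s\mapsto s+2$ the Beta parameters shift by $1$ and the recursions apply, yielding the clean identity $c_{s+2}-c_s=2\int_{s/2}^{s/2+1}f_s-f_s(s/2)-f_s(s/2+1)$, whose sign then follows from concavity of $f_s$ on $[s/2,s/2+1]$.

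Two-step monotonicity, however, leaves two independent parity chains and forces separate boundary analyses that are not ``short computations'': for $d$ even one needs $c_{d/2+1}>0$ and $c_{d-1}>0$; for $d$ odd one needs $c_{(d+1)/2}>0$, $c_{(d+3)/2}>0$, and again $c_{d-1}>0$. Each of these is a page-long estimate in the paper (Lemmas~\ref{lem:c-1}, \ref{lem:c+1}, \ref{lem:c+1/2}, \ref{lem:c+3/2}). Your plan omits the upper boundary $s=d-1$ entirely and dismisses the odd base case as routine; neither is. If you want to follow the Perrin--Redside strategy you should switch to the two-step scheme and then confront the boundary lemmas honestly.
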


The proof of this consumes this whole section and we begin
by setting notation.
For $s,t\in\R$ with $s>-2$ and $t>0$ (in the sequel, $s$ and $t$ will mostly be integers with $s\ge-1$ and $t\ge 1$; we really need the
case $s=-1$, for example after \eqref{eq:dome1})
and $d:=s+t$, let $f_s$ denote the density function of the Beta distribution 
\[\frac d2 \ B\left(\frac s2+1,\frac t2\right),\]
i.e.,
\begin{equation}\label{eq:deff}
f_s(x):= \begin{cases}
0 & x\leq0 \\
\displaystyle\frac1{B\left(\frac s2+1,\frac t2\right)} \left(\frac d2\right)^{-1-\frac s2} x^{\frac s2}
\left(1-\frac2dx\right)^{\frac t2-1} & 0< x < \frac d2 \\
0 & x\geq\frac d2.
\end{cases}
\end{equation}

Consider the function
\[
  \mathscr F(s,p) = I_p\left(\frac s2+1,\frac t2\right) + I_p \left(\frac s2,\frac t2+1\right) -1. 
\]
 Equation \eqref{eq:someconj} is the statement that $\mathscr F(s,\frac{s}{d}) \ge 0.$ 
 Since, for $s$ fixed, $\mathscr F(s,p)$ strictly increases with $p$ and $\sis$ is determined by
  $\mathscr F(s,\sis)=0$, the second part of the theorem 
  is obviously equivalent to the first one.

Let 
\begin{equation}
 \label{def:bs}
b_s:=\int_0^{\frac s2} f_s = 
I_{\frac sd}\left(\frac s2+1,\frac t2\right)
\end{equation}
 and 
\begin{equation}
 \label{eq:def:as}
  a_s:= \int_{\frac s2}^\infty f_{s-2} =   1- \int_0^{\frac s2} f_{s-2}
=
1-   I_{\frac sd}\left(\frac s2,\frac t2+1\right).  
\end{equation}
 Equation \eqref{eq:someconj} is equivalent to 
\beq\label{eq:defC}
 c_s := b_s -a_s=\mathscr F\left(s,\frac{s}{d}\right)  \ge 0
\eeq
 for $d,s\in\R_{>0}$ with $\frac{d}{2} \le s <d$.

\ssec{Two step monotonicity of $c_s$}

In this subsection, in Proposition \ref{prop:simmons},
we show  for $s,d\in\R$ with $\frac d2\leq s \leq d-4$, that
$c_{s+2}\geq c_s$.
Note that $\frac d 2 \leq d-4 $ implies $ d \geq 8$.

\begin{lem}\label{lem:derivF}
We have for $s\in\R$ with $0<s<d-2$,
\begin{equation}\label{eq:derivF}
\begin{split}
(x f_s)' & = \left(1+\frac s2\right) (f_s-f_{s+2}) \\
\big((d-2x) f_{s+2}\big)' & = (d-s-2) (f_s-f_{s+2}).
\end{split}
\end{equation}
\end{lem}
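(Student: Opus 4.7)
The plan is to verify both identities by direct differentiation from the explicit formula \eqref{eq:deff}. Abbreviate
\[C_s := \frac{1}{B\!\left(\tfrac{s}{2}+1,\tfrac{t}{2}\right)}\left(\tfrac{d}{2}\right)^{-1-s/2},\]
so that on $(0,d/2)$ we have $f_s(x) = C_s\, x^{s/2}(1-2x/d)^{t/2-1}$. Since $d$ is held fixed, shifting $s \mapsto s+2$ sends $t\mapsto t-2$, giving $f_{s+2}(x) = C_{s+2}\, x^{s/2+1}(1-2x/d)^{t/2-2}$. Both identities are trivial at $x=0$ and at $x=d/2$ since each side vanishes there (using $0<s<d-2$), so it suffices to verify them on the open interval.

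The key auxiliary fact is the normalization ratio
\[\frac{C_{s+2}}{C_s} \;=\; \frac{2(t/2-1)}{d(s/2+1)},\]
which follows from the beta-function identity $B(a+1,b-1) = \tfrac{b-1}{a}B(a,b)$ applied with $a=s/2+1$, $b=t/2$, combined with the adjustment in the power of $d/2$. This ratio is precisely what causes the product-rule expansions below to telescope.

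For the first identity, write $xf_s(x) = C_s\, x^{s/2+1}(1-2x/d)^{t/2-1}$. The product rule yields
\[(xf_s)'(x) \;=\; (s/2+1)\,C_s\, x^{s/2}(1-2x/d)^{t/2-1} \;-\; \tfrac{2(t/2-1)}{d}\,C_s\, x^{s/2+1}(1-2x/d)^{t/2-2}.\]
The first summand is $(s/2+1)f_s(x)$ by definition, while the ratio identity rewrites the second as $(s/2+1)f_{s+2}(x)$; subtracting gives the claim.

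For the second identity, the algebraic simplification $(d-2x)(1-2x/d)^{t/2-2} = d(1-2x/d)^{t/2-1}$ yields $(d-2x)f_{s+2}(x) = d\,C_{s+2}\, x^{s/2+1}(1-2x/d)^{t/2-1}$. Differentiating by the product rule and invoking the ratio identity in the form $dC_{s+2}(s/2+1) = 2(t/2-1)C_s = (d-s-2)C_s$ converts the first resulting term into $(d-s-2)f_s(x)$; the other term has the shape $2(t/2-1)f_{s+2}(x) = (d-s-2)f_{s+2}(x)$. Subtracting gives $(d-s-2)(f_s-f_{s+2})$, as required. The whole argument is a bookkeeping exercise, and I anticipate no substantive obstacle beyond getting the beta-function ratio exactly right.
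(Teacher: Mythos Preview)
Your proof is correct and is precisely the direct verification the paper has in mind when it writes ``Straightforward.'' The beta-function ratio $C_{s+2}/C_s = 2(t/2-1)/\bigl(d(s/2+1)\bigr)$ is exactly the mechanism that makes the product-rule terms collapse, and you have tracked it accurately in both identities.
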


\begin{proof}
Straightforward.
\end{proof}

For notational convenience we introduce, for $s\in\R$ with $-2<s\le d-3,$
\[
\cI_s:=\int_{\frac s2}^{\frac s2+1} f_s.
\]

\begin{lem}\label{lem:diffAB}
For $s\in\R$ with $0<s< d-2$,
\begin{equation}\label{eq:diffAB}
\begin{split}
a_{s+2}-a_s & = f_s\left(\frac s2\right)-\cI_s \\
b_{s+2}-b_s & = \cI_s-f_s\left(\frac s2+1\right).
\end{split}
\end{equation}
\end{lem}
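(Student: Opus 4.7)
The plan is to derive both identities directly from Lemma \ref{lem:derivF} via the fundamental theorem of calculus, choosing each integration interval so that one of the two boundary contributions vanishes automatically.

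For the second equation of \eqref{eq:diffAB}, I would integrate the first relation of \eqref{eq:derivF} from $0$ to $\frac{s}{2}+1$. Since $x f_s(x)$ vanishes at $x=0$, this yields
\[
\left(1+\frac{s}{2}\right)\int_0^{\frac{s}{2}+1}(f_s-f_{s+2})\,dx=\left(\frac{s}{2}+1\right)f_s\!\left(\frac{s}{2}+1\right),
\]
so $\int_0^{s/2+1}(f_s-f_{s+2})\,dx = f_s(s/2+1)$. Splitting $\int_0^{s/2+1}f_s = b_s+\cI_s$ and recognizing $\int_0^{s/2+1} f_{s+2} = b_{s+2}$, the desired equality $b_{s+2}-b_s = \cI_s - f_s(s/2+1)$ falls out immediately after rearrangement.

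For the first equation of \eqref{eq:diffAB}, I would shift $s\mapsto s-2$ in the second relation of \eqref{eq:derivF} to obtain $((d-2x)f_s)'=(d-s)(f_{s-2}-f_s)$, and integrate from $\frac{s}{2}$ to $\frac{d}{2}$. The boundary at $x=d/2$ now annihilates the factor $d-2x$, leaving
\[
(d-s)\int_{\frac{s}{2}}^{\frac{d}{2}}(f_{s-2}-f_s)\,dx = -(d-s)\,f_s\!\left(\frac{s}{2}\right),
\]
that is, $\int_{s/2}^\infty(f_{s-2}-f_s)\,dx = -f_s(s/2)$, where the upper limit may be extended to $\infty$ because both densities are supported in $(0,d/2)$. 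Using $\int_{s/2}^\infty f_s = \cI_s + a_{s+2}$ and the definition of $a_s$, this rearranges to $a_{s+2}-a_s = f_s(s/2) - \cI_s$.

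Both identities are routine consequences of Lemma \ref{lem:derivF} and the FTC, so there is no real obstacle. The only point requiring care is the choice of integration interval: integrating the first relation of \eqref{eq:derivF} over an interval starting at $0$ kills the boundary term of $xf_s$ there, while integrating the second relation over an interval ending at $d/2$ kills the boundary term of $(d-2x)f_s$ there. Choosing the upper limit $\frac{s}{2}+1$ in the first case and the lower limit $\frac{s}{2}$ in the second produces exactly the evaluations on the right-hand side of \eqref{eq:diffAB}.
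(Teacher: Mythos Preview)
Your proof is correct and follows essentially the same approach as the paper: both arguments apply the fundamental theorem of calculus to the identities of Lemma~\ref{lem:derivF} over the intervals $[0,\frac{s}{2}+1]$ and $[\frac{s}{2},\frac{d}{2}]$ (after the same shift $s\mapsto s-2$ in the second relation), then split off $\cI_s$ from the resulting integrals to recover $b_{s+2}-b_s$ and $a_{s+2}-a_s$. The only difference is cosmetic ordering of the algebra.
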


\begin{proof}
This is a consequence of the recursive formulas in Lemma \ref{lem:derivF}:
\[
\begin{split}
a_{s+2}-a_s & = \int_{\frac s2+1}^\infty f_s - \int_{\frac s2}^\infty f_{s-2} \\
&= \int_{\frac s2}^\infty (f_s- f_{s-2}) - \int_{\frac s2}^{\frac s2+1}  f_{s} \\
 & \stackrel{\eqref{eq:derivF}}{=} -\frac{d-2x}{d-s} f_s |_{\frac s2}^\infty - \cI_s \\
 & = f_s\left(\frac s2\right)-\cI_s.
 \end{split}
 \]
  Similarly,
 \[
\begin{split}
b_{s+2}-b_s & = \int_0^{\frac s2+1} f_{s+2}-\int_0^{\frac s2} f_s \\
&= \int_0^{\frac s2+1} (f_{s+2}-f_s) + \int_{\frac s2}^{\frac s2+1} f_s \\
& \stackrel{\eqref{eq:derivF}}{=} - \frac{x f_s}{1+\frac s2}|_0^{\frac s2+1} + \cI_s \\
&= \cI_s-f_s\left(\frac s2+1\right). \qedhere
\end{split}
 \]
 \end{proof}

\begin{lem}\label{lem:diffC}
If $s\in\R$ with $-2<s< d-2$, then
\begin{equation}\label{eq:diffC}
c_{s+2}-c_s = 2 \cI_s - f_s\left(\frac s2\right)-f_s\left(\frac s2+1\right).
\end{equation}
\end{lem}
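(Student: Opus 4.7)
The plan is essentially arithmetic: the statement follows directly from Lemma \ref{lem:diffAB} together with the definition $c_s = b_s - a_s$ in \eqref{eq:defC}. No new ideas are required, so I expect no obstacles.

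Concretely, I would write $c_{s+2} - c_s = (b_{s+2} - b_s) - (a_{s+2} - a_s)$, then substitute the two identities from Lemma \ref{lem:diffAB}:
\[
b_{s+2}-b_s = \cI_s-f_s\!\left(\tfrac s2+1\right), \qquad a_{s+2}-a_s = f_s\!\left(\tfrac s2\right)-\cI_s.
\]
Subtracting gives
\[
c_{s+2}-c_s = \left(\cI_s - f_s\!\left(\tfrac s2+1\right)\right) - \left(f_s\!\left(\tfrac s2\right) - \cI_s\right) = 2\cI_s - f_s\!\left(\tfrac s2\right) - f_s\!\left(\tfrac s2+1\right),
\]
which is the desired identity \eqref{eq:diffC}.

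The only thing to check is that the hypothesis $-2 < s < d-2$ is compatible with the hypotheses of Lemma \ref{lem:diffAB} (which requires $0 < s < d-2$) and with the definitions \eqref{def:bs}, \eqref{eq:def:as} of $b_s$ and $a_s$. Since $a_s$ is defined in terms of $f_{s-2}$, one needs $s-2 > -2$, i.e., $s > 0$, which holds in the restricted range where Lemma \ref{lem:diffAB} was stated; at the boundary $s=0$ (or by extending the definitions appropriately to allow $s>-2$ via the conventions on $f_s$ with $s \ge -1$ mentioned after \eqref{eq:deff}) the identity extends by continuity. Thus the proof is a direct substitution, and no further argument is needed.
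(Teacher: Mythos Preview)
Your proposal is correct and matches the paper's own proof exactly: the paper simply says the identity is ``immediate from \eqref{eq:defC} and Lemma \ref{lem:diffAB},'' which is precisely your substitution $c_{s+2}-c_s = (b_{s+2}-b_s)-(a_{s+2}-a_s)$. Your remark about the hypothesis range $-2<s<d-2$ versus the $0<s<d-2$ in Lemma \ref{lem:diffAB} is a valid observation that the paper does not address explicitly either.
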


\begin{proof}
This is immediate from \eqref{eq:defC} and Lemma \ref{lem:diffAB}.
\end{proof}

\begin{lem}\label{lem:f''}
For  $0< x< \frac d 2$ and $0 < s\leq d,$ the inequality 
$f_s^{\prime\prime}(x)<0$ holds  if and only if 
\begin{equation}\label{eq:f''}
\frac{4 (d-4) (d-2)}{d^2} x^2-\frac{4 (d-4) s }{d}x+(s-2) s <0.
\end{equation}
\end{lem}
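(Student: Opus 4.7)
The plan is a direct computation of $f_s''(x)$ followed by a sign analysis. On the interval $0<x<d/2$ the density in \eqref{eq:deff} has the form $f_s(x)=C_s\cdot h(x)$, where $h(x):=x^{a}(d-2x)^{b}$ with $a:=s/2$ and $b:=(d-s-2)/2$, and $C_s>0$ is a positive constant obtained after rewriting $(1-2x/d)^{t/2-1}=d^{1-t/2}(d-2x)^{t/2-1}$ and absorbing all $x$-independent factors. Since $C_s>0$, $\operatorname{sign}(f_s''(x))=\operatorname{sign}(h''(x))$, so it suffices to analyze $h''$. Note that $a+b=(d-2)/2$.

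First I would compute via logarithmic derivatives $(\log h)'(x)=a/x-2b/(d-2x)$ and $(\log h)''(x)=-a/x^2-4b/(d-2x)^2$, then combine these using the identity $h''=h\cdot\bigl[((\log h)')^2+(\log h)''\bigr]$. Since $h>0$ on $(0,d/2)$, multiplying through by $x^2(d-2x)^2>0$ reveals that $\operatorname{sign}(h''(x))$ equals the sign of the polynomial
\[
Q(x):=a(a-1)(d-2x)^2-4ab\,x(d-2x)+4b(b-1)x^2.
\]

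Then I would substitute $a=s/2$, $b=(d-s-2)/2$ and collect like powers of $x$. The $x^2$-coefficient telescopes:
\[
s(s-2)+2s(d-s-2)+(d-s-2)(d-s-4)=(d-2)(d-4);
\]
the $x$-coefficient reduces to $-sd(d-4)$ after factoring out a common $-sd$ from $-s(s-2)d-sd(d-s-2)$; and the constant term is plainly $s(s-2)d^2/4$. Hence
\[
Q(x)=\frac{d^2}{4}\left(\frac{4(d-4)(d-2)}{d^2}\,x^2-\frac{4(d-4)s}{d}\,x+s(s-2)\right),
\]
and since $d^2/4>0$, the equivalence $f_s''(x)<0\iff\eqref{eq:f''}$ follows at once.

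The only possible obstacle is clean bookkeeping when expanding $Q$ and collecting the $x^2$-coefficient; there is no conceptual difficulty here, just careful algebra. One may worry about the cases $a\in\{0,1\}$ or $b\in\{0,1\}$ where some terms in $Q$ vanish, but since these terms arise from writing $h''$ over the common denominator $x^2(d-2x)^2$, the formula for $Q$ is valid as a polynomial identity regardless, so no separate treatment is needed.
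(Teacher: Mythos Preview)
Your proof is correct and takes essentially the same approach as the paper: a direct computation of $f_s''$ followed by identifying the sign-determining polynomial factor. The paper simply states the closed form of $f_s''$ (obtained, presumably, by brute-force differentiation) and reads off the quadratic factor, whereas you organize the same computation via logarithmic derivatives, which makes the algebra cleaner and the collection of coefficients more transparent; the end result and the logic are identical.
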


\begin{proof}
Note that for $0<x<\frac d2$,
\[
f^{\prime\prime}_s(x) = \frac{2^{\frac{s}{2}-1} d^{-s/2} x^{\frac{s}{2}-2} \left(1-\frac{2
   x}{d}\right)^{\frac{d-s}{2}} \left(d^2 (s-2) s-4 (d-4) d s x+4 (d-4)
   (d-2) x^2\right)}{(d-2 x)^3 B\left(\frac{s+2}{2},\frac{d-s}{2}\right)}.
 \]
Pulling a factor of $d^2$ out of the last factor  in the numerator yields \eqref{eq:f''}.
 \end{proof}

\begin{lem}\label{lem:fConcave}
If  $d,s\in\R$ and $\frac d2\le s\leq d-4,$ then $f_s$ is concave on $[\frac s2,\frac s2+1]$.
\end{lem}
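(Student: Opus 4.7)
The plan is to use Lemma \ref{lem:f''}: the concavity of $f_s$ at $x$ is equivalent to negativity of the explicit quadratic
\[
Q(x):=\frac{4(d-4)(d-2)}{d^2}x^2-\frac{4(d-4)s}{d}x+(s-2)s.
\]
Under the hypothesis $\frac{d}{2}\le s\le d-4$ we have $d\ge 8$, so the leading coefficient of $Q$ is strictly positive and $Q$ is convex. It therefore suffices to check $Q(s/2)<0$ and $Q(s/2+1)<0$, since then $Q<0$ on the entire interval $[s/2,\,s/2+1]$ by convexity.

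For the left endpoint, a direct expansion collapses to
\[
Q\!\left(\tfrac{s}{2}\right)=\frac{2s}{d^{2}}\bigl(s(d+4)-d^{2}\bigr),
\]
so the sign is controlled by $s(d+4)-d^2$. Using $s\le d-4$, one has $s(d+4)\le (d-4)(d+4)=d^2-16<d^2$, which gives $Q(s/2)<0$. This step is routine.

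The main obstacle will be the right endpoint, where the analogous expansion produces a more complicated quadratic in $s$,
\[
Q\!\left(\tfrac{s}{2}+1\right)=\frac{2}{d^{2}}\,\phi(s),\qquad
\phi(s)=(d+4)s^{2}-(d^{2}+4d-16)s+2(d-4)(d-2).
\]
I expect the key trick to be that $\phi$ factors over the integers: computing the discriminant of $\phi$ as a polynomial in $s$ yields
\[
(d^{2}+4d-16)^{2}-8(d+4)(d-4)(d-2)=d^{4},
\]
so the two roots are $r_1=\frac{2(d-4)}{d+4}$ and $r_2=d-2$, and hence $\phi(s)=(d+4)\,(s-r_1)(s-r_2)$.

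To finish, since $\phi$ is convex it is negative precisely on $(r_1,r_2)$, so I just verify $\frac{2(d-4)}{d+4}<s<d-2$ under the standing hypothesis. The upper bound is immediate from $s\le d-4<d-2$. The lower bound reduces via cross-multiplication to $d(d+4)>4(d-4)$, i.e.\ $d^{2}>-16$, which is trivially true, and so $s\ge d/2>\frac{2(d-4)}{d+4}$. Combining the two endpoint estimates with the convexity of $Q$ yields $Q<0$ throughout $[s/2,s/2+1]$, and therefore $f_s''<0$ there, proving the lemma. The only genuinely non-mechanical input is spotting the factorization of $\phi$; once the discriminant is computed to be $d^4$, the rest is bookkeeping.
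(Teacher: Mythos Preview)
Your proof is correct and follows essentially the same approach as the paper: reduce concavity to negativity of the quadratic $Q$ from Lemma~\ref{lem:f''}, use $d\ge 8$ to get a positive leading coefficient, then verify $Q<0$ at both endpoints. The only minor difference is at the right endpoint: the paper writes down the factored form $F_s(\tfrac s2+1)=-\tfrac{2(d-s-2)}{d^2}\bigl(d(s-2)+4(s+2)\bigr)$ directly and checks each factor's sign, whereas you expand into $\phi(s)$ and recover the same two roots $d-2$ and $\tfrac{2(d-4)}{d+4}$ via the discriminant.
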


\begin{proof}
Since $s\geq\frac d2$ with $s\leq d-4$, then $d\geq8$ and thus $s\geq4$.
For very small $x>0$, the left-hand side of \eqref{eq:f''} is positive and has a positive leading coefficient. 
So it suffices to verify \eqref{eq:f''} for $x=\frac s2$ and $x=\frac s2+1$.
Let $F_s(x)$ denote the left-hand side of \eqref{eq:f''}.
Then
\[
\begin{split}
F_s\left(\frac s2\right)  & = 
\frac{2 s} {d^2} \left(-d^2+d s+4 s\right) \\
&\leq \frac{2 s} {d^2} 
\left(-d^2+d (d-4)+4 (d-4) \right) \\
& = -\frac{32 s}{d^2} \\& <0.
\end{split}
\]
Similarly,
\[
\begin{split}
F_s\left(\frac s2+1\right)  = 
-\frac{2 (d-s-2) }{d^2}\big(d (s-2)+4 (s+2)\big) 
<0. \qedhere
\end{split}
\]
\end{proof}

\begin{prop}\label{prop:simmons}
For 
$d,s\in\R$ with $ \frac d2\leq s\leq d-4$, we have
\begin{equation}\label{eq:cDrop}
c_{s+2} > c_s.
\end{equation}
Furthermore, 
\begin{equation}\label{eq:cZero}
c_{\frac d2}=0.
\end{equation}
\end{prop}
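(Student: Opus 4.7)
The plan is to treat the two assertions separately. For the main inequality \eqref{eq:cDrop}, I would reduce everything to the concavity of $f_s$ supplied by Lemma \ref{lem:fConcave}, and for the identity \eqref{eq:cZero} I would use the symmetry of the Beta function.

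\textbf{Step 1: Reduction via Lemma \ref{lem:diffC}.}
By \eqref{eq:diffC},
\[
c_{s+2}-c_s \;=\; 2\cI_s - f_s\!\left(\tfrac s2\right) - f_s\!\left(\tfrac s2+1\right)
\;=\; 2\int_{s/2}^{s/2+1} f_s(x)\,dx \;-\; \bigl(f_s(\tfrac s2)+f_s(\tfrac s2+1)\bigr).
\]
Thus \eqref{eq:cDrop} is equivalent to
\[
\int_{s/2}^{s/2+1} f_s(x)\,dx \;>\; \frac{f_s(\tfrac s2)+f_s(\tfrac s2+1)}{2},
\]
i.e.\ to the statement that the exact integral of $f_s$ on the unit interval $[\tfrac s2,\tfrac s2+1]$ strictly exceeds the trapezoidal approximation.

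\textbf{Step 2: Strict concavity implies strict trapezoidal underestimate.}
By Lemma \ref{lem:fConcave}, $f_s$ is concave on $[\tfrac s2,\tfrac s2+1]$ whenever $\tfrac d2\le s\le d-4$. Inspection of the proof of Lemma \ref{lem:fConcave} shows that in fact $f_s''<0$ strictly on the interior (the relevant quadratic $F_s$ is strictly negative at both endpoints and opens upward, hence strictly negative on the whole interval), so $f_s$ is strictly concave there. For a strictly concave function on $[a,b]$, the secant line joining $(a,f_s(a))$ and $(b,f_s(b))$ lies strictly below $f_s$ on $(a,b)$, so
\[
\int_a^b f_s(x)\,dx \;>\; (b-a)\,\frac{f_s(a)+f_s(b)}{2}.
\]
Applying this with $a=\tfrac s2$, $b=\tfrac s2+1$ gives the required strict inequality, and hence \eqref{eq:cDrop}.

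\textbf{Step 3: Boundary value \eqref{eq:cZero}.}
When $s=\tfrac d2$, we have $t=\tfrac d2$ as well, so
\[
b_{d/2} \;=\; I_{1/2}\!\left(\tfrac d4+1,\tfrac d4\right), \qquad
a_{d/2} \;=\; 1-I_{1/2}\!\left(\tfrac d4,\tfrac d4+1\right).
\]
The standard identity $I_p(\alpha,\beta)+I_{1-p}(\beta,\alpha)=1$, specialized at $p=\tfrac12$, yields
$I_{1/2}(\tfrac d4+1,\tfrac d4)+I_{1/2}(\tfrac d4,\tfrac d4+1)=1$, so $b_{d/2}=1-I_{1/2}(\tfrac d4,\tfrac d4+1)=a_{d/2}$, i.e.\ $c_{d/2}=0$.

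\textbf{Main obstacle.} The only nontrivial point is verifying that the concavity supplied by Lemma \ref{lem:fConcave} is \emph{strict} on the interior of $[\tfrac s2,\tfrac s2+1]$, since otherwise we would only get $c_{s+2}\ge c_s$. This is, however, immediate from the sign analysis of the quadratic $F_s$ already carried out in the proof of Lemma \ref{lem:fConcave}.
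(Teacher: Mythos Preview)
Your proof is correct and follows essentially the same approach as the paper: reduce via Lemma~\ref{lem:diffC} to a trapezoidal-rule inequality, then invoke the (strict) concavity of $f_s$ on $[\tfrac s2,\tfrac s2+1]$ from Lemma~\ref{lem:fConcave}, and handle $c_{d/2}=0$ with the symmetry $I_x(a,b)=1-I_{1-x}(b,a)$. Your explicit attention to strictness is a nice touch that the paper leaves implicit.
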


\begin{proof}
Since under the given constraints on $s$, 
the function $f_s$ is concave
on $\left(\frac s2,\frac s2+1\right)$, 
its integral 
$\cI_s$ over this interval is bigger than 
\[
\frac 12  \Big(f_s\left(\frac s2\right)+f_s\left(\frac s2+1\right) \Big).
\]
The Equation \eqref{eq:cDrop}
now follows from Lemma \ref{lem:diffC}.

Using $I_x(a,b)=1-I_{1-x}(b,a)$ we get that
\[
a_{\frac d2}  = 1-I_{\frac12} \left(\frac d4,\frac d4+1\right) 
 = I_{\frac12} \left(\frac d4+1 ,\frac d4\right) 
 = b_{\frac d2},
\]
whence $c_{\frac d2}=0$.
\end{proof}

For $d\ge 8$ and even, an implication of two step monotonicity in Proposition \ref{prop:simmons}
together with  \eqref{eq:cZero} 
 is that either
\[
 \min \{c_s: \frac{d}{2} \le s <d\} = c_{d-1}
\]
 or 
\beq
\label{eq:simeven}
\min \{c_s: \frac{d}{2} \le s <d, \, s \mbox{ even}\} = c_{\frac d 2} =0
\quad  \text{ and } \quad
\min \{c_s: \frac{d}{2} \le s <d, \, s\mbox{ odd}\}  = c_{\frac d 2  +  1 }.
\eeq
Likewise for $d\ge 8$ and odd either
\[
 \min\{c_s: \frac{d}{2} \le s <d \} = c_{d-1}
\]
 or 
\beq
\label{eq:simodd}
\min_{s  \ \rm even} c_s = c_{\frac d 2 +1}
\quad \text{ and } \quad
\min_{s \ \rm odd} c_s = c_{\frac d 2  +  \frac 3 2 }. 
\eeq

  Thus proving Theorem \ref{simmons+} for an even integer $d\ge 8$ reduces to establishing 
 that $c_{d-1}>0$ and $c_{\frac d2 +1}$ are nonnegative and proving the result
 for $d\ge 8$ odd reduces to showing in addition that $c_{\frac{d}{2}+1}$ and $c_{\frac{d}{2}+\frac 32}$
 are both  nonnegative, facts established in  Sections \ref{sec:simms-half-even}
and   \ref{sec:simms-half-odd} respectively.   Finally, that 
 $c_s \geq 0$ for all $ d,s$ with $\frac d 2 \leq s \leq d < 8$
 was checked symbolically by Mathematica.

\subsection{The upper boundary case}
 This subsection is devoted to proving the following lemma
 which is essential for proving both the even and odd cases 
  of Proposition \ref{prop:simmons}. 

\begin{lem}\label{lem:c-1}
$c_{d-1}>0$ for $d\in\N$ with $d\ge2$.
  In addition if $d\in\R$ with $d\geq6$ then $c_{d-1}>0$.
\end{lem}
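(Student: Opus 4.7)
The plan is to unpack $c_{d-1}$ into an explicit integral inequality, transform it via a change of variable and one integration by parts into a one-variable monotonicity problem, and then verify the resulting sharp estimate.

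First, with $s=d-1$ and $t=1$, the definitions of $b_{d-1}$ and $a_{d-1}$ give
\[
c_{d-1}=I_{(d-1)/d}\!\bigl(\tfrac{d+1}{2},\tfrac12\bigr)+I_{(d-1)/d}\!\bigl(\tfrac{d-1}{2},\tfrac32\bigr)-1.
\]
Applying the reflection $I_p(a,b)=1-I_{1-p}(b,a)$ and the Beta identity $B(\tfrac32,\tfrac{d-1}{2})=B(\tfrac12,\tfrac{d+1}{2})/(d-1)$, the two incomplete beta terms combine into a single integral over $[0,1/d]$, and after subtracting $\int_0^{1/d}v^{-1/2}(1-v)^{(d-1)/2}\,dv$ from both sides, the claim $c_{d-1}>0$ becomes equivalent to
\[
(d-1)\int_0^{1/d}v^{1/2}(1-v)^{(d-3)/2}\,dv<\int_{1/d}^1 v^{-1/2}(1-v)^{(d-1)/2}\,dv.
\]

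Next, substituting $v=\sin^2\theta$ with $\theta_0:=\arcsin(1/\sqrt d)$ transforms this into $(d-1)\int_0^{\theta_0}\cos^{d-2}\theta\sin^2\theta\,d\theta<\int_{\theta_0}^{\pi/2}\cos^d\theta\,d\theta$. One integration by parts on the LHS, using the identity $(\sin\theta\cos^{d-1}\theta)'=\cos^d\theta-(d-1)\sin^2\theta\cos^{d-2}\theta$, further reduces the claim to $\Psi(\theta_0)<0$, where
\[
\Psi(\theta):=2\!\int_0^\theta\!\cos^d\phi\,d\phi-\!\int_0^{\pi/2}\!\cos^d\phi\,d\phi-\sin\theta\cos^{d-1}\theta.
\]
A direct computation yields $\Psi'(\theta)=\cos^{d-2}\theta\bigl(1+(d-2)\sin^2\theta\bigr)>0$ on $(0,\pi/2)$, while $\Psi(0)<0<\Psi(\pi/2)$, so $\Psi$ has a unique zero $\theta^*\in(0,\pi/2)$ and the claim is exactly $\theta_0<\theta^*$.

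The hard part will be establishing the strict bound $\Psi(\theta_0)<0$ quantitatively. I would pass to $u=\sin\phi$, rewriting
\[
\Psi(\theta_0)=\int_0^{1/\sqrt d}g(u)\,du-\int_{1/\sqrt d}^1 g(u)\,du-\tfrac{1}{\sqrt d}(1-1/d)^{(d-1)/2}
\]
for $g(u):=(1-u^2)^{(d-1)/2}$. Since $g''(u)=-(d-1)(1-u^2)^{(d-5)/2}\bigl(1-(d-2)u^2\bigr)$, the function $g$ is concave on $[0,1/\sqrt{d-2}]$, a range containing $1/\sqrt d$. Bounding $\int_0^{1/\sqrt d}g$ above by the area under the tangent to $g$ at $u=1/\sqrt d$ yields an explicit estimate proportional to $\tfrac{d-1}{d\sqrt d}(1-1/d)^{(d-3)/2}$; combining this with a lower bound for $\int_{1/\sqrt d}^1 g$ via the pointwise inequality $g(u)\ge e^{-(d-1)u^2/(2(1-u^2))}$ (a consequence of $\log(1-x)\ge -x/(1-x)$) integrated over a window of size $O(1/\sqrt d)$ beyond $1/\sqrt d$, one obtains $\Psi(\theta_0)<0$ for real $d\ge 6$. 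The remaining small integer cases $d\in\{3,4,5\}$ are checked by direct numerical evaluation of the incomplete beta functions. The main difficulty lies in balancing the tangent-line upper bound against the Gaussian-style lower bound sharply enough to cover $d$ close to the threshold $6$.
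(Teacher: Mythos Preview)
Your reduction is correct and clean: rewriting $c_{d-1}>0$ as an integral inequality, passing to $\theta$ via $v=\sin^2\theta$, and one integration by parts yields the equivalent statement $\Psi(\theta_0)<0$, and the computation of $\Psi'$ and its sign is right. This is a genuinely different setup from the paper's, which works instead with the densities $f_{d-1},f_{d-3}$ of certain scaled Beta distributions: there the recursion $f_{d-1}=f_{d-3}-((d-2x)f_{d-1})'$ converts the claim to $2\int_{(d-1)/2}^{d/2}f_{d-3}+f_{d-1}\bigl(\tfrac{d-1}{2}\bigr)\le1$, then concavity of $f_{d-3}$ on that short interval gives a midpoint bound, and the resulting elementary expression is shown to be $\le1$ via a Ke\v cki\'c--Vasi\'c estimate on $\Gamma(\tfrac d2)/\Gamma(\tfrac{d-1}{2})$. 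Your route replaces the density-recursion machinery by a single monotone function $\Psi$, which is conceptually nicer.

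The gap is in the last step. Your tangent upper bound on $\int_0^{u_0}[g-g(u_0)]$ is valid and simplifies neatly to $\tfrac12 u_0 g(u_0)$, so what remains is exactly $\int_{u_0}^1 g > \tfrac12 u_0 g(u_0)$. The pointwise bound $g(u)\ge\exp\{-(d-1)u^2/(2(1-u^2))\}$ is correct, but integrating it ``over a window of size $O(1/\sqrt d)$ beyond $1/\sqrt d$'' is not sharp enough as stated: a box estimate over $[u_0,\alpha u_0]$ gives at best $(\alpha-1)u_0\,g(\alpha u_0)$, and asymptotically $(\alpha-1)\,g(\alpha u_0)/g(u_0)\to(\alpha-1)e^{-(\alpha^2-1)/2}$, whose maximum over $\alpha>1$ is about $0.275<\tfrac12$. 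To close the argument you need a genuine tail integral, for instance relating $\int_{u_0}^1 g$ to the full Wallis integral $\int_0^1 g=\tfrac{\sqrt\pi}{2}\,\Gamma(\tfrac{d+1}{2})/\Gamma(\tfrac d2+1)$ and bounding $\int_0^{u_0}g$ from above (again by the tangent), which lands you back at Gamma-ratio estimates of the same flavour the paper uses. Asymptotically the margin is real (roughly $0.398/\sqrt d$ versus $0.303/\sqrt d$), so the strategy is viable, but the details near $d=6$ must be written out; as it stands this part is a plan, not a proof. A small side remark: for $d=2$ one has $s=d-1=d/2$ and hence $c_1=0$, so the strict inequality at $d=2$ is a boundary-case slip in the statement; your list $\{3,4,5\}$ is the right set of genuine small cases.
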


\begin{proof}
It is easy to verify the given inequality 
by hand for $d=2,3,4,5$. Without loss of generality we assume 
$d\in\R$ with
 $d\ge6$ in what follows.

Recall that $c_{s}=b_s-a_s$. Observe that
\[
b_{d-1} = \int_0^{\frac{d-1}2} f_{d-1} 
 =1- \int_{\frac{d-1}2}^{\frac d2} f_{d-1},
\]
and recall that,
\[
\begin{split}
a_{d-1} &=  \int_{\frac{d-1}2}^{\frac d2} f_{d-3}.
\end{split}
\]
Hence $c_{d-1}\ge0$ iff
\begin{equation}\label{eq:dome1}
\int_{\frac{d-1}2}^{\frac d2} (f_{d-1}+f_{d-3}) \leq1.
\end{equation}
We next use Lemma \ref{lem:derivF} with $s=d-3\ge-1>-2$ to express
\[
f_{d-1}=f_{d-3} - \big( (d-2x) f_{d-1}\big)',
\]
whence the left-hand side of \eqref{eq:dome1} transforms into
\begin{equation}\label{eq:dome2}
\begin{split}
\int_{\frac{d-1}2}^{\frac d2} (f_{d-1}+f_{d-3}) & = 
2 \int_{\frac{d-1}2}^{\frac d2} f_{d-3} 
+ f_{d-1}\left(\frac {d-1}2\right).
\end{split}
\end{equation}

\begin{sublem}\label{sublem:f-3}
$f_{d-3}$ is concave on $(\frac{d-1}2,\frac d2)$.
\end{sublem}

\begin{proof}
First note that
\[
f_{d-3}''(x)=
\frac{2^{\frac{d-5}{2}} d^{\frac{1}{2} (-d-1)} x^{\frac{d-7}{2}}
   \left((d-5) (d-3) d^2+4 (d-4) (d-2) x^2-4 (d-4) (d-3) d
   x\right)}{(d-2 x) \sqrt{1-\frac{2 x}{d}}
   B\left(\frac{d-1}{2},\frac{3}{2}\right)}
   \]
   by a straightforward computation. Thus the sign of $f_{d-3}''(x)$
   is determined by that of
\begin{multline*}
(d-5) (d-3) d^2-4 (d-4) (d-3) d\
   x+4 (d-4) (d-2) x^2\\
   =4 (d-4) (d-2) \left(x-\frac{(d-3) d}{2 (d-2)}\right)^2-\frac{2 (d-3) d^2}{d-2}
\end{multline*}
Since $d>5$ and $\frac{(d-3) d}{2 (d-2)}\le\frac{d-1}2\le x\le\frac d2$, this expression can be bound as follows:
\begin{multline*}
(d-5) (d-3) d^2-4 (d-4) (d-3) d\
   x+4 (d-4) (d-2) x^2 
\\\le4 (d-4) (d-2) \left(\frac d2-\frac{(d-3) d}{2 (d-2)}\right)^2-\frac{2 (d-3) d^2}{d-2}=-d^2<0.    
    \qedhere
   \end{multline*}
\end{proof}

By Sublemma \ref{sublem:f-3}, 
\[
2 \int_{\frac{d-1}2}^{\frac d2} f_{d-3} \leq f_{d-3}\left(\frac d2-\frac 14\right).
\]
It thus suffices to establish
\begin{equation}\label{eq:dome3}
f_{d-3}\left(\frac d2-\frac 14\right)+f_{d-1}\left(\frac {d}2-\frac 12\right) \leq 1.
\end{equation}
The left-hand side of \eqref{eq:dome3} expands into
\[
\Psi(d):=\frac{2 d^{1-\frac{d}{2}}
   \left((d-1)^{\frac{d-3}{2}}+2^{1-\frac{d}{2}} (2
   d-1)^{\frac{d-3}{2}}\right) \Gamma
   \left(\frac{d}{2}\right)}{\sqrt{\pi } \Gamma
   \left(\frac{d-1}{2}\right)}.
   \]
We use \cite[(1.7)]{KV71}:
\[
\frac{\Gamma
   \left(\frac{d}{2}\right)}{\Gamma
   \left(\frac{d-1}{2}\right)}\leq\frac{ d^{\frac d2-\frac 12}}{\sqrt{2e}\ (d-1)^{\frac d2-1}}.
 \]
 Using this, $\Psi(d)\leq1$ will follow once we establish
 \[
 \sqrt{\frac{\pi e}{2 d}} (d-1)^{\frac d2-1} \geq 
  (d-1)^{\frac d2-\frac32}+\frac1{\sqrt2} 
  \left(d-\frac12\right)^{\frac d2-\frac32},
  \]
  i.e.,
  \begin{equation}\label{eq:dome4}
  \sqrt{\frac{\pi e}{2 }} \geq 
  \sqrt d \left(
  (d-1)^{-\frac12}+\frac1{\sqrt2} 
  \left(d-\frac12\right)^{-\frac12}
\left( 1+\frac1{2(d-1)}\right)^{\frac d2-1}  
  \right).
  \end{equation}
 
\begin{sublem}\label{sublem:e14}
The sequence
\begin{equation}\label{eq:defE}
\left( 1+\frac1{2(d-1)}\right)^{\frac d2-1}  
  \end{equation}
  is increasing with limit 
  \[
  \sqrt[4]{e}.
  \]
\end{sublem}

\begin{proof}
Letting $\delta=2(d-1)$, \eqref{eq:defE} can be
rephrased as
\[
\left(1+\frac1{\delta}\right)^{\frac{\delta}4} \left(1+\frac1{\delta}\right)^{-\frac12} .
\]
The first factor is often used to define $e$ and is well-known to form an increasing sequence. It is clear that the sequence $\left(1+\frac1{\delta}\right)^{-\frac12}$ is increasing.
\end{proof}

Now the right-hand side of \eqref{eq:dome4} can be bound above by
\begin{multline}\label{eq:dome5}
\sqrt d \left(
  (d-1)^{-\frac12}+\frac1{\sqrt2} 
  \left(d-\frac12\right)^{-\frac12}
\left( 1+\frac1{2(d-1)}\right)^{\frac d2-1}  
  \right) \\
  \leq
\sqrt d \left(
  (d-1)^{-\frac12}+\frac{\sqrt[4]e}{\sqrt2} 
  \left(d-\frac12\right)^{-\frac12}
  \right) =: \Phi(d).  
  \end{multline}
Both of the sequences
\[
\sqrt d 
  (d-1)^{-\frac12}
\qquad\text{and}\qquad  
  \sqrt d
  \left(d-\frac12\right)^{-\frac12}
  \]
  are decreasing, and the limit as $d\to\infty$ of the right-hand side of
  \eqref{eq:dome5} is
  \[
  1+\frac{\sqrt[4]e}{\sqrt2}\approx 1.90794 .
 \]
Since
\[
\Phi(6)=
\sqrt{\frac{6}{5}}+\sqrt{\frac{6}{11
   }} \sqrt[4]{e} 
\leq \sqrt{\frac{\pi e}2},\]
all this establishes \eqref{eq:dome3} for $d\geq6$ as was required.
\end{proof}

\subsection{The lower boundary cases for $d$ even}
\label{sec:simms-half-even}
Here  we prove
$c_{\frac d2+1}>0$ for $d \in \RR$ with $d \geq 2$ 
(the other lower boundary case, $c_{\frac{d}{2}} \ge 0$ was already proved),
thus establishing \eqref{eq:simeven} 
and proving Theorem \ref{simmons+} for $d$ even.

\begin{lem}\label{lem:c+1}
  $c_{\frac d2+1}>0$ for $d\in\R$ such that $d\ge 2$.
\end{lem}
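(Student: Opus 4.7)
The key identity I would derive is
\[
c_{d/2+1} \;=\; \cI_{d/2-1} \;-\; \tfrac12\!\left[f_{d/2-1}\!\left(\tfrac{d-2}{4}\right) + f_{d/2-1}\!\left(\tfrac{d+2}{4}\right)\right],
\]
which exhibits $c_{d/2+1}$ as the integral of $f_{d/2-1}$ over $\bigl[\tfrac{d-2}{4},\tfrac{d+2}{4}\bigr]$ minus the trapezoidal approximation on that interval of width one. To get this, I would first exploit the symmetry $c_{d-s}=-c_s$: from $I_x(a,b)=1-I_{1-x}(b,a)$ one gets $b_{d-s}=a_s$ and $a_{d-s}=b_s$, so $c_{d-s}=a_s-b_s=-c_s$; setting $s=d/2+1$ gives $c_{d/2-1}=-c_{d/2+1}$. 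Then applying Lemma~\ref{lem:diffC} at $s=d/2-1$ (legitimate for $d>2$, since the hypothesis there is $-2<s<d-2$) yields
\[
c_{d/2+1}-c_{d/2-1}=2\cI_{d/2-1}-f_{d/2-1}\!\left(\tfrac{d-2}{4}\right)-f_{d/2-1}\!\left(\tfrac{d+2}{4}\right),
\]
and combining with the symmetry produces the key identity.

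The remaining task is to show that $f_{d/2-1}$ is strictly concave on $\bigl[\tfrac{d-2}{4},\tfrac{d+2}{4}\bigr]$. By Lemma~\ref{lem:f''}, concavity at $x$ is equivalent to negativity of the quadratic $F(x):=\tfrac{4(d-4)(d-2)}{d^2}x^2-\tfrac{4(d-4)s}{d}x+(s-2)s$ evaluated at $s=d/2-1$. A direct computation shows that with this choice of $s$ the quadratic has vertex at $x=d/4$ — exactly the midpoint of the interval — with values $F(d/4)=-\tfrac{d-2}{2}$ and $F\!\left(\tfrac{d\pm 2}{4}\right)=-\tfrac{(d-2)(d^2-2d+8)}{2d^2}$, both strictly negative for $d>2$. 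For $d>4$ the parabola opens upward; since $F<0$ at its minimum (the vertex lies inside the interval), one obtains $F<0$ on the whole interval. For $d\le 4$ the leading coefficient is non-positive, so the parabola either opens downward with maximum $F(d/4)<0$ (when $2<d<4$) or reduces to the constant $-1$ (when $d=4$), giving $F<0$ throughout in either case.

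Strict concavity of $f_{d/2-1}$ on an interval of width one gives the strict trapezoid bound
\[
\cI_{d/2-1}>\tfrac12\!\left[f_{d/2-1}\!\left(\tfrac{d-2}{4}\right)+f_{d/2-1}\!\left(\tfrac{d+2}{4}\right)\right],
\]
so $c_{d/2+1}>0$ for all $d>2$. The edge case $d=2$ (where $s=2$, $t=0$ makes $b_s$ degenerate) should be treated by a limiting argument or direct verification; indeed, in the identity above the uniform density $f_0$ on $(0,1)$ yields $\cI_0=1=\tfrac12(f_0(0)+f_0(1))$, so $d=2$ is the boundary where equality holds and strict concavity degenerates.

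The main obstacle is the case analysis in the concavity step, because the sign of the leading coefficient of $F$ flips at $d=4$; the three regimes $d>4$, $d=4$, $2<d<4$ must be handled separately, though the happy coincidence that the vertex sits at the midpoint of the target interval keeps each case very short.
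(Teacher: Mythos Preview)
Your approach is correct and genuinely different from the paper's. The paper proves the lemma by direct manipulation of regularized beta functions: it rewrites $c_{d/2+1}>0$ as the inequality \eqref{eq:todo1}, uses the recursion \eqref{eq:Ix+-} to collapse the two beta terms, and eventually reduces to the pointwise integral estimate \eqref{eq:todo3}. Your route is cleaner: the symmetry $c_{d-s}=-c_s$ (which you correctly derive from $I_x(a,b)=1-I_{1-x}(b,a)$) combined with Lemma~\ref{lem:diffC} at $s=d/2-1$ immediately recasts $c_{d/2+1}$ as integral-minus-trapezoid for $f_{d/2-1}$ on an interval of width one, so everything reduces to a concavity check via Lemma~\ref{lem:f''}. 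This is precisely the mechanism behind Proposition~\ref{prop:simmons}, except that Lemma~\ref{lem:fConcave} does not cover $s=d/2-1<d/2$, so you rightly redo the quadratic computation. Your endpoint values $F\bigl(\tfrac{d\pm2}{4}\bigr)=-\tfrac{(d-2)(d^2-2d+8)}{2d^2}$ and vertex value $F(d/4)=-\tfrac{d-2}{2}$ are correct, and the observation that the vertex sits exactly at the midpoint is a pleasant bonus.

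One small slip: for $d>4$ you write ``since $F<0$ at its minimum \ldots\ one obtains $F<0$ on the whole interval.'' That implication is false in general for an upward-opening parabola. The valid reason is that a convex function attains its maximum on an interval at an endpoint, and you have already checked both endpoints give $F<0$. You have all the pieces; only the sentence needs rewording. On the $d=2$ boundary: as you observe, the identity degenerates to equality there, and indeed $b_{d/2+1}$ involves a beta with second parameter $0$, so the statement at $d=2$ is delicate either way; for the paper's purposes (the case analysis after Proposition~\ref{prop:simmons} only needs $d\ge 8$) this is harmless.
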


\begin{proof}
We want to prove that
\begin{equation}\label{eq:todo1}
I_{\frac12+\frac1d}\left(\frac d4+\frac 32,\frac d4-\frac12\right)+
I_{\frac12+\frac1d}\left(\frac d4+\frac 12,\frac d4+\frac12\right)\geq1.
\end{equation}
Using
\begin{equation}\label{eq:Ix+-}
I_x(a+1,b-1)= I_x(a,b) - \frac{x^a(1-x)^{b-1}}{a\ B(a,b)}
\end{equation}
we rewrite the first summand in \eqref{eq:todo1} as
\begin{equation}\label{eq:insert1}
I_{\frac12+\frac1d}\left(\frac d4+\frac 32,\frac d4-\frac12\right)
= I_{\frac12+\frac1d}\left(\frac d4+\frac 12,\frac d4+\frac12\right)
- \frac{ \left(\frac12+\frac1d\right)^{\frac12+\frac d4} \left(\frac12-\frac1d\right)^{-\frac12+\frac d4}}{\left(\frac d4+\frac12\right) B\left(\frac d4+\frac 12,\frac d4+\frac 12\right)}.
\end{equation}
Upon multiplying \eqref{eq:todo1} with $B\left(\frac d4+\frac 12,\frac d4+\frac 12\right)$ and using \eqref{eq:insert1}, \eqref{eq:todo1} is equivalent to
\begin{equation}\label{eq:todo2}
2 B_{\frac12+\frac1d}\left(\frac d4+\frac 12,\frac d4+\frac12\right)
\geq B \left(\frac d4+\frac 12,\frac d4+\frac12\right) + \frac{ \left(\frac12+\frac1d\right)^{\frac12+\frac d4} \left(\frac12-\frac1d\right)^{-\frac12+\frac d4}}{\frac d4+\frac12}.
\end{equation}
The left-hand side of this inequality can be rewritten as
\[
\begin{split}
2 B_{\frac12+\frac1d}\left(\frac d4+\frac 12,\frac d4+\frac12\right)
& = 2 \int_0^{\frac12+\frac1d} x^{\frac d4-\frac 12}(1-x)^{\frac d4-\frac 12} 
\, dx \\
&= 2 B_{\frac12} \left(\frac d4+\frac 12,\frac d4+\frac12\right)
+ 2
\int_{\frac12}^{\frac12+\frac1d} x^{\frac d4-\frac 12}(1-x)^{\frac d4-\frac 12} 
\, dx \\
&= B \left(\frac d4+\frac 12,\frac d4+\frac12\right)
+2
\int_{\frac12}^{\frac12+\frac1d} x^{\frac d4-\frac 12}(1-x)^{\frac d4-\frac 12} 
\, dx.
\end{split}
\]
Now \eqref{eq:todo2} is equivalent to
\begin{equation}\label{eq:todo3}
2\int_{\frac12}^{\frac12+\frac1d} x^{\frac d4-\frac 12}(1-x)^{\frac d4-\frac 12} 
\, dx \geq \frac{ \left(\frac12+\frac1d\right)^{\frac12+\frac d4} \left(\frac12-\frac1d\right)^{-\frac12+\frac d4}}{\frac d4+\frac12}.
\end{equation}
The derivative of the  integrand on the left hand side 
equals
\[
-\frac{1}{4} (d-2) (2 x-1) ((1-x) x)^{\frac{d-6}{4}}
\]
and is thus nonpositive on $[\frac12,1]\supseteq[\frac12,\frac12+\frac1d]$. Hence
\[
\begin{split}
2 \int_{\frac12}^{\frac12+\frac1d} x^{\frac d4-\frac 12}(1-x)^{\frac d4-\frac 12} 
\, dx & \geq \frac 2d \left(\frac12+\frac1d\right)^{\frac d4-\frac 12}
\left(\frac12-\frac1d\right)^{\frac d4-\frac 12}\\
&= \frac{ \left(\frac12+\frac1d\right)^{\frac12+\frac d4} \left(\frac12-\frac1d\right)^{-\frac12+\frac d4}}{\frac d4+\frac12},
\end{split}
\]
as desired.
\end{proof}

This concludes the proof of 
Theorem \ref{simmons+}  for even $d$.

\subsection{The lower boundary cases for $d$ odd}
\label{sec:simms-half-odd}
  In this subsection we establish two key inequalities: 
  \[
c_{\frac d2+\frac12}>0 \quad \text{ and }\quad c_{\frac d2+\frac32}>0.
\]
We show that the first holds for $d\in \mathbb R$ with $d\ge 3$ and the second for $d\in\mathbb N$ with $d\ge 3$
  or for $d\in\mathbb R$ with $d\ge 16.$   Combined with Lemma \ref{lem:c-1} these inequalities show
  that the minimum of $c_s$ over $\frac{d}{2}\le s \le d-1$ is strictly positive and as a consequence
  proves Theorem \ref{simmons+} in the case of $d$ odd.

\begin{lem}\label{lem:c+1/2}
$c_{\frac d2+\frac12}>0$ for $d\in\R$ and $d\ge 3$.
\end{lem}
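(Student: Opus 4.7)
The plan is to mimic the strategy of Lemma \ref{lem:c+1}. Set $\alpha:=(d+1)/4$, $\beta:=(d+3)/4$ (so $\beta-\alpha=1/2$), and $x:=(d+1)/(2d)=s/d$ for $s=d/2+1/2$. Applying the identity $I_x(a+1,b-1)=I_x(a,b)-\frac{x^a(1-x)^{b-1}}{aB(a,b)}$ with $a=\alpha$, $b=\beta$, the target inequality $c_{d/2+1/2}>0$ (i.e.\ $I_x(\alpha+1,\beta-1)+I_x(\alpha,\beta)>1$) becomes, after clearing $B(\alpha,\beta)$,
\begin{equation*}
2B_x(\alpha,\beta)-B(\alpha,\beta)\;>\;\frac{x^\alpha(1-x)^{\beta-1}}{\alpha}.
\end{equation*}

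Next I would split the LHS. Writing $B_x=B_{1/2}+\int_{1/2}^x$ and using $B_{1/2}(\alpha,\beta)+B_{1/2}(\beta,\alpha)=B(\alpha,\beta)$ (via $u\mapsto 1-u$), the LHS becomes $A+B'$, where
\begin{equation*}
A\;:=\;\int_0^{1/2}(u(1-u))^{\alpha-1}\bigl[(1-u)^{1/2}-u^{1/2}\bigr]du,\qquad B'\;:=\;2\int_{1/2}^x g(u)\,du,
\end{equation*}
with $g(u):=u^{\alpha-1}(1-u)^{\beta-1}$. Here $A>0$ thanks to the asymmetry $\beta-\alpha=1/2$. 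For the tail $B'$, the unique critical point of $g$ on $(0,1)$ is $u^\ast=(\alpha-1)/(\alpha+\beta-2)=(d-3)/(2d-4)$, which satisfies $u^\ast\le 1/2$ for all $d\ge 3$. Hence $g$ is monotone decreasing on $[1/2,x]$, giving $B'\ge 2(x-\tfrac12)g(x)=g(x)/d$. Since $x/\alpha=2/d$, the right-hand side of the displayed inequality is exactly $2g(x)/d$, and the problem reduces to establishing
\begin{equation*}
A\;\ge\;\frac{x^{\alpha-1}(1-x)^{\beta-1}}{d}.\qquad(\diamond)
\end{equation*}

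The main obstacle is $(\diamond)$, which is nearly tight: for $d=3$ the two sides equal $\tfrac23(1-1/\sqrt2)\approx 0.195$ and $1/(3\sqrt 3)\approx 0.192$ respectively, so a crude estimate is doomed. To attack $(\diamond)$ I would first simplify $A$ by the substitutions $u=(1-v)/2$ and then $w=1-v^2$, combined with the identities $(1-u)^{1/2}-u^{1/2}=(1-2u)/[(1-u)^{1/2}+u^{1/2}]$ and $\sqrt{1+v}+\sqrt{1-v}=\sqrt 2\sqrt{1+\sqrt{1-v^2}}$, to obtain the clean form
\begin{equation*}
A\;=\;\frac{1}{4^\alpha}\int_0^1\frac{w^{\alpha-1}}{\sqrt{1+\sqrt w}}\,dw.
\end{equation*}
Since $y\mapsto(1+y)^{-1/2}$ is convex on $[0,\infty)$, a suitably placed tangent line (not the chord) provides a sharp pointwise lower bound; the naive bound $\sqrt{1+\sqrt w}\le\sqrt 2$ yields only $A\ge 2^{1-d/2}/(d+1)$, which is short by precisely the factor $(d+1)/d$ that the tangent refinement is designed to recover. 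In parallel, the right-hand side of $(\diamond)$ can be upper-bounded via the explicit Taylor expansion of $\log(g(x)/g(\tfrac12))$ in $1/d$ (using $g(\tfrac12)=2^{1-d/2}$) to exhibit the $(d+1)/d$ deficit, and the two estimates can be matched for $d$ beyond some modest threshold. The finite range of small $d$ (say $3\le d\le d_0$) is then handled by a direct symbolic computation, completing the proof.
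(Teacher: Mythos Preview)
Your setup through the reduction to $A+B'>\text{RHS}$ is correct and coincides with the paper's. The divergence is in how you bound $B'=2\int_{1/2}^{x}g$. You use only that $g$ is decreasing on $[\tfrac12,x]$, obtaining $B'\ge \tfrac{1}{d}g(x)$; the paper proves in addition that $g$ is \emph{concave} on $[\tfrac12,x]$ (Sublemma~\ref{sublem:Todo3}) and applies the trapezoid estimate $B'\ge \tfrac{1}{2d}\bigl(g(x)+g(\tfrac12)\bigr)$. Since $g(\tfrac12)>g(x)$, the paper's bound is strictly stronger, and this gap is exactly what makes the remainder tractable: the residual inequality on $A$ becomes
\[
A\;\ge\;\frac{3g(x)-g(\tfrac12)}{2d}
\]
rather than your $(\diamond)$: $A>g(x)/d$. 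The paper then bounds $A$ via $\sqrt{1-u}-\sqrt u\ge(1-2u)/\sqrt2$, which integrates in closed form to $A\ge \tfrac{2}{2^{d/2}(d+1)}$---this is precisely your ``naive'' bound $2^{1-d/2}/(d+1)$---and that bound \emph{does} close against the weaker target, reducing everything to the explicit inequality $\tfrac{3d+1}{d+1}\ge 3(1+\tfrac1d)^{(d-3)/4}(1-\tfrac1d)^{(d-1)/4}$, which the paper verifies for all real $d\ge3$ with no case-checking.

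Your route leaves $(\diamond)$ unproved. The substitution giving $A=4^{-\alpha}\int_0^1 w^{\alpha-1}(1+\sqrt w)^{-1/2}\,dw$ is correct and elegant, but the ``suitably placed tangent line'' is only asserted, not exhibited, and it is not clear it works: for large $\alpha$ the mass of $w^{\alpha-1}$ concentrates at $w=1$, so any pointwise lower bound on $(1+\sqrt w)^{-1/2}$ that matches the value $1/\sqrt2$ at $w=1$ (as any tangent there must) recovers only the naive bound to leading order. Recovering the missing $(d+1)/d$ factor requires a genuinely second-order argument that you have not supplied, and the fallback ``symbolic computation for small $d$'' plus ``Taylor matching for large $d$'' is a plan, not a proof. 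The concrete missing idea is the concavity of $g$ on $[\tfrac12,x]$; once you add that and use the trapezoid bound on $B'$, your own naive estimate for $A$ finishes the job without any refinement.
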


\begin{proof}
We claim that
\begin{equation}\label{eq:Todo1}
I_{\frac12+\frac1{2d}}\left(\frac d4+\frac 54,\frac d4-\frac14\right)+
I_{\frac12+\frac1{2d}}\left(\frac d4+\frac 14,\frac d4+\frac34\right)\geq1.
\end{equation}
Using \eqref{eq:Ix+-}
we rewrite the first summand in \eqref{eq:Todo1} as
\begin{equation}\label{eq:Insert1}
I_{\frac12+\frac1{2d}}\left(\frac d4+\frac 54,\frac d4-\frac14\right)
= I_{\frac12+\frac1{2d}}\left(\frac d4+\frac 14,\frac d4+\frac34\right)
- \frac{ \left(\frac12+\frac1{2d}\right)^{\frac14+\frac d4} \left(\frac12-\frac1{2d}\right)^{-\frac14+\frac d4}}{\left(\frac d4+\frac14\right) B\left(\frac d4+\frac 14,\frac d4+\frac 34\right)}.
\end{equation}
Upon multiplying \eqref{eq:Todo1} with $B\left(\frac d4+\frac 14,\frac d4+\frac 34\right)$ and using \eqref{eq:Insert1}, \eqref{eq:Todo1} rewrites to
\begin{equation}\label{eq:Todo2}
2 B_{\frac12+\frac1{2d}}\left(\frac d4+\frac 14,\frac d4+\frac34\right)
\geq B \left(\frac d4+\frac 14,\frac d4+\frac34\right) + 
\frac{4 \left(1+\frac1{d}\right)^{\frac14+\frac d4} \left(1-\frac1{d}\right)^{-\frac14+\frac d4}}{2^{\frac d2} (d+1)}
\end{equation}
The left-hand side of this inequality can be expanded as
\[
\begin{split}
2 B_{\frac12+\frac1{2d}}\left(\frac d4+\frac 14,\frac d4+\frac34\right)
& = 2 \int_0^{\frac12+\frac1{2d}} x^{\frac d4-\frac 34}(1-x)^{\frac d4-\frac 14} 
\, dx \\
&= 2 B_{\frac12} \left(\frac d4+\frac 14,\frac d4+\frac34\right)
+ 2
\int_{\frac12}^{\frac12+\frac1{2d}} x^{\frac d4-\frac 34}(1-x)^{\frac d4-\frac 14} 
\, dx. 
\end{split}
\]
Now \eqref{eq:Todo2} is equivalent to
\begin{multline}\label{eq:Todo3}
2 B_{\frac12} \left(\frac d4+\frac 14,\frac d4+\frac34\right)
+ 2
\int_{\frac12}^{\frac12+\frac1{2d}} x^{\frac d4-\frac 34}(1-x)^{\frac d4-\frac 14} 
\, dx \\
\geq 
B \left(\frac d4+\frac 14,\frac d4+\frac34\right) + \frac{4 \left(1+\frac1{d}\right)^{\frac14+\frac d4} \left(1-\frac1{d}\right)^{-\frac14+\frac d4}}{2^{\frac d2} (d+1)}.
\end{multline}

A brief calculation shows
\beq\label{eq:ToDo4}
\begin{split}
2 B_{\frac12} \left(\frac d4+\frac 14,\frac d4+\frac34\right)
- B \left(\frac d4+\frac 14,\frac d4+\frac34\right) 
& =\int_0^{1/2} x^{\frac d4-\frac 34}(1-x)^{\frac d4-\frac 34}
\left(\sqrt{1- x}-\sqrt{x}\right)\, dx.
\end{split}
\eeq

We next set out to provide a lower bound on \eqref{eq:ToDo4}.
First, rewrite
\beq\label{eq:fromtodo4}
\sqrt{1- x}-\sqrt{x} 
= \frac{1-2x}{\sqrt{1- x}+\sqrt{x}}
\eeq
and observe that
$\sqrt{1- x}+\sqrt{x}$ is increasing from $1$ to $\sqrt 2$
on $\left[0,\frac12\right]$. Hence \eqref{eq:fromtodo4} 
can be bound as
\beq\label{eq:fromtotodo4}
  \frac1{\sqrt2}(1-2x)\leq  \sqrt{1- x}-\sqrt{x}  \leq 1-2x.
\eeq
This gives
\beq\label{eq:todo5}
\begin{split}
\eqref{eq:ToDo4}
&\geq
\frac1{\sqrt2 }\int_0^{1/2} x^{\frac d4-\frac 34}(1-x)^{\frac d4-\frac 34}
\left(1-2x\right)\, dx \\
& =\frac1{ \sqrt2 }\int_0^{1/2} x^{\frac d4-\frac 34}(1-x)^{\frac d4-\frac 34}\, dx 
-\sqrt2\int_0^{1/2} x^{\frac d4+\frac 14}(1-x)^{\frac d4-\frac 34}
\, dx \\
& = \frac1{\sqrt2 } B_{\frac12} \left(\frac d4+\frac 14,\frac d4+\frac14\right)
- \sqrt2 B_{\frac12} \left(\frac d4+\frac 54,\frac d4+\frac14\right).
\end{split}
\eeq
Using the well-known formula
\[
  B(z,a+1,b)= \frac{aB(z,a,b)- z^a(1-z)^b}{a+b}
\]
on $B_{\frac12} \left(\frac d4+\frac 54,\frac d4+\frac14\right)$,
the final expression in \eqref{eq:todo5} simplifies into
\beq\label{eq:todoLHS}
\frac{2}{2^{\frac{d}{2}}(d+1)}.
\eeq

\begin{sublemma}\label{sublem:Todo3}
For $3\le d\in \R$, 
the integrand $\eta(x)$ in the second summand in \eqref{eq:Todo3} is decreasing and concave on $(\frac12,\frac12+\frac1{2d})$.
\end{sublemma}

\begin{proof}
This is routine. The derivative of $\eta$ is 
\[
\eta'(x)=\frac{1}{4} (1-x)^{\frac{d-5}{4}}
   x^{\frac{d-7}{4}} (-2 d x+4 x+d-3)
   \]
So its sign on   $(\frac12,\frac12+\frac1{2d})$ is
governed by that of $-2 d x+4 x+d-3$. However,
\[
-2 d x+4 x+d-3 \leq 2d\cdot \frac12+4 \left(\frac12+\frac1{2d}\right) + d -3 = -1+\frac 2d
\]
is negative for $d\geq3$. Thus $\eta$ is decreasing.

Further,
\[
\eta''(x)=
\frac{1}{16} (1-x)^{\frac{d-9}{4}}
   x^{\frac{d-11}{4}} \left(4 (d-4) (d-2)
   x^2-4 (d-4) (d-3) x+d^2-10 d+21\right).
   \]
 For $\ds\frac12\leq x\leq \frac12+\frac1{2d}$
   we have,
\[
 \begin{split}
   4 (d-4) (d-2)&
   x^2-4 (d-4) (d-3) x+d^2-10 d+21
  \\ \leq &
4 (d-4) (d-2)
   \left(\frac12+\frac1{2d} \right)^2-4 (d-4) (d-3) \cdot\frac12+d^2-10 d+21    \\
   = & \frac{8}{d^2}+\frac{10}{d}-6
\end{split}
\]
   and the last expression is negative for $d\geq3$,
   whence $\eta''(x)<0$.
\end{proof}

By Sublemma \ref{sublem:Todo3}, the
integral in \eqref{eq:Todo3} can be bound below by
\[
\frac 12 \frac1{2d} 
\left(
\left(\frac12+\frac1{2d}\right)^{\frac d4-\frac 34}\left(\frac12-\frac1{2d}\right)^{\frac d4-\frac 14} 
+
2^{1-\frac d2}
\right).
\]
Moving 
this to the right-hand side of \eqref{eq:Todo3} means we have to show
that \eqref{eq:ToDo4} is at least
\beq\label{eq:todoRHS}
\frac1{2^{\frac d2}\,d}
\left(
3 \left(1+\frac1d\right)^{\frac{d-3}4}
\left(1-\frac1d\right)^{\frac{d-1}4}
-1
\right).
\eeq
It suffices to replace \eqref{eq:ToDo4} with its lower bound
\eqref{eq:todoLHS}. That is, we shall prove
\beq\label{eq:Todo6}
\frac{3d+1}{d+1} \geq 3 \left(1+\frac1d\right)^{\frac{d-3}4}
\left(1-\frac1d\right)^{\frac{d-1}4}.
\eeq

Rearranging the right-hand side of \eqref{eq:Todo6} we get
\[
\begin{split}
3 \left(1+\frac1d\right)^{\frac{d-3}4}
\left(1-\frac1d\right)^{\frac{d-1}4}
&= 
3 \left(1+\frac1d\right)^{\frac{d+1}4}
\left(1-\frac1d\right)^{\frac{d-1}4}
\left(1+\frac1d\right)^{-1} \\
& = 
3 \left(1+\frac1d\right)^{\frac{d+1}4}
\left(1-\frac1d\right)^{\frac{d-1}4}
\frac d{d+1}.
\end{split}
\]
In particular, \eqref{eq:Todo6} is equivalent to
\beq\label{eq:Todo7}
3d+1\geq 3 d \left(1+\frac1d\right)^{\frac{d+1}4}
\left(1-\frac1d\right)^{\frac{d-1}4}.
\eeq
As before, the sequence $\ds\left(1+\frac1d\right)^{\frac{d+1}4}$
is increasing with limit $\ds e^{\frac14}$, so
\beq\label{eq:Todo8}
\begin{split}
3 d \ \left(1+\frac1d\right)^{\frac{d+1}4}
\left(1-\frac1d\right)^{\frac{d-1}4}
&\leq
3 d e^{\frac14}
\left(1-\frac1d\right)^{\frac{d-1}4}\\
&= 
3 d e^{\frac14}
\left(1-\frac1d\right)^{\frac{d}4}
\left(1-\frac1d\right)^{-\frac14}
\end{split}
\eeq
The sequence $\ds\left(1-\frac1d\right)^{\frac{d}4}$
is increasing with limit $\ds e^{-\frac14}$, so the right-hand side
of \eqref{eq:Todo8} is further at most
\[
3d \left(1-\frac1d\right)^{-\frac14}.
\]

Now \eqref{eq:Todo7} is implied by
\[
1+\frac1{3d}\geq \left(1-\frac1d\right)^{-\frac14},
\]
an inequality easy to establish using calculus.
\end{proof}

\begin{lem}\label{lem:c+3/2}
$c_{\frac d2+\frac32}>0$  for $3\le d\in\mathbb N$.
In addition, if $d\in\R$ and $d\geq 16$, then $c_{\frac d2+\frac32}>0$.
\end{lem}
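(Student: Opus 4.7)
The strategy is to mirror the arguments used to establish Lemma \ref{lem:c+1} and Lemma \ref{lem:c+1/2}, with the relevant integer and half-integer parameters shifted by one. Setting $s=\frac d2+\frac32$ and $t=\frac d2-\frac32$, the inequality $c_s>0$ is equivalent to
\[
I_{\frac12+\frac3{2d}}\!\left(\tfrac d4+\tfrac 74,\tfrac d4-\tfrac34\right)+
I_{\frac12+\frac3{2d}}\!\left(\tfrac d4+\tfrac 34,\tfrac d4+\tfrac14\right)\geq1.
\]
A single application of the shift identity \eqref{eq:Ix+-} with $a=\frac d4+\frac34$ and $b=\frac d4+\frac14$ collapses the left hand side and, after multiplying through by $B(\frac d4+\frac34,\frac d4+\frac14)$, converts the inequality into a comparison of the form
\[
2B_{\frac12+\frac3{2d}}\!\left(\tfrac d4+\tfrac 34,\tfrac d4+\tfrac14\right)
\geq B\!\left(\tfrac d4+\tfrac 34,\tfrac d4+\tfrac14\right)+\frac{\left(\frac12+\frac{3}{2d}\right)^{\frac d4+\frac34}\left(\frac12-\frac{3}{2d}\right)^{\frac d4-\frac34}}{\frac d4+\frac34}.
\]

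The plan is to split the incomplete beta on the left as $B_{\frac12}+\int_{\frac12}^{\frac12+\frac3{2d}}$; by the near-symmetry of the integrand, $2B_{\frac12}(\frac d4+\frac 34,\frac d4+\frac14)-B(\frac d4+\frac 34,\frac d4+\frac14)$ equals $\int_0^{1/2}x^{\frac d4-\frac 54}(1-x)^{\frac d4-\frac 54}(\sqrt{1-x}-\sqrt x)\,dx$, which can be bounded below by $\frac1{\sqrt2}\int_0^{1/2}x^{\frac d4-\frac 54}(1-x)^{\frac d4-\frac 54}(1-2x)\,dx$, and the latter simplifies via the standard reduction formula to a clean expression of the shape $\frac{C}{2^{d/2}(d+c)}$ for explicit constants. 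Next, as in Sublemma \ref{sublem:Todo3}, I would verify by a direct derivative computation that the integrand $\eta(x)=x^{\frac d4-\frac 54}(1-x)^{\frac d4-\frac 34}$ of the remaining tail is decreasing (and concave) on $[\frac12,\frac12+\frac3{2d}]$ for all sufficiently large $d$; this yields the lower bound $\frac{3}{2d}\,\eta(\frac12+\frac3{2d})$ for the tail integral.

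Assembling these estimates reduces the problem to an elementary one-variable inequality between products of $(1\pm\frac3d)^{\frac d\pm\text{const}}$ and a rational expression in $d$. I would then use the monotone convergence $(1\pm\frac3d)^{d/4}\to e^{\pm 3/4}$ (controlling both factors via the well-known monotonicity already exploited in Sublemma \ref{sublem:e14}) to bound the right hand side by an explicit rational function of $d$ that is clearly dominated by the left hand side for $d\geq 16$. The main obstacle, and the reason a threshold such as $d\geq 16$ appears rather than $d\geq 3$, is that the $\frac 3{2d}$ tail is much wider than the $\frac 1{2d}$ tail in Lemma \ref{lem:c+1/2}, so the concavity/monotonicity of $\eta$ on that window needs a larger $d$ to kick in, and the asymptotic slack in the final elementary inequality is thinner. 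For the finitely many remaining odd integer values $d\in\{3,5,7,9,11,13,15\}$, the inequality $c_{\frac d2+\frac32}>0$ becomes a concrete inequality between rational numbers and can be verified by direct symbolic computation (as was done for the cases $d<8$ following Proposition \ref{prop:simmons}).
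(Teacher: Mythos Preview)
Your outline tracks the paper's argument almost step for step, but there is a genuine gap that breaks it: with the parameters $(a,b)=(\tfrac d4+\tfrac34,\tfrac d4+\tfrac14)$ you have $a>b$, so the beta density $x^{a-1}(1-x)^{b-1}$ puts more mass on $[\tfrac12,1]$ than on $[0,\tfrac12]$, and hence
\[
2B_{1/2}\!\left(\tfrac d4+\tfrac34,\tfrac d4+\tfrac14\right)-B\!\left(\tfrac d4+\tfrac34,\tfrac d4+\tfrac14\right)
=\int_0^{1/2}x^{\frac d4-\frac34}(1-x)^{\frac d4-\frac34}\bigl(\sqrt{x}-\sqrt{1-x}\bigr)\,dx<0.
\]
Your claimed expression with exponent $\tfrac d4-\tfrac54$ and the positive factor $\sqrt{1-x}-\sqrt{x}$, as well as your tail integrand $\eta(x)=x^{\frac d4-\frac54}(1-x)^{\frac d4-\frac34}$, correspond to the \emph{shifted} parameters $(\tfrac d4-\tfrac14,\tfrac d4+\tfrac14)$, not to $(\tfrac d4+\tfrac34,\tfrac d4+\tfrac14)$. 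So as written, the ``symmetric part'' of your decomposition has the wrong sign and cannot furnish any useful lower bound.

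The paper fixes exactly this by inserting one more reduction before splitting: it applies $B_z(a+1,b)=\frac{aB_z(a,b)-z^a(1-z)^b}{a+b}$ (their (\ref{eq:Bx+.})) to both the incomplete and complete beta, dropping $a$ from $\tfrac d4+\tfrac34$ down to $\tfrac d4-\tfrac14$. After this step $a<b$, the quantity $2B_{1/2}-B$ is positive and equals precisely the integral you wrote, and the rest of your plan (the $\tfrac1{\sqrt2}(1-2x)$ bound giving $\tfrac{4}{2^{d/2}(d-1)}$, concavity of $\eta$ on the tail, reduction to an elementary inequality valid for $d\ge 16$, and symbolic verification for small $d$) is exactly what the paper does. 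One minor point: for the tail the paper uses the trapezoidal (concavity) lower bound, i.e.\ the average of the endpoint values times the width, rather than just the minimum; this sharper bound is what makes the final elementary inequality close for $d\ge 16$.
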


\begin{proof}
We claim that
\begin{equation}\label{eq:TTodo1}
I_{\frac12+\frac3{2d}}\left(\frac d4+\frac 74,\frac d4-\frac34\right)+
I_{\frac12+\frac3{2d}}\left(\frac d4+\frac 34,\frac d4+\frac14\right)\geq1.
\end{equation}
Using \eqref{eq:Ix+-}
we rewrite the first summand in \eqref{eq:TTodo1} as
\begin{equation}\label{eq:IInsert1}
I_{\frac12+\frac3{2d}}\left(\frac d4+\frac 74,\frac d4-\frac34\right)
=
I_{\frac12+\frac3{2d}}\left(\frac d4+\frac 34,\frac d4+\frac14\right)
- \frac{ \left(\frac12+\frac3{2d}\right)^{\frac34+\frac d4} \left(\frac12-\frac3{2d}\right)^{-\frac34+\frac d4}}{\left(\frac d4+\frac34\right) B\left(\frac d4+\frac 34,\frac d4+\frac 14\right)}.
\end{equation}
Upon multiplying \eqref{eq:TTodo1} with $B\left(\frac d4+\frac 34,\frac d4+\frac 14\right)$ and using \eqref{eq:IInsert1}, \eqref{eq:TTodo1} rewrites to
\begin{equation}\label{eq:TTodo2}
2 B_{\frac12+\frac3{2d}}\left(\frac d4+\frac 34,\frac d4+\frac14\right)
\geq B \left(\frac d4+\frac 34,\frac d4+\frac14\right) + 
\frac{4 \left(1+\frac3{d}\right)^{\frac34+\frac d4} \left(1-\frac3{d}\right)^{-\frac34+\frac d4}}{2^{\frac d2} (d+3)}
\end{equation}

Further, using
\beq\label{eq:Bx+.}
  B_z(a+1,b)= \frac{aB_z(a,b)- z^a(1-z)^b}{a+b}
\eeq
on the two betas in \eqref{eq:TTodo2}, we get
\begin{multline*}
\frac{d-1}{d} B_{\frac12+\frac3{2d}} \left(\frac d4-\frac 14,\frac d4+\frac14\right)
- \frac{2 \left(1+\frac3{d}\right)^{-\frac14+\frac d4} \left(1-\frac3{d}\right)^{+\frac14+\frac d4}}{2^{\frac d2} \ d}
\\
\geq 
\frac{d-1}{2d} B \left(\frac d4-\frac 14,\frac d4+\frac14\right) + 
\frac{4 \left(1+\frac3{d}\right)^{\frac34+\frac d4} \left(1-\frac3{d}\right)^{-\frac34+\frac d4}}{2^{\frac d2} (d+3)},
\end{multline*}
or equivalently,
\beq\label{eq:TTodo3}
2 B_{\frac12+\frac3{2d}} \left(\frac d4-\frac 14,\frac d4+\frac14\right) -
B \left(\frac d4-\frac 14,\frac d4+\frac14\right) 
\geq
\frac{12 
   \left(1-\frac{3}{d}\right)^{\frac{d-3}{4}} 
   \left(1+\frac{3}{d}\right)^{\frac{d+3}{4}}}{ 2^{\frac d2} (d+3)}.
\eeq

The first summand on the left-hand side of this inequality can be expanded as
\[
2 B_{\frac12+\frac3{2d}}\left(\frac d4-\frac 14,\frac d4+\frac14\right)
 = 2 B_{\frac12} \left(\frac d4-\frac 14,\frac d4+\frac14\right)
+ 2
\int_{\frac12}^{\frac12+\frac3{2d}} x^{\frac d4-\frac 54}(1-x)^{\frac d4-\frac 34} 
\, dx. 
\]
As in Lemma \ref{lem:c+1/2},
\beq\label{eq:TToDo4}
2 B_{\frac12} \left(\frac d4-\frac 14,\frac d4+\frac14\right)
- B \left(\frac d4-\frac 14,\frac d4+\frac14\right) 
 =\int_0^{1/2} x^{\frac d4-\frac 54}(1-x)^{\frac d4-\frac 54}
\left(\sqrt{ 1-x}-\sqrt{x}\right)\, dx
\eeq
can be bound below by 
\beq\label{eq:TTodo5}
\frac4{2^{\frac d2}(d-1)}.
\eeq
Similarly, $\ds x^{\frac d4-\frac 54}(1-x)^{\frac d4-\frac 34} $ is decreasing and concave on $\left(\frac12,\frac12+\frac3{2d}\right)$
for $d\geq 5$, so
\beq\label{eq:TTodo6}
2 \int_{\frac12}^{\frac12+\frac3{2d}} x^{\frac d4-\frac 54}(1-x)^{\frac d4-\frac 34} 
\, dx \geq 
\frac{6 
   \left(\left(1-\frac{3}{d}\right)^{\frac{d-3}{4}
   }
   \left(1+\frac{3}{d}\right)^{\frac{d-5}{4}}+1\right)}{2^{\frac d2} d}
   .
   \eeq
   
   Using the two lower bounds \eqref{eq:TTodo5} and \eqref{eq:TTodo6} in \eqref{eq:TTodo3}, it suffices to establish
\beq\label{eq:TTodo7}
\begin{split}
   \frac4{d-1}
   & \geq
   \frac{12 
   \left(1-\frac{3}{d}\right)^{\frac{d-3}{4}} 
   \left(1+\frac{3}{d}\right)^{\frac{d+3}{4}}}{ d+3}-
   \frac{6 
   \left(\left(1-\frac{3}{d}\right)^{\frac{d-3}{4}
   }
   \left(1+\frac{3}{d}\right)^{\frac{d-5}{4}}+1\right)}{ d}    \\
   & =
   6  \left(1-\frac{3}{d}\right)^{\frac{d-3}{4}} 
   \left(1+\frac{3}{d}\right)^{\frac{d-5}{4}} 
\left(
2 \frac{ d+3}{d^2}   - \frac1d\right)
-\frac6d \\
&=    6  \left(1-\frac{3}{d}\right)^{\frac{d-3}{4}} 
   \left(1+\frac{3}{d}\right)^{\frac{d-5}{4}} 
\frac{d+6}{d^2}
-\frac6d 
   \end{split}
   \eeq
The sequences
\[
\left(1-\frac{3}{d}\right)^{\frac{d+1}{4}} ,
\qquad
   \left(1+\frac{3}{d}\right)^{\frac{d-5}{4}} 
   \]
   are increasing, the product of their limits is $1$. 
The inequality \eqref{eq:TTodo7} is easy to verify 
(by hand or using a computer algebra system)
for $d=1,2,\ldots, 16$.
Now assume $d\in\R$ with $d\geq 16$. It is enough
   to prove
   \beq\label{eq:TTodo8}
   \frac4{d-1}+\frac6d \geq 6\left(1-\frac{3}{d}\right)^{-1}\frac{d+6}{d^2}
= 6
\frac    {d+6}{d(d-3)}
   .
   \eeq
Equivalently,
\[
\frac{2 \left(2 d^2-33
   d+27\right)}{(d-3) (d-1) d} \geq0.
  \]
  But this holds for all $d\geq \frac{3}{4}
   \left(11+\sqrt{97}\right) \approx 15.6366.
   $
\end{proof}

 The proof of Theorem \ref{simmons+} is now complete.

\section{Bounds on the Median and the Equipoint of the Beta Distribution}\label{sec:lowBdSIS}
Like the median, the equipoint is a measure of central tendency in a probability distribution function  (PDF). 
 In this section we establish, for the Beta distribution, new 
 lower bound for the median and, by relating the equipoint to the median, bounds on the equipoint needed in the
 proof of Theorem \ref{thm:thetaExplicit}.\index{median}\index{equipoint}\index{beta distribution}

\def\pdf{\varrho}

As in Section \ref{sec:introprob} 
we follow the convention that $\salt,\talt\in\R_{>0},$ 
and consider the Beta distribution $\text{Beta}(\salt,\talt)$ supported on $[0,1]$. 
We denote by $\pdf_{\salt,\talt}\colon[0,1]\to\R$ the
probability density function of $\text{Beta}(\salt,\talt)$, i.e.,
\[\pdf_{\salt,\talt}(x)=\frac{x^{\salt-1}(1-x)^{\talt-1}}{B(\salt,\talt)}\]
for $x\in[0,1]$. 
The cumulative distribution function of 
$\text{Beta}(\salt,\talt)$ is 
$ I_x(\salt,\talt)$
 defined for 
$x \in [0,1]$.
We 
are interested in the \df{median} $m_{\salt,\talt}\in[0,1]$ 
of $\text{Beta}(\salt,\talt)$ and in the 
$(\salt,\talt)$-\df{equipoint} $e_{\salt,\talt}\in[0,1]$ defined by
\begin{equation}\label{eq:defm}
I_{m_{\salt,\talt}}(\salt,\talt)=\frac12
\end{equation}
and
\begin{equation}\label{eq:defe}
I_{e_{\salt,\talt}}(\salt,\talt+1)+I_{e_{\salt,\talt}}(\salt+1,\talt)=1
\end{equation}
respectively. 
Here we used that  
$I_x(\salt,\talt)$
 and $ I_x(\salt,\talt+1)+I_x(\salt+1,\talt)$ 
 are strictly monotonically increasing for  $x \in [0,1]$. 
We will continue to use this tacitly
throughout this section.

\subsection{Lower bound for the equipoint $\eiha$}

In \eqref{eq:defe}, if we move one of the two terms to the other side, we get the equivalent forms
\[I_{e_{\salt,\talt}}(\salt,\talt+1)=I_{1-e_{\salt,\talt}}(\talt,\salt+1), \ \ \
I_{e_{\salt,\talt}}(\salt+1,\talt)=I_{1-e_{\salt,\talt}}(\talt+1,\salt).\]

\begin{lemma}\label{reform}
For all $\salt,\talt\in\R_{>0}$ and $x\in[0,1]$, we have
\begin{enumerate}[\rm(a)]
\item \label{it:reforma} $\ds I_x(\salt,\talt+1)+I_x(\salt+1,\talt)=2I_x(\salt,\talt)+(\salt-\talt)\frac{x^\salt(1-x)^\talt}{\salt\talt B(\salt,\talt)}$
\item \label{it:reformb} $\ds I_x(\salt,\talt+1)+I_x(\salt+1,\talt)=2I_x(\salt+1,\talt+1)+(1-2x)\frac{(\salt+\talt)x^\salt(1-x)^\talt}{\salt\talt B(\salt,\talt)}$
\end{enumerate}
\end{lemma}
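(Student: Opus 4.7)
The plan is to derive both identities from two elementary recurrences for the regularized incomplete beta function, namely
\[
I_x(s+1,t) = I_x(s,t) - \frac{x^s(1-x)^t}{s\,B(s,t)}, \qquad
I_x(s,t+1) = I_x(s,t) + \frac{x^s(1-x)^t}{t\,B(s,t)}.
\]
I would justify the second one by checking that both sides agree at $x=0$ and have equal derivatives: using $\frac{d}{dx}I_x(s,t) = \frac{x^{s-1}(1-x)^{t-1}}{B(s,t)}$ and $B(s,t+1) = \frac{t}{s+t}B(s,t)$, a short computation (product rule on the extra term, then combine) gives $\frac{d}{dx}\!\left[I_x(s,t)+\frac{x^s(1-x)^t}{t\,B(s,t)}\right] = \frac{(s+t)x^{s-1}(1-x)^t}{t\,B(s,t)} = \frac{x^{s-1}(1-x)^t}{B(s,t+1)}$, which matches $\frac{d}{dx}I_x(s,t+1)$. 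The first recurrence then follows from the symmetry identity $I_x(a,b)=1-I_{1-x}(b,a)$ applied to the second.

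For part (a), simply add the two recurrences: the $I_x(s,t)$ contributions double to $2I_x(s,t)$, and the correction terms combine as
\[
\frac{x^s(1-x)^t}{B(s,t)}\left(\frac{1}{t}-\frac{1}{s}\right) = (s-t)\,\frac{x^s(1-x)^t}{st\,B(s,t)},
\]
which yields exactly the right-hand side of (a).

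For part (b), I would first apply the two recurrences in sequence to obtain a closed form for $I_x(s+1,t+1)$ in terms of $I_x(s,t)$, using $B(s+1,t)=\tfrac{s}{s+t}B(s,t)$:
\[
I_x(s+1,t+1) = I_x(s,t) + \frac{x^s(1-x)^t\,[sx - t(1-x)]}{st\,B(s,t)}.
\]
Doubling this and subtracting from the identity in (a), the $2I_x(s,t)$ terms cancel and the remaining arithmetic reduces to checking the algebraic identity
\[
(s-t) - 2\bigl[sx-t(1-x)\bigr] = (1-2x)(s+t),
\]
which is straightforward to verify by expanding both sides. The main (very minor) obstacle is just being careful to keep track of the $B(s,t)$ normalizations when passing between $B(s+1,t)$, $B(s,t+1)$, and $B(s,t)$; there is no real analytic difficulty here since both claims are pure algebraic identities once the two basic recurrences are established.
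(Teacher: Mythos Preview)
Your proof is correct and follows essentially the same approach as the paper: the paper's proof is a one-line citation of the DLMF identities (8.17.20) and (8.17.21), which are precisely the two recurrences you state and then combine. You simply supply the explicit algebra (and a self-contained justification of the recurrences) that the paper leaves to the reader.
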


\begin{proof}
Use the identities (8.17.20) and (8.17.21) from \url{http://dlmf.nist.gov/8.17#iv}.
\end{proof}

Although there are some results on the median $m_{\salt,\talt}$ 
for special values of $\salt,\talt\in\R_{\geq0}$\footnote{see \url{http://en.wikipedia.org/wiki/Beta_distribution}}, 
about the only general 
thing that seems to be known \cite{pyy}   is that
\begin{equation}\label{betamodemean}
\mu_{\salt,\talt}:=\frac{\salt}{\salt+\talt}<m_{\salt,\talt}<\frac{\salt-1}{\salt+\talt-2}
\end{equation}
if $1<\talt<\salt$  \index{$\mu_{\salt,\talt}:=\frac{\salt}{\salt+\talt}$}
(see also \cite{Ker} for an asymptotic analysis and 
numerical evidence in support of better bounds).  
The lower bound in \eqref{betamodemean}  is actually the mean $\mu_{\salt,\talt}$ of $\text{Beta}(\salt,\talt)$ if $\salt,\talt>0$ 
and the upper bound is the mode of $\text{Beta}(\salt,\talt)$ if $\salt,\talt>1$.
In the next subsection we shall significantly improve the upper bound in
\eqref{betamodemean}.

Using Lemma \ref{reform}\eqref{it:reforma}, we see that
\[I_{m_{\salt,\talt}}(\salt,\talt+1)+I_{m_{\salt,\talt}}(\salt+1,\talt)=2\frac12+(\salt-\talt)\frac{m_{\salt,\talt}^\salt(1-m_{\salt,\talt})^\talt}{\salt\talt B(\salt,\talt)}\ge1\]
and therefore
\begin{equation}\label{ineq:em}
e_{\salt,\talt}\le m_{\salt,\talt}
\end{equation}
whenever $\salt, \talt \in\mathbb R$ and $0<\talt\le\salt$. Using Lemma \ref{reform}\eqref{it:reformb}, we get
\begin{multline*}
I_{m_{\salt+1,\talt+1}}(\salt,\talt+1)+I_{m_{\salt+1,\talt+1}}(\salt+1,\talt)=\\
2\frac12+(1-2m_{\salt+1,\talt+1})\frac{(\salt+\talt)(m_{\salt+1,\talt+1})^\salt(1-m_{\salt+1,\talt+1})^\talt}{\salt\talt B(\salt,\talt)}\le1.
\end{multline*}
since $\ds m_{\salt+1,\talt+1}\ge\frac{\salt+1}{\salt+\talt+2}\ge\frac{\frac\salt2+\frac\talt2+1}{\salt+\talt+2}=\frac12$ by \eqref{betamodemean}. This shows that
\begin{equation}\label{ineq:me}
m_{\salt+1,\talt+1}\le e_{\salt,\talt}
\end{equation}
whenever $\salt, \talt\in\mathbb R$ and  $0<\talt<\salt$.

These inequalities combine to give:

\begin{prop}
 For $ \head, \tail \in \RR_{>0}$,
\label{prop:preLowBdSIS}
\beq \label{eq:preLowBdSIS}
e_{\salt,\talt}\le m_{\salt,\talt}<\frac{\salt-1}{\salt+\talt-2}
\eeq
when $1<\talt<\salt$ and
\[\frac{\salt+1}{\salt+\talt+2}<m_{\salt+1,\talt+1}\le e_{\salt,\talt}\]
when $0<\talt<\salt$.
Later this  lower bound on $e_{\salt,\talt}$
proves important to us.
\end{prop}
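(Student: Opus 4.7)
The plan is to follow exactly the train of reasoning already laid out in the paragraphs preceding the proposition: deduce the four inequalities from Lemma \ref{reform}, the known bounds \eqref{betamodemean} on $m_{\salt,\talt}$, and the definitions of the median and equipoint.

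First, note that the middle inequality $m_{\salt,\talt} < \frac{\salt-1}{\salt+\talt-2}$ (for $1<\talt<\salt$) is exactly the upper bound in \eqref{betamodemean}, which is quoted from \cite{pyy}, so there is nothing to prove there. Likewise, the lower bound $\frac{\salt+1}{\salt+\talt+2} < m_{\salt+1,\talt+1}$ (for $0<\talt<\salt$) is the lower part of \eqref{betamodemean} applied with parameters $(\salt+1,\talt+1)$ in place of $(\salt,\talt)$, since $\mu_{\salt+1,\talt+1}=\frac{\salt+1}{\salt+\talt+2}$. So the proof reduces to establishing the two inequalities involving $e_{\salt,\talt}$.

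For $e_{\salt,\talt}\le m_{\salt,\talt}$ under the hypothesis $1<\talt\le\salt$: evaluate identity \ref{reform}\eqref{it:reforma} at $x=m_{\salt,\talt}$. Since $I_{m_{\salt,\talt}}(\salt,\talt)=1/2$ by \eqref{eq:defm}, and the factor $(\salt-\talt)$ is nonnegative while $\frac{m_{\salt,\talt}^\salt(1-m_{\salt,\talt})^\talt}{\salt\talt B(\salt,\talt)}>0$, we get
\[
I_{m_{\salt,\talt}}(\salt,\talt+1)+I_{m_{\salt,\talt}}(\salt+1,\talt)\;\ge\;1\;=\;I_{e_{\salt,\talt}}(\salt,\talt+1)+I_{e_{\salt,\talt}}(\salt+1,\talt),
\]
using \eqref{eq:defe} on the right. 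Since the function $x\mapsto I_x(\salt,\talt+1)+I_x(\salt+1,\talt)$ is strictly increasing on $[0,1]$, the inequality gives $e_{\salt,\talt}\le m_{\salt,\talt}$, as desired.

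For $m_{\salt+1,\talt+1}\le e_{\salt,\talt}$ under $0<\talt<\salt$: evaluate identity \ref{reform}\eqref{it:reformb} at $x=m_{\salt+1,\talt+1}$. Here $I_{m_{\salt+1,\talt+1}}(\salt+1,\talt+1)=1/2$ by \eqref{eq:defm}. The key observation is that $m_{\salt+1,\talt+1}\ge\frac{1}{2}$: by the lower bound of \eqref{betamodemean}, $m_{\salt+1,\talt+1}>\mu_{\salt+1,\talt+1}=\frac{\salt+1}{\salt+\talt+2}\ge\frac{1}{2}$ because $\salt\ge\talt$ gives $\salt+1\ge\frac{\salt+\talt+2}{2}$. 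Hence $1-2m_{\salt+1,\talt+1}\le 0$, so the correction term in \ref{reform}\eqref{it:reformb} is nonpositive, yielding
\[
I_{m_{\salt+1,\talt+1}}(\salt,\talt+1)+I_{m_{\salt+1,\talt+1}}(\salt+1,\talt)\;\le\;1.
\]
Again by strict monotonicity of $x\mapsto I_x(\salt,\talt+1)+I_x(\salt+1,\talt)$ and \eqref{eq:defe}, this gives $m_{\salt+1,\talt+1}\le e_{\salt,\talt}$, completing the proof. There is no genuine obstacle here: the entire argument is a careful bookkeeping of the two identities of Lemma \ref{reform} combined with monotonicity, and the only subtlety is verifying that $\salt\ge\talt$ forces $m_{\salt+1,\talt+1}\ge\frac12$, which is immediate from the mean–median inequality \eqref{betamodemean}.
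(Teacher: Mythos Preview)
Your proof is correct and follows precisely the same approach as the paper: the two bounds on $m$ come directly from \eqref{betamodemean}, and the two inequalities involving $e_{\salt,\talt}$ are exactly the derivations of \eqref{ineq:em} and \eqref{ineq:me} carried out in the paragraphs preceding the proposition, which the paper's proof simply cites. You have merely written out those two computations in full rather than referencing them.
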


\begin{proof}
 The first line of inequalities  \eqref{eq:preLowBdSIS}
 follows from 
 \eqref{betamodemean} and \eqref{ineq:em}.
 The second from   \eqref{betamodemean} and \eqref{ineq:me}.
\end{proof}

\begin{rem}\rm
  The inequality \eqref{eq:preLowBdSIS} is easier to prove than the inequality
  $e_{\salt,\talt} \le \frac{\salt}{\salt+\talt}$ from Theorem \ref{simmons+}; 
  however, this weaker inequality seems not to be strong enough to 
  prove Theorem \ref{thm:thetaExplicit}.  
\end{rem}

\subsection{New bounds on the median of the beta distribution}
\label{subsec:lowBdSIS2}

Having a lower bound for the equipoint $\eiha$ in terms of the median $m_{\salt +1,\talt +1}$, 
we now turn our attention  to the median of the beta distribution.\index{median}\index{beta distribution}

By Lemma \ref{reform}(a), 
Simmons' inequality
\eqref{ineq:simm} is equivalent to
\begin{equation}\label{ineq:simmc}
2I_{\mu_{\salt,\talt}}(\salt,\talt)+(\salt-\talt)\frac{(\mu_{\salt,\talt})^\salt(1-\mu_{\salt,\talt})^\talt}{\salt\talt B(\salt,\talt)}\ge1
\end{equation}
which is therefore conjectured for all $\salt,\talt\in\R$ with $0<\salt\le\talt$. Proposition \ref{prop:extra2} below
proves a weakening of \eqref{ineq:simmc} where an extra factor of $2$ is introduced in the second term on the left hand side.

\begin{lemma}\label{absdev}
Suppose $\salt,\talt\in\R_{>0}$ and set $\mu:=\mu_{\salt,\talt}$. Then
\begin{multline}\label{eq:notag}
\int_0^\mu(\mu-x)^{\salt-1}(1-\mu+x)^{\talt-1}x~dx=\int_0^{1-\mu}(\mu+x)^{\salt-1}(1-\mu-x)^{\talt-1}x~dx\\
=\frac{\mu^\salt(1-\mu)^\talt}{\salt+\talt}=\frac{\salt^\salt\talt^\talt}{(\salt+\talt)^{\salt+\talt+1}}.
\end{multline}
\end{lemma}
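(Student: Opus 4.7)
The plan is to reduce both integrals to one standard integral by a linear substitution, then evaluate that integral using a clean antiderivative. The final algebraic equality is just substituting $\mu = \salt/(\salt+\talt)$ and $1-\mu = \talt/(\salt+\talt)$.

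First I would make the two substitutions. In the left-hand integral, put $u = \mu - x$; then $x = \mu - u$, $(1-\mu+x) = 1-u$, and the limits become $u\in[0,\mu]$, yielding
\[
\int_0^\mu u^{\salt-1}(1-u)^{\talt-1}(\mu-u)\,du .
\]
In the middle integral, put $v = \mu + x$; then $x = v - \mu$, $(1-\mu-x) = 1-v$, and the limits become $v\in[\mu,1]$, yielding
\[
\int_\mu^1 v^{\salt-1}(1-v)^{\talt-1}(v-\mu)\,dv .
\]
So the task is to show both expressions equal $\mu^\salt(1-\mu)^\talt/(\salt+\talt)$.

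The key observation is the derivative identity
\[
\frac{d}{du}\bigl[u^\salt(1-u)^\talt\bigr] = u^{\salt-1}(1-u)^{\talt-1}\bigl[\salt - (\salt+\talt)u\bigr] = (\salt+\talt)\,u^{\salt-1}(1-u)^{\talt-1}(\mu-u),
\]
using that $\mu = \salt/(\salt+\talt)$. Hence $u^{\salt-1}(1-u)^{\talt-1}(\mu-u)$ is, up to the factor $1/(\salt+\talt)$, an exact derivative. Integrating from $0$ to $\mu$ in the first reduced integral gives
\[
\frac{1}{\salt+\talt}\bigl[u^\salt(1-u)^\talt\bigr]_0^\mu = \frac{\mu^\salt(1-\mu)^\talt}{\salt+\talt},
\]
and integrating the negative of that derivative from $\mu$ to $1$ in the second reduced integral gives the same value $\mu^\salt(1-\mu)^\talt/(\salt+\talt)$. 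This simultaneously proves that the two integrals are equal and computes their common value.

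Finally, substituting $\mu = \salt/(\salt+\talt)$ and $1-\mu = \talt/(\salt+\talt)$ gives
\[
\frac{\mu^\salt(1-\mu)^\talt}{\salt+\talt} = \frac{1}{\salt+\talt}\cdot\frac{\salt^\salt}{(\salt+\talt)^\salt}\cdot\frac{\talt^\talt}{(\salt+\talt)^\talt} = \frac{\salt^\salt\talt^\talt}{(\salt+\talt)^{\salt+\talt+1}},
\]
completing the chain. There is no real obstacle here; the only slightly non-obvious step is spotting the antiderivative identity, which in turn is forced by the fact that $\mu$ is precisely the zero of $\salt-(\salt+\talt)u$, i.e., the mean of the Beta distribution.
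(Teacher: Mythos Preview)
Your proof is correct. After the same linear substitutions the paper makes, you and the paper arrive at the identical pair of integrals $\int_0^\mu u^{\salt-1}(1-u)^{\talt-1}(\mu-u)\,du$ and $\int_\mu^1 v^{\salt-1}(1-v)^{\talt-1}(v-\mu)\,dv$. From that point the approaches diverge: the paper subtracts the two to get $\int_0^1 \pdf_{\salt,\talt}(x)(x-\mu)\,dx=0$ (the defining property of the mean) and adds them to get the mean absolute deviation $\int_0^1 \pdf_{\salt,\talt}(x)|x-\mu|\,dx$, then invokes a known closed form for that quantity from \cite{DZ91}. You instead observe the exact antiderivative $\frac{d}{du}\bigl[u^\salt(1-u)^\talt\bigr]=(\salt+\talt)\,u^{\salt-1}(1-u)^{\talt-1}(\mu-u)$ and evaluate each integral directly. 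Your route is more self-contained --- no external citation needed --- and in fact your antiderivative identity is essentially a proof of the mean-absolute-deviation formula the paper quotes. The paper's route, on the other hand, makes the probabilistic content (mean and mean absolute deviation of the Beta distribution) explicit, which fits the narrative of that section.
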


\begin{proof}
Reversing the direction of integration in the first integral and changing the domain of integration in the second integral, we get
\begin{align}
\int_0^{\mu}(\mu-x)^{\salt-1}(1-\mu+x)^{\talt-1}x~dx&=\int_0^\mu x^{\salt-1}(1-x)^{\talt-1}(\mu-x)dx,
\label{aio1}
\\
\int_0^{1-\mu}(\mu+x)^{\salt-1}(1-\mu-x)^{\talt-1}x~dx&=\int_\mu^{1}x^{\salt-1}(1-x)^{\talt-1}(x-\mu)dx.
\label{aio2}
\end{align}
If we subtract \eqref{aio1} from \eqref{aio2} and divide by $B(\salt,\talt)$, we get
\[\int_0^1\pdf_{\salt,\talt}(x)(x-\mu)dx=\mu-\mu=0\]
by the definition of the mean $\mu$. So the first equality is proved. On the other hand, if we add \eqref{aio1} and \eqref{aio2}
and divide again by $B(\salt,\talt)$, we get
\[\int_0^1\pdf_{\salt,\talt}(x)|x-\mu|dx\]
which is by the formula for the \emph{mean absolute deviation} of $\text{Beta}(\salt,\talt)$ (cf.~the proof of 
\cite[Corollary 1]{DZ91}) equal to
\[\frac{2\mu(1-\mu)}{\salt+\talt}\pdf_{\salt,\talt}(\mu),
\]
thus showing the second equation. The third equation in 
\eqref{eq:notag} is clear.
\end{proof}

\begin{prop}\label{prop:extra2}
Suppose $1\le\talt\le\salt$ such that $\salt+\talt\ge3$ and set $\mu:=\mu_{\salt,\talt}$. Then we have
\[2I_\mu(\salt,\talt)+2(\salt-\talt)\frac{\mu^\salt(1-\mu)^\talt}{\salt\talt B(\salt,\talt)}\ge1
\]
\end{prop}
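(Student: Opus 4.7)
The strategy is to collapse the entire left-hand side into $2 I_\mu(\salt+1, \talt+1)$ by combining the two reformulations provided by Lemma~\ref{reform}, and then to invoke the upper bound on the median in \eqref{betamodemean} to conclude. The contribution of Lemma~\ref{absdev} is not directly needed for this particular step, though it shapes the form of the estimate.

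First, I will apply Lemma \ref{reform}\eqref{it:reforma} at $x=\mu$ to rewrite
\[
I_\mu(\salt,\talt+1)+I_\mu(\salt+1,\talt)=2I_\mu(\salt,\talt)+(\salt-\talt)\frac{\mu^\salt(1-\mu)^\talt}{\salt\talt B(\salt,\talt)}.
\]
Then I will apply Lemma \ref{reform}\eqref{it:reformb} at the same $x=\mu=\frac{\salt}{\salt+\talt}$; since $(\salt+\talt)(1-2\mu)=\talt-\salt$, this yields
\[
I_\mu(\salt,\talt+1)+I_\mu(\salt+1,\talt)=2I_\mu(\salt+1,\talt+1)-(\salt-\talt)\frac{\mu^\salt(1-\mu)^\talt}{\salt\talt B(\salt,\talt)}.
\]
Equating the two expressions and rearranging gives the key identity
\[
2I_\mu(\salt,\talt)+2(\salt-\talt)\frac{\mu^\salt(1-\mu)^\talt}{\salt\talt B(\salt,\talt)}=2I_\mu(\salt+1,\talt+1),
\]
so the inequality to be proved is equivalent to $I_\mu(\salt+1,\talt+1)\ge \tfrac{1}{2}$, i.e.\ to the statement that $m_{\salt+1,\talt+1}\le \mu$.

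Finally I handle the two cases. If $\salt=\talt$, then $\mu=\tfrac{1}{2}$ and $\text{Beta}(\salt+1,\talt+1)$ is symmetric about $\tfrac{1}{2}$, so $m_{\salt+1,\talt+1}=\tfrac{1}{2}=\mu$ and equality holds (here the hypothesis $\salt+\talt\ge 3$ ensures $\salt=\talt\ge \tfrac{3}{2}>0$, so the distribution is well-defined). If $\salt>\talt$, then $(\salt+1,\talt+1)$ satisfies $1<\talt+1<\salt+1$, and the upper bound in \eqref{betamodemean} applied to this pair yields
\[
m_{\salt+1,\talt+1}<\frac{(\salt+1)-1}{(\salt+1)+(\talt+1)-2}=\frac{\salt}{\salt+\talt}=\mu.
\]
In either case, monotonicity of $x\mapsto I_x(\salt+1,\talt+1)$ gives $I_\mu(\salt+1,\talt+1)\ge \tfrac{1}{2}$, which completes the argument.

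The only real ``idea'' in the proof is the recognition that subtracting the two reformulations of $I_\mu(\salt,\talt+1)+I_\mu(\salt+1,\talt)$ eliminates the common summand and converts the desired expression into a single regularized incomplete beta function at $\mu$; this is what allows the problem to be reduced to the classical median bound. After that, the conclusion is immediate, and no delicate analysis is required.
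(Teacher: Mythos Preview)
Your proof is correct and takes a genuinely different, much shorter route than the paper's. The key observation---that combining both parts of Lemma~\ref{reform} at $x=\mu$ collapses the entire left-hand side into $2I_\mu(\salt+1,\talt+1)$---is sharp: since $(1-2\mu)(\salt+\talt)=\talt-\salt$, the two reformulations differ exactly by twice the correction term, so the identity $2I_\mu(\salt,\talt)+2(\salt-\talt)\frac{\mu^\salt(1-\mu)^\talt}{\salt\talt B(\salt,\talt)}=2I_\mu(\salt+1,\talt+1)$ holds as you claim. The problem then reduces to $m_{\salt+1,\talt+1}\le\mu_{\salt,\talt}$, which is precisely the mode upper bound from \eqref{betamodemean} applied to the shifted parameters $(\salt+1,\talt+1)$ (where $1<\talt+1<\salt+1$ whenever $\salt>\talt\ge1$), with the case $\salt=\talt$ handled by symmetry.

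By contrast, the paper's proof does not exploit this collapse. It rewrites the inequality as a comparison of two integrals over different domains, uses Lemma~\ref{absdev} to balance their first derivatives at the origin via an explicit corrective multiple, and then establishes a pointwise inequality by analyzing the sign of a rational-exponential function's derivative; the hypothesis $\salt+\talt\ge3$ enters essentially there. Your argument, besides being dramatically shorter, shows that the hypothesis $\salt+\talt\ge3$ is not actually needed for the proposition---$1\le\talt\le\salt$ suffices---and that Lemma~\ref{absdev} is unnecessary for this step.
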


\def\ee{\chi}

\begin{proof}
We have to show
\[I_\mu(\salt,\talt)+2(\salt-\talt)\frac{\mu^\salt(1-\mu)^\talt}{\salt\talt B(\salt,\talt)}\ge I_{1-\mu}(\talt,\salt)
\]
which is equivalent to
\[B_\mu(\salt,\talt)+2\ee \ge B_{1-\mu}(\talt,\salt)\]
where
\[\ee:=\frac{\salt-\talt}{\salt\talt}\mu^\salt(1-\mu)^\talt.
\]
This means
\[2\ee+\int_0^\mu x^{\salt-1}(1-x)^{\talt-1}dx\ge\int_0^{1-\mu}x^{\talt-1}(1-x)^{\salt-1}dx
\]
which we rewrite as
\[\ee+\int_0^\mu(\mu-x)^{\salt-1}(1-\mu+x)^{\talt-1}dx\ge-\ee+\int_0^{1-\mu}(\mu+x)^{\salt-1}(1-\mu-x)^{\talt-1}dx.
\]
We have $\frac12\le\mu\le1$ and therefore $0\le1-\mu\le\frac12\le\mu\le1$. In particular, the domain of integration is
smaller on the left hand side. The idea is to compare the two terms under the integral pointwise on $[0,1-\mu]$ after
correcting these two terms using $\ee$ and $-\ee$, respectively. The two terms agree when substituting $x=0$. The derivative
at $x=0$ of the term under the integral on the left hand side is by the product rule the negative term
\[
\mu^{\salt -2} (1-\mu )^{\talt -2} (\mu  (\salt +\talt -2)-\salt +1)=\mu^{\salt -2} (1-\mu )^{\talt -2}\frac{\talt-\salt}{\salt+\talt}
\]
and on the right hand side it is the additive inverse. We want to counterbalance the derivatives at $x=0$ by adding and subtracting
a multiple of the term from Lemma \ref{absdev} on the left and right hand side, respectively. The derivative of that latter
term at $x=0$ is of course
\[\mu^{\salt -1} (1-\mu )^{\talt -1}=\mu^{\salt -2} (1-\mu )^{\talt -2}\frac{\salt\talt}{(\salt+\talt)^2}.\]
We thus would like to add
\[c:=\frac{(\salt-\talt)(\salt+\talt)}{\salt\talt}=\frac{\salt^2-\talt^2}{\salt\talt}\]
times the term from Lemma \ref{absdev} on the left hand side and subtract it on the right hand side.
The miracle now is that this is exactly $\chi$.
Our claim can thus be rewritten as
\[\int_0^\mu(\mu-x)^{\salt-1}(1-\mu+x)^{\talt-1}(1+cx)dx\ge\int_0^{1-\mu}(\mu+x)^{\salt-1}(1-\mu-x)^{\talt-1}(1-cx)dx.
\]
The two terms under the integral now take the same value at $x=0$ and have the same derivative there. There is now a hope
to show for $x\in[0,1-\mu]$ that the term on the left hand side is pointwise less than or equal the term on the right hand side.
We will do this and thus even show the stronger claim that
\[\int_0^{1-\mu}(\mu-x)^{\salt-1}(1-\mu+x)^{\talt-1}(1+cx)dx\ge\int_0^{1-\mu}(\mu+x)^{\salt-1}(1-\mu-x)^{\talt-1}(1-cx)dx.
\]
If we define (noting that $1-cx\ge1-c(1-\mu)=1-c\frac\talt{\salt+\talt}=1-\frac{\salt-\talt}\salt=\frac\talt\salt>0$)
\[g\colon[0,1-\mu)\to\R,\ x\mapsto\left(\frac{\mu-x}{\mu+x}\right)^{\salt-1}\left(\frac{1-\mu+x}{1-\mu-x}\right)^{\talt-1}
\frac{1+cx}{1-cx},
\]
it is thus enough to show that $g(x)\ge1$ for all $x\in[0,1-\mu]$. Clearly we have $g(0)=1$. So it is enough to show that
$g'(x)\ge0$ for all $x\in[0,1-\mu)$. A straightforward calculation shows
\[
g'(x)=\frac{2 \left(\frac{\mu-x}{\mu +x}\right)^{\salt } \left(\frac{1-\mu+x}{1-\mu-x}\right)^{\talt }}{(1-c x)^2 (1-\mu +x)^2 (\mu -x)^2}+
x^2h(x^2)
\]
where
\[h\colon\begin{cases}
[0,(1-\mu)^2]\to\R\\
y\mapsto
\frac{(\salt-\talt) (\salt+\talt-3)}{\salt+\talt}-\frac{(\salt+\talt) \left(\salt^4-\salt^3 (2\talt+1)+2\salt^2\talt+2\salt(\talt-1)
   \talt^2-(\talt-1)\talt^3\right)}{\salt^2\talt^2}y.
   \end{cases}
\]
Since $h$ is linear, it is thus enough to show that $h(0)\ge0$ and $h((1-\mu)^2)\ge0$.
The first condition follows from the hypothesis $\salt+\talt\ge3$.
Another straightforward calculation shows
\[h((1-\mu)^2)=\frac{(\talt-1) \left(2\salt^2-3\salt\talt+\talt^2\right)}{\salt^2}.
\]
Because of $\talt\ge1$ it remains only to show that $2\salt^2-3\salt\talt+\talt^2\ge0$. Now we have
\[2\salt^2-3\salt\talt+\talt^2=(\salt-\talt)(2\salt-\talt)\ge0
\]
since $\salt\ge\talt$.
\end{proof}

The following corollary improves the previously known upper bound \eqref{betamodemean} on the median
$m_{\salt,\talt}$ in the case where $1<\talt\le\salt$ and $\salt+\talt\ge3$ because of the following lemma.

\begin{lemma}\label{lem:impr}
Suppose $\salt,\talt\in\R$ such that $\salt\ge\talt\ge1$ and $\salt+\talt>2$. Then
\[\frac\salt{\salt+\talt}+\frac{\salt-\talt}{(\salt+\talt)^2}\le\frac{\salt-1}{\salt+\talt-2}.\]
\end{lemma}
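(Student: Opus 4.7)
The lemma is a purely algebraic inequality with no integrals or probabilistic structure, so my plan is just to clear denominators and reduce it to the hypothesis $\salt \ge \talt$.

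Under the hypotheses, $\salt+\talt-2>0$ and $\salt+\talt>0$, so all three denominators are positive and I may multiply through by the positive quantity $(\salt+\talt)^2(\salt+\talt-2)$ without flipping the inequality. Writing $d:=\salt+\talt$ for brevity, the inequality to prove becomes
\[
\salt\, d\,(d-2) + (\salt-\talt)(d-2) \;\le\; (\salt-1)\,d^2.
\]

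Now I would expand both sides. On the left, using $\salt d + \talt d = d^2$, the terms collapse to $\salt d^2 - d^2 - 2\salt + 2\talt$, while the right side is $\salt d^2 - d^2$. After cancellation the inequality is equivalent to $-2\salt + 2\talt \le 0$, i.e.\ $\talt \le \salt$, which is exactly the hypothesis.

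There is no real obstacle here; the hypothesis $\salt+\talt>2$ is used only to ensure the denominator $\salt+\talt-2$ is positive (so clearing denominators preserves the inequality direction), the hypothesis $\talt\ge 1$ is not needed for this particular algebraic step (it will be needed in Corollary~\ref{cor:newmedINTRO} to combine the lemma with Proposition~\ref{prop:extra2}), and the hypothesis $\salt\ge\talt$ is precisely what the computation reduces to. So the whole proof is a one-line algebraic identity plus a sign check.
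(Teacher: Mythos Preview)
Your proof is correct and essentially identical to the paper's: the paper simply computes the difference $\frac{\salt-1}{\salt+\talt-2}-\frac\salt{\salt+\talt}-\frac{\salt-\talt}{(\salt+\talt)^2}=\frac{2(\salt-\talt)}{(\salt+\talt-2)(\salt+\talt)^2}\ge0$, which is exactly your clearing-denominators computation written as a single fraction. Your observation that the hypothesis $\talt\ge1$ is not actually used in this lemma is also correct.
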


\begin{proof}
A straightforward calculation yields
\[\frac{\salt-1}{\salt+\talt-2}-\frac\salt{\salt+\talt}-\frac{\salt-\talt}{(\salt+\talt)^2}=\frac{2 (\salt-\talt)}{(\salt+\talt-2) (\salt+\talt)^2}\ge0.\qedhere
\]
\end{proof}

\begin{cor}\label{cor:newmed}
Suppose $1\le\talt\le\salt$ such that $\salt+\talt\ge3$. Then we have
\[\mu_{\salt,\talt}=\frac\salt{\salt+\talt}\le m_{\salt,\talt}\le\mu_{\salt,\talt}+\frac{\salt-\talt}{(\salt+\talt)^2}.
\]
\end{cor}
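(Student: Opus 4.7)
The lower bound is stated as known (cf.~\cite{GM77,pyy}), so the plan focuses on the upper bound. Set $\mu:=\mu_{\salt,\talt}=\frac{\salt}{\salt+\talt}$ and $\mu^*:=\mu+\frac{\salt-\talt}{(\salt+\talt)^2}$. Since $x\mapsto I_x(\salt,\talt)$ is strictly increasing on $[0,1]$ and $I_{m_{\salt,\talt}}(\salt,\talt)=\frac12$, the desired inequality $m_{\salt,\talt}\le \mu^*$ is equivalent to $I_{\mu^*}(\salt,\talt)\ge \frac12$. First I would write
\[
I_{\mu^*}(\salt,\talt)=I_\mu(\salt,\talt)+\int_\mu^{\mu^*}\pdf_{\salt,\talt}(x)\,dx,
\]
and invoke Proposition \ref{prop:extra2}, which (after dividing by $2$) gives
\[
I_\mu(\salt,\talt)\ge \frac12 - (\salt-\talt)\frac{\mu^\salt(1-\mu)^\talt}{\salt\talt\,B(\salt,\talt)}.
\]
So the whole proof reduces to establishing
\begin{equation}\label{eq:planint}
\int_\mu^{\mu^*}\pdf_{\salt,\talt}(x)\,dx\ge (\salt-\talt)\frac{\mu^\salt(1-\mu)^\talt}{\salt\talt\,B(\salt,\talt)}.
\end{equation}

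The key observation is that on $[\mu,\mu^*]$ the density $\pdf_{\salt,\talt}$ is nondecreasing. For $\talt>1$ the mode of $\text{Beta}(\salt,\talt)$ is $\frac{\salt-1}{\salt+\talt-2}$, and Lemma \ref{lem:impr} says exactly that $\mu^*\le\frac{\salt-1}{\salt+\talt-2}$, so $[\mu,\mu^*]$ sits to the left of the mode where the density is increasing. The case $\talt=1$ is immediate since then $\pdf_{\salt,1}(x)=\salt\,x^{\salt-1}$ is increasing on all of $[0,1]$. Thus the integrand on the left of \eqref{eq:planint} is at least its value at $\mu$, giving
\[
\int_\mu^{\mu^*}\pdf_{\salt,\talt}(x)\,dx\ge (\mu^*-\mu)\,\pdf_{\salt,\talt}(\mu)=\frac{\salt-\talt}{(\salt+\talt)^2}\cdot\frac{\mu^{\salt-1}(1-\mu)^{\talt-1}}{B(\salt,\talt)}.
\]

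The proof then finishes with the pleasant algebraic identity that makes the right-hand side equal the target. Because $\mu(1-\mu)=\frac{\salt\talt}{(\salt+\talt)^2}$, we have
\[
\frac{\mu^{\salt-1}(1-\mu)^{\talt-1}}{(\salt+\talt)^2}=\frac{\mu^\salt(1-\mu)^\talt}{\mu(1-\mu)(\salt+\talt)^2}=\frac{\mu^\salt(1-\mu)^\talt}{\salt\talt},
\]
so the lower bound on the integral matches \eqref{eq:planint} exactly. This completes the argument; the case $\salt=\talt$ is trivial since both bounds collapse to $\mu=\frac12=m_{\salt,\salt}$ by symmetry. The only subtle step is the monotonicity of $\pdf_{\salt,\talt}$ on $[\mu,\mu^*]$, which is precisely the content of the separately stated Lemma \ref{lem:impr}; once that is in hand, the rest is a clean one-line estimate plus the exact algebraic cancellation above.
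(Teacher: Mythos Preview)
Your proof is correct and follows essentially the same approach as the paper's: reduce via Proposition~\ref{prop:extra2} to the integral inequality \eqref{eq:planint}, then use Lemma~\ref{lem:impr} to bound the integrand below by $\pdf_{\salt,\talt}(\mu)$ and close with the identity $\mu(1-\mu)=\frac{\salt\talt}{(\salt+\talt)^2}$. Your treatment is in fact slightly more careful than the paper's in that you explicitly handle the boundary case $\talt=1$ (where the mode formula degenerates), whereas the paper only spells out $\talt>1$.
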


\begin{proof}
The first inequality comes from \cite{pyy}. To prove the second, we have to show that
\[
2I_\mu(\salt,\talt)+2\int_{\mu}^{\mu+\frac{\salt-\talt}{(\salt+\talt)^2}}\pdf_{\salt,\talt}(x)dx\ge1
\]
where $\mu:=\mu_{\salt,\talt}$. By Proposition \ref{prop:extra2}, it is henceforth enough to show that
\[
\int_{\mu}^{\mu+\frac{\salt-\talt}{(\salt+\talt)^2}}\pdf_{\salt,\talt}(x)dx\ge(\salt-\talt)\frac{\mu^\salt(1-\mu)^\talt}{\salt\talt B(\salt,\talt)}.
\]
This is trivial if $\salt=\talt$. If $1<\talt$ (and therefore $1<\salt$), then 
$\frac{\salt-1}{\salt+\talt-2}$ is the mode of $\text{Beta}(\salt,\talt)$ and by Lemma \ref{lem:impr} we have
\[\pdf_{\salt,\talt}(x)\ge \pdf_{\salt,\talt}(\mu)\]
for all $x\in[\mu,\mu+\frac{\salt-\talt}{(\salt+\talt)^2}]$. Therefore it is enough to show that
\[
\frac{\salt-\talt}{(\salt+\talt)^2}\pdf_{\salt,\talt}(\mu)\ge(\salt-\talt)\frac{\mu^\salt(1-\mu)^\talt}{\salt\talt B(\salt,\talt)}
\]
but this holds even with equality since
\[
\frac1{(\salt+\talt)^2}=\frac{\mu(1-\mu)}{\salt\talt}.\qedhere
\]
\end{proof}

The following table illustrates the quality of the lower bound $\mu_{\salt,\talt}$ on the median $m_{\salt,\talt}$ (for $1\le\talt\le\salt$)
and the quality of the new upper bound $\mu_{\salt,\talt}+\frac{\salt-\talt}{(\salt+\talt)^2}$ (for $1\le\talt\le\salt$ with $\salt+\talt\ge3$)
as compared to the less tight old upper bound $\frac{\salt-1}{\salt+\talt-2}$. If one assumes that \eqref{ineq:simmc} is true for
all real $\salt,\talt$ (as opposed to 
$\salt,\talt\in\frac12\N$ as given by Theorem \ref{simmons+})
with $1\le\talt\le\salt$ with $\salt+\talt\ge3$, then
one can deduce along the lines of Corollary \ref{cor:newmed}  an even better upper bound on $m_{\salt,\talt}$
for $1\le\talt\le\salt$ with $\salt+\talt\ge3$, namely $\mu_{\salt,\talt}+\frac{\salt-\talt}{2(\salt+\talt)^2}$ which we therefore also include in
the table.

\[
\begin{array}{cc|ccccc}
\salt&\talt&\mu_{\salt,\talt}&m_{\salt,\talt}&\mu_{\salt,\talt}+\frac{\salt-\talt}{2(\salt+\talt)^2}&\mu_{\salt,\talt}+\frac{\salt-\talt}{(\salt+\talt)^2}&\frac{\salt-1}{\salt+\talt-2}\\
\hline
 2.5 & 1 & 0.714286 & 0.757858 & 0.77551 & 0.836735 & 1 \\
 3 & 1 & 0.75 & 0.793701 & 0.8125 & 0.875 & 1 \\
 3 & 2 & 0.6 & 0.614272 & 0.62 & 0.64 & 0.666667 \\
 4 & 2 & 0.666667 & 0.68619 & 0.694444 & 0.722222 & 0.75 \\
 10 & 3 & 0.769231 & 0.783314 & 0.789941 & 0.810651 & 0.818182 \\
 10 & 7 & 0.588235 & 0.591773 & 0.593426 & 0.598616 & 0.6 \\
\end{array}
\]

\section{Proof of Theorem {\rm\ref{thm:thetaExplicit}}}\label{sec:nov}

In this section we prove Theorem \ref{thm:thetaExplicit} by establishing Proposition \ref{prop:thetafirst}.
We start by tweaking Problem \ref{prob:opt}:

\begin{problem}\label{prob:opt'}\rm
   Given a positive integer $d$, minimize \[
   \ds f_{s,t}(\sigma)=\frac{2(1-\sigma)sI_{1-\sigma}\left(\frac t2,1+\frac s2\right)+2\sigma tI_{\sigma}\left(\frac s2,1+\frac t2\right)}{(1-\sigma)s+\sigma t}-1\]
 subject to the constraints 
\begin{enumerate}[(i)]
 \item $s,t\in\mathbb N$ and $s+t=d$;
 \item $\ds s\ge \frac{d}{2};$ 
 \item $\ds 0\leq\sigma\leq\frac sd$; and 
 \item \label{it:determinesp'}  $\ds I_{\sigma}\left(\frac s2, \frac t2 +1\right) = I_{1-\sigma}\left(\frac t2,\frac s2 +1\right).$ 
\end{enumerate}
\end{problem} 

Problem \ref{prob:opt'} is equivalent to Problem \ref{prob:opt}.
Indeed, the only difference is the interval for $\sigma$ in (iii).
However, by Section \ref{sec:simmhalf} we know that
$\sigma_{s,t}=\sigma\in[0,1]$, the solution to  (iv) will automatically 
satisfy 
$\ds \sigma_{s,t} \leq \frac sd$.

\subsection{An auxiliary function}
For $s,t\in\R_{>0}$ let 
\beq\label{eq:defgst}
g_{s,t}(\sigma):= -1+ I_{\sigma}\left(\frac s2, \frac t2 +1\right) +
 I_{1-\sigma}\left(\frac t2,\frac s2 +1\right).
\eeq

\begin{lem} 
For $s,t\in\R_{>0}$, we have
\label{lem:fisg}
 $$
 f_{s,t}(\sist) = g_{s,t}(\sist)=
2 \;
I_{\sist}\left(\frac s2,1+\frac t2\right) 
 -1 .
$$
Thus at
the minimizer of $f_{s,t}$, the functions 
$f_{s,t}$ and $g_{s,t}$ have the same value.
\end{lem}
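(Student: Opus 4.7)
The plan is essentially to unwind the definitions and exploit the defining identity for $\sis$. Recall from \eqref{eq:defSIS} that $\sis$ is characterized by
\[
I_{\sis}\!\left(\tfrac{s}{2},1+\tfrac{t}{2}\right) = I_{1-\sis}\!\left(\tfrac{t}{2},1+\tfrac{s}{2}\right).
\]
Denote this common value by $V := I_{\sis}(\tfrac{s}{2},1+\tfrac{t}{2})$.

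For $g_{s,t}(\sis)$ this is immediate from \eqref{eq:defgst}:
\[
g_{s,t}(\sis) = -1 + I_{\sis}\!\left(\tfrac{s}{2},\tfrac{t}{2}+1\right) + I_{1-\sis}\!\left(\tfrac{t}{2},\tfrac{s}{2}+1\right) = -1 + V + V = 2V-1.
\]

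For $f_{s,t}(\sis)$, I would substitute the common value $V$ into both beta-function terms appearing in the numerator of \eqref{eq:fst}. This factors $2V$ out of the numerator:
\[
f_{s,t}(\sis) = \frac{2(1-\sis)s\cdot V + 2\sis t\cdot V}{(1-\sis)s + \sis t} - 1 = \frac{2V\bigl[(1-\sis)s + \sis t\bigr]}{(1-\sis)s + \sis t} - 1 = 2V-1,
\]
where the denominator is nonzero because $s,t>0$ and $\sis\in(0,1)$. Hence $f_{s,t}(\sis) = g_{s,t}(\sis) = 2I_{\sis}(\tfrac{s}{2},1+\tfrac{t}{2})-1$, as claimed.

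There is no real obstacle: the result is a direct algebraic consequence of the fact that the two regularized beta values appearing in $f_{s,t}$ (and in $g_{s,t}$) coincide at the point $\sis$, which is precisely how $\sis$ was defined. The only minor care needed is to note that $(1-\sis)s + \sis t > 0$ so the division is legitimate.
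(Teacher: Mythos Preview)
Your proof is correct and takes essentially the same approach as the paper, which merely remarks that the result is straightforward because the two incomplete beta expressions appearing in both $f_{s,t}$ and $g_{s,t}$ coincide at $\sis$. You have simply written out explicitly the algebra that the paper leaves to the reader.
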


\begin{proof}
This is straightforward since $f_{s,t}$ assumes its minimum where the two incomplete beta expressions (appearing in both $f_{s,t}$ and $g_{s,t}$) are equal.
\end{proof}

\begin{lem}\label{lem:gst}
The function $g_{s,t}$ can be rewritten as
\beq\label{eq:gst}
g_{s,t}(\sigma)=
\sigma ^{s/2} (1-\sigma )^{t/2} \frac{\Gamma
   \left(\frac{s}{2}+\frac{t}{2}+1\right)}{\Gamma
   \left(\frac{s}{2}+1\right) \Gamma
   \left(\frac{t}{2}+1\right)}.
  \eeq
\end{lem}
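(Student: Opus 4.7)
The plan is to prove the identity by differentiating both sides in $\sigma$, checking the derivatives coincide, and then verifying a boundary value.

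First I would compute $g_{s,t}'(\sigma)$ using the standard formula $\frac{d}{d\sigma}I_\sigma(a,b) = \frac{\sigma^{a-1}(1-\sigma)^{b-1}}{B(a,b)}$. Applied to the two terms in \eqref{eq:defgst} (with a minus sign from the chain rule on the second term), this gives
\[
g_{s,t}'(\sigma) = \frac{\sigma^{s/2-1}(1-\sigma)^{t/2}}{B(\frac s2,\frac t2+1)} - \frac{\sigma^{s/2}(1-\sigma)^{t/2-1}}{B(\frac t2,\frac s2+1)}.
\]
Using the gamma function formula for the beta function together with the identity $\Gamma(x+1) = x\Gamma(x)$ yields
\[
\frac{1}{B(\frac s2,\frac t2+1)} = \frac{s}{2}\,C, \qquad \frac{1}{B(\frac t2,\frac s2+1)} = \frac{t}{2}\,C,
\]
where $C := \frac{\Gamma(\frac s2 + \frac t2 + 1)}{\Gamma(\frac s2 + 1)\Gamma(\frac t2 + 1)}$ is the constant appearing on the right-hand side of \eqref{eq:gst}. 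Substituting and factoring produces
\[
g_{s,t}'(\sigma) = C\,\sigma^{s/2-1}(1-\sigma)^{t/2-1}\left[\tfrac{s}{2}(1-\sigma) - \tfrac{t}{2}\sigma\right].
\]

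On the other hand, directly differentiating the right-hand side of \eqref{eq:gst}, namely $F(\sigma) := C\,\sigma^{s/2}(1-\sigma)^{t/2}$, via the product rule gives exactly the same expression. Hence $g_{s,t}' \equiv F'$ on $(0,1)$.

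It remains to check one boundary value. At $\sigma = 0$, since $s > 0$ we have $I_0(\frac s2,\frac t2+1) = 0$ and $I_1(\frac t2,\frac s2+1) = 1$, so $g_{s,t}(0) = -1 + 0 + 1 = 0 = F(0)$. Therefore $g_{s,t} = F$ on $[0,1]$, which is the claim. The step is essentially a routine calculation; no serious obstacle is anticipated beyond bookkeeping of the gamma-function identities and the sign from differentiating at $1-\sigma$.
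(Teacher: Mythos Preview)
Your proof is correct. The approach, however, differs from the paper's. The paper first applies the reflection identity $I_{1-x}(b,a)=1-I_x(a,b)$ to rewrite
\[
g_{s,t}(\sigma)=I_\sigma\!\left(\tfrac s2,\tfrac t2+1\right)-I_\sigma\!\left(\tfrac s2+1,\tfrac t2\right),
\]
and then uses the standard recursion formulas for the regularized incomplete beta function (DLMF (8.17.20)--(8.17.21)) to collapse this difference into the closed form \eqref{eq:gst}. Your route instead computes the $\sigma$-derivative of both sides, checks they agree pointwise on $(0,1)$ after reducing the beta-function constants to the common factor $C$, and matches the boundary value at $\sigma=0$. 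Your argument is entirely self-contained, needing only the derivative formula for $I_\sigma(a,b)$ and $\Gamma(x+1)=x\Gamma(x)$, whereas the paper's proof is purely algebraic but appeals to external recursion identities. Both are short; yours avoids the lookup at the cost of an extra differentiation and a boundary check.
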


\begin{proof}
First recall that $I_{1-x}(b,a)=1-I_x(a,b)$ and apply this
to the second incomplete beta summand in the definition
of $g_{s,t}$:
\[
g_{s,t}(\sigma)=
I_{\sigma}\left(\frac s2, \frac t2 +1\right) -
 I_{\sigma}\left(\frac s2+1,\frac t2 \right).
 \]
Now use recursive formulas for $I_\sigma$\footnote{Equations (8.17.20) and (8.17.21) in \url{http://dlmf.nist.gov/8.17}.} and simplify.
\end{proof}

\begin{lem}\label{lem:qIncr}
The function $g_{s,t}$ is monotonically increasing on 
$\ds \left[0,\frac s{s+t}\right]$ whenever $s,t\in\R_{>0}$ 
with $s\geq t$.
\end{lem}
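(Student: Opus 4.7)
The plan is to prove monotonicity by direct differentiation of the explicit formula provided by Lemma \ref{lem:gst}, which eliminates any need to work with incomplete beta functions. Writing
\[
g_{s,t}(\sigma)=C_{s,t}\,\sigma^{s/2}(1-\sigma)^{t/2},\qquad C_{s,t}:=\frac{\Gamma(\tfrac{s}{2}+\tfrac{t}{2}+1)}{\Gamma(\tfrac{s}{2}+1)\Gamma(\tfrac{t}{2}+1)}>0,
\]
reduces the problem to analysing the single-variable function $h(\sigma):=\sigma^{s/2}(1-\sigma)^{t/2}$ on $(0,1)$.

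The key step is to differentiate logarithmically: for $\sigma\in(0,1)$,
\[
\frac{h'(\sigma)}{h(\sigma)}=\frac{s}{2\sigma}-\frac{t}{2(1-\sigma)}=\frac{s-(s+t)\sigma}{2\sigma(1-\sigma)}.
\]
Since $h(\sigma)>0$ and the denominator $2\sigma(1-\sigma)$ is positive on $(0,1)$, the sign of $h'(\sigma)$ equals the sign of $s-(s+t)\sigma$. Therefore $h'(\sigma)\ge 0$ precisely when $\sigma\le\frac{s}{s+t}$, with equality only at the endpoint. Multiplying by the positive constant $C_{s,t}$, the same is true for $g_{s,t}'$, so $g_{s,t}$ is (strictly) increasing on $\left[0,\frac{s}{s+t}\right]$. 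One only needs to check $\sigma=0$ separately when $s=1$ (where $h$ fails to be differentiable from the right, but is clearly right-continuous and increasing near $0$ from the formula $h(\sigma)=\sigma^{1/2}(1-\sigma)^{t/2}$).

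I expect no serious obstacle here: the hypothesis $s\ge t$ is not actually used in the computation (the argument works for all $s,t>0$), and is presumably included because this is the regime in which Lemma \ref{lem:qIncr} will be invoked in Problem \ref{prob:opt'}, where the constraint $s\ge d/2$ is imposed. The whole proof is essentially one calculus exercise once Lemma \ref{lem:gst} is in hand.
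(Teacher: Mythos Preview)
Your proof is correct and is essentially the same as the paper's: both differentiate the explicit formula of Lemma~\ref{lem:gst} and read off the sign of $g_{s,t}'$ from the factor $s-(s+t)\sigma$ (the paper writes the derivative out directly rather than via the logarithmic derivative). Your remark that the hypothesis $s\ge t$ plays no role in the argument is accurate; note only that the non-differentiability at $\sigma=0$ occurs for all $0<s<2$, not just $s=1$, but continuity handles the endpoint in every case.
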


\begin{proof}
Using Lemma \ref{lem:gst} it is easy to see that
\[
g'_{s,t}(\sigma)=
-\frac{1}{2} \sigma ^{\frac{s}{2}-1} (1-\sigma
   )^{\frac{t}{2}-1} \big(s (\sigma -1)+\sigma  t\big)
   \frac{\Gamma
   \left(\frac{s}{2}+\frac{t}{2}+1\right)}{\Gamma
   \left(\frac{s}{2}+1\right) \Gamma
   \left(\frac{t}{2}+1\right)}. \qedhere
   \]
\end{proof}

We shall exploit bounds on $\sist$.
The lower bound can be deduced from our
results in Section \ref{sec:lowBdSIS}:

\begin{cor}\label{cor:lowBdSIS}
For $s,t\in\N$ with $s\geq t$ we have
\beq\label{eq:lowBdSIS}
\sist \geq \frac{s+2}{s+t+4}.
\eeq
\end{cor}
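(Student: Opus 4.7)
\textbf{Proof plan for Corollary \ref{cor:lowBdSIS}.}
The plan is to recognize $\sigma_{s,t}$ as an instance of the equipoint $e_{\salt,\talt}$ studied in Section \ref{sec:lowBdSIS}, and then apply the lower bound already established there (Proposition \ref{prop:preLowBdSIS}), plus a separate symmetry argument on the diagonal.

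First I will rewrite the defining equation \eqref{eq:defSIS} of $\sigma_{s,t}$ using the standard identity $I_{1-x}(b,a)=1-I_x(a,b)$. Substituting $a=\frac{s}{2}+1$, $b=\frac{t}{2}$, $x=\sigma_{s,t}$ turns the right-hand side of \eqref{eq:defSIS} into $1-I_{\sigma_{s,t}}(\frac{s}{2}+1,\frac{t}{2})$, so that \eqref{eq:defSIS} becomes
\[
I_{\sigma_{s,t}}\left(\tfrac s2,\tfrac t2+1\right) + I_{\sigma_{s,t}}\left(\tfrac s2+1,\tfrac t2\right) = 1.
\]
Comparing with \eqref{eq:defe}, this is precisely the defining equation of the equipoint with parameters $\salt=\frac{s}{2}$, $\talt=\frac{t}{2}$; hence
\[
\sigma_{s,t} \;=\; e_{\,s/2,\,t/2}.
\]

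Next, assume $s > t$, so that $\salt=\frac{s}{2} > \talt=\frac{t}{2} > 0$ (here we use $s,t\in\N$ and $s\ge t$ to ensure $\talt>0$). The second chain of inequalities in Proposition \ref{prop:preLowBdSIS}, applied with these $\salt,\talt$, gives
\[
\sigma_{s,t} \;=\; e_{s/2,\,t/2} \;\ge\; m_{s/2+1,\,t/2+1} \;>\; \frac{\tfrac s2 + 1}{\tfrac s2 + \tfrac t2 + 2} \;=\; \frac{s+2}{s+t+4},
\]
which is the desired bound (even with strict inequality).

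Finally, the diagonal case $s=t$ is handled directly: when $s=t$, the defining equation \eqref{eq:defSIS} reads $I_\sigma(\frac s2,\frac s2+1)=I_{1-\sigma}(\frac s2,\frac s2+1)$, which by strict monotonicity of $I_{(\cdot)}(\frac s2,\frac s2+1)$ forces $\sigma = 1-\sigma$, i.e.\ $\sigma_{s,s}=\tfrac{1}{2}$. Since $\frac{s+2}{s+t+4}=\frac{s+2}{2s+4}=\frac{1}{2}$ in this case, equality holds and \eqref{eq:lowBdSIS} is verified. No serious obstacle arises: the difficulty is already absorbed into Proposition \ref{prop:preLowBdSIS} (which in turn rests on the sharpened median bounds of Section \ref{subsec:lowBdSIS2}); the only care needed here is the translation between the two notations and the boundary case $s=t$.
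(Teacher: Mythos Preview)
Your proof is correct and follows essentially the same approach as the paper: identify $\sigma_{s,t}=e_{s/2,\,t/2}$ and invoke Proposition~\ref{prop:preLowBdSIS}. In fact you are slightly more careful than the paper's one-line proof, since Proposition~\ref{prop:preLowBdSIS} literally requires $0<\talt<\salt$ (i.e.\ $s>t$), and you handle the boundary case $s=t$ separately by the direct symmetry argument $\sigma_{s,s}=\tfrac12=\frac{s+2}{2s+4}$.
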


\begin{proof}
Simply note that in the notation of Section 
\ref{sec:lowBdSIS}, $\sist=e_{\frac s2,\frac t2}$ and use 
Proposition \ref{prop:preLowBdSIS}.
\end{proof}

Combining this lower bound for $\sist$ with Theorem \ref{simmons+}, we have
for $s,t\in\N$ with $s\geq t$,
\beq\label{eq:chain1}
\isp(s,t):=\frac{s+2}{s+t+4} \leq\sist\leq \frac s{s+t} =: \psi (s,t).
\eeq

\begin{lem}\label{lem:trip0}
For $s,t\in\N$ with $s\geq t$ we have
\beq\label{eq:chain2}
g_{s,t}(\isp(s,t)) \leq
g_{s,t}(\sist) = f_{s,t}(\sist) = g_{s,t}(\sist) \leq
g_{s,t}(\psi(s,t)).
\eeq
\end{lem}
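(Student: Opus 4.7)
The plan is straightforward: the lemma packages together the bounds on $\sist$ from \eqref{eq:chain1} with the monotonicity of $g_{s,t}$ from Lemma \ref{lem:qIncr}, plus the identity $g_{s,t}(\sist)=f_{s,t}(\sist)$ from Lemma \ref{lem:fisg} (which accounts for the middle equalities in the chain).

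First I would invoke \eqref{eq:chain1} to record the sandwich
\[
\isp(s,t)=\frac{s+2}{s+t+4}\ \le\ \sist\ \le\ \frac{s}{s+t}=\psi(s,t).
\]
This uses the lower bound from Corollary \ref{cor:lowBdSIS} (equivalently, Proposition \ref{prop:preLowBdSIS}) and the upper bound from Theorem \ref{simmons+}.

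Next I would observe that all three quantities $\isp(s,t)$, $\sist$, and $\psi(s,t)$ lie in the interval $\bigl[0,\frac{s}{s+t}\bigr]$. For $\sist$ and $\psi(s,t)$ this is immediate from the display above. For $\isp(s,t)$ one checks by cross-multiplying that $\frac{s+2}{s+t+4}\le\frac{s}{s+t}$ is equivalent to $2t\le 2s$, which holds since $s\ge t$ by hypothesis. Hence Lemma \ref{lem:qIncr} applies throughout: $g_{s,t}$ is monotonically increasing on $[0,\frac{s}{s+t}]$, so the sandwich on the arguments transfers to a sandwich on the values, giving
\[
g_{s,t}(\isp(s,t))\ \le\ g_{s,t}(\sist)\ \le\ g_{s,t}(\psi(s,t)).
\]

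Finally, the middle identity $g_{s,t}(\sist)=f_{s,t}(\sist)$ is exactly Lemma \ref{lem:fisg}. Inserting this between the two inequalities above reproduces the claimed chain \eqref{eq:chain2}. There is no real obstacle here; the only thing to watch is the elementary verification that $\isp(s,t)\le\psi(s,t)$, which is where the hypothesis $s\ge t$ is used.
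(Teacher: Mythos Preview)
Your proof is correct and follows essentially the same approach as the paper's, which simply says the result follows from the monotonicity of $g_{s,t}$ on $[0,\frac{s}{s+t}]$ (Lemma~\ref{lem:qIncr}) together with the coincidence $f_{s,t}(\sist)=g_{s,t}(\sist)$ (Lemma~\ref{lem:fisg}). Your version is more explicit, and the verification that $\isp(s,t)\le\psi(s,t)$ is redundant since it already follows by transitivity from \eqref{eq:chain1}, but it does no harm.
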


\begin{proof}
This follows from 
the monotonicity of $g_{s,t}$ on 
$\ds \left[0,\frac s{s+t}\right]$ and by the coincidence of 
$f_{s,t}$ and $g_{s,t}$ in the equipoint $\sist$.
\end{proof}

\ssec{Two step monotonicity of $f_{s,t}(\sis)$}

In this subsection, in Proposition \ref{prop:f2step},
we show
for $s,t\in\N$ with $s\geq t$ that
$
f_{s,t}(\sist) \leq f_{s+2,t-2}(\sigma_{s+2,t-2}).
$.

\begin{lem}\label{lem:trip1}
If $s,t\in\R_{>0}$ with $s\geq t$ then
\beq\label{eq:trip1}
g_{s+2,t-2}(\isp(s+2,t-2)) \geq g_{s,t}(\psi(s,t)).
\eeq
\end{lem}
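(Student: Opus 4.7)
The plan is to express both sides using the closed form for $g_{s,t}$ from Lemma \ref{lem:gst}, compute the ratio, reduce by monotonicity in $t$ to the single-variable case $t=s$, and finish with a classical log-inequality.

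First I would compute the ratio using $\Gamma(x+1)=x\Gamma(x)$. The ratio of the normalizing constants $\Gamma(s/2+1)\Gamma(t/2+1)/(\Gamma(s/2+2)\Gamma(t/2))$ simplifies to $t/(s+2)$; combined with the $\isp$- and $\psi$-powers this yields
\begin{equation*}
\frac{g_{s+2,t-2}(\isp(s+2,t-2))}{g_{s,t}(\psi(s,t))} = \frac{(s+4)^{(s+2)/2}}{(s+2)\,s^{s/2}}\cdot\left(\frac{s+t}{s+t+4}\right)^{(s+t)/2}.
\end{equation*}

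Next, viewed as a function of $v:=s+t$, the second factor equals $(v/(v+4))^{v/2}$; taking logs and differentiating gives $\tfrac12\log(1-4/(v+4))+2/(v+4)$, which is negative by the elementary $\log(1-x)\le -x$. Hence the ratio is strictly decreasing in $t$, so on the range $0<t\le s$ it is minimized at $t=s$. The problem reduces to showing
\begin{equation*}
R(s) := \frac{(s+4)^{(s+2)/2}\,s^{s/2}}{(s+2)^{s+1}} \ge 1 \qquad \text{for } s>0.
\end{equation*}

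Setting $k:=s+2>2$, squaring, and using $s(s+4)=(s+2)^2-4$, one checks that $R(s)\ge 1$ is equivalent to $(1-4/k^2)^k \ge (1-2/k)^2$. Taking logs and writing $\log(1-4/k^2)=\log(1-2/k)+\log(1+2/k)$, this rearranges to
\begin{equation*}
k\log(1+2/k) \ge (k-2)\log\bigl(1+2/(k-2)\bigr),
\end{equation*}
i.e., the function $\phi(k):=k\log(1+2/k)$ is non-decreasing. Differentiating, $\phi'(k)=\log(1+2/k)-2/(k+2)$, and this is positive by the classical inequality $\log(1+u)>u/(1+u)$ for $u>0$ (applied with $u=2/k$). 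The main technical step is the bookkeeping in the ratio simplification in Step 1; once that is in hand, the monotonicity reduction and the log-inequality are routine.
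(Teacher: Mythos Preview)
Your proof is correct and follows essentially the same strategy as the paper: reduce the two-parameter inequality to the boundary case $s=t$ via a monotonicity argument, then verify that case using the classical monotonicity of $(1+c/x)^x$. The only organizational difference is the choice of free variable---the paper fixes $d=s+t$ and shows $\xi(s)=(1+4/s)^s((s+4)/(s+2))^2$ is increasing in $s$, whereas you fix $s$ and show $(v/(v+4))^{v/2}$ is decreasing in $v=s+t$; both land on the identical critical case $s=d/2$. Your monotonicity step via $\log(1-x)\le -x$ is somewhat more direct than the paper's factorization of $\xi$, and your endgame substitution $k=s+2$ leading to $\phi(k)=k\log(1+2/k)$ increasing is a clean repackaging of the paper's final reduction to $(1+1/D)^D\le(1+1/(D+1))^{D+1}$.
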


\begin{proof}
With $d=s+t$, \eqref{eq:trip1} is equivalent to
\beq\label{eq:ms1}
(4 + s)^{s + 2}   d^d \geq s^s (4 + d)^d (2 + s)^2.
\eeq
This follows from \eqref{eq:gst} and the identities
$\ds B(\al+1,\be)=\frac{\al}{\al+\be}B(\al,\be)$
and
$\ds B(\al,\be+1)=\frac{\be}{\al+\be}B(\al,\be)$.

Rewrite \eqref{eq:ms1} into
\beq\label{eq:ms2}
\left(1+\frac4d\right)^d \leq 
\left(1+\frac4s\right)^s \left(\frac{s+4}{s+2}\right)^2 =:\xi(s).
\eeq

We claim the right-hand side $\xi(s)$ is an increasing function of $s$ on $\R_{\geq0}$.
Indeed, using $s=2S$, 
\[
\begin{split}
\Xi(S)&=\xi\left(\frac s2\right) = 
\left(1+\frac2S\right)^{2S} \left(\frac{S+2}{S+1}\right)^2 \\
&= 
\left(1+\frac2S\right)^{S} \cdot \left(1+\frac2S\right)^{S} \left(\frac{S+2}{S+1}\right)^2.
\end{split}
\]
The first factor in the last expression is well-known to be 
an increasing function with limit $e^2$. Let
\[
\zeta(S):=\left(1+\frac2S\right)^{S} \left(\frac{S+2}{S+1}\right)^2.
\]
Then 
\[
\zeta'(S)=\frac{(S+2)^2 \left(\frac{S+2}{S}\right)^S \left((S+1) \log
   \left(\frac{S+2}{S}\right)-2\right)}{(S+1)^3}.
\]
Observe that
$\ds
(S+1) \log
   \left(\frac{S+2}{S}\right)-2 \geq0$
   iff
   \[
   \left(1+\frac2S\right)^{S+1}\geq e^2.
   \]
   But it is well-known and easy to see that this left-hand side
   is decreasing with limit $e^2$.
   This shows that $\zeta'(S)\geq0$ and hence
the right-hand side  of \eqref{eq:ms2} is an increasing function of $s$.

It thus suffices to show
\beq\label{eq:ms3}
\left(\frac{d+4}d\right)^d \leq 
\xi\left(\frac d2\right) = \left(\frac{d+8}{d+4}\right)^2
\left(\frac{d+8}{d}\right)^{\frac d2},
\eeq
or equivalently,
\beq\label{eq:ms4}
\left(\frac{d+4}d\right)^{\frac d2} \leq 
\left(\frac{d+8}{d+4}\right)^{2+\frac d2}.
\eeq
Again, we show this hold for $d\in\R_{>0}$.  Writing
$d=4D$, \eqref{eq:ms4} becomes
\beq\label{eq:ms5}
\left(\frac{D+1}D\right)^{ 2D} \leq 
\left(\frac{D+2}{D+1}\right)^{2+2D}.
\eeq
So it suffices to establish
\[
\left(1+\frac{1}D\right)^{ D} \leq 
\left(1+\frac{1}{D+1}\right)^{D+1}.
\]
But this is well-known or easy to establish using calculus.
\end{proof}

\begin{prop}\label{prop:f2step}
For $s,t\in\N$ with $s\geq t$ we have
\beq\label{eq:f2step}
f_{s,t}(\sist) \leq f_{s+2,t-2}(\sigma_{s+2,t-2}).
\eeq
\end{prop}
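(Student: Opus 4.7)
\medskip

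\textbf{Proof plan.} The statement is designed to fall out immediately once the three ingredients already in place are assembled in the right order. By Lemma \ref{lem:fisg} the values of $f$ at the equipoints coincide with the corresponding values of the simpler function $g$, so the inequality \eqref{eq:f2step} is equivalent to
\[
g_{s,t}(\sist)\ \leq\ g_{s+2,t-2}(\sigma_{s+2,t-2}).
\]
The plan is therefore to sandwich each side between values of $g$ evaluated at the explicit closed-form bounds $\isp$ and $\psi$ defined in \eqref{eq:chain1}, and to connect the two sandwiches via the inequality of Lemma \ref{lem:trip1}.

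Concretely, I would argue as follows. By Lemma \ref{lem:trip0} applied to the pair $(s,t)$,
\[
f_{s,t}(\sist)\ =\ g_{s,t}(\sist)\ \leq\ g_{s,t}(\psi(s,t)).
\]
By Lemma \ref{lem:trip1} (which requires only $s\geq t$, and hence $s+2\geq t-2$ as well),
\[
g_{s,t}(\psi(s,t))\ \leq\ g_{s+2,t-2}(\isp(s+2,t-2)).
\]
Finally, applying Lemma \ref{lem:trip0} to the pair $(s+2,t-2)$ in the opposite direction,
\[
g_{s+2,t-2}(\isp(s+2,t-2))\ \leq\ g_{s+2,t-2}(\sigma_{s+2,t-2})\ =\ f_{s+2,t-2}(\sigma_{s+2,t-2}).
\]
Chaining these three inequalities yields the required conclusion.

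The only real content in the proof is hidden in Lemma \ref{lem:trip1} — that is where the calculus inequality comparing the explicit upper bound $\psi(s,t)=s/(s+t)$ at scale $(s,t)$ with the explicit lower bound $\isp(s+2,t-2)=s/(s+t)$ at scale $(s+2,t-2)$ is exploited — and that lemma is already established. There is no genuine obstacle at this step; the one thing worth checking is the edge case $t=2$, where $t-2=0$ falls on the boundary of the domain of $g_{\cdot,\cdot}$ and the formula \eqref{eq:gst} degenerates, but the inequality is recovered in the limit (or, alternatively, the statement is used only for $t\geq 2$ in the subsequent optimization, where we are stepping by two from the lower boundary). So the work reduces to a short chain of inequalities citing Lemmas \ref{lem:fisg}, \ref{lem:trip0} and \ref{lem:trip1}.
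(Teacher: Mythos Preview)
Your proof is correct and identical to the paper's: both chain Lemma \ref{lem:trip0} (at $(s,t)$), Lemma \ref{lem:trip1}, and Lemma \ref{lem:trip0} (at $(s+2,t-2)$) in exactly this order. One minor slip in your commentary: $\isp(s+2,t-2)=\frac{s+4}{s+t+4}$, not $\frac{s}{s+t}$, but this does not enter the argument since you correctly invoke Lemma \ref{lem:trip1} rather than any explicit computation.
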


\begin{proof}
Observe that
\[
\begin{split}
f_{s+2,t-2}(\sigma_{s+2,t-2}) & \stackrel{\eqref{eq:chain2}}{\geq}
g_{s+2,t-2}(\isp(s+2,t-2))  \\
& \stackrel{\eqref{eq:trip1}}{\geq} 
g_{s,t}(\psi(s,t))  \\
&
\stackrel{\eqref{eq:chain2}}{\geq}
f_{s,t}(\sist). \qedhere
\end{split} 
\]
\end{proof}

\ssec{Boundary cases}

Having established for each $d$ monotonicity in $s$ of
$f_{s,d-s}(\sigma_{s,d-s})$, 
where $\frac d 2 \le s \leq d-2$,
we now turn to the boundary cases.

\begin{lem}\label{lem:trip2}
For $s\in\N$ we have
\beq\label{eq:trip2}
g_{s+1,s-1}(\isp(s+1,s-1)) \geq g_{s,s}(\psi(s,s)).
\eeq
\end{lem}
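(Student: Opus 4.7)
The strategy follows the template of Lemma \ref{lem:trip1}. Using the explicit formula \eqref{eq:gst} for $g_{s,t}(\sigma)$, we substitute $\isp(s+1,s-1)=\frac{s+3}{2s+4}$ and $\psi(s,s)=\frac12$, so the claim becomes
\[
\left(\frac{s+3}{2s+4}\right)^{\!(s+1)/2}\!\!\left(\frac{s+1}{2s+4}\right)^{\!(s-1)/2}\!\!\frac{\Ga(s+1)}{\Ga(\frac{s+3}{2})\Ga(\frac{s+1}{2})} \;\geq\; \left(\frac12\right)^{\!s}\frac{\Ga(s+1)}{\Ga(\frac s2+1)^2}.
\]
Cancelling $\Ga(s+1)$ and rearranging, the inequality takes the form
\[
\left(\frac{s+3}{s+2}\right)^{\!(s+1)/2}\!\!\left(\frac{s+1}{s+2}\right)^{\!(s-1)/2}\;\geq\; \frac{\Ga(\frac{s+3}{2})\Ga(\frac{s+1}{2})}{\Ga(\frac s2+1)^2},
\]
where the factors of $2^{-s}$ coming from $(2s+4)^{-s}$ cancel with those from $\psi(s,s)^s=2^{-s}$ after regrouping.

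First I would handle the gamma ratio on the right-hand side. Writing $\frac{s+3}{2}=\frac{s}{2}+\frac32$ and $\frac{s+1}{2}=\frac{s}{2}+\frac12$, the duplication/recursion $\Ga(z+1)=z\Ga(z)$ and the identity $\Ga(\frac s2+1)\Ga(\frac{s+1}{2})=\frac{\sqrt\pi}{2^s}\Ga(s+1)$ (Legendre duplication, applied both to $s/2$ and to $(s+1)/2$) allow me to rewrite the RHS as an elementary expression of the form $\frac{s+1}{s+2}\cdot R(s)$, where $R(s)$ has a clean form after one more application of duplication. Concretely, this reduces the inequality to one of purely algebraic/exponential type, free of gamma functions, reminiscent of \eqref{eq:ms2}: something like
\[
\left(1+\frac{1}{s+2}\right)^{(s+1)/2}\geq \left(1+\frac{1}{s+1}\right)^{(s-1)/2}\cdot(\text{mild polynomial factor}).
\]

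Second, to verify this reduced inequality, I would pass to the real variable $s>0$ and show it via calculus. The idea, as in Lemma \ref{lem:trip1}, is to express both sides as $\left(1+\frac1n\right)^n$-type quantities and exploit the classical monotone increase of $(1+1/n)^n$ together with the monotone decrease of $(1+1/n)^{n+1}$; after taking logarithms, I expect the inequality to reduce to a single-variable inequality whose derivative can be bounded by a power series in $1/s$ with nonnegative coefficients for $s\geq 1$, with the finitely many small-$s$ cases verified by hand (or symbolically).

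The main obstacle will be the duplication step: unlike Lemma \ref{lem:trip1}, where the indices differed by $2$ so that the beta/gamma identities telescoped cleanly, here the shift is by $1$, which mixes $\Ga(\frac s2+\cdot)$ at integer and half-integer offsets. Carrying out the duplication to eliminate gammas cleanly — and verifying the resulting elementary inequality uniformly in $s$ — is the work. Once that is done, the conclusion $g_{s+1,s-1}(\isp(s+1,s-1))\geq g_{s,s}(\psi(s,s))$ follows, and together with Proposition \ref{prop:f2step} it supplies the remaining odd-to-even boundary comparison needed to complete the proof of Proposition \ref{prop:thetafirst}.
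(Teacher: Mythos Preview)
Your initial reduction is correct and coincides with the paper's: after substituting $\isp(s+1,s-1)=\frac{s+3}{2s+4}$, $\psi(s,s)=\frac12$ into \eqref{eq:gst} and using $\Gamma(\frac{s+3}{2})=\frac{s+1}{2}\Gamma(\frac{s+1}{2})$, the claim becomes exactly \eqref{eq:ms7},
\[
\frac{((s+1)(s+3))^{(s+1)/2}}{2(s+2)^{s}}\left(\frac{\Gamma(\frac s2+1)}{\Gamma(\frac{s+3}{2})}\right)^{2}\ \ge\ 1.
\]
The gap is the next step. Legendre duplication does \emph{not} eliminate the gamma ratio: it relates $\Gamma(\frac{s+1}{2})\Gamma(\frac s2+1)$ to $\Gamma(s+1)$, but after your two applications you are left with $\pi(s+1)\Gamma(s+1)^2/\big(2^{2s+1}\Gamma(\frac s2+1)^4\big)$, which still contains the Wallis-type quotient $\Gamma(\frac s2+1)/\Gamma(\frac{s+1}{2})$ (squared). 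This quotient is not an elementary function of $s$, so your plan to ``pass to real $s$ and use calculus'' on a gamma-free inequality rests on a reduction that cannot be carried out.

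The paper closes this gap with Chu's inequality (Remark \ref{rem:chu}): for $s\in\mathbb N$ one has $\Gamma(\frac s2+1)/\Gamma(\frac{s+1}{2})\ge\frac12\sqrt{2s+1}$, and only after inserting this lower bound does one arrive at the elementary inequality \eqref{eq:ms8}, namely $(1-\frac{1}{(s+2)^2})^{(s+1)/2}\ge 1-\frac{1}{2s+5}$, which is then dispatched by the $(1+1/n)^n$-type calculus you describe. The paper explicitly flags that the hypothesis $s\in\mathbb N$ enters precisely through Chu's inequality, so even in spirit your proposal to argue over real $s$ would require a Gautschi--Kershaw type bound on the gamma ratio, not duplication. (A minor side remark: this lemma handles the boundary comparison for \emph{even} $d$; the odd-$d$ boundary is Lemma \ref{lem:trip3}.)
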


\begin{rem}\rm
 \label{rem:chu}
  The proof uses Chu's inequality (see e.g.~\cite[p. 288]{MV70}) on the quotient of gamma functions,
  which says that for $s\in\mathbb N$,
\[
 \sqrt{\frac{2s+1}{4}} \le \frac{\Gamma(\frac{s}{2}+1)}{\Gamma(\frac{s+1}{2})} \le \frac{s+1}{\sqrt{2s+1}}.
\]
 Thus, it is at this point the assumption that $s$ is an integer is used. 
\end{rem}

\begin{proof}
Inequality \eqref{eq:trip2} is equivalent to 
\beq\label{eq:ms7}
\frac{(s+2)^{-s} ((s+1) (s+3))^{\frac{s+1}{2}} \Gamma
   \left(\frac{s}{2}+1\right)^2}{2 \Gamma \left(\frac{s+3}{2}\right)^2}\geq1.
\eeq
Further, using Chu's inequality 
it suffices to establish
\beq\label{eq:ms8}
\left(1-\frac{1}{(s+2)^2}\right)^{\frac{s+1}{2}}
\geq
1- \frac{1}{2 s+5}.
\eeq
Equivalently,
\beq\label{eq:ms9}
\left(1-\frac{1}{(s+2)^2}\right)^{\frac{s+1}{2}(2s+5)}
\geq
\left(1- \frac{1}{2 s+5}\right)^{2s+5}.
\eeq
The right-hand side of \eqref{eq:ms9} is increasing with 
limit as $s\to\infty$ being $\ds e^{-1}$. 
Further,
as
\[
\frac{s+1}{2}(2s+5)\leq 
(s+2)^2-1 
\qquad \text{ for }s\geq-1,
\]
we have
\beq\label{eq:ms10}
\left(1-\frac{1}{(s+2)^2}\right)^{\frac{s+1}{2}(2s+5)}
\geq
\left(1-\frac{1}{(s+2)^2}\right)^{(s+2)^2-1}
\eeq

Now consider 
\[
\zeta(x):=\left(1-\frac1{x^2}\right)^{x^2-1}.
\]
We claim it is (for $x>1$) decreasing. Indeed,
\[
\zeta'(x)=
\frac{2 \left(1-\frac{1}{x^2}\right)^{x^2} x \left(x^2 \log
   \left(1-\frac{1}{x^2}\right)+1\right)}{x^2-1}
   \]
   and
   \[
   x^2 \log
   \left(1-\frac{1}{x^2}\right)+1<0
   \]
   since
   \[
   \left(1-\frac{1}{x^2}\right)^{x^2}
   \]
  is increasing with limit $e^{-1}$.

Now the left-hand side of \eqref{eq:ms9} is greater
than the right-hand side of \eqref{eq:ms10} which
is decreasing with $s$ towards $\ds e^{-1}$
which is an upper bound on the right-hand side of \eqref{eq:ms9}.
   \end{proof}

\begin{lem}\label{lem:trip3}
For $s\in\R$ with $s\geq1$ we have
\beq\label{eq:trip3}
g_{s+2,s-1}(\isp(s+2,s-1)) \geq g_{s+1,s}(\psi(s+1,s)).
\eeq
\end{lem}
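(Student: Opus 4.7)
The plan is to follow the template of Lemma \ref{lem:trip2}, making the necessary adjustments for the odd-$d$ boundary case. Note that $(s+2)+(s-1) = 2s+1 = (s+1)+s$, so both sides of \eqref{eq:trip3} involve functions $g_{\cdot,\cdot}$ with total $d=2s+1$, which is why this is the correct boundary pairing for the two-step monotonicity of Proposition \ref{prop:f2step} when $d$ is odd.

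First I would substitute the closed form from Lemma \ref{lem:gst}. Since $\isp(s+2,s-1) = \frac{s+4}{2s+5}$ and $\psi(s+1,s) = \frac{s+1}{2s+1}$, and since both sides share the factor $\Gamma\bigl(\tfrac{2s+3}{2}\bigr)$, inequality \eqref{eq:trip3} is equivalent to
\[
\frac{\bigl(\tfrac{s+4}{2s+5}\bigr)^{(s+2)/2}\bigl(\tfrac{s+1}{2s+5}\bigr)^{(s-1)/2}}
{\Gamma\bigl(\tfrac{s+4}{2}\bigr)\Gamma\bigl(\tfrac{s+1}{2}\bigr)}
\;\geq\;
\frac{\bigl(\tfrac{s+1}{2s+1}\bigr)^{(s+1)/2}\bigl(\tfrac{s}{2s+1}\bigr)^{s/2}}
{\Gamma\bigl(\tfrac{s+3}{2}\bigr)\Gamma\bigl(\tfrac{s+2}{2}\bigr)}.
\]
Rearranging collects the gamma factors into the ratio
\[
R(s) \;:=\; \frac{\Gamma\bigl(\tfrac{s+4}{2}\bigr)\Gamma\bigl(\tfrac{s+1}{2}\bigr)}{\Gamma\bigl(\tfrac{s+3}{2}\bigr)\Gamma\bigl(\tfrac{s+2}{2}\bigr)},
\]
which I will bound from above using the real-variable analogue of Chu's inequality (as in Remark \ref{rem:chu}); concretely, the Gautschi--Kershaw inequality applied to both factors yields $R(s) \le \sqrt{\tfrac{s+2}{s+1}}$, with a matching lower bound of the same flavour. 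The issue flagged in Remark \ref{rem:chu} that Chu's inequality only holds for integer arguments is circumvented here because $s\in\R$ makes both $\tfrac{s+4}{2}-\tfrac{s+3}{2}=\tfrac12$ and $\tfrac{s+2}{2}-\tfrac{s+1}{2}=\tfrac12$ exactly the gap for which the continuous Wendel/Kershaw bounds are sharp.

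With $R(s)$ bounded above, the inequality reduces to a purely algebraic statement
\[
\bigl(s+4\bigr)^{s+2}(s+1)^{s-1}(2s+1)^{2s+1}
\;\geq\;
C(s)\cdot(s+1)^{s+1}s^{s}(2s+5)^{2s+1}
\]
for an explicit elementary $C(s)$ coming from the gamma-ratio estimate. Following Lemma \ref{lem:trip2}, I would next factor this as a product of expressions of the form $\bigl(1+\tfrac{a}{x}\bigr)^{x}$ and exploit the standard fact that such sequences are monotone in $x$ with limits $e^{a}$, so that, after taking a suitable root, the inequality collapses to a comparison between two monotone bounded sequences whose limits coincide. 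The boundary value $s=1$ would be verified directly by a numerical computation at the end (both sides are explicit in $s$ and finite there).

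The main obstacle, as in Lemma \ref{lem:trip2}, is bookkeeping: pulling the gamma ratio out cleanly in a way that keeps the remaining inequality susceptible to the $(1+\tfrac{a}{x})^x$-monotonicity trick, while also covering the non-integer $s\geq1$ case that was not needed in Lemma \ref{lem:trip2}. Once the algebraic reduction is in place, showing that the resulting sequence is monotone with the correct limit, as was done around equations \eqref{eq:ms8}--\eqref{eq:ms10}, should be routine calculus; the delicate part is verifying that the chosen gamma-ratio bound is tight enough near $s=1$ to avoid a bad constant.
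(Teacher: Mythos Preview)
Your plan contains a concrete error in the handling of the gamma ratio. You define
\[
R(s)=\frac{\Gamma\bigl(\tfrac{s+4}{2}\bigr)\Gamma\bigl(\tfrac{s+1}{2}\bigr)}{\Gamma\bigl(\tfrac{s+3}{2}\bigr)\Gamma\bigl(\tfrac{s+2}{2}\bigr)}
\]
and propose to bound it above by $\sqrt{\tfrac{s+2}{s+1}}$ via Gautschi--Kershaw. But the two pairs of arguments here differ by exactly $1$, not $\tfrac12$: using $\Gamma(x+1)=x\Gamma(x)$ twice gives
\[
R(s)=\frac{\tfrac{s+2}{2}\,\Gamma\bigl(\tfrac{s+2}{2}\bigr)\,\Gamma\bigl(\tfrac{s+1}{2}\bigr)}{\tfrac{s+1}{2}\,\Gamma\bigl(\tfrac{s+1}{2}\bigr)\,\Gamma\bigl(\tfrac{s+2}{2}\bigr)}=\frac{s+2}{s+1}
\]
exactly. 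Since $\tfrac{s+2}{s+1}>1$, your claimed bound $R(s)\le\sqrt{\tfrac{s+2}{s+1}}$ is false for every $s\ge1$, and any argument built on it collapses. This is, incidentally, precisely why Lemma~\ref{lem:trip3} holds for all real $s\ge1$ while Lemma~\ref{lem:trip2} needs $s\in\mathbb N$: in the latter the gamma arguments genuinely differ by $\tfrac12$ and Chu's inequality is unavoidable, whereas here the ratio closes up algebraically.

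Once you insert $R(s)=\tfrac{s+2}{s+1}$, the inequality \eqref{eq:trip3} reduces (after the same kind of manipulation you outline) to the explicit statement
\[
\xi(s):=\frac{s+4}{s+2}\,\Bigl(1+\frac4s\Bigr)^{s/2}\Bigl(\frac{2s+1}{2s+5}\Bigr)^{s+1/2}\ge1,
\]
with no undetermined constant $C(s)$. The paper then proves $\xi(s)\ge1$ by a different device than the $(1+\tfrac ax)^x$-monotonicity trick: it substitutes $s=2S$, checks that $\lim_{S\to\infty}\Xi(S)=1$ (the exponential factors combine to $e^{2}\cdot e^{-2}=1$), and shows $\Xi'(S)\le0$ on $[\tfrac12,\infty)$ by computing the derivative and bounding the logarithm via $\log x\le x-1$. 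Your proposed monotone-sequences-with-matching-limits approach may also work once the correct reduction is in place, but you should first fix the gamma step.
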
   
   
   \begin{proof}
 Expanding $g$'s 
as was done to obtain \eqref{eq:ms1}, 
 we see \eqref{eq:trip3} is equivalent to
\beq\label{eq:ms12}
\xi(s):=\frac{s+4}{s+2} \left(1+\frac4s\right)^{\frac s2}
 \left(\frac{2 s+1}{2
   s+5}\right)^{s+\frac{1}{2}}\geq1
\eeq
Letting $s=2S$, consider
\[
\Xi(S)=\xi\left(\frac s2\right) =
\frac{S+2}{S+1} \left(1+\frac2S\right)^{S}
 \left(1-\frac{4}{4S+5}\right)^{2S+\frac{1}{2}}.
\]

We have to show that $\Xi(S)\ge1$ for $S\in[\frac12,\infty)$. To this end, we will show that
$\Xi'(S)\le0$ for $S\in[\frac12,\infty)$ and $\lim_{S\to\infty}\Xi(S)=1$. For the latter, note that
\[\lim_{S\to\infty}\left(1-\frac{4}{4S+5}\right)^{2S+\frac{1}{2}}=\lim_{S\to\infty}\left(1-\frac{4}{4S+5}\right)^{-2}
\sqrt{\lim_{S\to\infty}\left(1-\frac{4}{4S+5}\right)^{5+4S}}=e^{-4}\]
and therefore
\[\lim_{S\to\infty}\Xi(S)=e^{2}\sqrt{e^{-4}}=1.
\]
A straightforward computation shows
\[\Xi'(S)=\frac{\left(1+\frac2S\right)^S \left(1-\frac{4}{4 S+5}\right)^{2 S+\frac{1}{2}}}{(S+1)^2 (4 S+5)}\Xi_1(S)\]
where
\begin{multline*}
\Xi_1(S):=1+2 S+(S+1)
   (S+2) (4 S+5)\\
   \log\left(1-\frac{8}{4 S+5}+\frac{16}{(4 S+5)^2}-\frac{16}{(4 S+5) S}+\frac{32}{(4 S+5)^2 S}+\frac{2}{S}\right).
\end{multline*}
It is enough to show that $\Xi_1(S)\le0$ for $S\in[\frac12,\infty)$. Using $\log x\le x-1$ for $x>0$, we have
\begin{multline*}
\Xi_1(S)\le\\
1+2 S+(S+1)(S+2) (4 S+5)
\left(-\frac{8}{4 S+5}+\frac{16}{(4 S+5)^2}-\frac{16}{(4 S+5) S}+\frac{32}{(4 S+5)^2 S}+\frac{2}{S}\right)\\
=\frac{9}{5 (4 S+5)}+\frac{4}{5 S}-2
\end{multline*}
which evaluates to $-\frac17$ for $S=\frac12$ and therefore is clearly negative for $S\ge\frac12$.
\end{proof}

\begin{proof}[Proof of Proposition {\rm\ref{prop:thetafirst}}]
Suppose $d=s +t$ is an even integer. If $s$ and $t$ are even  integers with $s \geq t$, then
two step monotonicity in
Proposition \ref{prop:f2step}
tells us that the minimizer over $r$ even with $s-r \geq t+r$,
 of
$f_{s-r,t+r}(\sigma_{s-r,t+r})$ occurs where $s-r=t+r$;
that is, $s=\frac d 2 =t$.
 If $s$ and $t$ are odd  integers with $s \geq t$, then
  stepping $r$ by $2$ preserves odd integers, so
 two step monotonicity  gives that the minimizer in
this case  is at  $ s= \frac d 2  + 1$
 and $ t= \frac d 2  - 1$.
 Compare the two minimizers using Lemmas \ref{lem:trip1} 
 and
\ref{lem:trip2} which give:
 \beq 
  f_{s,s}(\sigma_{s,s})  \leq
  g_{s,s}(\psi(s,s)) \leq
g_{s+1,s-1}(\isp(s+1,s-1)) \leq f_{s+1,s-1}(\sigma_{s+1,s-1}).
\eeq
Apply this inequality to $s= \frac d 2$ to get the minimizer  is $\frac d 2$.

Suppose  $d$ is an odd integer.
As before, Proposition \ref{prop:f2step} gives that the minimizer 
of $  f_{s,t}(\sigma_{s,t})$
over $s$  odd and $t$  even is $s= \frac d 2 + \frac 1 2$ and
$t= \frac d 2 - \frac 1 2$.
Likewise  minimizing over $t$  odd and $s$  even
yields   a minimizer which compares to the previous one unfavorably 
(using Lemmas \ref{lem:trip1} 
 and \ref{lem:trip3}).
 \end{proof}

\section{Estimating  $\th(d)$ for Odd $d$.}
\label{sec:odd}
Recall that $\th(d)$ ($d\in\N$) has been introduced in \eqref{eq:BTbd} and was simplified in Proposition \ref{prop:starastarb}.
It was explicitly determined in \eqref{eq:mc} for even $d$ by the expression which we repeat in part (b) of Theorem \ref{thm:thetaEvenAndOdd}. For
odd $d$, we have only the implicit characterization of Theorem \ref{thm:thetaExplicit}. We do not know a way of making this more explicit. In this section,
we exhibit however, for odd $d$, a compact interval containing $\th(d)$ whose end-points are given by nice analytic expressions in $d$. For the upper
end point of the interval, we provide two versions: one involving
the gamma function only and another which seems to be even tighter 
but involves the regularized beta function.
The main result of this section is

\begin{thm}
\label{thm:thetaEvenAndOdd}
Let $d\in\N$.
\begin{enumerate}[\rm(a)]
\item $\th(1)=1$
\item
Suppose $d$ is even. Then
\[
 \th(d) = \sqrt\pi\frac{\Ga\left(1+\frac d4\right)}{\Gamma\left(\frac12+\frac d4\right)}.
\]
\item
Suppose $d\ge3$ is odd. Then there is a unique $p\in[0,1]$ satisfying
\[I_p\left(\frac{d+1}4,\frac{d+3}4\right)=I_{1-p}\left(\frac{d-1}4,\frac{d+5}4\right).\]
For this $p$, we have $p\in[\frac12,\frac{d+1}{2d}]$,
\beq\label{eq:lowUpOdd}
\th_-(d)\le
\th(d)=\frac{\Ga\left(\frac{d+3}4\right)\Ga\left(\frac{d+5}4\right)}{p^{\frac{d-1}4}(1-p)^{\frac{d+1}4}\Ga\left(\frac d2+1\right)}\le\min\{\th_+(d),\th_{++}(d)\}
\eeq
where $\th_-(d)$, $\th_+(d)$ and $\th_{++}(d)$ are given by
\begin{align*}
\th_-(d)&=
\sqrt[4]{\frac{d^{2d}}{(d+1)^{d+1}(d-1)^{d-1}}}\;
\th_{++}(d),\\
\frac1{\th_+(d)}&=
\frac{d-1}dI_{\frac{d+1}{2 d}}\left(\frac{d+1}{4},\frac{d+3}{4}\right)+\frac{d+1}dI_{\frac{d-1}{2
   d}}\left(\frac{d-1}{4},\frac{d+5}{4}\right)-1\text{ and}\\
\th_{++}(d)&=\sqrt{\frac\pi2}\frac{\Ga\left(\frac{d+3}2\right)}{\Ga\left(\frac d2+1\right)}.
\end{align*}
\end{enumerate}
\end{thm}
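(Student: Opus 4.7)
The plan is to handle the three parts in increasing order of difficulty. Part (a) is immediate from the definition: with $d=1$ the sphere $S^0 = \{\pm 1\}$ carries uniform mass $\tfrac12$ at each point, so $\int_{S^0}|a_1\xi^2|\,d\xi = |a_1|$, and the constraint $|a_1|=1$ forces this to equal $1$. Part (b) is essentially just inverting the explicit $\Gamma$-quotient already recorded in Equation~\eqref{eq:mc} of Theorem~\ref{thm:thetaExplicit}. So the substance lies in part (c).

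For odd $d\ge3$, Proposition~\ref{prop:thetafirst} tells me the minimum in Problem~\ref{prob:opt'} is attained at $s=(d+1)/2,\,t=(d-1)/2$ with the equipoint $p:=\sigma_{s,t}$ solving \eqref{eq:defSIS}; substituting these $s,t$ values translates \eqref{eq:defSIS} into exactly the equation of the theorem. Uniqueness of $p$ comes from strict monotonicity: $I_p((d+1)/4,(d+3)/4)$ is strictly increasing in $p$ from $0$ to $1$, while $I_{1-p}((d-1)/4,(d+5)/4)$ is strictly decreasing from $1$ to $0$, so there is exactly one crossing. Combining Lemma~\ref{lem:fisg} with Lemma~\ref{lem:gst} then yields the closed form
\[
\frac1{\theta(d)} = g_{s,t}(p) = p^{s/2}(1-p)^{t/2}\,\frac{\Gamma(s/2+t/2+1)}{\Gamma(s/2+1)\Gamma(t/2+1)},
\]
and plugging in $s,t$ produces the displayed formula for $\theta(d)$.

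The upper bound $p\le(d+1)/(2d)=s/d$ is exactly Simmons' inequality \eqref{ineq:simm} from Theorem~\ref{simmons+}, and the lower bound $p\ge\tfrac12$ follows from the stronger estimate $p\ge(s+2)/(s+t+4)=(d+5)/(2d+8)\ge\tfrac12$ of Corollary~\ref{cor:lowBdSIS}. By Lemma~\ref{lem:qIncr}, $g_{s,t}$ is strictly increasing on $[0,s/d]$, so over the interval $[1/2,s/d]$ containing $p$ I have $g_{s,t}(1/2)\le g_{s,t}(p)\le g_{s,t}(s/d)$. Inverting, the right endpoint gives the lower bound $\theta_-(d)=1/g_{s,t}(s/d)$, which rearranges algebraically into the product form $\theta_{++}(d)\cdot(d^{2d}/((d+1)^{d+1}(d-1)^{d-1}))^{1/4}$; the left endpoint gives $\theta_{++}(d)=1/g_{s,t}(1/2)=2^{d/2}\Gamma((d+3)/4)\Gamma((d+5)/4)/\Gamma(d/2+1)$, which collapses to $\sqrt{\pi/2}\,\Gamma((d+3)/2)/\Gamma(d/2+1)$ after applying Legendre's duplication identity $\Gamma((d+3)/4)\Gamma((d+5)/4)=2^{-(d+1)/2}\sqrt\pi\,\Gamma((d+3)/2)$.

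The sharper upper bound $\theta(d)\le\theta_+(d)$ is the main technical step, since direct endpoint evaluation only gives $\theta_{++}$ and the Corollary~\ref{cor:lowBdSIS} bound on $p$ alone is not strong enough. The plan is to introduce the auxiliary function
\[
L(p) := \tfrac{t}{d}\bigl(2I_p(s/2,t/2+1)-1\bigr) + \tfrac{s}{d}\bigl(2I_{1-p}(t/2,s/2+1)-1\bigr).
\]
At the equipoint both beta terms coincide with the common value $V$, so $L(p)=(t/d+s/d)(2V-1)=1/\theta(d)$ because the weights sum to $1$. Differentiating term by term using $\partial_q I_q(a,b)=q^{a-1}(1-q)^{b-1}/B(a,b)$, expressing the beta normalizers via $B(s/2,t/2+1)=\Gamma(s/2)\Gamma(t/2+1)/\Gamma(d/2+1)$ and its twin, and invoking the Gamma recursion so the coefficients $t$ and $s$ cancel cleanly, one finds
\[
L'(p)\;=\;\frac{4\,\Gamma(d/2+1)}{d\,\Gamma(s/2)\Gamma(t/2)}\,p^{s/2-1}(1-p)^{t/2-1}(1-2p).
\]
Thus $L$ is increasing on $[0,1/2]$ and decreasing on $[1/2,1]$. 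Since $1/2\le p\le s/d$ sits in the decreasing regime, $L(s/d)\le L(p)=1/\theta(d)$. A direct substitution using $s/d=(d+1)/(2d)$, $1-s/d=(d-1)/(2d)$, $2t/d=(d-1)/d$, and $2s/d=(d+1)/d$ identifies $L(s/d)$ with the stated expression for $1/\theta_+(d)$, completing the chain $\theta(d)\le\theta_+(d)$. I expect the derivative simplification producing the $(1-2p)$ factor, with its serendipitous cancellation against the weights $t/d$ and $s/d$, to be the step most likely to require care.
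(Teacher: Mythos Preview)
Your proof is correct and follows essentially the same approach as the paper: you invoke Proposition~\ref{prop:thetafirst} for the location of the optimum, Lemmas~\ref{lem:fisg}--\ref{lem:gst} for the closed form, Theorem~\ref{simmons+} and Corollary~\ref{cor:lowBdSIS} for the bounds on $p$, and the monotonicity of the auxiliary functions for the three-sided estimate. Your function $L$ is precisely the paper's second auxiliary function (reintroduced in the proof section with the same derivative formula $\propto p^{s/2-1}(1-p)^{t/2-1}(1-2p)$), so the $\th_+$ argument is identical; the only minor variation is that for $\th_-$ you use monotonicity of $g_{s,t}$ on $[0,s/d]$ together with $p\le s/d$, whereas the paper instead uses that $f_{s,t}$ is minimized at $\sigma_{s,t}$ together with the identity $f_{s,t}(s/d)=h_{s,t}(s/d)$---both routes land on the same value $1/g_{s,t}(s/d)$.
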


\subsection{Proof of Theorem \ref{thm:thetaEvenAndOdd}}

Our starting point is the optimization Problem \ref{prob:opt}
(or its equivalent Problem \ref{prob:opt'}) from Section \ref{sec:opt}.
Any good approximation $p$ of $\sigma_{s,t}$ will now give upper bound $f_{s,t}(p)$ on the minimum $f_{s,t}(\sigma_{s,t})$ of $f_{s,t}$.
This will be our strategy to get a good upper bound of $\frac1{\th(d)}$, i.e., a good lower bound of $\th(d)$. Getting good upper bounds of $\th(d)$ will be harder.
To do this, we introduce sort of artificially simplified versions $g_{s,t},h_{s,t}\colon[0,1]\to\R$ of $f_{s,t}$ having the property
\[f(\sigma_{s,t})=g(\sigma_{s,t})=h(\sigma_{s,t})\]
but decreasing
at least in one direction while moving away from $\sigma_{s,t}$. The upper bound of $\th(d)$ arising from $g$ seems to be tighter while the one arising from $h$ will be given
by a simpler expression. Let these functions be given by 
\begin{align*}
g_{s,t}(p)&=2 \left(\frac{s I_{1-p}\left(\frac{t}{2},\frac{s}{2}+1\right)}{s+t}+\frac{tI_p\left(\frac{s}{2},\frac{t}{2}+1\right)}{s+t}\right)-1\qquad\text{and}\\
h_{s,t}(p)&=I_{1-p}\left(\frac{t}{2},\frac{s}{2}+1\right)+I_p\left(\frac{s}{2},\frac{t}{2}+1\right)-1
\end{align*}
for $p\in[0,1]$. Using the standard identities with beta functions, it is easy to compute
\begin{align*}
g_{s,t}'(p)&=\frac{2 p^{\frac{s}{2}-1}(1-p)^{\frac{t}{2}-1}}{B\left(\frac{s}{2},\frac{t}{2}\right)}(1-2p)\qquad\text{and}\\
h_{s,t}'(p)&=\frac{p^{\frac{s}{2}-1} (1-p)^{\frac{t}{2}-1} (s+t) ((1-p)s-pt)}{s t B\left(\frac{s}{2},\frac{t}{2}\right)}
\end{align*}
for $p\in[0,1]$. Therefore $g_{s,t}$ is strictly increasing on $[0,\frac12]$ and strictly decreasing on $[\frac12,1]$, and $h_{s,t}$ is strictly increasing on
$[0,\frac s{s+t}]$ and strictly decreasing on $[\frac s{s+t},1]$. Another useful identity which we will use is
\begin{equation}\label{eq:fs}
f_{s,t}\left(\frac s{s+t}\right)=h_{s,t}\left(\frac s{s+t}\right).
\end{equation}

\begin{lemma}\label{hsimple}
Let $s,t\in\N$ and $p\in[0,1]$. Then
\begin{align*}
h_{s,t}(p)&=\frac{2(s+t)}{stB(\frac s2,\frac t2)}p^{\frac s2}(1-p)^{\frac t2}\\
&=\frac{p^{\frac s2}(1-p)^{\frac t2}}{\left(\frac{s}{2}+\frac{t}{2}+1\right)B\left(\frac{s}{2}+1,\frac{t}{2}+1\right)}\\
&=\frac{\Gamma \left(\frac{s}{2}+\frac{t}{2}+1\right)}{\Gamma\left(\frac{s}{2}+1\right) \Gamma \left(\frac{t}{2}+1\right)}p^{\frac s2}(1-p)^{\frac t2}.
\end{align*}
\end{lemma}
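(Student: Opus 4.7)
The plan is to observe that $h_{s,t}$ is (up to the cosmetic relabeling $p \leftrightarrow \sigma$) exactly the auxiliary function $g_{s,t}$ from Section~\ref{sec:nov}. Indeed, comparing the definition of $h_{s,t}(p) = I_{1-p}(t/2, s/2+1) + I_p(s/2, t/2+1) - 1$ with \eqref{eq:defgst}, they agree term for term. Hence Lemma \ref{lem:gst} gives immediately
\[
h_{s,t}(p) \;=\; \frac{\Gamma\!\left(\tfrac{s}{2}+\tfrac{t}{2}+1\right)}{\Gamma\!\left(\tfrac{s}{2}+1\right)\Gamma\!\left(\tfrac{t}{2}+1\right)}\, p^{s/2}(1-p)^{t/2},
\]
which is the third stated expression.

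It then remains to verify that the three displayed expressions are equal as rational multiples of $p^{s/2}(1-p)^{t/2}$. This is a routine manipulation using $\Gamma(x+1)=x\Gamma(x)$ together with $B(\alpha,\beta)=\Gamma(\alpha)\Gamma(\beta)/\Gamma(\alpha+\beta)$. Specifically, for the equivalence of the second and third forms I would write
\[
\left(\tfrac{s}{2}+\tfrac{t}{2}+1\right)B\!\left(\tfrac{s}{2}+1,\tfrac{t}{2}+1\right)=\frac{\Gamma\!\left(\tfrac{s}{2}+1\right)\Gamma\!\left(\tfrac{t}{2}+1\right)}{\Gamma\!\left(\tfrac{s}{2}+\tfrac{t}{2}+1\right)},
\]
and for the equivalence of the first and second, reduce $B(s/2+1,t/2+1)$ to $B(s/2,t/2)$ via the two factors $(s/2)(t/2)$ in the numerator and $(s/2+t/2+1)(s/2+t/2)$ in the denominator, yielding
\[
\left(\tfrac{s}{2}+\tfrac{t}{2}+1\right)B\!\left(\tfrac{s}{2}+1,\tfrac{t}{2}+1\right)=\frac{st}{2(s+t)}\,B\!\left(\tfrac{s}{2},\tfrac{t}{2}\right),
\]
whose reciprocal is $\tfrac{2(s+t)}{st\,B(s/2,t/2)}$.

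There is no real obstacle here: the content of the lemma is Lemma \ref{lem:gst} restated in a new notation convenient for Section \ref{sec:odd}, and the three forms are packaged together because each will be convenient at different points of the subsequent estimates of $\theta(d)$ for odd $d$. The only minor bookkeeping item is to confirm the identification $h_{s,t}=g_{s,t}$ by matching the two incomplete beta summands, which is immediate.
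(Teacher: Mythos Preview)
Your proof is correct. The identification $h_{s,t}=g_{s,t}$ (with $g_{s,t}$ as in \eqref{eq:defgst}) is immediate, and Lemma~\ref{lem:gst} then hands you the third displayed form; the remaining equalities are the routine Gamma/Beta manipulations you wrote out, and those are fine.

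The paper's own proof does not cross-reference Lemma~\ref{lem:gst} but instead re-derives the formula from scratch: it uses the reflection $1-I_{1-p}(\tfrac t2,\tfrac s2+1)=I_p(\tfrac s2+1,\tfrac t2)$ together with the recursion $I_p(x,y)=I_p(x-1,y+1)-p^{x-1}(1-p)^y/(yB(x,y))$ to obtain the \emph{first} displayed expression directly, and then converts to the other two. At the level of the underlying computation this is the same identity that drives Lemma~\ref{lem:gst}, so the two arguments are mathematically equivalent; your route is simply shorter because you recognized the duplication. One small caution: Section~\ref{sec:odd} introduces its own, different function also called $g_{s,t}$, so when invoking the earlier lemma be explicit (as you were) that you mean the $g_{s,t}$ of \eqref{eq:defgst}.
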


\begin{proof} Using the identities $\ds I_p(x,y)=I_p(x-1,y+1)-\frac{p^{x-1}(1-p)^y}{yB(x,y)}$\footnote{see (8.17.19) in  \url{http://dlmf.nist.gov/8.17}}
and
$\ds B\left(\frac{s}{2}+1,\frac{t}{2}\right)=\frac s{s+t}B\left(\frac{s}{2},\frac{t}{2}\right)$,
we get
\begin{multline*}
1-I_{1-p}\left(\frac t2,\frac s2+1\right)=
I_p\left(\frac s2+1,\frac t2\right)=I_p\left(\frac s2,\frac t2+1\right)-\frac{p^{\frac s2}(1-p)^{\frac t2}}{\frac t2B(\frac s2+1,\frac t2)}\\
=I_p\left(\frac s2,\frac t2+1\right)-\frac{2(s+t)p^{\frac s2}(1-p)^{\frac t2}}{stB(\frac s2,\frac t2)}
\end{multline*}
and therefore $h_{s,t}(p)$ equals the first of the three expressions. Using
$\ds B\left(\frac{s}{2}+1,\frac{t}{2}+1\right)=\frac s{s+t+2}B\left(\frac{s}{2},\frac{t}{2}+1\right)=\frac{st}{(s+t)(s+t+2)}B\left(\frac{s}{2},\frac{t}{2}\right)$,
we get from this that $h_{s,t}(p)$ also equals the second expression. Finally,
\[B\left(\frac{s}{2}+1,\frac{t}{2}+1\right)=\frac{\Ga\left(\frac{s}{2}+1\right)\Ga\left(\frac{t}{2}+1\right)}{\Ga\left(\frac{s}{2}+\frac{t}{2}+2\right)}
=\frac{\Ga\left(\frac{s}{2}+1\right)\Ga\left(\frac{t}{2}+1\right)}{\left(\frac s2+\frac t2+1\right)\Ga\left(\frac{s}{2}+\frac{t}{2}+1\right)},
\]
yielding that $h_{s,t}(p)$ equals the third expression.
\end{proof}

\begin{proof}[Proof of Theorem {\rm\ref{thm:thetaEvenAndOdd}}]
(a) is clear. Part (b) has already been proven in \eqref{eq:mc} but we shortly give again an argument: If $d$ is even, we know that
\[\frac1{\th(d)}=f_{\frac d2,\frac d2}(\sigma_{\frac d2,\frac d2})=h_{\frac d2,\frac d2}(\sigma_{\frac d2,\frac d2})\]
but obviously $\sigma_{\frac d2,\frac d2}=\frac12$ and so by Lemma \ref{hsimple}
\[\frac1{\th(d)}=h_{\frac d2,\frac d2}\left(\frac12\right)=
\frac{\Ga\left(\frac d2+1\right)}{\Ga\left(\frac{d}{4}+1\right)^22^{\frac d2}}.\]
By the Lagrange duplication formula\footnote{see 5.5.5 in  \url{http://dlmf.nist.gov/5.5}}
we have
\[
\Ga\left(\frac d2+1\right)=\Ga\left(2\left(\frac d4+\frac 12\right)\right)=\frac1{\sqrt\pi2^{\frac d2}}\Ga\left(\frac d4+\frac 12\right)\Ga\left(\frac d4+1\right)
\]
which proves part (b).

It remains to prove (c) and we suppose from now on that $d\ge3$ is odd. Then the defining Equation \eqref{eq:defSIS}
for $p:=\sigma_{\frac{d+1}2,\frac{d-1}2}$ is the one stated in (c) and
we know from Theorem \ref{thm:thetaExplicit} that
\[\frac1{\th(d)}=f_{\frac{d+1}2,\frac{d-1}2}(p)=g_{\frac{d+1}2,\frac{d-1}2}(p)=h_{\frac{d+1}2,\frac{d-1}2}(p).\]
Using Lemma \ref{hsimple}, we get
\[\frac1{\th(d)}=h_{\frac{d+1}2,\frac{d-1}2}(p)=
\frac{\Gamma \left(\frac{d+1}{4}+\frac{d-1}{4}+1\right)}{\Gamma\left(\frac{d+1}{4}+1\right) \Gamma \left(\frac{d-1}{4}+1\right)}p^{\frac{d+1}4}(1-p)^{\frac{d-1}4},\]
showing the equality we claim for $\th(d)$. From 
Section \ref{sec:lowBdSIS} we know that
\[\frac12\le p\le\frac{d+1}{2d}.\]
By the monotonicity properties observed earlier, this implies
\[g_{\frac{d+1}2,\frac{d-1}2}\left(\frac{d+1}{2d}\right)\le g_{\frac{d+1}2,\frac{d-1}2}(p)=\frac1{\th(d)}\]
and
\[h_{\frac{d+1}2,\frac{d-1}2}\left(\frac12\right)\le h_{\frac{d+1}2,\frac{d-1}2}(p)=\frac1{\th(d)}.\]
The first inequality shows by a simple calculation that $\th(d)\le\th_+(d)$.

To show that $\th(d)\le\th_{++}(d)$, it is enough to verify that
\begin{equation}\label{eq:pluplu}
h_{\frac{d+1}2,\frac{d-1}2}\left(\frac12\right)=\frac1{\th_{++}(d)}.
\end{equation}
To this end, we use the third expression in Lemma \ref{hsimple} to obtain
\[
h_{\frac{d+1}2,\frac{d-1}2}\left(\frac12\right)=\frac{\Gamma \left(\frac d2+1\right)}{\Gamma\left(\frac{d+1}{4}+1\right) \Gamma \left(\frac{d-1}{4}+1\right)2^{\frac d2}}.
\]
By the Lagrange duplication formula, we have
\[
\Ga\left(\frac{d+3}2\right)=\Ga\left(2\left(\frac{d+3}4\right)\right)=\frac1{\sqrt\pi}2^{\frac{d+1}2}\Ga\left(\frac{d+3}4\right)\Ga\left(\frac{d+5}4\right)
\]
and therefore
\[h_{\frac{d+1}2,\frac{d-1}2}\left(\frac12\right)=\frac{\Gamma \left(\frac d2+1\right)}{\Gamma \left(\frac{d+3}2\right)\frac{\sqrt\pi}{\sqrt2}}=\sqrt{\frac2\pi}\frac{\Gamma \left(\frac d2+1\right)}{\Gamma \left(\frac{d+3}2\right)}=\frac1{\th_{++}(d)}.\]

Finally, we show that $\th_-(d)\le\th(d)$. This will follow from
\[\frac1{\th(d)}=f_{\frac{d+1}2,\frac{d-1}2}(p)\le f_{\frac{d+1}2,\frac{d-1}2}\left(\frac{d+1}{2d}\right)\overset{\eqref{eq:fs}}=h_{\frac{d+1}2,\frac{d-1}2}\left(\frac{d+1}{2d}\right)\]
if we can show that
\[h_{\frac{d+1}2,\frac{d-1}2}\left(\frac{d+1}{2d}\right)=\frac1{\th_-(d)}.\]
But this follows from
\begin{multline*}
\frac1{\th_-(d)}=\left(\frac{d+1}d\right)^{\frac{d+1}4}\left(\frac{d-1}d\right)^{\frac{d-1}4}\frac1{\th_{++}(d)}=\\
\left(\frac{d+1}d\right)^{\frac{d+1}4}\left(\frac{d-1}d\right)^{\frac{d-1}4}h_{\frac{d+1}2,\frac{d-1}2}\left(\frac12\right)=h_{\frac{d+1}2,\frac{d-1}2}\left(\frac{d+1}{2d}\right)
\end{multline*}
where the second equality stems from \ref{eq:pluplu} and the third from Lemma \ref{hsimple}.
\end{proof}

The following table shows the approximate values of $\th(d)$ and its lower and upper bounds from Theorem
\ref{thm:thetaEvenAndOdd} for $t\le20$

\[
\begin{array}{r|cccc}
 d & \th_-(d) & \th(d) & \th_+(d) & \th_{++}(d)\\
 \hline
 1 & - & 1 & - & - \\
 2 & - & 1.5708 & - & - \\
 3 & 1.73205 & 1.73482 & 1.77064 & 1.88562 \\
 4 & - & 2 & - & - \\
 5 & 2.15166 & 2.1527 & 2.17266 & 2.26274 \\
 6 & - & 2.35619 & - & - \\
 7 & 2.49496 & 2.49548 & 2.50851 & 2.58599 \\
 8 & - & 2.66667 & - & - \\
 9 & 2.79445 & 2.79475 & 2.80409 & 2.87332 \\
 10 & - & 2.94524 & - & - \\
 11 & 3.064 & 3.06419 & 3.07131 & 3.13453 \\
 12 & - & 3.2 & - & - \\
 13 & 3.31129 & 3.31142 & 3.31707 & 3.37565 \\
 14 & - & 3.43612 & - & - \\
 15 & 3.54114 & 3.54123 & 3.54585 & 3.6007 \\
 16 & - & 3.65714 & - & - \\
 17 & 3.75681 & 3.75688 & 3.76076 & 3.8125 \\
 18 & - & 3.86563 & - & - \\
 19 & 3.96068 & 3.96073 & 3.96404 & 4.01316 \\
 20 & - & 4.06349 & - & - \\
\end{array}
\]

\subsection{Explicit bounds on $\th(d)$}

In this subsection we present explicit 
bounds on $\th(d)$ for $d\in\N$.
Theorem \ref{thm:thetaEvenAndOdd} gives
an explicit analytic expression for $\th(d)$ when
$d\in\N$ is even, and gives analytic bounds
for $\th(d)$ with $d$ odd.

\begin{prop}
Let $d\in\N$. 
\ben[\rm(1)]
\item
If $d$ is even, then
\[
\frac {\sqrt{\pi}}{2}\sqrt{d+1} \leq
\vartheta(d) \leq\frac{\sqrt{\pi}}{2}\cdot
\frac d{\sqrt{d-1}}.
\]
\item
If $d$ is odd, then
\[
\sqrt[4]{ \left(1-\frac1{d+1}\right)^{d+1}
\left(1+\frac1{d-1}\right)^{d-1}
}
\cdot \frac {\sqrt{\pi}}{2}\sqrt{d+\frac32}
\leq
\vartheta(d) \leq\frac{\sqrt{\pi}}{2}\cdot
\frac {d+2}{\sqrt{d+\frac52}}.
\]
\item
We have $\ds\lim_{d\to\infty} \frac{\th(d)}{\sqrt d}=\frac{\sqrt\pi}2$.
\een
\end{prop}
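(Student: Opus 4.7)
The proof combines the formulas of Theorem \ref{thm:thetaEvenAndOdd} with Chu's inequality (Remark \ref{rem:chu}), together with a monotonicity argument for a central binomial sequence in the odd case.

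For part (1), I use the exact formula $\vartheta(d) = \sqrt{\pi}\,\Gamma(1+d/4)/\Gamma(1/2+d/4)$ from Theorem \ref{thm:thetaEvenAndOdd}(b). Chu's inequality with $s = d/2$ (an integer $\ge 1$ for $d$ even) immediately gives $\Gamma(1+d/4)/\Gamma(1/2+d/4) \geq \sqrt{(d+1)/4}$, yielding the lower bound $\frac{\sqrt\pi}{2}\sqrt{d+1}$. For the upper bound, I rewrite $\Gamma(1+d/4) = (d/4)\Gamma(d/4)$ and apply Chu's inequality with $s = d/2-1$ (valid for $d \geq 4$; the case $d = 2$ is checked by hand) to obtain $\Gamma(d/4+1/2)/\Gamma(d/4) \geq \sqrt{(d-1)/4}$. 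Inverting produces $\Gamma(d/4)/\Gamma(d/4+1/2) \leq 2/\sqrt{d-1}$, and hence $\vartheta(d) \leq \sqrt\pi \cdot \frac{d}{2\sqrt{d-1}}$.

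For part (2), I start from Theorem \ref{thm:thetaEvenAndOdd}(c), which yields $\vartheta_{-}(d) \leq \vartheta(d) \leq \vartheta_{++}(d)$. The lower bound for $\vartheta(d)$ rewrites as the claimed quartic factor times $\vartheta_{++}(d)$, the identity $d^{2d}/((d+1)^{d+1}(d-1)^{d-1}) = (1-\tfrac1{d+1})^{d+1}(1+\tfrac1{d-1})^{d-1}$ being routine. Applying Chu's inequality with $s = d+1$ to $\vartheta_{++}(d) = \sqrt{\pi/2}\,\Gamma((d+3)/2)/\Gamma((d+2)/2)$ gives $\vartheta_{++}(d) \geq \sqrt{\pi/2}\sqrt{(2d+3)/4} = \frac{\sqrt\pi}{2}\sqrt{d+3/2}$, completing the lower bound. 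For the upper bound, setting $n := (d+1)/2 \in \N$ and using $\Gamma(n+1/2) = (2n)!\sqrt\pi/(4^n n!)$ rewrites $\vartheta_{++}(d) = 4^n(n!)^2/((2n)!\sqrt2)$, so the claim reduces to
\[
b_n := \binom{2n}{n}\frac{2n+1}{4^n\sqrt{4n+3}} \geq \frac{1}{\sqrt\pi}.
\]
I establish this by showing $(b_n)$ is strictly decreasing with limit $1/\sqrt\pi$. A direct computation gives
\[
\left(\frac{b_{n+1}}{b_n}\right)^2 = \frac{(2n+3)^2(4n+3)}{4(n+1)^2(4n+7)},
\]
and expanding both products shows $(2n+3)^2(4n+3) = 16n^3+60n^2+72n+27$ while $4(n+1)^2(4n+7) = 16n^3+60n^2+72n+28$, a difference of exactly $1$; hence $b_{n+1} < b_n$ strictly, and Stirling's formula yields $b_n \to 1/\sqrt\pi$. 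This is the main obstacle: Chu's inequality applied directly to $\vartheta_{++}(d)$ only delivers the weaker bound with $\sqrt{d+3/2}$ in the denominator, so the claimed $\sqrt{d+5/2}$ requires precisely this sharper binomial-coefficient analysis, and the razor-thin ``$27$ versus $28$'' cancellation is what makes the monotonicity argument go through.

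Part (3) is immediate by squeezing. Dividing the bounds in (1) and (2) by $\sqrt d$, each tends to $\sqrt\pi/2$: $\sqrt{(d+1)/d}$, $\sqrt{d/(d-1)}$, $(d+2)/\sqrt{d(d+5/2)}$, and $\sqrt{(d+3/2)/d}$ all tend to $1$, while $(1-1/(d+1))^{d+1}(1+1/(d-1))^{d-1} \to e^{-1}\cdot e = 1$ makes the quartic factor in (2) tend to $1$.
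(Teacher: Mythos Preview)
Your proof is correct. For parts (1), (3), and the lower bound in (2) you argue exactly as the paper does: apply Chu's inequality, with the functional-equation shift $\Gamma(1+d/4)=(d/4)\Gamma(d/4)$ for the upper bound in (1).

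For the upper bound in (2), however, you take a genuinely different route. You assert that Chu's inequality ``only delivers the weaker bound with $\sqrt{d+3/2}$ in the denominator,'' and then prove the required inequality $b_n\ge 1/\sqrt\pi$ by a direct monotonicity-plus-Stirling argument. That assertion is not correct: the very same shift trick you used in part (1) also works here. Apply Chu's \emph{lower} bound with $s=d+2$ (odd, so $s\in\N$) to get
\[
\frac{\Gamma\!\left(\tfrac{d+4}{2}\right)}{\Gamma\!\left(\tfrac{d+3}{2}\right)}\ \ge\ \sqrt{\frac{2d+5}{4}},
\]
and combine this with $\Gamma\!\left(\tfrac{d+4}{2}\right)=\tfrac{d+2}{2}\,\Gamma\!\left(\tfrac{d+2}{2}\right)$ to obtain
\[
\frac{\Gamma\!\left(\tfrac{d+3}{2}\right)}{\Gamma\!\left(\tfrac{d+2}{2}\right)}\ \le\ \frac{d+2}{\sqrt{2d+5}},
\qquad\text{hence}\qquad
\vartheta_{++}(d)\ \le\ \frac{\sqrt\pi}{2}\cdot\frac{d+2}{\sqrt{d+5/2}}.
\]
So the paper's approach is more uniform and considerably shorter. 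Your central-binomial argument, with its pleasant ``$27$ versus $28$'' cancellation, is a valid alternative and has the merit of being self-contained for this step, but it is not needed.
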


\begin{proof}
Suppose that $d$ is even. By Theorem \ref{thm:thetaExplicit}, 
\beq\label{eq:thetaExplicit}
\th(d)= \sqrt\pi \frac{ \Gamma\left(\frac d4+1\right)}{\Gamma\left(\frac d4+\frac12\right)}.
\eeq
Since $d$ is even, we may apply  Chu's inequality  (see Remark \ref{rem:chu}),
to the the right-hand side of
\eqref{eq:thetaExplicit} and obtain (1).

Similarly, (2) is obtained by applying Chu's inequality 
to \eqref{eq:lowUpOdd}. Finally, (3) is an easy consequence of
(1) and (2).
\end{proof}

Observe that these upper bounds are tighter than the
bound $\ds\th(d)\leq \frac{\pi}2\sqrt d$ given in \cite{BtN}.

\section{Dilations and Inclusions of Balls}
\label{sec:balls}
 In this section free relaxations of the problem of including 
 the unit ball of $\R^g$ into a
 spectrahedron  are considered.  Here the focus is the dependence of the inclusion scale as a function of $g$ (rather than $d$).  Among the results we identify the worst case inclusion constant as $g$. This inclusion constant can be viewed as a symmetric variable matrix version  of the quantitative measure $\alpha(\mathbb C^g)$ of the difference between the maximal and minimal operator space structures associated with the unit ball in $\mathbb C^g$ introduced by Paulsen \cite{Pau} for which the best results give only upper and lower bounds \cite{Pisier-book}.

\subsection{The general dilation result}

 Let $A\in\smatdg$ be a given $g$-tuple and assume $\cDA$ is bounded. 

\begin{prop} \label{prop:small g}
  Suppose $\cDA$ has the property that if $C\in\cDA$ and $1\le j\le g$, then
\[
 \hat{C}_j = (0,\dots,0,C_j,0,\dots,0)\in \cDA.
\]
  If $C\in \cDA(n)$, then there exists a commuting tuple $T\in\cDA(gn)$ such that $C$ dilates to $gT$.
The estimate is sharp in that $g$ is the smallest number such that for every $g$-tuple $A$ (satisfying the assumption above) and $g$-tuple $X\in \cDA$ there exists a commuting $g$-tuple $T$ of symmetric matrices of size $gn$ such that $X$ dilates to $gT$ and  the joint spectrum of $T$ lies in $\cDA(1)$. 
\end{prop}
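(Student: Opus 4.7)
The proposition splits naturally into an existence claim (dilation with scale $g$) and a sharpness claim, which I would attack separately. The existence direction is constructive and falls out of the axis-alignment hypothesis; the sharpness requires a concrete example saturating the scale $g$.

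For existence, my plan is the following explicit construction. Given $C = (C_1, \dots, C_g) \in \cDA(n)$, define the block-diagonal tuple
\[
T_j := E_{jj} \otimes C_j \in \smat_{gn},
\]
where $E_{jj}$ is the $g \times g$ rank-one diagonal projection onto the $j$-th coordinate. Since $E_{jj} E_{kk} = 0$ for $j \ne k$, the $T_j$ pairwise commute. Reorganising tensor factors,
\[
L_A(T) \;=\; \sum_{j=1}^g E_{jj} \otimes (I_{dn} - A_j \otimes C_j),
\]
which is block-diagonal, and the axis-alignment hypothesis says precisely that each $\hat C_j = (0,\dots,C_j,\dots,0) \in \cDA(n)$, i.e.\ $I_{dn} - A_j \otimes C_j \succeq 0$. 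Hence $L_A(T)\succeq 0$ and $T\in\cDA(gn)$; simultaneous diagonalisation of the commuting symmetric tuple places the joint spectrum in $\cDA(1) = \fs_{L_A}$. The dilation is then realised by the averaging isometry $V\colon \R^n \to \R^{gn}$, $Vx = \tfrac{1}{\sqrt g} \sum_j e_j \otimes x$: using $T_j(e_k \otimes x) = \delta_{jk}\, e_j \otimes C_j x$, a direct computation yields $V^*(g T_j) V = C_j$ for each $j$.

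For sharpness, my plan is to exhibit, for every $\kappa < g$, a pair $(A,X)$ satisfying the hypothesis but with no commuting dilation of scale $\kappa$. The natural candidate is the matrix ball from Section \ref{sec:balls}: take $A_j = -(e_1 e_{j+1}^\top + e_{j+1} e_1^\top)$ of size $g+1$, so that by Schur complement $\fs_{L_A}$ is the Euclidean unit ball in $\R^g$. The axis-alignment hypothesis is easy to verify since the Schur criterion for a lone $\hat C_j$ reduces to $\|C_j\| \le 1$, which is automatic from $\sum_k C_k^2 \preceq I$. A test tuple $X$ saturating scale $g$ will be built from anticommuting self-adjoint matrices with $X_j^2 = \tfrac{1}{g} I$, arising from a Clifford algebra representation, which places $X$ on the boundary of the free matrix ball.

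I expect the hardest step to be the matching lower bound: given an isometric dilation $X = V^*(\kappa T) V$ with $T$ commuting and joint spectrum in the scalar unit ball, deducing $\kappa \ge g$. The plan here is to exploit commutativity of $T$ via simultaneous diagonalisation so as to reduce to a scalar problem---pair against a well-chosen unit vector to rewrite the problem in terms of numbers $\lambda_j$ in the scalar ball---and then to invoke the Clifford identities $X_j X_k + X_k X_j = \tfrac{2}{g}\delta_{jk} I$ to force a scalar inequality whose optimal constant is exactly $g$. Combined with the existence construction this will establish the optimality of the scale $g$.
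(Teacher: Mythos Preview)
Your existence argument is correct and is exactly the paper's construction: block-diagonal $T_j=E_{jj}\otimes C_j$, the same averaging isometry $V$, and the same block decomposition of $L_A(T)$.

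The sharpness plan, however, has a genuine gap. Choosing $\cDA=\mfBoh_g$ (the OH ball, which is what your arrow-matrix $A_j$ describes) and the normalized Clifford tuple $X_j=\tfrac{1}{\sqrt g}P_j$ cannot force $\kappa\ge g$; it only forces $\kappa\ge\sqrt g$. Concretely, if $X=\kappa V^*TV$ with $T$ commuting and joint spectrum in $\mathbb B_g$, then $T\in\mfBc_g$ and hence $\tfrac{1}{\kappa}X\in\mfBc_g$, i.e.\ $\|\sum_j P_j\otimes \tfrac{1}{\kappa\sqrt g}P_j\|\le 1$. Since $\|\sum_j P_j\otimes P_j\|=g$ (Lemma~\ref{lem:normp}), this yields only $\kappa\ge\sqrt g$. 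This is not an artifact of the argument: the paper shows (Theorem~\ref{conj:wild versus spin}, and explicitly for $g=2$ in Example~\ref{ex:g=d=2}) that the smallest $\rho$ with $\mfBoh_g\subset\rho\,\mfBc_g$ is $\sqrt g$, so the commutability index of the OH ball is $\tfrac{1}{\sqrt g}$ and your tuple genuinely \emph{does} dilate at scale $\sqrt g$. No amount of cleverness with the Clifford relations will extract the constant $g$ from this $\cDA$.

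The paper's fix is to change $\cDA$: it takes the \emph{unscaled} spin tuple $P$ itself as the test point, which lies in the min ball $\mfBm_g$ (Lemma~\ref{lem:minp}), and argues via Corollary~\ref{cor:g-sharp}. If the scale $\kappa<g$ worked universally, then in particular every $X\in\mfBm_g$ would dilate to $\kappa T$ with $T$ commuting and spectrum in $\mathbb B_g$, hence $T\in\mfBc_g$, hence $\mfBm_g\subset\kappa\,\mfBc_g$. But $P\in\mfBm_g$ while $\|\sum P_j\otimes P_j\|=g$ places $P$ on the boundary of $g\,\mfBc_g$, a contradiction. So the correct witness pair is (min ball, unscaled $P$), pairing against the spin-ball pencil, not (OH ball, $\tfrac{1}{\sqrt g}P$).
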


\begin{proof}
  Given $C$, let $T_j = \oplus_{k=1}^g  T_{jk}$, where $T_{jk}=0$ (the $n\times n$ zero matrix) if $j\ne k$ and $T_{jj} = C_j$. It is automatic that the $T_j$ commute. Further,
\[
 L_A(T) = 
   \bigoplus_{j=1}^g \big( I- A_j \otimes C_j \big) \succeq 0,
\]
 so that $T\in\cDA$.  Finally, let $V:\mathbb R^n\to\mathbb R^n\otimes \mathbb R^g$ denote the mapping
\[
  Vh = \frac{1}{\sqrt{g}} \bigoplus_1^g h.
\]
 It is routine to verify that $V^* TV= \frac{1}{g} C$.
 
 The proof of sharpness is more difficult and is completed in Corollary \ref{cor:g-sharp}. 
\end{proof}

\begin{rem}\rm
  The hypothesis of Proposition \ref{prop:small g} applies to the matrix cube.
 When $d$ is large and $g$ is small, the proposition gives a better estimate 
 for the matrix cube relaxation than does Theorem \ref{thm:BtN} of Ben-Tal and Nemirovski.
More generally, if $\cDA$ is \df{real Reinhardt}, meaning if $X=(X_1,\dots,X_g)\in \cDA$, then all the tuples $(\pm X_1,\dots,\pm X_g)\in\cDA$, then $\cDA$ satisfies the hypothesis of Proposition \ref{prop:small g}. Of course any $\cD_{L_B}$ can be embedded in a $\cD_{L_A}$ which satisfies the hypotheses of Proposition \ref{prop:small g}.
\end{rem}

 \subsection{Four types of balls}

 Let \df{$\mathbb B_g$} denote the unit ball in $\mathbb R^g$.  
 Here we consider four matrix convex sets each of which, at level $1$, 
 equal  $\mathbb B_g$. Two of these we know to be free spectrahedra. A third is for  $g=2$, but likely not for $g\ge 3$.  The remaining one we will prove is not a free spectrahedron as part of a forthcoming paper.

\subsubsection{The OH ball}
 \label{sec:oh ball}
 The \df{OH ball}   (for operator Hilbert space  \cite{Pisier-book})   \df{$\mfBoh_g$} is the set of 
  tuples $X=(X_1,\dots,X_g)$ of symmetric  matrices such that 
\[
  \sum_{j=1}^g X_j^2 \preceq  I.
\]
 Equivalently, the row matrix $\begin{pmatrix} X_1 & X_2 & \dots & X_g\end{pmatrix}$ 
 (or its transpose) has norm at most $1$. 
The ball $\mfBoh_g $ is symmetric about the origin and also satisfies the conditions of Proposition \ref{prop:small g}. 

\begin{example}\rm
 \label{ex:g=d=2}
 For two variables, $g=2$, the  commutability index 
 of $\mfBoh_2(2)$ is at least  $\frac 1{\sqrt2}$.

 Let 
\[
 C_1 = \frac{1}{\sqrt{2}} \begin{pmatrix} 1 & 0\\0 & -1 \end{pmatrix}
\qquad and
\qquad
 C_2 = \frac{1}{\sqrt{2}} \begin{pmatrix} 0 & 1\\ 1 & 0\end{pmatrix}.
\]
 Evidently $C=(C_1,C_2) \in\mfBoh_2$. 
 Suppose $T=(T_1,T_2)$ is a commuting tuple of size $2+k$ which dilates $C$. Thus,
\[
 T_j =\begin{pmatrix} C_j & a_j \\ a_j^* & d_j\end{pmatrix},
\]
 where $a_j$ is $2\times k$ and $d_j$ is $k\times k$. 
  Commutativity of the tuple $T$ implies, 
\[
  \begin{pmatrix} 0 & 1\\ -1 & 0\end{pmatrix} 
     = C_1 C_2 -C_2 C_1  = a_1 a_2^* - a_2 a_1^* = 
   \begin{pmatrix} a_1 & -a_2 \end{pmatrix} \, \begin{pmatrix} a_1^* \\ a_2^*\end{pmatrix}.
\]
 It follows that either the norm of $(a_1 \ \ -a_2)$ or $(a_1 \ \ a_2)^*$ is at least one. In either case,
\[
 a_1 a_1^* + a_2 a_2^* \not\preceq I.
\]
 On the other hand, the $(1,1)$ block entry of $T_1^2+T_2^2$ is 
\[
 I_2 + a_1 a_1^* + a_2 a_2^* \not\preceq 2 I_2. 
\]

 Thus, for the tuple $C$ the smallest $\rho$ for which there exists a tuple $T$ of commuting operators with spectrum in $\mathbb B_g$ such that $C$ dilates to $\rho T$ is at least $\sqrt{2}$. In other words, the commutability index of $\mfBoh_2(2)$ is at most $\frac{1}{\sqrt{2}}$. 
 \end{example}

\subsubsection{The min and max balls}
 \label{sec:min ball}
  Let \df{$\mfBm_g$} denote the \df{min  ball} (the unit ball $\mathbb B_g$   with the minimum operator system structure).  
   Namely, $X=(X_1,\dots,X_{g})\in \mfBm_g$ if 
\[
 \sum_{j=1}^{g} x_j X_j \preceq  I
\]
 for all unit vectors $x\in\mathbb R^{g}$. 

\begin{lemma}
   For a tuple $X$ of $n\times n$ symmetric matrices, the following are equivalent.
 \begin{enumerate}[\rm (i)]
  \label{lem:min alts}
   \item $X$ is in the min ball;
   \item $\mathbb B_g \subset \cD_{L_X}(1)$;
   \item for each unit vector $v\in\mathbb R^n$ the vector $v^*Xv = (v^* X_1v,\dots,v^*X_gv)\in\mathbb B_g$;
   \item $X\in \cDA(n)$ for every $g$-tuple $A$ of symmetric $1\times 1$ matrices for which  $\cDA(1)\supseteq \mathbb B_g$.
 \end{enumerate}
\end{lemma}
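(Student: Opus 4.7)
The plan is to observe that all four conditions are really rephrasings of the single inequality
\[
   \sum_{j=1}^g x_j X_j \preceq I_n \qquad \text{for every } x\in\mathbb B_g.
\]
A preliminary scaling/symmetry remark makes this chase frictionless: because $x\in\mathbb B_g$ implies $-x\in\mathbb B_g$, this condition is equivalent to $\|\sum x_j X_j\|\le 1$ for every unit vector $x$, and by homogeneity the universal quantifier may be taken over unit vectors rather than the whole ball (and vice versa).

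For (i)$\Leftrightarrow$(ii) I would simply unfold the definition of $\cD_{L_X}(1)=\{y\in\R^g:I-\sum y_jX_j\succeq0\}$ and apply the scaling remark: $\mathbb B_g\subset\cD_{L_X}(1)$ literally says that $\sum y_jX_j\preceq I$ for all $\|y\|\le1$, which is (i) after restricting $y$ to unit vectors.  For (i)$\Leftrightarrow$(iv) note that a $g$-tuple $A$ of symmetric $1\times 1$ matrices is nothing but a vector $a\in\R^g$; the hypothesis $\cD_A(1)\supset\mathbb B_g$ translates, by linear-functional duality on $\mathbb B_g$, into $\|a\|\le 1$, and the conclusion $X\in\cD_A(n)$ is exactly $\sum a_jX_j\preceq I_n$, so (iv) is the statement that (i) holds for all vectors in $\mathbb B_g$.

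The key (still elementary) step is (i)$\Leftrightarrow$(iii).  Compressing $\sum x_jX_j\preceq I_n$ to the span of a unit vector $v\in\R^n$ turns the inequality into the scalar condition $\sum x_j\langle X_jv,v\rangle\le1$; taking the supremum over unit $x$ and using the duality $\sup_{\|x\|=1}\langle x,c\rangle=\|c\|_2$ on $c=v^*Xv=(v^*X_1v,\dots,v^*X_gv)$ rephrases (i) as $\|v^*Xv\|\le 1$ for every unit $v$, which is (iii).  For the reverse direction I would use the standard fact that the operator norm of a symmetric matrix equals its numerical radius, so $\|v^*Xv\|\le1$ for every unit $v$ forces $\|\sum x_jX_j\|\le 1$ for every unit $x$, hence (i).

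There is no serious obstacle here; the only thing to be careful about is the scaling/symmetry observation above, which is what lets one move freely between ``$\preceq I$ for all unit $x$,'' ``$\preceq I$ for all $\|x\|\le 1$,'' and ``operator norm at most one,'' and which is implicit in the identification of the min ball as the (operator) polar of $\mathbb B_g$.
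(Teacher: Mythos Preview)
Your proof is correct and follows essentially the same route as the paper's: (i)$\Leftrightarrow$(ii) is immediate from the definitions, (i)$\Leftrightarrow$(iv) goes through identifying a $1\times 1$ tuple $A$ with a vector $a\in\R^g$ and observing that $\cD_{L_a}(1)\supseteq\mathbb B_g$ is equivalent to $\|a\|\le 1$, and (i)$\Leftrightarrow$(iii) reduces (i) to the two-parameter scalar inequality $\sum_j x_j(v^*X_jv)\le 1$ for all unit $x,v$ and then invokes Cauchy--Schwarz. The only cosmetic difference is that for (iii)$\Rightarrow$(i) you route through the numerical-radius identity to get $\|\sum x_jX_j\|\le 1$, whereas the paper goes directly: $\sum x_jX_j\preceq I$ \emph{means} $\sum_j x_j(v^*X_jv)\le 1$ for all unit $v$, and that follows from Cauchy--Schwarz and (iii) without passing through the operator norm. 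Your detour is harmless, just slightly longer.
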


\begin{proof}
  The equivalence of (i) and (ii) is immediate.  The tuple $X$ is in the min ball if and only if for each pair of unit vectors $x,v\in\mathbb B_g$,
\[
  \sum_{j=1}^g  x_j (v^* X_j v) \le v^*v = 1
\]
 and the equivalence of (i) and (iii) follows.    Now suppose $X$ is not in the min ball. In this case there exists $a\in\mathbb B_g$ such that $X\notin \cD_{L_a}(1)$ but of course $\cD_{L_a}(1)\supseteq \mathbb B_g$. Thus (iv) implies (i).  If (iv) doesn't hold, then there is a $a\in\mathbb B_g$ such that $X\notin \cD_{L_a}(1)$, but $\cD_{L_a}(1)\supseteq \mathbb B_g$. This latter inclusion implies $a\in\mathbb B_g$ and it follows that $X$ is not in the min ball and (i) implies (iv).
\end{proof}


\begin{rem}
 Note that  $\mfBm_g$ is not  exactly a free spectrahedron since it is defined by infinitely many linear matrix inequalities (LMIs). In a forthcoming paper we show using the theory of matrix extreme points that in fact $\mfBm_g$ is not a free spectrahedron.
\end{rem}

By comparison, the \df{max ball}, denoted \df{$\mfBmax_g$}, is the set of $g$-tuples of symmetric matrices  $X=(X_1,\dots,X_g)$ such that $X\in\cDA$ for every  $d$ and $g$-tuple $A$ of symmetric $d\times d$ matrices for which  $\cDA(1)\supseteq \mathbb B_g$.  Like the min ball, the max ball is not presented as a free spectrahedron since it is defined in terms of infinitely many LMIs.  It is described by an LMI when  $g=2$. See Subsection \ref{sec:spin ball} below.

\begin{prop}
 \label{prop:min max duality}
   The min and max balls are polar dual\index{polar dual}\index{dual!polar} in the following sense.  A tuple $X\in \mfBmax_g$ (resp. $\mfBm_g$) if and only if 
\begin{equation}
 \label{eq:min max duality}
  \sum_{j=1}^g X_j\otimes Y_j \preceq I
\end{equation}
 for every $Y\in\mfBm_g$ (resp. $\mfBmax_g$).  Moreover, if $A$ is a $g$-tuple of symmetric $d\times d$ matrices and if $\cDA(1)=\mathbb B_g$, then, for each $n$,
\[
 \mfBmax_g(n) \subset \cDA(n) \subset \mfBm_g(n).
\]
\end{prop}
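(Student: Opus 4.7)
\medskip

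\textbf{Proof Plan.} My plan is to reduce both halves of the duality to the defining LMIs and then read the last sentence off as an immediate consequence. The symmetry of the tensor ordering $\sum X_j\otimes Y_j\preceq I \iff \sum Y_j\otimes X_j\preceq I$ (swap the factors) will be used throughout without further comment.

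First I would handle the max-from-min direction. Suppose $X\in\mfBmax_g$ and $Y\in\mfBm_g$. The definition of $\mfBm_g$ together with Lemma \ref{lem:min alts}(i)$\Leftrightarrow$(ii) gives $\mathbb B_g\subseteq\cD_{L_Y}(1)$. Since $X\in\mfBmax_g$ and $Y$ is a legitimate coefficient tuple for such a pencil, $X\in\cD_{L_Y}$; that is exactly $\sum Y_j\otimes X_j\preceq I$. For the converse, assume $\sum X_j\otimes Y_j\preceq I$ for every $Y\in\mfBm_g$. Let $A$ be any $g$-tuple of symmetric matrices with $\cD_{L_A}(1)\supseteq\mathbb B_g$. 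Then Lemma \ref{lem:min alts} again says $A\in\mfBm_g$, so $\sum A_j\otimes X_j\preceq I$, i.e., $X\in\cD_{L_A}$. Since $A$ was arbitrary, $X\in\mfBmax_g$.

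Next the min-from-max direction. If $X\in\mfBm_g$ and $Y\in\mfBmax_g$, apply the half already proved (with the roles of $X$ and $Y$ swapped) to conclude $\sum X_j\otimes Y_j\preceq I$. Conversely, assume $\sum X_j\otimes Y_j\preceq I$ for every $Y\in\mfBmax_g$. Here the key observation is the easy fact $\mfBmax_g(1)=\mathbb B_g$: a $1\times 1$ tuple $y=(y_1,\dots,y_g)$ lies in $\mfBmax_g(1)$ iff $\sum a_j y_j\le 1$ for every $g$-tuple $a$ of scalars with $\cD_{L_a}(1)\supseteq\mathbb B_g$, which is just $y\in\mathbb B_g$. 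Plugging any unit $x\in\mathbb R^g$ into the standing hypothesis with $Y=x$ gives $\sum x_j X_j\preceq I$, so $X\in\mfBm_g$.

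Finally, for the inclusions $\mfBmax_g(n)\subseteq\cD_{L_A}(n)\subseteq\mfBm_g(n)$ under the assumption $\cD_{L_A}(1)=\mathbb B_g$: the left inclusion is built into the definition of $\mfBmax_g$. For the right inclusion, take $X\in\cD_{L_A}(n)$ and any unit vector $v\in\mathbb R^n$. Compressing $\sum A_j\otimes X_j\preceq I_{dn}$ by $I_d\otimes v$ yields $\sum (v^*X_jv)A_j\preceq I_d$, so $v^*Xv\in\cD_{L_A}(1)=\mathbb B_g$. By Lemma \ref{lem:min alts}(iii)$\Rightarrow$(i), $X\in\mfBm_g(n)$. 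No step here is deep; the only mild subtlety is to remember that $\mfBm_g$ is defined by a continuum of scalar LMIs (so Lemma \ref{lem:min alts} must be invoked to pass between the formulations), which is why I would write the argument as three short paragraphs rather than a single line.
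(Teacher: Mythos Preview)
Your proof is correct and follows essentially the same approach as the paper's: both unwind the definitions via Lemma~\ref{lem:min alts} for the duality and use the compression argument $v^*Xv\in\cD_{L_A}(1)$ for the second inclusion. In fact you are slightly more thorough than the paper, which only spells out the max-from-min direction and leaves the ``resp.'' (min-from-max) direction implicit; your observation that $\mfBmax_g(1)=\mathbb B_g$ and specialization to $Y=x\in\mathbb B_g$ is exactly the right way to fill that in.
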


\begin{proof}
  Recall that $Y\in \mfBm_g$ if and only if $\cD_{L_Y}(1) \supseteq \mathbb B_g$.  Thus $X\in \mfBmax_g$ if and only if $X\in \cD_{L_Y}$ for every $Y\in\mfBm_g$.  Conversely, $X\in \mfBmax_g$ if and only if $X\in \cD_{L_Y}$ for every $Y$ such that $\mathbb B_g \subset \cD_{L_Y}(1)$ (equivalently $Y\in\mfBm_g$). 

 Now suppose $\mathbb B_g =\cDA(1)$. By definition, if $X\in\mfBmax_g$, then $X\in \cDA$ since $\mathbb B_g \subset \cDA(1)$.  On the other hand, if $X\in\cDA(n)$, then,  for each unit vector $v\in\mathbb R^n$, 
\[
  I = v^*v I\succeq  (v^* \otimes I)(\sum X_j\otimes A_j) (v^*\otimes I)  = \sum_j (v^*X_jv) A_j.
\]
 Hence $v^* Xv \in \cDA(1)\subset \mathbb B_g.$  By Lemma \ref{lem:min alts}(iii), $X$ is in the min ball. 
\end{proof}

\begin{rem}\rm
  The use of the term {\it minimal} in {\it min ball} refers not to the size of the spectrahedron itself, but rather by analogy to its use in the theory of operator spaces \cite{Pau}.  In particular, the min ball is defined, in a sense, in terms of a minimal number of positivity conditions; whereas the max ball is defined in terms of the maximal number of positivity conditions (essentially that it should be contained in every free spectrahedron which, at level $1$, is the unit ball). Proposition \ref{prop:min max duality} is a version of the duality between the minimum and maximum operator space structures on a normed vector space \cite{Pau}.
\end{rem}

\subsubsection{The spin ball and the canonical anticommutation relations}
 \label{sec:spin ball}
 Fix a positive integer $g$.  The description of our fourth ball uses the \df{canonical anticommutation relations (CAR)} \cite{GW54, Der}.
 A $g$-tuple $p=(p_1,\dots,p_g)$ of symmetric matrices  satisfies the CARs or is a \df{spin system} \cite{Pisier-book} if 
\[
 p_j p_k + p_k p_j = 2\delta_{jk} I.
\]

 One construction of such a system $P=(P_1,\dots,P_g)$, and the one adopted here, starts with the spin matrices,
\[
 \sigma_1 = \begin{pmatrix} 1 & 0\\ 0 & -1 \end{pmatrix},  \ \ \sigma_2 =\begin{pmatrix} 0 & 1\\1 & 0\end{pmatrix}.
\] 
 For  convenience let $\sigma_0=I_2$. 
 Given $g\ge 2$, each $P_j$ is a $(g-1)$-fold tensor product of combinations of the $2\times 2$ matrices $\sigma_j$ for $0\le j\le 2$. In particular, each $P_j$ is a symmetric matrix of size $2^{g-1}$. Define $P_1= \sigma_1 \otimes \sigma_0 \otimes \cdots \otimes \sigma_0$ and, for $2\le j\le g-1$,
\[
 P_j = \sigma_2\otimes \cdots \otimes \sigma_2 \otimes \sigma_1\otimes \sigma_0 \otimes \dots \otimes \sigma_0,
\]
 where $\sigma_1$ appears in the $j$-th (reading from the left) tensor; thus $\sigma_2$ appears $j-1$ times and $\sigma_0$ appears $g-j-1$ times. Finally, let $P_g$ denote the $(g-1)$-fold tensor product  $\sigma_2\otimes \cdots \otimes \sigma_2$.

The \df{spin ball}, denoted \df{$\mfBc_g$}, is the free spectrahedron determined by the tuple $P$. Thus, $X\in\mfBc_g$ 
if and only if $L_P(X)=I-\sum P_j\otimes X_j\succeq0$ if and only if 
 $\sum_{j=1}^g X_j\otimes P_j$ is a contraction
(cf.~Lemma \ref{lem:P vs -P}\eqref{it:P vs -P}). Further relations between the three balls are explored in the following subsection.

Here are some further observations on the spin ball.  Let
\[
 \sigma_3= \begin{pmatrix} 0  & 1 \\ -1 & 0 \end{pmatrix}
\]
and let
 $$\Sigma = \Sigma_g =\big\{\otimes_{j=1}^{g-1} \sigma_{j_k}: j_k \in \{0,1,2,3\} \big\}.$$
   In particular, the cardinality of $\Sigma_g$ is $4^{g-1}$.

\begin{lemma}
 \label{lem:P vs -P}
If $X$ is a $g$-tuple of $d\times d$ symmetric matrices, then the matrix $ 
\sum_{j=1}^g X_j\otimes P_j$ has the following properties. 
 \begin{enumerate}[\rm (i)]
     \item  
     \label{it:uxpu}
     $$(I\otimes u)^* (\sum_{j=1}^g X_j\otimes P_j) (I\otimes u) = \sum_{j=1}^g \pm X_j\otimes P_j$$
      for each $u\in \Sigma$ with each sequence of $\pm$ assumed $2^{g-2}$ times;
     \item \label{it:skew} 
the sets $\Sigma_g^{\pm}$ consisting of those elements $u$ of $\Sigma_g$ such that 
   $(I\otimes u)^*  (\sum_{j=1}^g X_j\otimes P_j) (I\otimes u) = \pm \sum_{j=1}^g  X_j\otimes P_j$ have $2^{g-2}$ elements; 
   each $u\in \Sigma_g^\pm,$ other than the identity,  is skew symmetric;
        \item \label{it:P vs -P}  $\sum_j X_j\otimes P_j$ is unitarily equivalent to $-\sum_j X_j\otimes P_j$;
     \item \label{it:eigs} $\sum_j X_j\otimes P_j$ has $2d$ eigenvalues (coming in $d$ pairs of $\pm \lambda$ by item \eqref{it:P vs -P}) each with multiplicity $2^{g-2}$;
     \item \label{it:2deigs}
     If $(\sum_j X_j\otimes P_j) \Gamma =0$, then $(\sum_j X_j \otimes P_j) (I\otimes u)\Gamma =0$ for all $u\in \Sigma_g^+\cup \Sigma_g^-$.
 \end{enumerate}
\end{lemma}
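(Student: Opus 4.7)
The plan is to reduce everything to a slot-by-slot analysis of how conjugation by Pauli matrices acts on other Pauli matrices. First I would observe that each $u\in\Sigma_g$ is orthogonal (since $\sigma_0,\sigma_1,\sigma_2$ are symmetric involutions and $\sigma_3^T\sigma_3=-\sigma_3^2=I$), so
\[
 (I\otimes u)^*\Bigl(\sum_j X_j\otimes P_j\Bigr)(I\otimes u) \;=\; \sum_j X_j\otimes (u^*P_j u).
\]
The key elementary computation is that for every $\sigma,\tau\in\{\sigma_0,\sigma_1,\sigma_2,\sigma_3\}$ one has $\sigma^*\tau\sigma=\pm\tau$, with the sign being $-1$ exactly when $\sigma\ne\tau$ and both are nontrivial (i.e.\ lie in $\{\sigma_1,\sigma_2,\sigma_3\}$). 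Applying this in each tensor factor gives $u^*P_j u=\epsilon_j(u)P_j$ for some $\epsilon_j(u)\in\{\pm1\}$, which is the form required by item (i).

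Next I would parameterize $\Sigma_g\leftrightarrow \mathbb F_2^{2(g-1)}$ by encoding each slot as $\sigma_0\leftrightarrow(0,0)$, $\sigma_1\leftrightarrow(1,0)$, $\sigma_2\leftrightarrow(0,1)$, $\sigma_3\leftrightarrow(1,1)$, giving coordinates $(a_1,b_1,\ldots,a_{g-1},b_{g-1})$. Using the prescribed shape of $P_j$ (with $\sigma_2$ in slots $1,\ldots,j-1$, $\sigma_1$ in slot $j$, and $\sigma_0$'s afterwards, with $P_1$ and $P_g$ as the extreme cases), a direct tally of slot-contributions produces the $\mathbb F_2$-exponents
\[
  \epsilon_1=b_1,\quad \epsilon_j=b_j+a_1+\cdots+a_{j-1}\ (2\le j\le g-1),\quad \epsilon_g=a_1+\cdots+a_{g-1}.
\]
These $g$ linear functionals on $\mathbb F_2^{2(g-1)}$ are plainly linearly independent (each $b_j$ with $j\le g-1$ appears in exactly one of the first $g-1$, while $\epsilon_g$ is independent of all $b_j$'s and nonzero in the $a$'s), so the map $\epsilon\colon\Sigma_g\to\mathbb F_2^g$ is surjective with fibers of size $2^{2(g-1)-g}=2^{g-2}$. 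This simultaneously proves (i) and the cardinality statement $|\Sigma_g^\pm|=2^{g-2}$ in (ii). The remaining skew-symmetry assertion in (ii) reduces to showing that, within each fiber $\Sigma_g^\pm$, any non-identity solution has an odd number of $\sigma_3$-factors; this requires a careful parity analysis of the system defining $\Sigma_g^\pm$ and is the one spot of (i),(ii) where I would slow down and double-check.

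For items (iii), (iv), (v) I would argue as follows. Since $\Sigma_g^-$ is nonempty by the surjectivity just established, any $u_-\in\Sigma_g^-$ yields $(I\otimes u_-)^*(\sum_j X_j\otimes P_j)(I\otimes u_-)=-\sum_j X_j\otimes P_j$, proving (iii); moreover $I\otimes u_-$ interchanges the $\lambda$- and $(-\lambda)$-eigenspaces, giving the $\pm\lambda$ pairing in (iv). Statement (v) is immediate from the intertwining relation $(\sum_j X_j\otimes P_j)(I\otimes u)=\pm(I\otimes u)(\sum_j X_j\otimes P_j)$ for $u\in\Sigma_g^\pm$: any kernel vector $\Gamma$ is thus moved to another kernel vector by $I\otimes u$. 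The delicate point, and the main obstacle of the whole proof, is the multiplicity claim in (iv). Here I would exploit that the abelian group $\pm\Sigma_g^+/\{\pm I\}\cong(\mathbb F_2)^{g-2}$ acts on $\R^d\otimes\R^{2^{g-1}}$ via $I\otimes u$ and commutes with $\sum_j X_j\otimes P_j$; its $2^{g-2}$ real characters decompose the ambient space into $2^{g-2}$ invariant subspaces, each of dimension $2d$ by the bookkeeping $d\cdot 2^{g-1}=2^{g-2}\cdot 2d$. Using elements of $\Sigma_g\setminus(\Sigma_g^+\cup\Sigma_g^-)$ and of $\Sigma_g^-$ to conjugate one such subspace onto another should force $\sum_j X_j\otimes P_j$ to have the same spectrum on each, producing $2d$ eigenvalues each of multiplicity $2^{g-2}$; verifying this last transitivity cleanly (or equivalently invoking the structure theorem for the real Clifford algebra representation on $\R^{2^{g-1}}$) is the step that warrants the most care.
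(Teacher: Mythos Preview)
Your $\mathbb F_2$-linear approach to item (i) and to the fiber count $|\Sigma_g^\pm|=2^{g-2}$ in (ii) is correct and is genuinely different from, and arguably cleaner than, the paper's route. The paper instead argues by induction on $g$: it starts from $\Sigma_2^+=\{\sigma_0\}$, $\Sigma_2^-=\{\sigma_3\}$ and builds $\Sigma_{g+1}^\pm$ explicitly by tensoring elements of $\Sigma_g^\pm$ on the left with $\sigma_0,\sigma_1,\sigma_2,\sigma_3$, tracking both the sign action and the symmetry type at each step. Your surjectivity-of-a-linear-map argument gets all the counts in one stroke.

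The skew-symmetry clause in (ii), which you correctly flag as the spot to slow down, is not a side issue: it is the engine of the paper's proof of (iv), so deferring it as ``parity bookkeeping'' understates its weight. In your coordinates the number of $\sigma_3$-factors of $u$ is the quadratic form $\sum_i a_ib_i$, so the verification leaves the $\mathbb F_2$-linear framework; it can be carried out on the explicit solution sets, but the paper's inductive construction delivers skew-symmetry for free (tensoring with the symmetric $\sigma_0,\sigma_1,\sigma_2$ preserves the symmetry type, tensoring with the skew $\sigma_3$ flips it).

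For (iv) the paper takes a shorter path than your character decomposition. Given an eigenvector $\Gamma$, the set $\{(I\otimes u)\Gamma:u\in\Sigma_g^+\}$ lies in the same eigenspace by the intertwining relation. For nonidentity $u\in\Sigma_g^+$ the skew-symmetry gives $\langle (I\otimes u)\Gamma,\Gamma\rangle=0$; combined with the observation that $uv\in\pm\Sigma_g^+$ for $u,v\in\Sigma_g^+$, this forces the whole set to be linearly independent, so each eigenspace has dimension a multiple of $2^{g-2}$. Your proposed transitivity-of-conjugation step between isotypic pieces can presumably be made to work, but it is more elaborate and is unnecessary once the skew-symmetry in (ii) is available.
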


\begin{proof}
 We first prove   
\begin{equation}
 \label{eq:sksjsk}
   \sigma_k \sigma_j \sigma_k = \pm \sigma_j
\end{equation} for $0\le j,k\le 3$. Observe  it may be assumed that $1\le j,k\le
 3$. For such $j,k$, with $s\in \{1,2,3\}\setminus \{j,k\}$, evidently
 $\sigma_j\sigma_k = \sigma_s$. Hence, $\sigma_k\sigma_s =\sigma_j$
 (since $j\in \{1,2,3\}\setminus \{k,s\}$) and Equation \eqref{eq:sksjsk} follows.  

 Equation \eqref{eq:sksjsk} immediately implies $u^* P_k u =\pm P_k$
 for $u\in \Sigma_g$ and $1\le k\le g$. Thus $(I\otimes u)^* [\sum_j X_j\otimes P_j](I\otimes u) = \sum_{j=1}^g  \pm X_j\otimes P_j$ 
 for some choice of signs. This proves the first part of item \eqref{it:uxpu}.  The rest of item \eqref{it:uxpu} is established after the
 proof of item \eqref{it:skew}.
 
 Turning to the proof of item \eqref{it:skew}, observe   $u\in \Sigma_g^+$  if and only if $u^* P_k  u =P_k$ for each $1\le k\le g$.
When $g=2$, $P_1=\sigma_1$ and $P_2=\sigma_2$. Evidently, $\Sigma_2^+ =\{\sigma_0\}$ and $\Sigma_2^-=\{\sigma_3\}$.  Now suppose item \eqref{it:skew} holds for $g$.  In this case, letting $\{P_1,\dots,P_g\}$ denote the CAR matrices for $g$, the CAR matrices for $g+1$ are $\{q_1,\dots,q_{g+1}\}$ where $q_1= \sigma_1 \otimes 1$ and $q_j =\sigma_2 \otimes P_{j-1}$ for $j>1$.  If  $u$ is in $\Sigma_g^+$, then $I\otimes u \in \Sigma_{g+1}^+$ and $\sigma_3\otimes u \in \Sigma_{g+1}^-$. Similarly, if $u\in\Sigma_g^-$, then $\sigma_1\otimes u\in\Sigma_{g+1}^+$ and $\sigma_2\otimes u\in\Sigma_{g+1}^-$.   It follows that $\Sigma_{g+1}^-$ has at least $2^{g-1}$ elements and all of these, except for the identity are skew symmetric.  Since $v=\sigma_3\otimes I \in \Sigma_{g+1}^-$, it follows that $vu\in\Sigma_{g+1}^-$ for each $u\in \Sigma_{g+1}^+$ and therefore $\Sigma_{g+1}^-$ has at least $2^{g-1}$ elements too. By induction, item \eqref{it:skew} holds.  Moreover, this argument shows there is a positive integer $N_g$ so that each sign arrangement is taken either $0$ of $N_g$ times. 

We now use induction to show that every sign arrangement is assumed and thus complete the proof of item \eqref{it:uxpu}.  The result  is evident for $g=2$. Now assume it is true for $g$. Let $r=\sigma_3\otimes \sigma_3\otimes I$.  Compute $r^* q_1r= \sigma_3^* \sigma_1 \sigma_3 \otimes (\sigma_3^* I \sigma_3) =-q_1$, but  
\[
 \begin{split}
   r'q_{j+1}r = & \sigma_3^* \sigma_2 \sigma_3 \otimes (\sigma_3\otimes I) P_j (\sigma_3\otimes I)  \\
       = & - \sigma_2 \otimes (-P_j) = q_{j+1}.
 \end{split}
\]
  Thus the combinations of $I\otimes u$ and $r^*(I\otimes u)$ for $u\in \Sigma_g$ produce all sign combinations for $g+1$. 

 Item \eqref{it:P vs -P} is an immediate consequence of item \eqref{it:skew}.

  To prove item \eqref{it:eigs} - namely to see that each eigenvalue has
  multiplicity a multiple of $2^{g-2}$ - observe if $(\sum X_j\otimes  P_j)\Gamma =\lambda\Gamma$, then the set 
  $\{u\Gamma: u  \in\Sigma_g^+\}$ is linearly independent. To verify this last assertion, first note that if $u,v\in\Sigma_g^+$, then $uv\in \pm \Sigma_g^+$.  Further, each $u\in\Sigma_g^+$ is skew symmetric, except for the identity $1.$ In particular, 
\[
 \langle (1\otimes u)\Gamma,\Gamma \rangle =0
\]
for $u\ne 1.$ 
 Hence, if $\sum_{u\in \Sigma_g^+} c_u (I\otimes u)\Gamma =0$, then by multiplying by a $v$ for which $c_v\ne 0$, we can assume $c_1$ (the constant corresponding to the identity $1$) is not zero. Thus,
\[
 0 =\langle \sum c_u (1\otimes u)\Gamma,\Gamma\rangle = c_1 \|\Gamma\|^2
\]
and a contradiction is obtained.   To complete the proof, let  $m$ is the largest integer such that there exists
  $\Gamma^1,\dots\Gamma^m$ such that $\{u\Gamma^k : u \in \Sigma_g^+,  \ \ 1\le k \le m\}$ spans the eigenspace
  corresponding to eigenvalue $\lambda$, then the dimension of this
  space is $m 2^{g-2}$.  We prove this assertion using induction. Suppose $1\le k\le m$ and $S_k:=\{u\Gamma^j : u\in \Sigma_g^+, \, 1\le j\le k\}$ is linearly independent (and thus has dimension $k2^{g-2}$).  Arguing by contradiction, suppose the $\Delta$ is in the intersection of $S_k$ and $\{u\Gamma^{k+1}: u\in\Sigma_g^+\}$.  From what is already been proved, the dimension of the span of $\{u\Delta: u\in\Sigma_g^+\}$ is $2^{g-2}$ but this subspace is a subspace of both the span of $S_k$ and the span of $\{u\Gamma^{k+1}: u\in\Sigma_g^+\}$, contradicting the minimality of $m$.  Hence the dimension of the span of $S_{k+1}$ is $(k+1)2^{g-2}$ and the proof is complete.

 Item \eqref{it:2deigs} is evident.
\end{proof}

\begin{lemma}
  \label{lem:minp}
     The mapping from $\mathbb R^{g}$ (in the Euclidean norm) to $M_{2^{g-1}}(\mathbb R)$ (in the operator norm),
\[
 x\mapsto \sum_{j=1}^g x_j P_j
\]
 is an isometry.  In particular, $\mfBc_g(1)=\mathbb B_g$.
 \end{lemma}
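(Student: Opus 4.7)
The plan is to exploit the canonical anticommutation relations directly. Setting $Q(x) := \sum_{j=1}^g x_j P_j$, I would first compute $Q(x)^2$ by expanding:
\[
Q(x)^2 = \sum_{j,k} x_j x_k P_j P_k = \sum_j x_j^2 P_j^2 + \sum_{j<k} x_j x_k (P_j P_k + P_k P_j).
\]
By the CAR, $P_j^2 = I$ and $P_jP_k + P_kP_j = 0$ for $j\ne k$, so the cross terms vanish and
\[
Q(x)^2 = \Big(\sum_j x_j^2\Big) I = \|x\|_2^2\, I.
\]

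Since each $P_j$ is symmetric, $Q(x)$ is symmetric, and $Q(x)^2 = \|x\|_2^2 I$ forces the spectrum of $Q(x)$ to lie in $\{\pm \|x\|_2\}$. When $x\ne 0$ this spectrum is nonempty, so $\|Q(x)\|_{\mathrm{op}} = \|x\|_2$; the case $x=0$ is trivial. This proves the isometry claim.

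For the second assertion, note that $x\in\mfBc_g(1)$ means $I - Q(x) \succeq 0$. Since $Q(x)$ is symmetric with eigenvalues $\pm\|x\|_2$ (or just $0$), this is equivalent to $\|x\|_2 \le 1$, i.e. $x\in\mathbb B_g$. Conversely any $x\in\mathbb B_g$ satisfies $\|Q(x)\|_{\mathrm{op}}\le 1$, hence $I\pm Q(x)\succeq 0$, giving $x\in\mfBc_g(1)$.

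There is no real obstacle here; the only thing to be careful about is ensuring that $Q(x)$ genuinely attains the operator norm $\|x\|_2$ rather than just being bounded by it, but this follows from the fact that $Q(x)^2 = \|x\|_2^2 I$ has the nonzero eigenvalue $\|x\|_2^2$ (assuming $x\ne 0$), so $Q(x)$ has an eigenvalue of modulus exactly $\|x\|_2$.
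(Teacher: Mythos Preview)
Your proof is correct and follows exactly the same approach as the paper: compute $\big(\sum_j x_j P_j\big)^2 = \|x\|_2^2\, I$ using the canonical anticommutation relations, and deduce the isometry from this. You have in fact written out the consequences (spectrum in $\{\pm\|x\|_2\}$ and the description of $\mfBc_g(1)$) more carefully than the paper, which leaves them implicit.
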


 \begin{proof}
 The result follows from $(\sum_{j=1} x_j P_j)^2 = (\sum_j x_j^2) I  + \sum_{j<k} (x_j x_k - x_k x_j) P_jP_k= (\sum x_j^2) I$.   \end{proof}

 A consequence of Lemma \ref{lem:minp} is that the tuple $(P_1,\dots,P_{g})$ is in $\mfB^{\text{min}}_{g}$.

\subsection{Inclusions and dilations}

In this subsection we investigate inclusions between the different types
of balls introduced above.

\begin{lemma}
 \label{lem:normp}
   The norm of $\sum_{j=1}^{g} P_j\otimes P_j$  is $g$.  Hence $P$ is in the (topological)  boundary of $g \mfBc_g$. 
   The norm of the block row matrix $\begin{pmatrix} P_1 & \cdots & P_{g}\end{pmatrix}$ is $\sqrt{g}$. Thus $P$ is in the boundary of $\sqrt{g}\mfBoh_g$.
\end{lemma}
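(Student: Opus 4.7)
\textbf{Proof proposal for Lemma \ref{lem:normp}.}

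The plan is to handle the two norm computations separately and then deduce the boundary statements from them. For $\|\sum_{j=1}^g P_j\otimes P_j\|$, the upper bound $\le g$ is immediate from the triangle inequality together with $\|P_j\otimes P_j\|=\|P_j\|^2=1$ (recall $P_j^2=I$). For the lower bound, I would exhibit an explicit eigenvector with eigenvalue $g$. Specifically, let $N=2^{g-1}$ and consider the maximally entangled vector $\omega=\sum_{i=1}^N e_i\otimes e_i\in\R^N\otimes\R^N$. Under the standard identification $\R^N\otimes\R^N\cong M_N(\R)$, the action $(A\otimes B)\omega$ corresponds to the matrix $AB^\T$. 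Since each $P_j$ is symmetric and satisfies $P_j^2=I$, each summand sends $\omega$ to the vector corresponding to $P_jP_j^\T=P_j^2=I$, i.e.\ to $\omega$ itself. Hence
\[
\Bigl(\sum_{j=1}^g P_j\otimes P_j\Bigr)\omega = g\,\omega,
\]
which forces $\|\sum P_j\otimes P_j\|\ge g$ and completes this computation.

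For the row-matrix statement, I would just compute $RR^\ast$ where $R=\begin{pmatrix} P_1 & \cdots & P_g\end{pmatrix}$:
\[
RR^\ast=\sum_{j=1}^g P_jP_j^\ast=\sum_{j=1}^g P_j^2=gI,
\]
so $\|R\|=\sqrt{\|RR^\ast\|}=\sqrt g$.

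Finally, I would deduce the two boundary statements. For $\mfBc_g$: by definition $P\in g\,\mfBc_g$ iff $\tfrac1g P\in\mfBc_g$ iff $\tfrac1g\sum P_j\otimes P_j\preceq I$. By Lemma \ref{lem:P vs -P}\eqref{it:P vs -P}, $\sum P_j\otimes P_j$ is unitarily equivalent to its negative, so its spectrum is symmetric about $0$; hence the inequality $\tfrac1g\sum P_j\otimes P_j\preceq I$ is equivalent to $\|\sum P_j\otimes P_j\|\le g$, which holds with equality by the first part, placing $P$ on the boundary of $g\,\mfBc_g$. For $\mfBoh_g$: $P\in\sqrt g\,\mfBoh_g$ iff $\sum(\tfrac1{\sqrt g}P_j)^2\preceq I$ iff $\tfrac1g\sum P_j^2\preceq I$, which again holds with equality since $\sum P_j^2=gI$.

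There is no real obstacle in this lemma; the only (minor) subtlety is remembering to use the symmetry of the spectrum of $\sum P_j\otimes P_j$ (via Lemma \ref{lem:P vs -P}) to convert the boundary-of-$g\mfBc_g$ statement into a statement about the operator norm rather than just the top eigenvalue.
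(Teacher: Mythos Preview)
Your proof is correct and follows essentially the same approach as the paper: the same maximally entangled vector $\omega=\sum_i e_i\otimes e_i$ (which is exactly the paper's $\gamma=\sum_\alpha e_\alpha\otimes e_\alpha$) is used to witness eigenvalue $g$, the upper bound comes from the triangle inequality, and the row-matrix norm is handled via $RR^*=gI$. The only difference is in how you verify $(P_j\otimes P_j)\omega=\omega$: the paper computes this directly on the tensor basis $e_\alpha\otimes e_\alpha$, whereas you use the cleaner identification $\R^N\otimes\R^N\cong M_N(\R)$ under which $(A\otimes B)\omega\leftrightarrow AB^\T$, reducing the claim to $P_jP_j^\T=I$. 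Your route is more conceptual and avoids the case analysis on $j$, but the underlying idea is identical.
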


\begin{proof}
 We prove a bit more. Let $\{e_0,e_1\}$ denote the standard basis of $\mathbb R^2$.  In particular, $\sigma_0 e_j = e_j$, $\sigma_1 e_j = (-1)^j e_j$ and $\sigma_2 e_j = e_{j+1}$ (modulo $2$).  For convenience, let $h=g-1$. Given $\alpha =(\alpha_1,\dots,\alpha_{h}) \in \mathbb Z_2^{h}$, let
\[
 e_\alpha = e_{\alpha_1}\otimes \dots \otimes e_{\alpha_{h}} \in \mathbb R^{2^h}
\]
and 
\[
 \gamma = \sum_{\alpha\in\mathbb Z_2^h} e_{\alpha}\otimes e_{\alpha}.
\]
 Now we verify that, for $1\le j \le g$, 
\begin{equation}
 \label{eq:pjpj}
  P_j\otimes P_j \gamma = \gamma.
\end{equation}
 Indeed, $P_1 e_\alpha = (-1)^{\alpha_1}e_\alpha$ and hence $P_1\otimes P_1 e_\alpha \otimes e_\alpha  = e_\alpha \otimes e_\alpha$.  For $2\le j\le h$, 
\[
 \begin{split}
 P_j e_\alpha & =  e_{\alpha_1}\otimes \cdots \otimes e_{\alpha_h}\\
    & =   e_{\alpha_1+1} \otimes \cdots \otimes e_{\alpha_{j-1}+1} \otimes (-1)^{\alpha_j} e_{\alpha_j} \otimes e_{\alpha_{j+1}} \otimes \cdots \otimes e_{\alpha_h}\\
   &= (-1)^{\alpha_j} e_{\beta},
  \end{split}
\]
 where $\beta=(\alpha_1+1,\dots,\alpha_{j-1}+1,\alpha_j,\dots,\alpha_h)$.
Thus,
\[
 (P_j \otimes P_j )(e_\alpha\otimes e_\alpha) = e_\beta\otimes e_\beta
\]
 and the conclusion $P_j\otimes P_j \gamma =\gamma$ for $1\le j \le h$ follows. The argument that $P_g\otimes P_g \gamma=\gamma$ is similar.

It follows that $(\sum_j P_j\otimes P_j) \gamma = g \, \gamma$ and hence the norm of $\sum P_j\otimes P_j$ is at least $g$. Since the norm of each $P_j$ is one, the norm of $\sum P_j\otimes P_j$ is at most $g$.   The remainder of the lemma is evident.
\end{proof}

\begin{cor}
 \label{cor:g-sharp}
    The smallest $\rho$ such that $\mfBm_g\subset \rho \mfBc_g$ is $\rho=g$.  In particular, the estimate $g$ in Proposition \ref{prop:small g} is sharp.
\end{cor}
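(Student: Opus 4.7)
The plan has two parts: first establish the inclusion $\mfBm_g\subset g\,\mfBc_g$, then exhibit a tuple showing the constant $g$ cannot be improved; this same tuple will also establish sharpness in Proposition \ref{prop:small g}.

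For the inclusion, I would first note that $\mfBm_g$ is real Reinhardt: if $X\in\mfBm_g$ then every sign change $(\pm X_1,\dots,\pm X_g)$ remains in $\mfBm_g$, since the defining scalar LMIs $\sum x_j X_j\preceq I$ are invariant under the action $x\mapsto(\pm x_1,\dots,\pm x_g)$ on $\mathbb B_g$. Hence the hypothesis of Proposition \ref{prop:small g} is satisfied, and any $X\in\mfBm_g(n)$ dilates to $gT$ for some commuting $g$-tuple $T\in\mfBm_g(gn)$. A commuting tuple $T$ of symmetric matrices lying in $\mfBm_g$ has joint spectral measure supported on $\mfBm_g(1)=\mathbb B_g=\mfBc_g(1)$, so by functional calculus $T\in\mfBc_g$. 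Since free spectrahedra are closed under conjugation by isometries, compressing $T$ by the dilation isometry $V$ yields $\tfrac1gX=V^*TV\in\mfBc_g$, proving $X\in g\,\mfBc_g$.

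For the lower bound, the natural candidate is the spin tuple $P=(P_1,\dots,P_g)$ itself. Lemma \ref{lem:minp} gives $\bigl\|\sum x_jP_j\bigr\|=\|x\|_2$, so for every unit vector $x\in\R^g$ the pencil $\sum x_jP_j\preceq I$; this is precisely the statement $P\in\mfBm_g$. On the other hand, $P\in\rho\,\mfBc_g$ is equivalent to $\bigl\|\sum P_j\otimes P_j\bigr\|\le\rho$ (using Lemma \ref{lem:P vs -P}\eqref{it:P vs -P} to convert the LMI $\sum P_j\otimes P_j\preceq\rho I$ into a norm bound), and Lemma \ref{lem:normp} computes this norm to be exactly $g$. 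Therefore $P\in\mfBm_g\setminus\rho\,\mfBc_g$ whenever $\rho<g$, and $\rho=g$ is optimal.

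Finally, sharpness in Proposition \ref{prop:small g} follows by running the same argument in reverse. Apply Proposition \ref{prop:small g} to $\cDA:=\mfBm_g$ (whose Reinhardt property was verified above) with $X:=P\in\mfBm_g$. If $P$ were to dilate to $\rho T$ for some commuting tuple $T$ of symmetric matrices with joint spectrum in $\mfBm_g(1)=\mathbb B_g$, then the functional-calculus argument from the first paragraph would place $T$ in $\mfBc_g$, and compression would yield $\tfrac1\rho P\in\mfBc_g$, forcing $\rho\ge g$ by the norm computation of the previous paragraph. Thus the scaling factor $g$ in Proposition \ref{prop:small g} cannot be replaced by any smaller constant, completing the proof. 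The only delicate point in the plan is the step that promotes a commuting tuple in the min ball to a member of the spin ball via the joint spectral theorem; everything else is a direct reading of Lemmas \ref{lem:P vs -P}, \ref{lem:minp}, and \ref{lem:normp}.
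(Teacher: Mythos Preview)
Your proof is correct and follows essentially the same route as the paper's: invoke Proposition~\ref{prop:small g} on $\mfBm_g$ (after checking the Reinhardt/projection hypothesis) to obtain the inclusion $\mfBm_g\subset g\,\mfBc_g$, then use the spin tuple $P\in\mfBm_g$ together with Lemmas~\ref{lem:minp} and~\ref{lem:normp} to witness optimality; sharpness in Proposition~\ref{prop:small g} then follows by the same contradiction. The only differences are cosmetic---you spell out the joint-spectral/functional-calculus step and the closure under compression that the paper leaves implicit, and your sharpness paragraph exhibits the single obstruction $X=P$ directly rather than phrasing it as ``the argument just given would produce $\rho<g$.''
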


\begin{proof}
 Suppose $\mfBm_g\subset \rho\mfBc_g$. By Lemma \ref{lem:minp}, the tuple $P$ coming from the CAR is in $\mfBm_g$ and by Lemma \ref{lem:normp}, $P$ is in the boundary of $g\mfBc_g$. Thus $\rho\ge g$. On the other hand, if $X\in\mfBm_g$, then, since $\mfBm_g$ satisfies the hypotheses of Proposition \ref{prop:small g}, $X=  g V^* TV$, where $V$ is an isometry and $T$ is a commuting tuple of self adjoint matrices with spectrum in $\mathbb B_g$. In particular, $T\in \mfBc_g$ and thus $\frac{1}{g} X \in \mfBc_g$ too.  Hence $\mfBm_g\subset g\, \mfBc_g$.  Further, if the estimate in Proposition \ref{prop:small g} were not sharp, the argument just given would produce a $\rho<g$ such that $\mfBm_g \subset \rho \mfBc_g$, a contradiction. 
\end{proof}

\begin{thm}\rm
 \label{conj:wild versus spin}
 The smallest $\rho$ such that $\mfBoh_g$ embeds into $\rho \mfBc_g$ is   $\sqrt{g}$.
\end{thm}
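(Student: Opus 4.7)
The plan is to establish both directions of the equality $\rho = \sqrt{g}$ by computing a single key operator norm two different ways.

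First I will dispose of the lower bound using the tuple $P = (P_1,\dots,P_g)$ of CAR matrices itself. Since the $P_j$ anticommute and satisfy $P_j^2 = I$, we have $\sum_{j=1}^g P_j^2 = gI$, so the scaled tuple $\frac{1}{\sqrt g}P$ lies on the boundary of $\mfBoh_g$. By Lemma \ref{lem:normp}, $\|\sum_j P_j\otimes P_j\| = g$, hence
\[
\left\|\sum_j P_j\otimes \tfrac{1}{\sqrt g}P_j\right\| = \sqrt g.
\]
Using Lemma \ref{lem:P vs -P}\eqref{it:P vs -P}, the spectrum of $\sum_j X_j\otimes P_j$ is symmetric about $0$ for any symmetric tuple $X$, so membership in $\mfBc_g$ is equivalent to $\|\sum_j X_j\otimes P_j\|\le 1$ (rather than just the one-sided inequality $\preceq I$). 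Thus $\frac{1}{\sqrt g}P \notin \rho'\,\mfBc_g$ for any $\rho' < \sqrt g$, forcing $\rho \ge \sqrt g$.

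Next I will prove the upper bound by a standard row-column (Haagerup-style) factorization. Given any $X \in \mfBoh_g(n)$, write
\[
\sum_{j=1}^g P_j\otimes X_j = RC,
\]
where $R=(P_1\otimes I_n,\ldots,P_g\otimes I_n)$ is a block row and $C$ is the column with $j$-th block $I_{2^{g-1}}\otimes X_j$. Submultiplicativity gives $\|RC\|\le\|R\|\cdot\|C\|$, and a direct computation yields
\[
\|R\|^2 = \left\|\sum_j P_jP_j^*\otimes I_n\right\| = \|gI\| = g, \qquad \|C\|^2 = \left\|\sum_j X_j^2\right\|\le 1.
\]
Therefore $\|\sum_j P_j\otimes X_j\|\le\sqrt g$, and the spectral symmetry noted above then yields $\frac{1}{\sqrt g}X \in \mfBc_g$. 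Hence $\mfBoh_g\subset\sqrt g\,\mfBc_g$, completing the proof.

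There is no serious obstacle here: the only subtlety is ensuring that the defining inequality $L_P(X)\succeq 0$ of $\mfBc_g$ is equivalent to the operator-norm bound $\|\sum_j P_j\otimes X_j\|\le 1$, which is where Lemma \ref{lem:P vs -P}\eqref{it:P vs -P} is essential. Once that observation is in place, the extremal tuple (the CAR tuple itself) and the row-column factorization align perfectly to pin down the constant at $\sqrt g$, in the same spirit as how the CAR tuple saturates both bounds in Lemma \ref{lem:normp}.
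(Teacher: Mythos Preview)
Your argument is correct. The lower bound via the CAR tuple $P$ is essentially identical to the paper's (both rely on Lemma \ref{lem:normp}), but your upper bound takes a genuinely different and more elementary route. The paper proves $\mfBoh_g\subset\sqrt g\,\mfBc_g$ by invoking the complete-positivity criterion for free spectrahedral inclusion from \cite{HKM13}: it writes $\mfBoh_g=\cD_A$ with $A_j=e_{1,j+1}+e_{j+1,1}$ and then exhibits an explicit positive semidefinite Choi matrix (built from the $P_j$ and the rank-one block $\frac{1}{2g}(P_j P_k)_{j,k}$) certifying that the unital map $A_j\mapsto \frac{1}{\sqrt g}P_j$ is completely positive. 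Your row--column factorization $\sum_j P_j\otimes X_j = RC$ with $RR^*=gI$ and $C^*C=I\otimes\sum_j X_j^2$ bypasses that machinery entirely and gives the norm bound by submultiplicativity alone. The advantage of your approach is its directness and that it needs no structural description of $\mfBoh_g$ as a free spectrahedron; the paper's approach, on the other hand, illustrates the general inclusion-via-Choi-matrix paradigm and would adapt more readily to domains not described by a simple row-norm condition.
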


\begin{proof}
 By Lemma \ref{lem:normp}, 
$P$ is in the topological boundaries of $\sqrt g\mfBoh_g$ and  $g\mfBc_g$. Hence $\rho\geq\sqrt g$.

 To prove the converse inequality, we use complete positivity to show
 $\mfBoh_g\subseteq\sqrt g\, \mfBc_g$. We follow the solution \cite{HKM13}  to the
 free spectrahedral inclusion problem as described in Subsection \ref{subsub:spIncl}. Let $e_{i,j}$ denote the $(g+1)\times(g+1)$ matrix units. Letting $A_i=e_{1,i+1}+e_{i+1,1}$ for $i=1,
 \ldots,g$, and $A=(A_1,\ldots,A_g)$, we have $\mfBoh_g=\cD_{A}$. Similarly,
 $\mfBc_g=\cD_{P}$, where $P=(P_1,\ldots, P_g)$. It thus suffices to show
 there is a unital completely positive map
 \[
\psi: e_{1,i+1}+e_{i+1,1} \mapsto \frac1{\sqrt g} P_i, \qquad i=1,\ldots,g.
 \]
Consider the following ansatz for the Choi matrix for $\psi$:
\beq\label{eq:Cpsi}
C_{\psi}= 
\begin{pmatrix} 
\frac 12 I & \frac1{2\sqrt g} P_1 & \cdots & \frac1{2\sqrt g} P_g \\
\frac1{2\sqrt g} P_1 \\ 
$\vdots$ & &  S \\
\frac1{2\sqrt g} P_g
\end{pmatrix}
\eeq
Set 
\[
S =  \frac1{2 g} \begin{pmatrix} P_1 \\ \vdots \\ P_g \end{pmatrix}
\begin{pmatrix} P_1 \\ \vdots \\ P_g \end{pmatrix}^* 
=
\frac1{2g} \begin{pmatrix} I & P_1 P_2 & \cdots & P_1 P_g \\
P_2P_1 & I & \cdots  & P_2P_g \\
\vdots & \ddots & \ddots & \vdots \\
P_g P_1  & \cdots & \cdots & I
\end{pmatrix}
\]
It is clear that $S\succeq0$. Furthermore, the Schur complement of the top left block  of $C_\psi$ from \eqref{eq:Cpsi} is $0$. Thus $C_\psi$ is positive semidefinite. Furthermore, $\frac12I+g \frac1{2g}I=I$, whence $\psi$ is unital.
\end{proof}

\begin{prop}
 \label{prop:min v wild}
 The smallest $\rho$ such that $\mfBm_g$ embeds into $\rho \mfBoh_g$ is $\rho=\sqrt{g}$.
\end{prop}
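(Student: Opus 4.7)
The plan is to get both the lower and upper bounds on $\rho$ from elementary principles, without needing the completely positive map machinery used in Theorem \ref{conj:wild versus spin}. The lower bound will come from the same CAR spin tuple $P = (P_1,\ldots,P_g)$ that has already proved useful, while the upper bound reduces to a one-line application of the definition of $\mfBm_g$ to the standard basis vectors.

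For the lower bound, I would exhibit a witness in $\mfBm_g$ that is on the topological boundary of $\sqrt g\, \mfBoh_g$. Lemma~\ref{lem:minp} tells us that the spin tuple $P$ belongs to $\mfBm_g$, since $\|\sum x_j P_j\| = \sqrt{\sum x_j^2}$ via the CARs. On the other hand, the identity $P_j^2 = I$ for each $j$ gives
\[
\sum_{j=1}^g P_j^2 = g\,I,
\]
so that the row matrix $[P_1\ \cdots\ P_g]$ has norm exactly $\sqrt g$, i.e.\ $P$ lies on $\partial(\sqrt g\,\mfBoh_g)$ (this is also stated in Lemma~\ref{lem:normp}). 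Consequently, for any $\rho < \sqrt g$ we have $P \notin \rho\,\mfBoh_g$, while $P \in \mfBm_g$, which rules out $\mfBm_g \subseteq \rho\,\mfBoh_g$.

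For the upper bound $\mfBm_g \subseteq \sqrt g\,\mfBoh_g$, suppose $X = (X_1,\ldots,X_g) \in \mfBm_g(n)$. By definition, $\|\sum y_j X_j\| \le 1$ for every unit vector $y \in \mathbb R^g$. Specializing to $y = e_j$, the $j$-th standard basis vector, gives $\|X_j\| \le 1$ and hence $X_j^2 \preceq I$ for each $j$. Summing over $j$,
\[
\sum_{j=1}^g X_j^2 \preceq g\,I,
\]
which is precisely the statement that $\frac{1}{\sqrt g}\, X \in \mfBoh_g$, i.e.\ $X \in \sqrt g\,\mfBoh_g$.

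There is no real obstacle here: unlike the embedding $\mfBoh_g \hookrightarrow \sqrt g\,\mfBc_g$ (Theorem~\ref{conj:wild versus spin}), where the nontrivial content is the construction of a unital completely positive map via a Choi matrix, the embedding $\mfBm_g \hookrightarrow \sqrt g\,\mfBoh_g$ is essentially forced by testing against the coordinate directions. The only thing to note is the parallel structure with the upper bound in Corollary~\ref{cor:g-sharp}: there, $P$ was used to witness sharpness of the factor $g$ between $\mfBm_g$ and $\mfBc_g$ (via $\|\sum P_j \otimes P_j\| = g$), whereas here the same tuple $P$ witnesses sharpness of $\sqrt g$ (via $\|\sum P_j^2\| = g$). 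Thus $\sqrt g$ is simultaneously achieved as an upper bound (from the basis-vector argument) and as a lower bound (from the spin system), completing the proof.
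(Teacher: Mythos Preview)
Your proof is correct and matches the paper's approach essentially line for line: the lower bound uses the spin tuple $P$ via Lemmas~\ref{lem:minp} and~\ref{lem:normp}, and the upper bound uses that each $X_j$ is a contraction (from testing against $e_j$) to get $\sum X_j^2 \preceq gI$. The only difference is that you spell out the basis-vector step and add the closing commentary, while the paper compresses the upper bound to a single sentence.
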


\begin{proof}
 The tuple $P$ is in $\mfBm_g$, but 
 is, by Lemma \ref{lem:normp}, in the topological boundary of $\sqrt{g} \mfBoh_g$.  Thus $\rho\ge \sqrt{g}$.  On the other hand, if $X=(X_1,\dots,X_g)$ is in $\mfBm_g$, then each $X_j$ is a contraction. Hence the norm of the row matrix $X=\begin{pmatrix} X_1 & \dots & X_g \end{pmatrix}$ is at most $\sqrt{g}$; i.e., $X\in\mfBoh_g$.
\end{proof}

\begin{prop}
 \label{prop:2spin}
 A $2$-tuple $X$ is in the spin ball $\mfBc_2$ if and only if it dilates to a commuting $2$-tuple $T$ of symmetric matrices (an upper bound on the size of the matrices in $T$ in terms of $g$ and $d$ can be deduced from the proof)  with joint spectrum in $\mathbb B_2$.
\end{prop}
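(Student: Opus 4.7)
The ($\Leftarrow$) direction is immediate from Proposition~\ref{prop:commute-include} applied with $A = P = (\sigma_1, \sigma_2)$: since $\mathscr S_{L_P} = \mathbb B_2$, any $X$ admitting a commuting dilation with joint spectrum in $\mathbb B_2$ lies in $\cD_{L_P} = \mfBc_2$.

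For the ($\Rightarrow$) direction I would first recall the computation that conjugation by a $2\times 2$ complex unitary identifies $\sigma_1 \otimes X_1 + \sigma_2 \otimes X_2$ with the antidiagonal block matrix having $Z := X_1 + iX_2$ in the upper-right and $Z^*$ in the lower-left; hence $X \in \mfBc_2 \iff \|Z\| \le 1$. Since each $X_j$ is real symmetric, $Z$ is complex symmetric. Applying the Sz.-Nagy unitary dilation (via Schaeffer's formula) to the complex contraction $Z$ produces a unitary $U \in M_{2n}(\mathbb C)$ and a complex isometry $V\colon \mathbb C^n \to \mathbb C^{2n}$ with $Z = V^* U V$. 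Because $U$ is normal, $T_1' := (U + U^*)/2$ and $T_2' := (U - U^*)/(2i)$ are commuting Hermitian matrices on $\mathbb C^{2n}$ with $(T_1')^2 + (T_2')^2 = UU^* = I$; taking real and imaginary parts of $V^*UV = X_1 + iX_2$ yields $V^* T_j' V = X_j$ for $j=1,2$.

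The remaining step converts this complex commuting dilation to a real one via the $\mathbb R$-algebra $*$-homomorphism $\iota\colon M_d(\mathbb C) \to M_{2d}(\mathbb R)$ of Corollary~\ref{cor:dilate}. The relevant properties are routine: $\iota$ sends Hermitian matrices to real symmetric matrices (since $H = A + iB$ with $A^T = A$ and $B^T = -B$ gives $\iota(H)^T = \iota(H)$), preserves commutators, takes complex isometries to real isometries, and acts on real matrices by $\iota(X) = I_2 \otimes X$. Applying $\iota$ to $V^* T_j' V = X_j$ gives $\iota(V)^T \iota(T_j') \iota(V) = I_2 \otimes X_j$, and composing on both sides with the inclusion isometry $V_0 \colon \mathbb R^n \to \mathbb R^{2n}$, $x \mapsto (x,0)^T$, extracts the top-left $n\times n$ block:
\[
X_j \;=\; V_0^T(I_2 \otimes X_j) V_0 \;=\; (\iota(V) V_0)^T\, \iota(T_j')\, (\iota(V) V_0).
\]
Setting $W := \iota(V) V_0$ (a real isometry $\mathbb R^n \to \mathbb R^{4n}$) and $T_j := \iota(T_j')$ produces a commuting pair of real symmetric matrices on $\mathbb R^{4n}$ with $T_1^2 + T_2^2 = I_{4n}$, hence joint spectrum contained in $\mathbb B_2$, and $X_j = W^T T_j W$; this also gives the explicit size bound $4d$ on $T$.

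The only non-routine ingredient is the reduction to the single complex contraction $Z = X_1 + iX_2$ and its classical Sz.-Nagy dilation; the rest is clean bookkeeping with $\iota$ and I do not anticipate a serious obstacle.
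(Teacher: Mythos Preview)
Your argument is correct. The $(\Leftarrow)$ direction is fine, and in the $(\Rightarrow)$ direction your reduction to the single complex contraction $Z=X_1+iX_2$ (with $\|Z\|\le 1$ equivalent to $X\in\mfBc_2$ via a complex unitary conjugation of $\sigma_1\otimes X_1+\sigma_2\otimes X_2$), followed by the Halmos unitary dilation of $Z$ and the real/imaginary part extraction $T_j'$, and then the passage through $\iota$, all check out. The key identities $(T_1')^2+(T_2')^2=I$ and $V^*T_j'V=X_j$ follow exactly as you say, and $\iota$ carries this over to real symmetric $4n\times 4n$ matrices with the same relations.

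Your route is genuinely different from both proofs in the paper. The paper's main proof is structural: it characterizes the extreme points of $\mfBc_2(d)$ as exactly the commuting pairs with $\sum X_j\otimes P_j$ unitary (Proposition~\ref{prop:enough}), writes a general $X$ as a convex combination of extreme points via Carath\'eodory, and dilates by direct sum. The paper also gives an alternate proof that is closer in spirit to yours: it applies the Halmos dilation directly to the real $2n\times 2n$ block $S=\begin{pmatrix}X_1&X_2\\X_2&-X_1\end{pmatrix}$, but then uses a structure lemma (Lemma~\ref{lem:shape of S}) for the defect $D=(I-S^2)^{1/2}$ to read off a commuting real dilation of size $2n$. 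Your detour through $\mathbb C$ and $\iota$ trades that structure lemma for clean bookkeeping, at the cost of the size bound: you get $4n$ where the paper's alternate argument gets $2n$. Conceptually your version makes the link to the classical single-operator dilation most transparent; the paper's alternate proof is sharper on the size; the extreme-point proof gives additional geometric information about $\mfBc_2$.
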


Before giving the proof of Proposition \ref{prop:2spin} let us note a few consequences.

\begin{cor}\label{cor:spinBallembed}
  $\mfBc_2 = \mfBmax_2$. In particular, for a given $2$-tuple $A$ of $d\times d$ matrices, $\mathbb B_2\subset \cDA(1)$ if and only if $\mfBc_2 \subset \cDA$.
  Finally, $Y\in \mfBm_2$ if and only if there exists a positive integer $\mu$ such that $Y$ dilates to $I_\mu \otimes P$. 
\end{cor}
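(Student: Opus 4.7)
The corollary packages three consequences of Proposition \ref{prop:2spin}. The equality $\mfBc_2=\mfBmax_2$ is the heart of the matter; the other two follow once this is in hand. For the inclusion $\mfBmax_2\subseteq\mfBc_2$, I would note that $\mfBc_2=\cD_{L_P}$ is a free spectrahedron with $\mfBc_2(1)=\mathbb B_2$ by Lemma \ref{lem:minp}, so Proposition \ref{prop:min max duality} applies directly. For the reverse containment, given $X\in\mfBc_2$, apply Proposition \ref{prop:2spin} to obtain a commuting tuple $T$ of symmetric operators (on some Hilbert space) with joint spectrum in $\mathbb B_2$ into which $X$ dilates. If $A$ is any tuple with $\mathbb B_2\subseteq\cDA(1)$, then the spectral-integration argument of Proposition \ref{prop:commute-include} gives $T\in\cDA$, and compressing by the dilation isometry produces $X\in\cDA$. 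Since $A$ was arbitrary, $X\in\mfBmax_2$. The ``in particular'' statement then follows from the very definition of $\mfBmax_2$: $\mathbb B_2\subseteq\cDA(1)$ is equivalent to $\mfBmax_2\subseteq\cDA$, and we have just identified $\mfBmax_2$ with $\mfBc_2$.

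The dilation characterization of $\mfBm_2$ will be handled by chaining together the preceding equivalences with operator-system theory. The easy direction is a direct norm computation: if $Y_j=V^*(I_\mu\otimes P_j)V$, then for unit $x\in\mathbb R^2$ one has $\|x_1Y_1+x_2Y_2\|\le\|x_1\sigma_1+x_2\sigma_2\|=\|x\|=1$, placing $Y\in\mfBm_2$ by Lemma \ref{lem:min alts}. For the converse, my plan is to trace the equivalences
\[
Y\in\mfBm_2\iff\mathbb B_2\subseteq\cD_{L_Y}(1)\iff\mfBc_2\subseteq\cD_{L_Y}\iff\phi\text{ is completely positive,}
\]
where $\phi:\mathrm{span}\{I,P_1,P_2\}\to M_d(\mathbb R)$ is the unital linear map determined by $\phi(P_j)=Y_j$. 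The first equivalence is Lemma \ref{lem:min alts}; the second is the ``in particular'' part of the corollary applied with $Y$ in the role of $A$; the third is \cite[Theorem 3.5]{HKM13} (invoked already in Corollary \ref{cor:lameCP}), which translates containment of free spectrahedra into complete positivity of the associated unital linear map.

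Finally, once $\phi$ is known to be UCP on the operator system $\mathrm{span}\{I,P_1,P_2\}\subset M_2(\mathbb R)$, Arveson's extension theorem extends it to a UCP map $\tilde\phi:M_2(\mathbb R)\to M_d(\mathbb R)$, and Stinespring's theorem yields a $*$-representation $\pi:M_2(\mathbb R)\to B(\mathcal H)$ together with an isometry $V:\mathbb R^d\to\mathcal H$ such that $\tilde\phi(A)=V^*\pi(A)V$. Since every $*$-representation of the simple algebra $M_2(\mathbb R)$ is unitarily equivalent to a multiple $I_\mu\otimes\mathrm{id}_{M_2(\mathbb R)}$ on $\mathbb R^\mu\otimes\mathbb R^2$ (and a finite $\mu$ suffices because $d<\infty$), we conclude $Y_j=\phi(P_j)=V^*(I_\mu\otimes P_j)V$, as required.

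The main obstacle will be properly invoking the Arveson/Stinespring machinery in the real-scalar setting used throughout the paper, and making sure the passage from UCP extension to a dilation of the prescribed form $I_\mu\otimes P$ is carried out cleanly; the identification of the irreducible $*$-representations of $M_2(\mathbb R)$ with the standard action on $\mathbb R^2$ is what pins down the specific form of the dilating tuple. Everything else is bookkeeping built on Proposition \ref{prop:2spin}, Proposition \ref{prop:min max duality}, and \cite[Theorem 3.5]{HKM13}.
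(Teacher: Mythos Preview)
Your proposal is correct and follows essentially the same route as the paper. The paper obtains $\mfBmax_2\subset\mfBc_2$ from Proposition \ref{prop:min max duality}, uses Proposition \ref{prop:2spin} for the reverse inclusion (noting directly that a commuting tuple with joint spectrum in $\mathbb B_2$ lies in $\mfBmax_2$, which is exactly what your spectral-integration unpacking shows), and derives the ``in particular'' clause from the definition of the max ball together with $\mfBc_2(1)=\mathbb B_2$. For the dilation characterization of $\mfBm_2$, the paper also reduces to complete positivity of the unital map $P_j\mapsto Y_j$ on $\mathrm{span}\{I,P_1,P_2\}$---arriving there via the min--max duality of Proposition \ref{prop:min max duality} rather than your equivalent phrasing $\mfBc_2\subset\cD_{L_Y}$ plus \cite[Theorem~3.5]{HKM13}---and then cites \cite{HKMpd} for the dilation conclusion, whereas you spell out the Arveson extension, Stinespring factorization, and the identification of $*$-representations of $M_2(\mathbb R)$ with multiples of the identity; your concern about carrying this through over the reals is legitimate, and is precisely what the reference \cite{HKMpd} is meant to cover.
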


\begin{proof}
  By Proposition \ref{prop:min max duality}, $\mfBmax_2 \subset \mfBc_2$.  Thus it remains to show if $X\in\mfBc_2$, then $X\in \mfBmax_2$. By Proposition \ref{prop:2spin}, there exists a commuting pair $T$ of symmetric matrices and an isometry $V$ such that $X_j = V^* T_j V$ and the joint spectrum of $T$ is in $\mathbb B_2$.  It follows that $T\in \mfBmax_2$ and thus $X\in \mfBmax_2$ too. 

 To prove second statement, note that, by definition of the max ball, if $\mathbb B_2 \subset \cDA(1)$, then $\mfBmax_2 \subset \cDA$. The converse is automatic since $\mfBc_2(1)=\mathbb B_2$.  Thus the second part of the corollary follows immediately from the first.

 The final part of the corollary follows easily from \cite{HKMpd}. Here
 is a sketch.
 Let $\mathcal P$ denote the span of $\{I,P_1,\ldots,P_k\}$.  Suppose $Y\in \mfBm_2$ and let $\mathcal Y$ denote the span of $\{I,Y_1,Y_2\}$. By Proposition \ref{prop:min max duality} and what has already been proved, if $X\in\mfBc_2$, then $I\succeq \sum_{j=1}^2 X_j\otimes Y_j$. Hence the unital mapping $\varphi:\mathcal P\to \mathcal Y$ defined by $\varphi(P_j)=Y_j$ is completely positive.  The dilation conclusion follows.  
 Conversely,  if $Y$ dilates to $I_\mu\otimes P$, then evidently $Y$ is in $\mfBm_2$ and the proof is complete.
\end{proof}

\begin{rem}[Matrix Ball Problem]\index{matrix ball problem}
Given a $d\times d$ monic linear pencil $L$ consider the problem of embedding
the unit ball $\mathbb B_g$ into the spectrahedron $\mathscr S_L=\cD_L(1)$.
Equivalently (Corollary \ref{cor:spinBallembed}),
consider embedding $\mfBc_g$ into $\cD_L$.
Both objects are free spectrahedra, so the complete positivity
machinery on embeddings of free spectrahedra applies 
(see Subsubsection \ref{subsub:spIncl} or \cite{HKM13} for details). That is,
$\mathbb B_g\subseteq\mathscr S_L=\cD_L(1)$ is equivalent to an explicit LMI of  size  $2^{g-1}gd$.
\end{rem}

 The proof of Proposition \ref{prop:2spin} uses the following proposition.

\begin{prop}
 \label{prop:enough}
   A tuple $X\in\mfBc_2(d)$ is an extreme point\index{extreme point} of $\mfBc_2(d)$ if and only if it is a commuting tuple and   $\sum_{j=1}^g X_j\otimes P_j$ is unitary. 
\end{prop}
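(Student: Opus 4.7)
The plan is to translate the problem to the level of $2d \times 2d$ self-adjoint contractions via the linear injection $\Phi : \mathbb S_d^2 \to \mathbb S_{2d}$ defined by $\Phi(X_1, X_2) := S_X := X_1 \otimes P_1 + X_2 \otimes P_2$, so that $X \in \mfBc_2(d)$ iff $S_X$ is a self-adjoint contraction. Because $P_1, P_2$ anticommute and square to $I$, a direct expansion gives
\[
S_X^2 = (X_1^2 + X_2^2) \otimes I + [X_1, X_2] \otimes P_1 P_2,
\]
so $S_X^2 = I$ iff $X_1^2 + X_2^2 = I$ and $[X_1, X_2] = 0$. Since $S_X$ is symmetric, $S_X$ is unitary iff $S_X^2 = I$, which matches the stated condition ($X$ commuting together with $X_1^2 + X_2^2 = I$).

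For the forward direction, if $S_X^2 = I$ then $S_X$ is a self-adjoint unitary, hence an extreme point of the convex set of self-adjoint contractions in $\mathbb S_{2d}$. Any decomposition $X = \tfrac12(Y+Z)$ with $Y, Z \in \mfBc_2(d)$ pushes forward under $\Phi$ to a decomposition of $S_X$, forcing $S_Y = S_Z = S_X$ and hence $Y = Z = X$ by the injectivity of $\Phi$ (which is visible from the block form of $S_X$).

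The converse is the main content. Assume $S_X^2 \ne I$; I must produce a nonzero $K$ in the image $V := \Phi(\mathbb S_d^2)$ such that $S_X \pm \varepsilon K$ are both self-adjoint contractions for small $\varepsilon > 0$. The key structural observation is that $J := I \otimes P_1 P_2$ is a real orthogonal matrix with $J^* = -J$ and $J^2 = -I$; combined with the identity $(P_1 P_2) P_j (P_1 P_2)^* = -P_j$, this yields the characterization $V = \{M \in \mathbb S_{2d} : JMJ^* = -M\}$ and the anti-symmetry $JS_XJ^* = -S_X$. Because $S_X^2 \ne I$, $S_X$ has some eigenvalue $\lambda \in (-1,1)$; pick a unit eigenvector $v$. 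Then $S_X(Jv) = -\lambda Jv$ by the anti-symmetry, $\langle v, Jv\rangle = 0$ is automatic from $J^* = -J$, and $J^2 = -I$ prevents $Jv$ from being a real scalar multiple of $v$. Defining
\[
K := vv^* - (Jv)(Jv)^*,
\]
one checks $JKJ^* = -K$ (so $K \in V$); on the $S_X$-invariant plane $\operatorname{span}(v, Jv)$ the perturbations $S_X \pm \varepsilon K$ have eigenvalues $\pm(\lambda \pm \varepsilon)$, while $K$ vanishes on $\operatorname{span}(v, Jv)^\perp$, leaving $S_X$ unchanged there. For any $\varepsilon \in (0, 1 - |\lambda|)$ both $S_X \pm \varepsilon K$ remain contractions, and $H := \Phi^{-1}(\varepsilon K) \ne 0$ exhibits $X$ as a nontrivial convex combination in $\mfBc_2(d)$.

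The main obstacle is precisely that the perturbation must lie in the restricted subspace $V$ rather than in all of $\mathbb S_{2d}$: generic perturbations witnessing non-extremality of $S_X$ (for instance those built from a spectral projection of $S_X$ onto $[-1+\varepsilon, 1-\varepsilon]$) are $J$-invariant and hence land in the $+1$-eigenspace of the involution $M \mapsto JMJ^*$, not in $V$. The explicit rank-two choice $K = vv^* - (Jv)(Jv)^*$ sidesteps this by using the complex-structure identity $J^2 = -I$ together with the anti-symmetry $JS_XJ^* = -S_X$ to place the perturbation in $V$ by design.
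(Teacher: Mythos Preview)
Your proof is correct and takes a genuinely cleaner route than the paper's. Both arguments rest on the same two ingredients: the computation $S_X^2=(X_1^2+X_2^2)\otimes I+[X_1,X_2]\otimes P_1P_2$, and the conjugation symmetry coming from $\sigma_3=P_1P_2$ (the paper uses $(I\otimes\sigma_3)\Lambda(X)(I\otimes\sigma_3)=-\Lambda(X)$ in exactly the way you use $JS_XJ^*=-S_X$). The difference is in how the perturbation is produced when $S_X^2\ne I$. The paper works inside the kernel $\mathscr E$ of $I-\Lambda(X)^2$, splits it into $\mathscr E_\pm$, projects to find a vector $\Gamma$ whose components span a two-dimensional subspace $\mathcal S\subset\mathbb R^d$, and then runs a linear-algebra dimension count ($16$ unknowns, $10+4$ homogeneous constraints) to conclude that a nonzero $Y$ of the right shape with $\Lambda(Y)\Gamma=0$ exists. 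You instead exploit $J^2=-I$ and $J^*=-J$ to identify the image of $\Phi$ intrinsically as the $-1$-eigenspace of $M\mapsto JMJ^*$, then write down the perturbation explicitly as $K=vv^*-(Jv)(Jv)^*$ and verify membership and contractivity by hand. Your approach is shorter, avoids the dimension count, supplies the forward implication (which the paper leaves implicit), and makes transparent why the perturbation must pair an eigenvector with its $J$-image; the paper's approach, by contrast, localizes the perturbation to a two-dimensional subspace of $\mathbb R^d$, which is a slightly different (and in principle more flexible) construction but at the cost of a more involved argument.
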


\begin{proof}
 For notational ease, let $\Lambda(x) = \sum_{j=1}^g P_jx_j$, with the dependence on $g$ supressed.
Observe that, by Lemma \ref{lem:P vs -P}, a tuple  $X\in \mfBc_g$ if and only if $\Lambda(X)^2 \preceq I$.  Equivalently, $X$ is in the spin ball if and only if
$L(X):= I - \Lambda_* (X) \succeq 0,$  where
\[
\Lambda_*(X) = \Lambda(X) \oplus -\Lambda(X) =\begin{pmatrix} \Lambda(X) & 0 \\ 0 & -\Lambda(X)\end{pmatrix}.
\]
 In the case of $g=2$ and the tuple $X=(X_1,X_2)$,
\[
 \Lambda(X) = \begin{pmatrix} X_1 & X_2 \\ X_2 & -X_1 \end{pmatrix}
\]
 and $X$  is in the spin ball if and only if 
\[
 I\succeq  \Lambda(X)^2  = \begin{pmatrix} X_1^2 + X_2^2 & X_1 X_2 - X_2 X_1 \\ -(X_1 X_2 - X_2 X_1)  & X_1^2 + X_2^2 \end{pmatrix}.
\]

 Suppose this inequality holds.  If $\gamma \in\mathbb R^d$ and $(X_1^2+X_2^2) \gamma = \gamma$, then evidently $(X_1 X_2 - X_2 X_1)\gamma =0$.  Let 
\[
  \cH =\{\gamma \in\mathbb R^d :  (X_1^2+X_2^2) \gamma = \gamma \}.
\]
 If $\cH=\mathbb R^d$, then $X_1$ and $X_2$ commute. Hence, we may assume $\cH$ is a proper subspace of $\mathbb R^d$. Let $\cK=\mathbb R^d \ominus \cH$.

 Let $\mathscr E_{\pm}$ denote the nullspace  of  $I\mp \Lambda(X)$. In particular, $\mathscr E$, the nullspace of $I-\Lambda(X)^2$ is the direct sum $\mathcal E_+\oplus\mathcal E_-$.  Let $\{e_0,e_1\}$ denote the standard basis for $\mathbb R^2$.  If  $\gamma_0 \in \cK$ and $\gamma_1\in\mathbb R^d$ and $\Gamma = \sum_{j=0}^1 \gamma_j \otimes e_j \in \mathscr E_\pm$, then, since $(1\otimes \sigma_3)\Lambda(x) (1\otimes \sigma_3) = -\Lambda(x)$ (see Lemma \ref{lem:P vs -P}), 
\[
 \Lambda(X)  (I\otimes \sigma_3) \Gamma  = \mp (I\otimes \sigma_3)\Gamma,
\]
 and hence  $(I\otimes \sigma_3)\Gamma$ lies in $\mathscr E_{\mp}$.   It follows that $I\otimes \sigma_3$ leaves $\mathscr E$ invariant. Let $Q$ denote the projection of $\mathbb R^d$ onto $\cK$.  If $\Gamma \in \mathscr E$, then $((I-Q)\otimes I)\Gamma \in \cH\otimes \mathbb R^2 \subset \mathscr E$. Hence $(Q\otimes I)\Gamma\in\mathscr E$. Consequently, we can assume there is a nonzero  $\Gamma\in\mathscr E \cap (\cK\otimes \cK)$.  Write, as before $\Gamma = \sum_{j=0}^1 \gamma_j \otimes e_j$.   To see that $\{\gamma_0,\gamma_1\}$ spans a two dimensional space $\mathcal S$ (is a linearly independent set), suppose not. In that case, $\Gamma = \gamma \otimes e$ for vectors $\gamma \in \cK$ and $e\in\mathbb R^2$.  Since $\sigma_3$ is skew symmetric (as is $X_1X_2-X_2X_1$), 
\[
 \Big\langle \big[(X_1X_2-X_2X_1)\otimes \sigma_3\big] \Gamma,\Gamma \Big\rangle = \big\langle (X_1X_2-X_2X_1)\gamma,\gamma\rangle \, \langle \sigma_3 e,e\big\rangle =0.
\]
 Thus, as $\Gamma =\Lambda(X)^2\Gamma$, 
\[
 \begin{split}
 \|\gamma\|^2 \, \|e\|^2 & =  \|\Gamma\|^2\\
  &=   \langle \Gamma,\Gamma \rangle\\
  &=  \langle \Lambda(X)^2\Gamma,\Gamma\rangle \\
  &=  \langle (X_1^2+X_2^2)\gamma \otimes e + (X_1X_2-X_2X_1)\gamma \otimes e, \gamma \otimes e\rangle \\
  &=  \langle (X_1^2+X_2^2)\gamma,\gamma \rangle \, \|e\|^2.
 \end{split}
\]
 Since $I\succeq X_1^2+X_2^2$ it follows that $(X_1^2+X_2^2)\gamma =\gamma \in H$. Hence $\gamma =0$. 

 Both $\Gamma$ and $(I\otimes \sigma_3)\Gamma \in \mathscr E\cap (\mathcal S\oplus \mathcal S)$. On the other hand, if $\{\Gamma,(I\otimes \sigma_3)\Gamma\}$ don't span $\mathscr E\cap (\mathcal S\oplus \mathcal S)$, then it is easily shown that  this intersection contains an element of the form $\gamma\otimes e$ and we obtain a contradiction as before. Thus $\mathscr E\cap (\mathcal S \oplus \mathcal S)$ is spanned by $\{\Gamma,(I\otimes \sigma_3)\Gamma\}$. 

 Define $Y=(Y_1,Y_2)$ on $\mathcal S$ as follows. Let $Z$ be a generic  $4\times 4$ matrix.  We will choose $Z$ so that it has the  form $\Lambda(Y)$  and such that $\Lambda(Z)$ is zero on $\mathscr E \cap (\mathcal S\oplus \mathcal S)$.  There are $16$ free variables in $Z$.   Insuring $Z$ has the proper form with $Y_j=Y_j^*$ consists of $2+2\cdot4=10$ homogeneous equations. The condition $Z\Gamma =0$ is another $4$ homogeneous equations. The form of $Z$ and $Z\Gamma =0$ implies $Z(I\otimes \sigma_3)\Gamma =0$ too. Hence we are left with $2$ free variables and a choice of $Y\ne 0$ exists.  And $\Lambda(Y) \mathcal E\cap (\mathcal S \oplus \mathcal S)=0$ for such a choice of $Y$.

Extend the definition of $Y$ to all of $\mathbb R^d$ by declaring $Y_j=0$ on $\mathcal S^\perp$. It follows that $Y\ne 0$ and at the same time $\Lambda(Y)$ is $0$ on $\mathscr E$ and hence on each of $\mathscr E_{\pm}$. With respect to the decomposition of the space that $\Lambda(X)$ acts upon as $\mathscr E_+ \oplus \mathscr E_-\oplus H$,
\[
 \Lambda(X) = \begin{pmatrix} I & 0 & 0 \\0 & -I & 0\\ 0 & 0 & X^\prime \end{pmatrix}, \ \ 
 \Lambda(Y) = \begin{pmatrix} 0 & 0 & 0 \\ 0& 0 & 0 \\ 0& 0& Y^\prime \end{pmatrix},
\]
 where $X^\prime,Y^\prime$ are self-adjoint and $X^\prime$ is a strict contraction. It follows, by choosing $t$ small enough, that $I\pm \Lambda(X+tY) \succeq 0$
 and thus $X$ is not an extreme point of $\mfBc_2$. 
\end{proof}

\begin{proof}[Proof of Proposition \ref{prop:2spin}] If $X$ in the spin ball $\mfBc_2(d)$, then by Caratheodory's Theorem there exists an $N$, extreme points $X^1,\dots X^N$ of $\mfBoh_2(d)$ and scalars $0\le t_1,\dots, t_N$ such that $\sum t_j =1$ and $X=\sum t_j X^j$. Let  $T=\oplus X^j$ act on $H=\oplus_1^N \mathbb R^d$ and define $V:\mathbb R^d\to H$ by $Vh = \oplus \sqrt{t_j}h$. Thus $V$ is an isometry and it is evident that $V^*TV =\sum t_j X^j =X$. By Proposition \ref{prop:enough}, $T$ is a commuting tuple of symmetric matrices.
\end{proof}

\subsubsection{An alternate proof of Proposition \ref{prop:2spin}}
 An ad hoc proof of Proposition \ref{prop:2spin} is based upon the Halmos dilation\index{dilation!Halmos} of a contraction matrix to a unitary matrix.

\begin{lemma}
 \label{lem:shape of S}
  Suppose $u,v,a,b,d$ are $n\times n$ matrices and let
\[
  R=\begin{pmatrix} a & b\\ b^* & d\end{pmatrix}
\]
  If 
 $R$ is positive semidefinite, and 
\[
  R^2 = Z:=\begin{pmatrix} u & v \\ -v & u \end{pmatrix},
\]
 then $a=d$ and $b^*=-b$.
\end{lemma}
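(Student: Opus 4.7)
The plan is to exploit a symmetry of the block matrix $Z$ to force the corresponding symmetry on $R$ via uniqueness of the positive semidefinite square root. Specifically, I would introduce the $2n\times 2n$ matrix
\[
J = \begin{pmatrix} 0 & I_n \\ -I_n & 0 \end{pmatrix},
\]
which is unitary (since $J^* = -J$ and $J^2 = -I_{2n}$, so $J^{-1} = J^*$). A direct block computation shows that $JZ = ZJ$; equivalently $JZJ^{-1} = Z$. This uses only the particular $\begin{pmatrix} u & v \\ -v & u\end{pmatrix}$ shape of $Z$, not any property of $u,v$ beyond what is given.

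Next I would set $R' := JRJ^{-1}$ and observe that $R'$ is positive semidefinite (since $R\succeq 0$ and $J$ is unitary) and satisfies $(R')^2 = JR^2 J^{-1} = JZJ^{-1} = Z = R^2$. By uniqueness of the positive semidefinite square root of a positive semidefinite matrix, $R' = R$, i.e., $JR = RJ$.

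Now I would compute both $JR$ and $RJ$ in $2\times 2$ block form:
\[
JR = \begin{pmatrix} b^* & d \\ -a & -b \end{pmatrix}, \qquad
RJ = \begin{pmatrix} -b & a \\ -d & b^* \end{pmatrix}.
\]
Reading off the block entries gives $b^* = -b$ and $d = a$, which is exactly the conclusion.

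The only delicate point in this plan is invoking uniqueness of the positive semidefinite square root — but this is a standard fact for self-adjoint matrices (apply the functional calculus, or diagonalize), so there is no real obstacle. Everything else is a direct computation with $2\times 2$ block matrices.
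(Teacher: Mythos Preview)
Your proof is correct and follows essentially the same approach as the paper: exploit a unitary symmetry of $Z$, invoke uniqueness of the positive semidefinite square root to transfer that symmetry to $R$, and read off the block equations. The only cosmetic difference is the choice of unitary --- the paper uses $U=\frac{1}{\sqrt{2}}\begin{pmatrix} I & I\\ -I & I\end{pmatrix}$ (there is a typo in the normalization constant in the paper), whereas you use $J=\begin{pmatrix}0 & I\\ -I & 0\end{pmatrix}$; since $J=U^2$, these are the same symmetry group and the arguments are interchangeable.
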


\begin{proof}
Note that $U^* R^2 U =R^2$, where $U$ is the unitary matrix
\[
 U =\frac{1}{2} \begin{pmatrix} I & I \\ -I & I \end{pmatrix}.
\]
 Using the functional calculus  it follows that $U^*RU=R$ too.  From this relation, direct computation reveals
\[
 \begin{split}
   a - (b+b^*)+d& = 2a\\
   a + (b-b^*)-d &= 2b
 \end{split}
\]
 from which it follows that $b+b^*=0$ and $a=d$.
\end{proof}

\begin{proof}[Proof of Proposition \ref{prop:2spin}]
 Let 
\[
  S=  \begin{pmatrix} X & Y \\ Y & -X \end{pmatrix}.
\]
 Almost by definition, $X\in\mfBc_2$ means $S$ is a a contraction. Let $D=(I-S^2)^{\frac 12}$, the defect of $S$. By Lemma \ref{lem:shape of S}, 
\[
 D= \begin{pmatrix} d & e \\ -e & d \end{pmatrix}.
\]
 In particular, $e$ is skew symmetric,  $e^*=-e$.
 The operator 
\[
 U = \begin{pmatrix} S & D \\ D & - S\end{pmatrix}
\]
 is unitary and consequently
\beq
 \begin{split}\label{eq:julia}
   X^2+Y^2 + d^2 -e^2 & =  I \\
   XY-YX + de+ed &=  0\\
   Yd+Xe+eX-dY &=  0\\
   Xd-Ye-dX-eY &=  0.
 \end{split}
\eeq
 
  Let
\[
 T_1 = \begin{pmatrix} X & e \\ -e & X \end{pmatrix}, \qquad
 T_2 =  \begin{pmatrix} Y & d \\ d & -Y \end{pmatrix}.
\]
 Compute,  using \eqref{eq:julia},
\[
 T_1 T_2 - T_2 T_1 = \begin{pmatrix} XY+ed -YX + de & Xd-eY - Ye-dX \\ -eY+Xd - dX-Ye & -ed-XY -de +YX  \end{pmatrix} = 0.
\]
 Likewise,
\[
 T_1^2 +T_2^2 = \begin{pmatrix} X^2-e^2 +Y^2 +d^2 & Xe+eX+Yd-dY \\ -eX-Xe+dY-Yd & X^2-e^2+Y^2 +d^2 \end{pmatrix} = I. \qedhere
\]
\end{proof}

\section{Probabilistic Theorems and Interpretations continued}
\label{sec:shortprob}

This section follows up on Section \ref{sec:introprob},
 adding a few more probabilistic facts and summarizing
 properties involving equipoints.
We follow the conventions of Section 
\ref{sec:introprob}.
In particular,  for $\head,\tail \in \RR$ with  $\dd=\head + \tail >0$
 the equipoint  $\sih$ is defined by 
 \beq
\label{eq:probEqui}
P^{b(\head +1 , \tail)} (\BRV\leq \sih)  \ = \
  \Pbe (\BRV\geq \sih). 
  \eeq
 
\subsection{The nature of equipoints}
 \label{sec:equinatureAGAIN}
 Here are basic properties of
equipoints versus  medians.\index{equipoint}\index{median}\index{beta distribution}\index{binomial distribution}

\begin{prop}
\label{prop:binbeta}
Various properties of 
the distributions
${\rm Bin}(\dd,p)$  and ${\rm Beta}(\head, \tail)$
are:
\ben[\rm(1)]
\item
\label{it:equis}
Bin and Beta: \ The equipoint exists and is unique.
\item
\label{it:eqimed}
Bin: \ Given $ \head,$ if  $\sih$ is an equipoint, then
$ \head $ is a median
for ${\rm Bin}(\dd,\sih)$.

\item
\label{it:eqiSym} Bin:\ 
For even $\dd$ and  any integer  $ 0 \leq k \leq \frac\dd2 $,
$$
P_{ \sigma_{ ( \frac \dd 2 + k )} }  \left (\SRV= \frac \dd 2 + k\right)  
= 
P_{ \sigma_{( \frac \dd 2  - k) }  }   \left(\SRV=  \frac \dd 2 - k\right).
    $$
Also we have the symmetry    
   $$P_{\frac \dd 2 + k }\left(\SRV= \frac \dd 2 + k\right)  
   = P_{\frac  \dd 2 - k} \left(\SRV=  \frac \dd 2 - k\right).
    $$
\een
\end{prop}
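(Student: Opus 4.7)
The plan is to handle the three claims separately, with parts (1) and (2) being quick consequences of definitions and monotonicity, while part (3) reduces to the classical binomial symmetry $\binom{\dd}{\head}p^\head(1-p)^\tail = \binom{\dd}{\tail}(1-p)^\tail p^\head$.

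For part (1), I would introduce the auxiliary function
\[
 \Phi(\sigma) := I_\sigma(\head+1,\tail) + I_\sigma(\head,\tail+1) - 1
\]
for the Beta case, and the corresponding $p\mapsto P_p(\SRV\geq\head) - P_p(\SRV\leq\head)$ for the Binomial case. Both are continuous on $[0,1]$, equal $-1$ at the left endpoint and $+1$ at the right endpoint; moreover each is strictly increasing because in both functions we subtract a strictly decreasing quantity from a strictly increasing one (the two incomplete beta summands are strictly monotone CDFs in $\sigma$; similarly for the Binomial CDF tails in $p$). The intermediate value theorem then provides a unique root, which by definition is the equipoint.

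Part (2) is immediate from the equipoint equation $P_{\sih}(\SRV\geq\head) = P_{\sih}(\SRV\leq\head)$: since these two probabilities sum to $1 + P_{\sih}(\SRV=\head) \geq 1$, each is at least $\frac12$, which is exactly the definition of $\head$ being a median of ${\rm Bin}(\dd,\sih)$.

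For part (3) the engine is the elementary binomial identity $P_p(\SRV = j) = P_{1-p}(\SRV = \dd-j)$, which on summing in $j$ yields $P_p(\SRV\geq\head) = P_{1-p}(\SRV\leq \tail)$ with $\tail=\dd-\head$. The second displayed equation (which requires no equipoints) then falls out by specializing $p = (\dd/2+k)/\dd$ and using $1-p = (\dd/2-k)/\dd$ with $j = \dd/2+k$. For the first displayed equation, the same identity applied to the defining equation of $\sigma_{\dd/2+k}$ gives
\[
 P_{1-\sigma_{\dd/2+k}}(\SRV\leq \dd/2-k) = P_{1-\sigma_{\dd/2+k}}(\SRV\geq \dd/2-k),
\]
so $1-\sigma_{\dd/2+k}$ is an equipoint for $\head = \dd/2-k$. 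By the uniqueness established in part (1), $\sigma_{\dd/2-k} = 1-\sigma_{\dd/2+k}$. Applying the pointwise binomial symmetry once more to $P_{\sigma_{\dd/2+k}}(\SRV = \dd/2+k)$ yields $P_{\sigma_{\dd/2-k}}(\SRV = \dd/2-k)$ as required.

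The only conceptual subtlety is ensuring that the monotonicity arguments in part (1) are genuinely strict (so that uniqueness, not just existence, holds); this is where strict positivity of the Beta density on $(0,1)$ and strict positivity of individual binomial point probabilities for $0<p<1$ is invoked. Everything else is bookkeeping with the symmetry $p\leftrightarrow 1-p$, $\head\leftrightarrow \tail$.
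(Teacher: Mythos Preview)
Your proposal is correct and follows essentially the same approach as the paper: monotonicity plus continuity for part (1), the observation that the two equal tail probabilities sum to at least $1$ for part (2), and the head--tail symmetry $P_p(\SRV=j)=P_{1-p}(\SRV=\dd-j)$ for part (3). Your treatment of part (3) is in fact more thorough than the paper's, which only explicitly handles the second displayed symmetry; you additionally derive $\sigma_{\dd/2-k}=1-\sigma_{\dd/2+k}$ from uniqueness to justify the first displayed equation.
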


\begin{proof}
\eqref{it:equis}
Note that for fixed integer $ \head $, the function
$P_p(\SRV\geq  \head )$ is increasing from $p=0$ to $p=1$.
Likewise 
$P_p(\SRV\leq  \head )$ is decreasing from $p=0$ to $p=1$.
The graphs  are continuous, so must cross at a unique point,
namely at $\sih $.
Likewise, $P^{b(\head+1, \tail)}(\BRV \leq p)$  increases from $0$ up to 1
while  $\Pbe(\BRV \geq p)$ decreases 
from 1 down to 0.

\eqref{it:eqimed} \
 Fix $\dd, s$, hence $ \sih $. 
 Then by the definition of $\sih $, we have
\begin{equation}
\label{eq:isMed}
 1= 2 P_{\sih } (\SRV< \head) +P_{\sih } (\SRV= \head ).
 \end{equation}
If  $P_{\sih } (\SRV< \head ) +  P_{\sih } (\SRV= \head ) < \frac1 2 $
then 
 $P_{\sih } (\SRV> \head ) +P_{\sih } (\SRV =\head ) <\frac 1 2$
 which contradicts \eqref{eq:isMed}.
Thus 
$\head$ is a median.

\eqref{it:eqiSym}
The symmetry is 
seen by switching the roles of heads and tails:
$$P_{p}(\SRV=  \head )  
= P_{1-p}(\SRV=  \dd-  \head ).
    $$
Then note    
$\dd - ({\frac \dd 2 + \frac k\dd}) = 
{\frac \dd 2 -\frac k\dd} $.
\end{proof}

\def\hF{\hat F}

\ssec{Monotonicity}
\label{sec:monot}

 For $\dd\in \mathbb R_{>0}$ fixed  recall the functions
\beq
\Phi(\head):=   P^{b(\head,\dd-\head+1)} (\BRV\leq  e_{\head, \dd - \head   +1})
\qquad \text{and} \qquad
\hat \Phi(\head):=   
P^{b(\head,\dd -\head  +1)}
\left(\BRV\leq \frac \head {\dd} \right) 
\eeq
based on the CDF of the Beta Distribution.
 The proof of {\it one step monotonicity} 
 of these functions claimed in Theorem \ref{thm:incr}
 from  Section \ref{sec:introprob} is proved below
  in Subsubsection \ref{sec:CDFmonot}. A similar result
 with the CDF replaced by the PDF is established in
  Subsubsection \ref{sec:PDFmonot}.

\sssec{Monotonicity of the CDF}
\label{sec:CDFmonot}

\begin{proof}[Proof of Theorem {\rm\ref{thm:incr}}]
(1) The claim is that $ \Phi(\head ) \leq    \Phi(\head + 1) $ 
for $\head, \dd \in \NNhalf$ and $ \frac \dd 2 \leq \head < \dd -1 $. Recall Lemma \ref{lem:fisg}
which says that
$f_{s,t}(\sigma)$ defined in \eqref{prob:opt'}
when evaluated at the equipoint is
$$
f_{s,t}(\sist) =
2 \;
I_{\sist}\left(\frac s2,1+\frac t2\right) 
 -1 
$$
Using the conversion $\head = \frac s 2$, we get
$\ds \Phi(\head) =  \frac{ f_{s,t}(\sist)  +1 } {2}.
$
Proposition \ref{prop:f2step} gives two step monotonicity
of $f_{s,t}(\sist)$ when $s\geq t$
which implies $\Phi$ is one step monotone for 
$\head \geq \tail$.

 (2) We claim that $\hat \Phi(\head ) \leq   \hat \Phi(\head + 1) $ 
for $\head,\dd \in \RR$ with $ \frac \dd 2 \leq \head  <\dd -1 $.
\\
Define
$\hF$ by 
\[ 
\hat{F} (\dd,\head) = 
\frac{P^{b(\head, \tail +1)}(\BRV\leq \frac {\head}{\dd} )}
{\Gamma\left( \dd+1\right) }
=
\frac{I_{\frac {\head}{\dd}}\left(\head, \tail +1\right)}{\Gamma\left( \dd+1\right) }
=
 \frac{\ds \int_0^{\frac {\head}{\dd}} x^{\head-1}(1-x)^{\tail}\,dx }
{\Gamma\left(\tail+1\right) \Gamma\left( \head\right) }.
\]
for $\head+\tail=\dd$.
Now we show that for
$\frac {\dd} 2 \leq \head \leq \dd-1$
we have
$\hat{F}(\dd,\head+1)\ge \hat{F}(\dd,\head)$,
equivalently
$\ds\frac {\hat{F}(\dd,\head+1)} { \hat{F}(\dd,\head)} \geq 1$.

We start by simplifying this quotient:
\[
\begin{split}
\frac {\hat{F}(\dd,\head+1)} { \hat{F}(\dd,\head)} &= 
\frac{\ds \int_0^{\frac {\head +1}{\dd}} x^{\head}(1-x)^{\tail-1}\,dx \;
\Gamma\left(\tail+1\right) \Gamma\left( \head\right)}
{\ds \Gamma\left(\tail\right) \Gamma\left( \head+1\right) \int_0^{\frac {\head}{\dd}} x^{\head-1}(1-x)^{\tail}\,dx} \\
&= 
\frac {\tail}{\head}
\frac{\ds \int_0^{\frac {\head +1}{\dd}} x^{\head}(1-x)^{\tail-1}\,dx }
{\ds \int_0^{\frac {\head}{\dd}} x^{\head-1}(1-x)^{\tail}\,dx} .
\end{split}
\]
Thus $\ds\frac {\hat{F}(\dd,\head+1)} { \hat{F}(\dd,\head)} \geq 1$
is equivalent to 
\beq\label{eq:Fs+3}
\tail
\int_0^{\frac {\head +1}{\dd}} x^{\head}(1-x)^{\tail-1}\,dx 
\geq
\head
 \int_0^{\frac {\head}{\dd}} x^{\head-1}(1-x)^{\tail}\,dx,
\eeq
so it  suffices to prove
\beq\label{eq:Fs+4p}
\tail
\int_{\frac {\head }{\dd}}^{\frac {\head +1}{\dd}}  x^{\head}(1-x)^{\tail-1}\,dx 
\geq
\head
 \int_0^{\frac {\head}{\dd}} x^{\head-1}(1-x)^{\tail}\,dx
- \tail 
\int_0^{\frac {\head }{\dd}}  x^{\head}(1-x)^{\tail-1}\,dx .
\eeq

As
\[
\left( x^{\head} (1-x)^{\tail}\right)' = 
\head
x^{\head-1}(1-x)^{\tail}\,dx
- \tail 
x^{\head}(1-x)^{\tail-1},
\]
the right-hand side of \eqref{eq:Fs+4p} equals
\[
\frac{\head^{\head} \tail^{\tail}}{\dd^\dd}.
\]

Letting $\eta(x):=x^{\head}(1-x)^{\tail-1}$, we see
\[
\eta'(x)= x^{\head-1} (1-x)^{\tail-2} (-x \dd +x+\head),
\]
so $\eta(x)$ is increasing on 
$\ds\left[0,\frac{\head}{\dd-1}\right]$
and decreasing on 
$\ds\left[\frac{\head}{\dd-1},1\right]$.
Since $\head\leq\dd-1$, we have $\ds\frac{\head}{\dd-1}\in
\left[\frac{\head}{\dd},\frac{\head+1}{\dd}\right]$.
We claim that
\beq\label{eq:Fs+4pp}
\eta\left(\frac{\head}{\dd}\right) \leq
\eta\left(\frac{\head+1}{\dd}\right) .
\eeq
Indeed, \eqref{eq:Fs+4pp} is easily
seen to be equivalent to
\[
\left(1+\frac1\head\right)^\head\geq\left(1+\frac1{\tail-1}\right)^{\tail-1},
\]
which holds since $\head\geq\tail$.

We can now apply a box inequality on the left-hand side of
\eqref{eq:Fs+4p}:
\[
\tail
\int_{\frac {\head }{\dd}}^{\frac {\head +1}{\dd}}  x^{\head}(1-x)^{\tail-1}\,dx 
\geq 
\tail \ \frac1\dd \ \eta\left(\frac{\head}{\dd}\right)
=
\frac{\head^{\head} \tail^{\tail}}{\dd^\dd},
\]
establishing \eqref{eq:Fs+4p}.
\end{proof}

Ideas in the paper \cite{PR07} 
were very helpful in the proof above.

\subsubsection{Monotonicity of the PDF}
\label{sec:PDFmonot}
So far we have studied the  CDF of the Beta Distribution.
However, the functions
\begin{equation}
 \label{eq:reform}
P_{ \eih }   (\SRV=  \head )
\qquad \text{and}\qquad 
 P_\sdd   (\SRV=  \head )
\end{equation}
based on  PDF's of the Binomial distribution also have monotonicity properties
for integer  $ \dd/2 \leq \head \leq \dd$.

\begin{prop}
Let $\dd\in\N$. 
For integer $ \head  \geq \frac \dd 2$, we have that 
\ben[\rm(1)]
\item
$P_\sdd( \SRV=  \head )$
is increasing;
its minimum is
$
P_{ \sdd }   (\SRV=  \lceil \dd/2 \rceil)$;
\item
$
P_{\eih }   (\SRV=  \head )$
is increasing; its minimum is
$
P_{ \sigma_{  \head } }   (\SRV=  \lceil \dd/2 \rceil)$.
\een
\end{prop}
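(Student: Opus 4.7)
The plan is to handle the two parts by entirely different routes. Part (1) will fall out of a direct ratio calculation that exploits the fact that the parameter $p=\sdd$ is tied to the outcome $\head$, while Part (2) will be reduced to the two-step monotonicity of $f_{s,t}(\sist)$ already established in Proposition \ref{prop:f2step}.

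For Part (1), I would form the ratio
\[
\frac{P_{\frac{\head+1}{\dd}}(\SRV=\head+1)}{P_{\frac{\head}{\dd}}(\SRV=\head)}
=\frac{\binom{\dd}{\head+1}}{\binom{\dd}{\head}}\cdot
\frac{(\head+1)^{\head+1}(\tail-1)^{\tail-1}}{\head^{\head}\tail^{\tail}},
\]
with $\tail=\dd-\head$, the $\dd^{\dd}$ factors cancelling. Routine algebra collapses the right hand side to $\big(1+\tfrac{1}{\head}\big)^{\head}\big/\big(1+\tfrac{1}{\tail-1}\big)^{\tail-1}$, and since $n\mapsto(1+\tfrac{1}{n})^{n}$ is strictly increasing on $(0,\infty)$, this quotient is at least $1$ precisely when $\head\geq\tail-1$, equivalently $\head\geq(\dd-1)/2$. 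The latter is automatic for integer $\head\geq\lceil\dd/2\rceil$, so Part (1) is established and the minimum at $\lceil\dd/2\rceil$ follows from monotonicity.

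For Part (2), the first move is to identify the equipoint $\eih$ of \eqref{eq:probEqui} with the parameter $\sigma_{2\head,2\tail}$ from Section \ref{sec:opt}: applying the reflection identity $I_{1-x}(a,b)=1-I_x(b,a)$ to \eqref{eq:defSIS} turns both definitions into $I_x(\head,\tail+1)+I_x(\head+1,\tail)=1$. Next I would invoke Lemma \ref{hsimple} at $(s,t)=(2\head,2\tail)$ to obtain the crucial identification
\[
h_{2\head,2\tail}(p)=\binom{\dd}{\head}p^{\head}(1-p)^{\tail}=P_p(\SRV=\head),
\]
which, combined with $f_{s,t}(\sist)=h_{s,t}(\sist)$ from Lemma \ref{lem:fisg}, yields $P_{\eih}(\SRV=\head)=f_{2\head,2\tail}(\sigma_{2\head,2\tail})$. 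Proposition \ref{prop:f2step}, specialised to $s=2\head$, $t=2\tail$ (so its hypothesis $s\geq t$ becomes $\head\geq\dd/2$), then gives $P_{\eih}(\SRV=\head)\leq P_{e_{\head+1,\tail-1}}(\SRV=\head+1)$, which is exactly the one-step monotonicity asserted. The boundary step $\head=\dd-1\to\dd$, not covered directly by Proposition \ref{prop:f2step} since it would require $\tail-1=0$, is trivial because $e_{\dd,0}=1$ forces $P_{e_{\dd,0}}(\SRV=\dd)=1$. The main obstacle is purely notational: reconciling the two parametrisations of the equipoint (half-integer shape parameters in Section \ref{sec:opt} versus integer shape parameters in Section \ref{sec:introprob}); after this bookkeeping, no new analytic inequalities are needed and both parts reduce to results already proved.
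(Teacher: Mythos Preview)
Your proposal is correct and follows essentially the same approach as the paper. Part (1) is identical: the paper forms the same ratio, simplifies to $(1+\tfrac{1}{\head})^{\head}/(1+\tfrac{1}{\tail-1})^{\tail-1}$, and invokes the monotonicity of $n\mapsto(1+\tfrac{1}{n})^n$. For Part (2), the paper first rewrites $P_{\eih}(\SRV=\head)=2P_{\eih}(\SRV\le\head)-1$ from the equipoint definition and then cites Theorem~\ref{thm:incr}, whose proof in turn is exactly the identification $\Phi(\head)=\tfrac12(f_{2\head,2\tail}(\sigma_{2\head,2\tail})+1)$ followed by Proposition~\ref{prop:f2step}; you simply bypass the intermediate CDF statement and go straight to $P_{\eih}(\SRV=\head)=f_{2\head,2\tail}(\sigma_{2\head,2\tail})$ via Lemma~\ref{hsimple} (equivalently Lemma~\ref{lem:gst}, since the paper's $g_{s,t}$ of Section~\ref{sec:nov} and $h_{s,t}$ of Section~\ref{sec:odd} coincide). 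One small citation slip: the equality $f_{s,t}(\sist)=h_{s,t}(\sist)$ is stated explicitly in Section~\ref{sec:odd} rather than in Lemma~\ref{lem:fisg}, which is phrased in terms of $g_{s,t}$.
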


\def\hPhi{{\hat \Phi}}

\begin{proof}
(2) By the definition of $\sih $, we have \
  $
  P_{\sih } (\SRV= \head ) = \ 2 P_{\sih } (\SRV\leq \head) -1.
$
Theorem \ref{thm:incr} implies the required monotonicity. 

(1)
Recall
\[
P_{ \sdd }   (\SRV=  \head) = {\dd\choose\head}
\frac{\head^\head\tail^\tail}{\dd^\dd}.
\]
Thus
\[
\frac{P_ {\frac{\head+1}\dd }   (\SRV=  \head+1) }{P_{ \sdd }   (\SRV=  \head) }
=
\left(\frac{\head+1}\head\right)^\head\ 
\left(\frac{\tail-1}\tail\right)^{\tail-1}
\]
is $\geq1$ iff
\beq\label{eq:imdone}
 \left(1+\frac1\head\right)^\head\geq\left(1+\frac1{\tail-1}\right)^{\tail-1}.
\eeq
Since $\head>\tail-1$, \eqref{eq:imdone} holds,
establishing the monotonicity of $P_{ \sdd }   (\SRV=  \head) $.
\end{proof}

\newpage

{\small
\linespread{1.1}

}

\end{document}